\documentclass[11pt]{amsart}

\usepackage[english]{babel}
\usepackage[useregional]{datetime2}
\usepackage[colorinlistoftodos,bordercolor=orange,backgroundcolor=orange!20,linecolor=orange,textsize=small]{todonotes}

\usepackage{fullpage}
\usepackage{caption}
\usepackage{subcaption}
\captionsetup[subfigure]{subrefformat=simple,labelformat=simple}
\usepackage{amsmath}
\usepackage{amsthm}
\usepackage{hyperref}
\usepackage{cleveref}
\usepackage{centernot}
\usepackage{blkarray}
\usepackage{float}
\usepackage[font=footnotesize,labelfont=bf]{caption}
\definecolor{burntorange}{cmyk}{0,0.52,1,0}
\usepackage{graphicx}
\usepackage{stackengine}
\stackMath
\usepackage{booktabs}
\newcommand{\tropprod}{\mathop{\odot}}
\newtheorem{theorem}{Theorem}[section]
\newtheorem{lemma}[theorem]{Lemma}
\newtheorem{proposition}[theorem]{Proposition}
\newtheorem{property}[theorem]{Property}
\newtheorem{corollary}[theorem]{Corollary}
\theoremstyle{definition}
\newtheorem{definition}[theorem]{Definition}

\theoremstyle{remark}
\newtheorem{remark}[theorem]{Remark}
\newtheorem{example}[theorem]{Example}

\usepackage[foot]{amsaddr}
\makeatletter
\renewcommand{\email}[2][]{%
  \ifx\emails\@empty\relax\else{\g@addto@macro\emails{,\space}}\fi%
  \@ifnotempty{#1}{\g@addto@macro\emails{\textrm{(#1)}\space}}%
  \g@addto@macro\emails{#2}%
}
\makeatother

\usepackage{graphicx}

\newcommand{\theory}{\mathrm{Th}}
\newcommand{\thVrcf}{\theory_{\mathrm{rcvf}}}
\newcommand{\new}[1]{{\em #1}}

\DeclareMathAlphabet{\mathbbold}{U}{bbold}{m}{n}
\newcommand{\zero}{\mathbbold{0}}
\newcommand{\unit}{\mathbbold{1}}
\newcommand{\zeror}{\mathbbold{0}} %
\newcommand{\unitr}{\mathbbold{1}} %
\newcommand{\mv}{m} %
\newcommand{\support}{\operatorname{supp}}
\newcommand{\C}{\mathbb{C}}

\newcommand{\R}{\mathbb R}
\newcommand{\vall}{\mathrm v}
\newcommand{\sval}{\mathrm{sv}}
\newcommand{\sign}{\mathrm{sgn}}

\newcommand{\pbool}{\mathrm{Res}}
\newcommand{\Val}{\mathrm V}
\newcommand{\dc}{\mathrm{dc}}

\newcommand{\smax}{\mathbb{S}_{\max}}
\newcommand{\rmax}{\mathbb{R}_{\max}}
\newcommand{\tmax}{\mathbb{T}_{\max}}
\newcommand{\bmax}{\mathbb{B}_{\max}}
\newcommand{\bmaxs}{{\mathbb B}_{{\mathrm s}}}

\newcommand{\PF}{\mathcal{P}_{\!\mathrm{f}}}%

\newcommand{\Y}{\mathsf{Y}}

\newcommand{\mult}{\mathrm{mult}}
\newcommand{\card}{\mathrm{card}}
\newcommand{\sat}{\mathrm{sat}}
\newcommand{\balance}{\,\nabla\,}
\newcommand{\Pn}{\normalize{P}}

\newcommand{\normalize}[1]{\,\overline{\!{#1}}} %
\newcommand{\surpass}{\trianglelefteq}

\newcommand{\formE}{\mathfrak{E}}
\newcommand{\formF}{\mathfrak{F}}

\newcommand{\ballext}{\mathcal{B}} %
\newcommand{\notbalance}{\!\centernot{\,\balance}}
\newcommand{\bp}{\mathbf{P}}

\usepackage[scr=boondox,scrscaled=1.05]{mathalfa}

\newcommand{\trop}[1][]{\ifthenelse{\equal{#1}{}}{ \mathbb{T} }{ \mathbb{T}(#1) }}

\usepackage{amssymb}

\renewcommand{\geq}{\geqslant}
\renewcommand{\leq}{\leqslant}
\renewcommand{\succeq}{\succcurlyeq}
\renewcommand{\le}{\leq}
\renewcommand{\ge}{\geq}

\newcommand{\botelt}{\bot}
\newcommand{\topelt}{\top}

\newcommand{\morphism}{\mu}
\newcommand{\Q}{\mathbb{Q}}
\newcommand{\N}{\mathbb{N}}
\newcommand{\Z}{\mathbb{Z}}

\DeclareMathOperator{\uval}{ldeg}
\newcommand{\coloneqq}{:=}

\newcommand{\morphismsys}{\varphi}
\newcommand{\resfield}{\mathscr{k}}
\newcommand{\hahnseries}[2]{#1[[t^{#2}]]}
\newcommand{\puiseuxseries}[1]{#1\{\{t\}\}}
\newcommand{\semiring}{\mathcal{A}} %
\newcommand{\extension}{\mathcal{E}}
\newcommand{\semiringvee}{\mathcal{A}^{\vee}} %
\newcommand{\tangible}{\mathcal{T}} %
\newcommand{\vfield}{\mathcal{K}}%
\newcommand{\rfield}{\mathcal{L}}%
\newcommand{\elf}{b} %
\newcommand{\rsmax}{r} %
\newcommand{\subgroup}{\mathcal G}
\newcommand{\vgroup}{\Gamma}
\newcommand{\vring}{\mathscr{O}}
\newcommand{\videal}{\mathscr{M}}
\newcommand{\res}{\operatorname{res}}
\newcommand{\hyper}{\mathcal{H}}

\newcommand{\angular}{\mathrm{ac}}
\newcommand{\xsec}{\mathrm{cs}}
\newcommand{\bifench}{{\mathrm c}}

\newcommand{\skewproductstar}[2]{#1{\rtimes}{#2}}

\begin{document}
\title{Factorization of polynomials over the symmetrized tropical semiring and Descartes' rule of sign over ordered valued fields}
\author{Marianne Akian$^{\, 1}$}
\author{Stephane Gaubert$^{\, 2}$}
\author{Hanieh Tavakolipour$^{\, 3}$}
\address[$1,2$]{Inria and CMAP, Ecole polytechnique, CNRS, Institut Polytechnique de Paris}
\address[$3$]{Amirkabir University of Technology, Department of Mathematics and Computer Science}
\email[$1$]{marianne.akian@inria.fr}
\email[$2$]{stephane.gaubert@inria.fr}
\email[$3$]{h.tavakolipour@aut.ac.ir}
\thanks{This work started during the postdoc time of the third author at Inria and CMAP, Ecole polytechnique, CNRS, Institut Polytechnique de Paris.
The study of the third author has been funded by an Inria postdoc position and by Iran National Science Foundation (INSF) (Grant No. 99023636)}
\date{\today}

\subjclass[2020]{Primary  12J15, 12J25, 15A80, 12D05, 16Y60;
Secondary 14T10, 06F20, 16Y20}

\keywords{tropical algebra; max-plus algebra; symmetrized tropical semiring; hyperfields; semiring systems; polynomial roots; Descartes' rule of signs; valued fields; valuations; ordered fields.} %

\maketitle

\begin{abstract}
  The symmetrized tropical semiring is an extension of the tropical semifield,
  initially introduced to solve tropical linear systems using Cramer's rule.
  It is equivalent to the signed tropical hyperfield
  which has been used in the study of tropicalizations of semialgebraic sets. Polynomials
  over the symmetrized tropical semiring, and their factorizations,
  were considered by Quadrat. Recently, Baker and Lorscheid introduced a notion of multiplicity
  for the roots of univariate polynomials over hyperfields.
  In the special case of the hyperfield of signs, they related
  multiplicities with Descarte's rule of sign for real polynomials.
More recently, Gunn extended these multiplicity definition and characterization
to the setting of ``whole idylls''.
    We investigate here the factorizations of univariate polynomial functions over symmetrized
    tropical semirings, and relate them with the multiplicities of roots
    over these semirings. We deduce a Descartes' rule for ``signs and valuations'',
which applies to polynomials over a real closed field with a convex valuation and 
an arbitrary (divisible) value group.
We show in particular that the inequality of the Descartes' rule
is tight when the value group is non-trivial.
This extends to arbitrary value groups a characterization of Gunn in the rank one case,
answering also to the tightness question.
Our results are obtained using the 
framework of semiring systems introduced by Rowen,
together with model theory of valued fields.
\end{abstract}

\section{Introduction}
\subsection{Motivation}
The {\em max-plus} or {\em tropical} semifield, denoted 
$\rmax$,  is the set of real
numbers $\R$ with $-\infty$ endowed with $\max$ operation as addition
and usual addition as multiplication.
This algebra was introduced by several authors as a tool to solve 
problems of optimization, optimal control, discrete event systems
or automata theory, which motivated the study of basic questions of algebra. %
In particular, elementary notions like polynomials and factorization of polynomial functions in one variable goes back to Cuninghame--Green~\cite{cuninghame1980algebra}, who proved that any polynomial function can be factored uniquely as a product of linear factors. One of the difficulties of the above tropical algebra is that there are no opposites, in particular linear systems have rarely a solution. In an attempt to bypass this difficulty and in particular to solve linear systems by using Cramer formula, the symmetrized tropical semiring $\smax$ was introduced in~\cite{maxplus90b}, as an extension of $\rmax$. The construction of this extension is similar to the construction of $\mathbb{Z}$ as an extension of $\N$, except that $\smax$ contains not only the positive, negative, and zero numbers, called signed elements, which form the set  $\smax^\vee$,  but also ``differences'' of the same non-zero elements which play the role of quasi-zeros, and are called balance elements. Linear systems over $\smax$ have been studied in~\cite{maxplus90b} and also in \cite[Sec.\ 3.5]{baccelli1992synchronization} and in \cite{gaubert1992theorie,cramer-guterman}. Polynomials  over $\smax$, or rather over $\smax^\vee$, were considered by Quadrat in  \cite[Sec.\ 3.6]{baccelli1992synchronization}, where he showed that some polynomial functions may not be factored (as a product of linear factors), and established some necessary and/or sufficient conditions for factorization.
In the tropical semifield, the existence and unicity of factorization
of polynomial functions allowed one to define the multiplicity 
of roots as the number of factors with this root in the factorization.
In the  symmetrized tropical semiring, 
the multiplicity cannot be defined in this way in general, since the 
factorization may not hold or may not be unique.
We are thus interested in characterizing the situations in which the
factorization holds and is unique, 
and this is one of the aims of the present paper.

The tropical semifield is also related to the notion of non-Archimedean valuation over a field, since a valuation (or rather its opposite) on a field $\vfield$
can be viewed as a map from $\vfield$ to $\rmax$, that is almost a morphism.
The images by such a non-Archimedean valuation of algebraic subsets of $\vfield^n$ are called {\em non-Archimedean amoebas}, and are studied in tropical 
geometry~\cite{itenberg2009tropical,maclagan2015introduction}.
For instance, Kapranov's theorem
shows that the closure of the image by a non-Archimedean valuation of an algebraic hypersurface over an algebraically closed field is a tropical hypersurface, see~{\cite{kapranov}}.
When $n=1$, this means that for every polynomial over an algebraically closed
valued field, the valuations of its roots (counted with multiplicities)
are the ``tropical roots'' (counted with multiplicities) 
of its valuation (which is a tropical polynomial).
A basic example of an algebraically closed field with a non-Archimedean 
valuation is the field of complex Puiseux series, with the valuation which
takes the leading (smallest) exponent of a series.
In that case, the above result is nothing but
the characterization of the leading exponents of the  Puiseux series or 
asymptotic expansions of the different roots 
in terms of the slopes of the Newton polygon, 
which is part of the classical Newton-Puiseux theorem.

Hyperfields satisfy many of the properties of semifields, but with a
multivalued addition. The typical example of an hyperfield is obtained by taking the quotient a field by a subgroup (see for instance \cite{krasner,connesconsani,viro2010hyperfields} for an 
introduction to hyperfields and for the above construction).
In this case, the canonical projection from the field to its quotient hyperfield is a morphism of hyperfields (see for instance \cite{viro2010hyperfields,baker2018descartes}).
Moreover, for a valued field $\vfield$, considering the subgroup of elements with valuation $0$, one obtains the tropical hyperfield which is related to the tropical semifield, or rather the supertropical semiring of Izhakian (see for instance~\cite{IR,AGRowen}). Then, the valuation is equivalent to the canonical projection and thus is a morphism of hyperfields. 
These properties have motivated the study of hyperfields in relation
with tropical geometry. Moreover, several generalizations have been introduced
by Baker and Lorscheid, like the notions of ordered blueprints and idylls,
see for instance~\cite{Lor1,BL}.

In \cite{Rowen,Rowen2}, Rowen introduced the notion of
semiring system which involves a subset of tangible elements
and a surpassing relation, and in \cite{AGRowen},
it is shown that under some conditions, 
the set of tangible or zero elements of a semiring system 
 is isomorphic to a hyperfield, and that any hyperfield can 
be written as the set of tangible or zero elements of a semiring system.
In particular, the semiring system associated to
the supertropical semiring \cite{IR} is such that 
the set of tangible or zero elements coincides with $\rmax$,
and is isomorphic to 
the tropical hyperfield \cite{krasner,viro2010hyperfields,CC}.
Moreover, the semiring system associated to $\smax$ is such that 
the set of tangible or zero elements coincides with $\smax^\vee$,
and is isomorphic to 
the tropical real hyperfield of \cite{viro2010hyperfields}, also called
real tropical hyperfield in \cite{jell_scheiderer_yu},
and signed tropical hyperfield in \cite{gunn}.

General non-Archimedean valuations over fields 
take their values in an ordered abelian group $\vgroup$, and
similarly, the tropical semifield can be constructed over
any ordered abelian group (and shall be denoted $\tmax$ in the sequel),
so does the symmetrized  tropical semiring (for which we use 
the same notation $\smax$ or the notation $\smax(\vgroup)$).
In general, a basic example of a field with a general
valuation is the field of Hahn series with complex coefficients.
 Then, Kapranov theorem
may be obtained for a general non-trivial valuation over an algebraically
closed field, and tropical geometry
has also been developped in this generality, see for instance the works
of Aroca \cite{aroca,aroca2}, Joswig and Smith \cite{joswig_smith},
Kaveh and Manon \cite{Kaveh-Manon},
or  Iriarte and Amini \cite{iriarte2022polyhedral,amini_iriarte202}.
In particular, when $k\geq 2$, the additive group $\vgroup=\R^k$ with 
lexicographic order
is a value group with rank $k$, which allows one to study asymptotic 
expansions with $k$ infinitesimal parameters of different orders
of magnitude. 
Kapranov theorem 
over an algebraically closed field has also been extended recently 
by Maxwell and Smith \cite{maxwell} for some morphisms of hyperfields.

When considering the field of real Puiseux series,
or the field of real Hahn series,  one may be interested 
in characterizing not only,
as in Newton-Puiseux and Kapranov theorems, 
the leading exponents of the roots, but also their 
signs. Such parameters are gathered into the signed valuation, already
considered in \cite{allamigeon2020tropical,jell_scheiderer_yu}
to study tropicalizations of semi-algebraic sets.
The signed valuation is 
almost a morphism from
the real closed field of real Puiseux series, or Hahn series, to 
the semiring $\smax$ or more generally $\smax(\vgroup)$, 
which takes its values in the 
subset $\smax^\vee$ of signed elements.
This is also a morphism of hyperfields or of semiring systems.
This leads to the question of an analogue of Newton-Puiseux 
and Kapranov theorems
for signed valuations instead of valuations. We shall address 
here the case $n=1$. 
To answer to this question, one necessarily needs to define a 
notion of multiplicity of roots of one variable polynomials
over the symmetrized tropical semiring,
and we shall see that we also need to characterize the
factorization of polynomial functions.

When considering the trivial group $\vgroup=\{0\}$, we obtain that
$\smax$ is the symmetrized Boolean semiring,
and $\smax^\vee$ is isomorphic to the hyperfield of signs, for which some of the
above questions have been studied in \cite{baker2018descartes,lorscheidfac}.
Indeed, a notion of multiplicity of polynomials
has been introduced and studied by Baker and Lorscheid \cite{baker2018descartes}
in the context of hyperfields. 
In particular, they proved inequalities on multiplicities 
for morphisms of hyperfields, and %
applied their results to
the particular cases of the hyperfield of signs and of the tropical hyperfield.
In \cite{lorscheidfac}, Agudelo and Lorscheid also
studied  factorization of formal polynomials over the hyperfield of signs.
Moreover, in~\cite{gunn}, Gunn extended 
the results of \cite{baker2018descartes} to
the case of the signed tropical hyperfield.
Since morphisms of hyperfields are particular cases of morphisms 
of semiring systems \cite{AGRowen}, all the results of
\cite{baker2018descartes}, \cite{lorscheidfac} and \cite{gunn}
can be rewritten in the framework of semiring systems.
In the present paper, we
shall go further on these notions and results in the context of any
symmetrized tropical semiring constructed over any ordered group $\vgroup$,
and also in the context of more general semiring systems.
It is clear that the multiplicity notion defined in  \cite{baker2018descartes} 
can be applied to the roots in $\smax^\vee$ 
of polynomials with coefficients in $\smax^\vee$,
using the isomorphism of $\smax^\vee$ with an hyperfield
(for instance the signed tropical hyperfield when $\vgroup=\R$).
In \Cref{sec-multsym}, we also extend it to any semiring system. 
Moreover, the signed valuation can be seen as a morphism of hyperfields, 
then the inequalities on multiplicities for such morphisms 
shown in \cite[Prop.~B]{baker2018descartes} are leading to a weak analogue
of Kapranov theorem for signed valuation of univariate polynomials 
in which equalities of multiplicities are replaced by inequalities:
for every polynomial over a real closed field with a convex valuation,
the signed valuations of its roots in this field (that is its real roots),
are signed tropical roots of its signed valuation (which is a polynomial
over $\smax^\vee$), but the multiplicity of a signed tropical root $r$
of a polynomial's signed valuation may be greater than the sum of the
multiplicities of all the possible roots of the polynomial, with a
signed valuation equal to $r$.
In \cite{baker2018descartes}, it is also shown that the equality
holds when the given polynomial over the field is factored as a product of 
linear factors. In the case of a real closed field, this 
is equivalent to the property that the roots of the 
polynomial are all real (in the real closed field).
Another question however is to check that the inequalities on multiplicities
are tight, a question raised by Baker and Lorscheid \cite[Remark 1.14]{baker2018descartes} for general morphisms of hyperfields. 
For $\smax^\vee$, tightness means that for any polynomial $P$ over $\smax^\vee$, 
there exists a polynomial $\bp$ 
over the real closed field whose signed valuation
is $P$ and the sum of the
multiplicities of all the possible roots of $\bp$,
with same signed valuation $r$,
coincides with the multiplicity of $r$ as a root of $P$.
This is true when the group $\vgroup$ is trivial,  due to \cite{grabiner},
see  \cite[Remark 1.14]{baker2018descartes}.
Gunn gave a partial answer  for 
the real ordered group $\vgroup=\R$:
in \cite[Th.~5.4]{gunn} is is shown that 
there exists a polynomial over the real closed field  of Hahn series,
with for each root $r$ of $P$,
the expected number of roots %
in the algebraically closed complex field of Hahn series,
that have a real dominant term with a signed valuation equal to $r$.

In \cite{baker2018descartes}, Baker and Lorscheid also characterized
the  multiplicities of roots of 
polynomials over the hyperfield of signs, %
then specializing the inequalities on multiplicities of polynomials 
to the sign map from the real field to the  hyperfield of signs,
they  deduce the inequality part of 
Descartes' rule of signs of real polynomials:
the number of positive roots of a real polynomial is greater or equal to the
number of sign changes of its coefficients.
Gunn extended these results to 
the case of polynomials over the signed tropical hyperfield \cite{gunn}.
In particular, 
\cite[Th.\ A]{gunn} characterizes the multiplicities of 
the roots of such polynomials, by using the notion of initial forms,
with a proof similar to the one done in 
\cite{baker2018descartes} for the case of the hyperfield of signs.
This characterization is also related with the Descartes' rule of signs 
for polynomials over a real closed field with a valuation on a real
value group shown in \cite[Th.\ B]{gunn}, although the latter result is proved independently from \cite[Th.\ A]{gunn}.

All these results concern the above signed valuation, either when the group is trivial,
or the group is the real group.
One may then ask if similar results hold for a general ordered group, and also if 
characterizations of multiplicities can be obtained for more general hyperfields or
semiring systems.
In particular, such characterizations may be useful in further studies or in
other applications, where Descartes' rule of signs is involved.
For instance, Itenberg and Roy~\cite{itenbergroy} extended
Descartes' rule of signs to systems of polynomials with several variables, 
and such a 
result may be related to a weak analogue of 
Kapranov theorem for polynomial systems
and the signed valuation. 
Moreover,
Tonelli-Cueto
pointed out the analogy between Descartes' rule of 
signs and Strassman's theorem for polynomials with $p$-adic coefficients
in  \cite[Section 2]{Tonelli-Cueto}. This analogy may become clearer if
we consider the valuation map on a general valued field, with a general value group
$\vgroup$, which is a morphism of hyperfields from the valued field to
the tropical semifield $\tmax$ associated to $\vgroup$.
Indeed,  Strassman's theorem can be seen as an application of the inequalities on multiplicities of \cite[Prop.~B]{baker2018descartes} to the field of $p$-adic numbers with a value group
equal to the group of all integers $\Z$.

After having posted the initial version of this manuscript on arxiv, we learnt
from Trevor Gunn that some of the results
of \cite{baker2018descartes} and \cite{gunn} 
were  extended in~\cite{gunn2} to
more general algebraic structures called ``whole idylls''.
In particular, \cite[Th.\ A and B]{gunn2} extends the characterization of
roots multiplicities to the case of polynomials over a ``whole idyll'',
an algebraic structure which includes hyperfields as a special case.
More generally, idylls were introduced in~\cite{BL}, as particular cases of ordered blueprints.
They can also be realized as particular cases of semiring systems~\cite{AGRowen}. However, the correspondance is somehow formal, since it is obtained by taking the quotient of a blueprint by its group.
We will compare the different approaches and results in \Cref{sec-multsym},
see in particular \Cref{rk:mult-smax}.

\subsection{Main results}
In order to study the above problems, 
we shall use the {\em semiring systems} framework. %
Semiring systems avoid the use of a multivalued addition compared to
hyperfields. They also avoid to keep a large set of formal expressions like blueprints. They are also more
adapted to usual algebraic constructions, like polynomials and matrices~\cite{AGRowen}, and also allow one to handle equations in a more symmetric way than 
hyperfields.

We consider in particular layered semiring systems \cite{AGRowen},
which are extensions of a ``base'' semiring system by a tropical semifield.
These include in particular
the  supertropical semiring and the symmetrized tropical semiring.
These semirings arise as the extensions of their Boolean versions,
which correspond to the Krasner hyperfield and the hyperfield of signs, respectively.
We obtain in \Cref{sec-multsym}, especially in~\Cref{tho-mult-smax},
a general characterization of multiplicities of roots of polynomials in 
a layered semiring system using multiplicities over the base semiring system.

\Cref{tho-mult-smax} allows one to
deduce the multiplicities over the symmetrized tropical semiring
from the ones over the hyperfield of signs characterized by Baker and Lorscheid~\cite{baker2018descartes}.
This is stated in \Cref{cor-mult-smax}, a result which generalizes
 the characterization of such multiplicities by Gunn~\cite{gunn}.
However, \Cref{tho-mult-smax} is of a more general interest, and indeed 
a similar result was obtained recently
in \cite[Th.\ A and B]{gunn2}, working with ``whole idylls''.
We provide a detailed comparison with the results of \cite{gunn2} in~\Cref{sec-multsym}, see in particular~\Cref{rk:mult-smax}.

The remaining main results of the paper concern the case of 
the symmetrized tropical semiring.
One part concerns the relation between multiplicities of roots and factorization. We first
give in \Cref{suf_cond} sufficient conditions for factorization which improve
the ones of \cite[Sec.\ 3.6]{baccelli1992synchronization}, and also
give a sufficient condition for unique factorization.
The proofs only use the properties of formal
polynomials over the tropical semifield $\tmax$.
These conditions are refined in \Cref{sec-multtact},
by applying \Cref{cor-mult-smax}.
We also determine the multiplicities of roots in case of unique factorization in \Cref{coro2-uniquefact}.
In particular \Cref{prop-mult-fact} shows that a polynomial function over the symmetrized
tropical semiring whose roots have multiplicities which sum to $n$ can be factored as a product of linear factors. The converse property does not hold,
in stark contrast with usual real algebra.

The second part concerns the characterization of
 the  signed valuations of polynomials over an
ordered field with a convex valuation, see \Cref{sec-valroots}.
Such a characterization is 
obtained as an application of the results of \Cref{sec-multtact},
and generalizes the Descartes's rule of signs for a
real closed field with an arbitrary non-trivial convex valuation.
Moreover, as said before, it is a weak analogue of Kapranov theorem.
The inequality part of Descartes's rule of signs
follows from the inequalities on multiplicities 
obtained by Baker and Lorscheid in \cite{baker2018descartes}.
Then, 
in \Cref{th-descartes}, %
we obtain the ``modulo 2'' part and 
the tightness of Descartes's rule of signs. %
These properties are obtained first by showing
them in a special case, and then, by exploiting the completeness of
the first order theory of real closed valued fields,
which follows from Denef-Pas quantifier elimination~\cite{denef_p-adic_semialgebraic,pas_cell_decomposition,pas_on_angular_component}.
The tightness result refines the one of Gunn \cite[Th.~5.4]{gunn}, and answers to a question raised there.
\bigskip

The paper is organized as follows.
First, in \Cref{sec-max}, we review some basic definitions, notations and results of max-plus or tropical algebra and their polynomials and also the related non-Archimedean valuation theory. These constructions are done as much as possible over any ordered group, or at least any divisible ordered group. Then, in \Cref{sec-sym}, we review some preliminaries on the symmetrized tropical semiring $\smax$, the results of Quadrat on the factorization of polynomial functions over $\smax$ in \cite[Sec.\ 3.6]{baccelli1992synchronization} (presented here for an arbitrary value group), the signed valuation over ordered valued fields, and the notions of hyperfields,
semiring systems and their extensions (layered semiring systems).
In \Cref{sec-factsym}, we give new sufficient conditions for a polynomial
function over  $\smax$ to be factored, and for this factorization to be unique.
In \Cref{sec-multsym}, we generalize and study 
the notion of multiplicity of a root of
a formal polynomial of~\cite{baker2018descartes} to the 
setting of a semiring systems and layered semiring systems.
In \Cref{sec-multtact}, we revisit factorization of polynomials over $\smax$ at the light of the multiplicity results of  \Cref{sec-multsym},
while using the results of \Cref{sec-factsym}.
In \Cref{sec-valroots}, for an ordered field with a convex valuation, we relate roots of the signed valuation of a polynomial with the signed valuations of its roots.

\section{Preliminaries on tropical algebra and non-Archimedean valuation}\label{sec-max}
In this section, we recall some basic definitions, notation and results of tropical algebra, and discuss their relation with non-Archimedean valuation theory.

\subsection{Tropical semifields}\label{subsec-def-trop}
A {\em commutative semiring} is a set $\semiring$ equipped with an addition $(a,b)\mapsto a\oplus b$ that is associative, commutative, and has a neutral element $\zero$, together with a multiplication $(a,b)\mapsto a\odot b$ that is
associative, commutative, and has a neutral element $\unit$.
We require that the multiplication distributes over the addition
and that $\zero$ is an absorbing element for multiplication.
A semiring is {\em idempotent} if its addition is idempotent,
meaning that $a\oplus a =a$.
A (commutative) {\em semifield} is a semiring in which every element
different from $\zero$ has a multiplicative inverse. 
An idempotent semiring
is equipped with the natural (partial) order $\preceq$, defined by
\begin{equation}\label{order_max}
a \preceq b \quad\text{if}\quad a \oplus b = b \enspace .
\end{equation}
We refer the reader to~\cite{baccelli1992synchronization} for background
on \textit{semirings} and \textit{semifields}. %

A general family of idempotent semifields arises as follows.
Given a (totally) ordered abelian group $(\vgroup,+,0,\leq)$,
the {\em tropical semifield} (over $\vgroup$) is defined as
\begin{align}
  \label{tmax}
  \tmax(\vgroup): = (\vgroup \cup\{\botelt\}, \max, +) \enspace,
  \end{align}
where $\botelt$ is the %
bottom element of $\tmax(\vgroup)$ ($\botelt \leq a$ for all $a\in\vgroup$)
-- which does not belong to $\vgroup$.
The addition of $\tmax(\vgroup)$ is given by
$(a,b) \mapsto a\oplus b:= \max(a,b)$ and has zero element $\botelt$ which will be also denoted by $\zero$.
The multiplication is given by $(a,b)\mapsto a\odot b:= a+b$,
and $\botelt \odot a=a\odot \botelt= \botelt$,  for all $a,b\in \vgroup$,
so that it has unit $0$, which will be also denoted by $\unit$, and the zero
element $\botelt$ is absorbing.
The natural order $\preceq$ defined in \eqref{order_max}
coincides with the order $\leq$ of $\vgroup$. 
In particular, we shall use the notation $\leq$ instead of $\preceq$ in
$\tmax(\vgroup)$.
The $n$th-power of an element $a\in\vgroup$ for the multiplicative law $\odot$,
$a^{\odot n}:=a\odot \dots \odot a$  ($n$-times), coincides with the
sum  $a+ \dots + a$ ($n$-times), also denoted by $na$.

The special case in which $\vgroup=\R$ and $\botelt=-\infty$
is the most standard one in tropical geometry.
Then, $\tmax(\R)$ is the so-called
{\em max-plus semifield}, denoted by $\rmax$. 
The case in which $\vgroup$ is trivial
is also of interest, since $\tmax(\{0\})=\{\zero,\unit\}$ is nothing
but the {\em Boolean semifield}, denoted by $\bmax$ in the sequel.
We say that the group $\vgroup$ is {\em divisible},
if for all $a\in \vgroup$ and for all positive integers $n$,
there exists $b$ such that $nb=a$. %
In this case, $b$ is unique (since $\vgroup$ is ordered).
Moreover, the action $(n,b) \mapsto nb$ of natural numbers on $\vgroup$
canonically extends to an action of rational numbers on
$\Gamma$, and for this action, $\Gamma$ becomes
a module.
Several standard notions and properties stated in the litterature
for the  max-plus semifield can be extended to any tropical semifield 
$\tmax(\vgroup)$, at least when $\vgroup$ is divisible or
non-trivial and divisible.
In the sequel, we shall consider as much as possible
a general ordered abelian group $\vgroup$, and will specify the assumptions 
on $\vgroup$ when needed.
Then, when the particular choice of the group $\vgroup$ will be irrelevant,
or clear from the context,
we will write $\tmax$ instead of $\tmax(\vgroup)$, for brevity.

\subsection{Formal polynomials and polynomial functions in $\tmax$}\label{sec-form-rmax}

{\em We consider here the semifield $\tmax=\tmax(\vgroup)$ associated
to a general ordered abelian group $\vgroup$. Often $\vgroup$ will need to be
divisible and sometimes non-trivial in order to obtain the same 
properties as in $\rmax$.}

A \textit{formal polynomial} $P$ over $\tmax$ is defined as usual: it is a sequence $(P_k)_{k\in \N} \in \tmax$, where $\N $ is the set of natural numbers (including $0$),  such that $P_k=\zeror$ for all but finitely many values of $k$. We denote by $\tmax[\Y]$, the set of formal polynomials. This set is endowed with the following two internal operations, which make it an idempotent semiring:
coefficient-wise sum, $(P \oplus Q)_k=P_k \oplus Q_k$; and 
Cauchy product, $(P Q)_k= \bigoplus_{0 \leq i \leq k}P_i\odot Q_{k-i}$.
Formal polynomials are equipped with the entry-wise partial order $\leq$, such that $P \leq Q$ if $P_k \leq Q_k$, for all $k \in \N$. We denote a formal polynomial $P$ as a formal sum, $P = \bigoplus_{k=0}^{\infty} P_{k}  \Y^{k}$. The \textit{degree} and \textit{lower degree} of $P$ are respectively defined by
\begin{equation}\label{deg}\deg(P):=\sup\{k \in \N \mid P_k \neq \zeror\},
  \;
  \uval (P) := \inf\{k \in \N\;|\;P_k \neq \zeror\}.
\end{equation}
(The term ``valuation'' is often used for the ``lower degree'',  but we
prefer the latter terminoligy since valuation will be used in a broader sense below.). When $P = \zeror$, we have $\deg(P)= -\infty$ and $\uval(P) = +\infty$. The \textit{support} $\mathrm{supp}(P)$ is the set of indices of the non-zero elements of $P$, that is, 
\begin{equation}\label{supp}\mathrm{supp}(P):=\{k\in \N \mid P_k \neq \zeror\}.\end{equation}
A formal polynomial consisting of a sequence with only one non-zero element is called a \textit{monomial}. We say that a formal polynomial has a \textit{full support} if 
\[P_k\neq \zeror, \quad \forall k: \mv \leq k \leq n,\; \text{with}\;\deg(P)=n,\; \uval(P) =\mv\enspace .\]
To any $P \in \tmax[\Y]$, with degree $n$ and lower degree $\mv$,
we associate a \textit{polynomial function} 
\begin{equation}\label{widehat_p}\widehat{P}: \tmax \rightarrow \tmax, \;y \mapsto \widehat{P}(y)= \bigoplus_{k=\mv}^{n}P_{k}\odot y^{\odot k}=
\max_{m\leq k \leq n}(P_k+ky)\enspace , 
\end{equation}
where the last expression uses the standard notation of the order group $\vgroup$.  We denote by $\PF(\tmax)$, the set of polynomial functions $\widehat{P}$. 
When $\vgroup=\R$, this is a (conventional) piecewise linear function. 
 
One crucial difference between tropical polynomials and polynomials over the
field of real numbers is that the surjective mapping $\mathcal{F}:\tmax[\Y] \mapsto\PF(\tmax)$, $P \rightarrow \widehat{P}$ is not injective. In other words, we can find polynomials $P$ and $Q$ with different coefficients such that
$\hat P(y)=\hat Q(y)$ for all $y \in \tmax$. For example,
\[\forall y\in \tmax , \;y^{\odot 2}\oplus \unit = (y \oplus\unit)^{\odot 2}= y^{\odot 2} \oplus y \oplus \unit.\]This is why it is important to distinguish between formal polynomials and polynomial functions. See also 
 \Cref{ex_poly1}.

Let us now consider the fundamental theorem of tropical algebra related to the unique factorization of polynomial functions, after stating two definitions.
\begin{definition}[Corners] \label{def_corners}
Given a tropical formal polynomial $P$ over $\tmax$,
 the non-$\zeror$ corners are the points at which the maximum 
in the definition \eqref{widehat_p} of the associated polynomial function as a supremum of monomial functions,
is attained at least twice (i.e.\ by at least two different monomials). If $P$ has no constant term, then $P$ is said to have a corner at $\zeror$ ($\botelt$ in $\vgroup$).
\end{definition}
When $\vgroup=\R$, the non-zero corners of $P$ are equivalently the points of non-differentiability of the associated polynomial function $\widehat{P}$ restricted to $\R$.

\begin{definition}[Multiplicity of a corner]
The \textit{multiplicity} of a corner $c\neq \zeror$ of the formal polynomial $P$ is the difference between the largest and the smallest exponent of the monomials of $P$ which attain the maximum at $c$. If $P$ has a corner at $\zeror$, the multiplicity of $\zeror$ is the lower degree $\uval{P}$ of $P$, defined in \eqref{deg}.
\end{definition}
When $\vgroup=\R$, the multiplicity of a corner $c\neq \zeror$ is equivalently the change of slope of the graph of the  associated polynomial function $\widehat{P}$ at $c$.

\begin{remark}
In tropical geometry, the set of corners of a polynomial $P$ in one variable
is the tropical variety associated to $P$. Corners are also called 
tropical roots. We use the name ``corner'' introduced by 
Cuninghame--Green and Meijer in \cite{cuninghame1980algebra}, to avoid confusion
with the roots in the symmetrized semiring as in  \Cref{sec-polysym}.
\end{remark}

We have in $\tmax[\Y]$ the following analogue of the fundamental theorem of algebra which was shown by Cuninghame--Green and Meijer for $\rmax$, that is in the case where $\Gamma=\R$.
\begin{theorem}[Corollary of~\protect{\cite{cuninghame1980algebra}}]\label{factor}
  Suppose that $\vgroup$ is divisible, and let $\tmax=\tmax(\vgroup)$.
Then, every formal polynomial  $P \in \tmax[\Y]$ of degree $n$ has exactly $n$ corners $c_1\geq \cdots \geq c_n$, and the associated polynomial function $\widehat{P}$ can be factored in a unique way as 
\[\widehat{P}(y)=  P_n\odot (y \oplus c_1) \odot \cdots \odot (y \oplus c_n)
\enspace. \]
\end{theorem}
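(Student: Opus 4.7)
The plan is to generalize the Newton-polygon argument used by Cuninghame--Green and Meijer for $\Gamma=\R$ to an arbitrary divisible ordered abelian group $\Gamma$. Write $P=\bigoplus_{k=\mv}^{n} P_k \Y^k$ with $P_n\neq\zero$ and $P_\mv\neq\zero$ (so $\mv=\uval(P)$), and recall that $\widehat{P}(y)=\max_{\mv\leq k\leq n}(P_k+ky)$ is a supremum of $\Gamma$-affine functions in the ordered group $\Gamma$. The first step is to form the upper concave envelope in $\Z\times(\Gamma\cup\{\botelt\})$ of the points $\{(k,P_k):k\in\support(P)\}$, and to let its vertices be $(k_0,P_{k_0}),(k_1,P_{k_1}),\ldots,(k_r,P_{k_r})$ with $n=k_0>k_1>\cdots>k_r=\mv$. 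The slopes
\[
s_i\;:=\;(P_{k_{i-1}}-P_{k_i})/(k_{i-1}-k_i),\qquad i=1,\ldots,r,
\]
are well defined in $\Gamma$ precisely because $\Gamma$ is divisible, and the concavity of the envelope forces $s_1>s_2>\cdots>s_r$ in $\Gamma$.

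Next I would show that only the envelope vertices contribute to $\widehat{P}$, i.e.\ $\widehat{P}(y)=\max_{0\leq i\leq r}(P_{k_i}+k_iy)$, by checking that any non-vertex monomial $(k,P_k)$ is pointwise dominated by a convex (here tropical) combination of two envelope monomials (the argument reduces to the elementary inequality for points lying below a concave envelope, and uses only the order structure). From this, the non-$\zero$ corners are exactly the slopes $s_1,\ldots,s_r$: at $y=s_i$ the monomials indexed by $k_{i-1}$ and $k_i$ both attain the maximum, and the multiplicity of $s_i$ in the sense of \Cref{def_corners} is exactly $k_{i-1}-k_i$. If $\mv>0$, then $\zero$ is a corner of multiplicity $\mv$. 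Declaring $c_1,\ldots,c_n$ to be the $s_i$'s repeated according to multiplicity and followed by $\mv$ copies of $\zero$, we thus exhibit $n$ corners in non-increasing order.

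For the factorization, let $R(y):=P_n\odot\bigodot_{i=1}^{n}(y\oplus c_i)$, whose polynomial function is $\widehat{R}(y)=P_n+\sum_{i=1}^{n}\max(y,c_i)$ (in the $+/\max$ notation of $\Gamma$). A direct computation on each interval determined by the $s_i$ shows that $\widehat{R}(y)$ is piecewise $\Gamma$-affine with breakpoints at the $s_i$'s and with linear part $P_n+k_i y+(\text{const}_i)$ on the slab between $s_{i+1}$ and $s_i$; matching the constant at the breakpoints (which is forced by continuity and the definition of $s_i$) yields $\widehat{R}=\widehat{P}$.

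For uniqueness, suppose $\widehat{P}(y)=P_n\odot\bigodot_{i=1}^{n}(y\oplus c'_i)$ with $c'_1\geq\cdots\geq c'_n$. The right-hand side is also a piecewise-affine function whose breakpoints, listed with multiplicities in the sense of \Cref{def_corners}, are precisely the distinct values among the $c'_i$, each with multiplicity equal to the number of times it appears. Since the corners and their multiplicities are intrinsic to the polynomial function $\widehat{P}$, the two lists $(c_i)$ and $(c'_i)$ must coincide. The only delicate point in the argument is the bookkeeping in Step~2--3 for degenerate envelopes (collinear data points, or gaps in $\support(P)$), but in both cases the minimum and maximum attaining exponents still differ by $k_{i-1}-k_i$, so the multiplicities match; this is the step I expect to require the most care.
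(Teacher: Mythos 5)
Your proposal is correct in substance but takes a genuinely different route from the paper. The paper sidesteps the Newton-polygon bookkeeping altogether by a transfer-of-completeness argument: Cuninghame--Green and Meijer already proved the statement for $\rmax=\tmax(\R)$, and the authors encode the existence and uniqueness of the $n$ corners and the factorization as a single first-order sentence (with the coefficients $P_0,\dots,P_n$ as universally quantified variables) in the theory of non-trivial totally ordered divisible abelian groups with bottom element; completeness of that theory (\Cref{prop-botiscomplete}) then transfers the truth of this sentence from $\R$ to an arbitrary non-trivial divisible $\Gamma$, and the trivial group is handled separately by embedding $\tmax(\{0\})$ into $\rmax$. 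The paper even explicitly notes that one \emph{could} alternatively check that the arguments of~\cite{cuninghame1980algebra} carry over to a divisible $\Gamma$ --- which is precisely your plan --- but opts for the model-theoretic route because the same transfer is reused repeatedly later (e.g.\ for \Cref{max-lemma}). Your approach buys an elementary, self-contained proof at the cost of the degenerate-case bookkeeping you already flag as delicate; the paper's is less informative about \emph{why} the theorem holds but is economical given the surrounding development. One systematic sign slip you should fix before the argument is airtight: with $s_i=(P_{k_{i-1}}-P_{k_i})/(k_{i-1}-k_i)$ and $k_{i-1}>k_i$, concavity of the Newton polygon gives $s_1<\cdots<s_r$ (not $s_1>\cdots>s_r$), and the value of $y$ at which the monomials indexed $k_{i-1}$ and $k_i$ tie in $\widehat{P}$ is $-s_i$, not $s_i$; so the non-$\zero$ corners are the \emph{negatives} of your slopes (consistent with \Cref{corners_polygon}), and the list $(c_i)$ should consist of the values $-s_i$ repeated with multiplicities $k_{i-1}-k_i$ --- which are then indeed non-increasing --- followed by $\mv$ copies of $\zero$.
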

The result is established in~\cite{cuninghame1980algebra} when $\vgroup=\R$, i.e., for polynomials over $\rmax$. A way to prove~\Cref{factor} would be to check that the arguments of~\cite{cuninghame1980algebra} carry over to $\tmax(\vgroup)$ where $\vgroup$ is divisible.
However, we shall
repeatedly need to extend results of this kind, so we next present the general
model theory argument, allowing one to deduce a specific class of results over $\tmax(\vgroup)$ with $\vgroup$ non-trivial and divisible from their analogues over $\rmax$.
Sometimes, as for \Cref{factor}, these results are also extended to the
trivial group $\vgroup$.
We rely on a classical theorem, showing that the first order theory
of non-trivial totally ordered divisible groups is complete~\cite{robinson}.
Actually, we shall need
a straightforward extension of this result,
which applies to non-trivial totally ordered
divisible groups completed with a bottom element $\botelt$.
The language of this theory consists of the symbols $(0,\botelt, +,\leq)$.
Its axioms are the non-triviality axiom 
$\exists y (y \neq 0 \land y \neq \botelt)$,
the axioms that describe the behavior of the bottom element $\botelt$ with
respect to the addition and order, i.e.,
$\forall y ({\botelt} + y = \botelt)$ and $\forall y (y \ge \botelt)$,
and the axioms that specify that $\vgroup$ is a totally ordered abelian group
with neutral element $0$. %
\begin{proposition}[{See~\cite[Prop.~8]{allamigeon2020tropical}}]
  \label{prop-botiscomplete}
  The first order theory of non-trivial totally ordered
  divisible groups with a bottom element is complete.
  \end{proposition}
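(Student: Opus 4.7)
The plan is to reduce to Robinson's classical theorem: the first order theory of non-trivial totally ordered divisible abelian groups, in the pure group language $\mathcal L' = (0,+,\leq)$, is complete. I would obtain the statement by showing that the bottom element $\botelt$ can be eliminated syntactically from any sentence written in the extended language $\mathcal L = (0,\botelt,+,\leq)$, at the cost of a case split on which variables take the value $\botelt$.

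First, I would observe that the extra axioms determine the behaviour of $\botelt$ entirely: it is the unique minimum of the order (so it is definable), and any term built from variables by $+$ equals $\botelt$ if and only if at least one of the variables occurring in it equals $\botelt$; moreover comparisons involving $\botelt$ are decided by the axioms $\forall y(y\geq \botelt)$ and $\forall y(\botelt+y=\botelt)$. Consequently, every atomic formula $t(x_1,\ldots,x_n)\leq s(x_1,\ldots,x_n)$ (or $t=s$) becomes trivial or reduces to an atomic formula of $\mathcal L'$ on the variables that are not $\botelt$.

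Second, I would carry out a structural induction on formulas: for each formula $\psi(x_1,\ldots,x_n)$ in $\mathcal L$ and each subset $B\subseteq\{1,\ldots,n\}$, I would produce a formula $\psi_B$ of $\mathcal L'$ whose free variables are $(x_i)_{i\notin B}$, such that
\[
\psi(\bar x)\;\wedge\;\bigwedge_{i\in B} x_i=\botelt\;\wedge\;\bigwedge_{i\notin B} x_i\neq\botelt
\]
is equivalent, in the extended theory, to $\psi_B((x_i)_{i\notin B})$ interpreted with the variables ranging over $\vgroup$. Conjunctions, disjunctions and negations propagate in the obvious way, and existential quantification splits as
\[
\exists x\,\psi(x,\bar y) \;\equiv\; \psi(\botelt,\bar y)\,\vee\,\exists x\in\vgroup\;\psi(x,\bar y),
\]
with the second disjunct recursively translated. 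For a closed sentence $\varphi$ of $\mathcal L$, this procedure yields a sentence $\varphi^\ast$ of $\mathcal L'$ equivalent to $\varphi$ in any model $M=\vgroup\cup\{\botelt\}$ of the extended theory. The non-triviality axiom $\exists y(y\neq 0 \wedge y\neq \botelt)$ translates precisely to the non-triviality of $\vgroup$, so Robinson's completeness theorem for non-trivial totally ordered divisible abelian groups decides $\varphi^\ast$, and hence $\varphi$, uniformly in $M$.

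The only real work is the bookkeeping in the inductive translation, but no genuine obstacle arises, because the absorbing and minimum properties of $\botelt$ leave no freedom in its behaviour. I therefore expect the ``hard'' part to be purely notational, consisting in checking that the case split on $B$ is preserved under quantifier alternation; alternatively, one may phrase the whole argument as an interpretation of the $\mathcal L$-theory into the $\mathcal L'$-theory of $\vgroup$, after which completeness transfers automatically.
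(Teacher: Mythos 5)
The paper does not actually prove this proposition; it is cited from \cite[Prop.~8]{allamigeon2020tropical}, so there is no in-text argument to compare against. Your translation argument is correct and is the natural route: since the axioms force $\botelt+y=\botelt$ and $y\ge\botelt$, a term evaluates to $\botelt$ exactly when $\botelt$ or a $\botelt$-valued variable occurs in it, every atomic formula becomes decided or reduces to a pure group atomic formula once one fixes which variables equal $\botelt$, and the case split is preserved under Boolean connectives and quantifiers via $\exists x\,\psi \equiv \psi[x\mapsto\botelt]\vee\exists x{\in}\vgroup\,\psi$. A sentence of the extended language therefore translates to a sentence of the pure group language, decided uniformly by Robinson's completeness theorem for non-trivial totally ordered divisible abelian groups. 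The one point worth making explicit is that you rely on every model of the extended theory having the form $\vgroup\cup\{\botelt\}$ with $\vgroup$ a non-trivial ordered divisible abelian group; this holds because the axioms for the group structure are relativized to the set $\{y : y\neq\botelt\}$ in the axiomatization described in the paper, so there is no gap. Your closing remark that the argument is really an interpretation of one theory into the other, from which completeness transfers, is the cleanest way to package exactly this bookkeeping.
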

\begin{proof}[Proof of~\Cref{factor}]
  Let us fix a positive integer $n$. Consider
  the following formula in the variables $P_0,\dots,P_n$
  and in the variables $c_1,\dots,c_n$:
  \[
  \formF (P,c): \qquad \forall y (\widehat{P}(y)=P_n \odot (y \oplus c_1) \odot \cdots \odot (y \oplus c_n) )\enspace .
  \]
  This is a first order formula in the theory of non-trivial totally ordered
  divisible groups with a bottom element.
  Let $\mathfrak{S}_n$ denote the symmetric group on $\{1,\dots,n\}$,
  and consider the formula in the variables $c_1,\dots,c_n,d_1,\dots,d_n$,
  which states that $c$ and $d$ coincide up to a permutation:
  \[
  \formE (c,d):\qquad \bigvee_{\sigma\in \mathfrak{S}_n}\bigwedge_{i=1,\ldots, n}
(c_i=d_{\sigma(i)} )\enspace .
  \]
  Then, the conclusion of~\Cref{factor} can be expressed
  as the first order sentence
  \[
 \forall P_0,\dots, \forall P_{n-1},
\forall P_n\neq \botelt,  (\exists c, \formF (P,c))  \wedge (\forall c, \forall d, \formF (P,c) \wedge \formF (P,d) \Rightarrow \formE (c,d)) \enspace ,
  \]
  where $\forall c$ is a shorthand for $\forall c_1,\dots, \forall c_n$,
  and similarly for $\exists c$ and $\forall d$. 
  By~\cite{cuninghame1980algebra}, this statement is valid when
  the underlying group is $\R$. So, by~\Cref{prop-botiscomplete},
  it is valid fo an arbitrary non-trivial totally ordered divisible group $\vgroup$. This shows that~\Cref{factor} holds in $\tmax(\vgroup)$, when
$\vgroup$ is a non-trivial totally ordered divisible group.
When $\vgroup$ is trivial, the result follows from the one in $\rmax$, since $\tmax(\{0\})$ is a subsemifield of $\tmax(\R)$, and the only possible corners are $\botelt$ and $0$.
  \end{proof}
In the sequel, we will use, without further
comments, in the setting of $\tmax(\vgroup)$,
properties already proved in the special case of $\tmax(\R)$,
when the validity of these properties
follows from a straighforward completeness argument, as in the proof
of~\Cref{factor}.

Convex analysis offers an alternative way of visualizing formal polynomials and corners. This is done usually when $\vgroup=\R$. To the formal polynomial $P \in \tmax[\Y]$, where $P = \bigoplus_{k=0}^{\infty} P_{k}  \Y^{k}$, let us associate the map
\[\mathrm{coef} P: \mathbb{R} \mapsto \tmax, \; (\mathrm{coef} P)(x)= \begin{cases}P_k& \text{if} \;x=k \in \N\\ \zeror& \text{otherwise}.
\end{cases} \]
When $\vgroup=\R$, then the map $\mathrm{coef}P$ is an extended real
valued function, and 
 the polynomial function $\widehat{P}(y)$ corresponds to the
{\em Legendre-Fenchel transform}~\cite{rockafellar1970convex} of the map $-\mathrm{coef}P$ (notice the minus sign). %
Let us generalize the usual definition of the Newton polygon as follows,
in order to handle any {\em ordered divisible group $\vgroup$}.

\begin{definition}[Newton polygon]
Assume $\vgroup$ is divisible.
Let $P= \bigoplus_{k=0}^{\infty} P_{k} \Y^{k} \in \tmax[\Y]$. The \textit{rational Newton polygon} of $P$ is the upper boundary of the rationally convex hull in $\Q\times \vgroup$ of the set of points $\{(k, P_k)\;| k\in \N, \; P_k \neq \zeror\}$. Here we say that a subset $S$ of $\Q\times \vgroup$ is rationally convex if for all $x,y\in S$ and $t\in [0,1]\cap \Q$, we have $(1-t)x + ty\in S$.
\end{definition}
When $\vgroup=\R$, the (usual) Newton polygon is the closure in $\R^2$ of the
rational Newton polygon,  and it is the upper concave hull of the map
$\mathrm{coef}P$.
In general, it is the upper concave hull of the restriction of the map
$\mathrm{coef}P$ to $\Q$.

The following standard result follows from the properties of Legendre-Fenchel transform~\cite{rockafellar1970convex}.
\begin{proposition}[See for instance~\protect{\cite[Prop.~2.4]{akian2016non}}]
\label{corners_polygon}
Let $P \in \rmax[\Y]$ be a formal polynomial. The corners of $P$ coincide with the opposite of the slopes of the Newton polygon of $P$. The multiplicity of a corner $c$ of $P$ coincides with the length of the interval where the Newton polygon has slope $-c$.
\end{proposition}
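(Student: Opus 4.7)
The plan is to identify $\widehat{P}$ as a Legendre–Fenchel transform of the Newton polygon function and then read off corners and their multiplicities from the dual subdifferential structure. Concretely, let $N:[\uval P,\deg P]\to \R$ denote the piecewise-affine concave upper envelope of the points $\{(k,P_k) : k\in \support(P)\}$, so that $N$ is the (finite part of the) Newton polygon of $P$, $N(k)\geq P_k$ for every $k$ in this interval, and equality holds at the vertices of the polygon, which all lie in $\support(P)$. Since $x\mapsto N(x)+xy$ is concave-piecewise-linear on $[\uval P,\deg P]$, its maximum over $x$ is attained on a vertex (or a whole edge) of $N$; this yields
\[
\widehat{P}(y) \;=\; \max_{k\in \support(P)}(P_k+ky) \;=\; \max_{x\in [\uval P,\deg P]}(N(x)+xy).
\]
Thus $y\mapsto \widehat{P}(y)$ is (up to a sign convention) the Legendre–Fenchel transform of $-N$.

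Next, I would invoke the standard duality between $N$ and its transform \cite{rockafellar1970convex}: the set $\partial(y):=\arg\max_{x}(N(x)+xy)$ is a non-empty closed interval, equal to the maximal segment on which $N$ has slope $-y$, and its extreme points are vertices of $N$, hence elements of $\support(P)$. Moreover, $\widehat{P}$ fails to be differentiable (equivalently, the max is attained by at least two distinct exponents $k_1<k_2$ in $\support(P)$) exactly when $\partial(y)$ is non-degenerate. By \Cref{def_corners}, this is precisely the condition for $y=c\neq \zero$ to be a corner of $P$. Consequently the non-$\zero$ corners of $P$ are in bijection with the edges of the Newton polygon, through $c\leftrightarrow -c=\text{slope}$, and the multiplicity of $c$—the difference between the largest and smallest exponent achieving the max—is exactly the length of the corresponding edge along the $x$-axis.

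Finally, I would handle the corner at $\zero$ separately. By definition $P$ has a corner at $\zero$ iff $P$ has no constant term, i.e., $\uval P>0$; its multiplicity is then $\uval P$. Geometrically this corresponds to the leftmost vertex of the polygon lying at $x=\uval P$, which one may view as the endpoint of an ``edge of slope $+\infty=-\zero$'' covering the interval $[0,\uval P]$ of length $\uval P$, consistently with the statement. The only delicate point—and the step I expect to require the most care—is the bookkeeping in the second paragraph: one must verify that the extreme points of the maximising interval are genuinely elements of $\support(P)$ (not merely of the segment $[\uval P,\deg P]$), so that the correspondence between ``max attained at $\geq 2$ exponents'' and ``edge of the polygon'' is exact. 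This is immediate from the fact that vertices of the concave hull of $\{(k,P_k):k\in\support(P)\}$ lie in $\support(P)$. Everything else is routine piecewise-linear convex analysis.
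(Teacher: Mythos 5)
Your proposal is correct and follows essentially the same route the paper indicates: Legendre--Fenchel duality between $\widehat{P}$ and the Newton polygon, which the paper attributes to \cite{rockafellar1970convex} and for which it refers to \cite[Prop.~2.4]{akian2016non} rather than spelling out the details. The points you flag as delicate — that the argmax interval's endpoints are vertices of the concave hull and hence lie in $\support(P)$, and the convention for the corner at $\zeror$ — are indeed the only places where care is required, and you handle them correctly.
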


To discuss about the possibility of factorization for formal polynomial $P \in \tmax[\Y]$, we shall adapt convex analysis techniques to the case of divisible groups.
\begin{definition} 
Assume $\vgroup$ is a divisible group.
With a given $P \in \tmax[\Y]$, we associate its {\em concave hull}, which is the element of $\tmax[\Y]$ denoted by $P^{\sharp}$, such that, for all $k \in \N$, 
\begin{equation}\label{p_sharp}P^{\sharp}_k:= \max \{(1-t) P_i+t P_j\mid i,j\in\N,\; t\in \Q\cap [0,1],\; \text{s.t.}\; (1-t)i+tj=k\}\in \Gamma\cup\{\bot\}\enspace.\end{equation}
Equivalently, the map $k\in \N\mapsto P^{\sharp}_k$ is the restriction to $\N$ of the upper rationally concave hull of the map $\mathrm{coef} P$ (which graph is the rational Newton polygon of $P$).
\end{definition}
Observe that, for all $k\in \N$, the set of values $t\in \Q\cap [0,1]$ such that
$(1-t)i+tj=k$ from some $i,j\in\N$ such that $P_i\neq \bot$ and $P_j\neq \bot$
is finite, so the maximum in~\eqref{p_sharp} is well defined.
Using equivalent definitions of concave functions, we get the following result.
\begin{lemma}[See~\protect{\cite[p.\ 123]{baccelli1992synchronization}} for $\vgroup=\R$] \label{roots_poly}
  Assume $\vgroup$ is a divisible group.
For any formal polynomial $P= P_n \Y^n \oplus  \cdots \oplus P_{\mv} \Y^\mv$,
 of lower degree  $\mv$ and degree $n$, we have 
\begin{enumerate}
\item $P = P^\sharp$, if and only if $P$ has full support (that is $P_k\neq \zeror$, for $k=\mv, \ldots, n$), and  
\begin{equation}\label{concave}
P_{n-1}-P_n \geq P_{n-2}-P_{n-1} \geq \cdots \geq P_{\mv}-P_{\mv +1}.\end{equation}
\item If $P=P_n (\Y \oplus c_1)\cdots (\Y \oplus c_n)$ (where $c_i$ may be equal to $\zeror$), then $P=P^\sharp$.
\item
Conversely, if $P=P^\sharp$, then the numbers $c_i \in \tmax$ defined by 
\[c_i := \begin{cases}
P_{n-i} - P_{n-i+1}& 1 \leq i \leq n-\mv;\\
\zeror & n-\mv <i \leq n.
\end{cases}
\]
are such that 
$c_1  \geq \cdots \geq c_n$
and $P$ can be factored as 
\[P=P_n (\Y \oplus c_1)\cdots (\Y \oplus c_n).\]
\end{enumerate}
If $P$ satisfies one of the above conditions, we shall say that
$P$ is {\em factored}.
\end{lemma}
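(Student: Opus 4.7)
The plan is to prove the three items in sequence, using the standard fact that by construction $k\in\N\mapsto P^\sharp_k$ is the restriction to $\N$ of the smallest rationally concave extended-valued function on $\Q$ dominating the map $\mathrm{coef}\,P$. Divisibility of $\vgroup$ is what makes the supremum in~\eqref{p_sharp} well defined as a finite max: for fixed $i,j\in\N$ only finitely many rationals $t\in[0,1]$ place $(1-t)i+tj$ in $\N$. The inequality $P\leq P^\sharp$ is automatic from~\eqref{p_sharp} (take $i=j=k$, $t=0$), and for $k\notin\{\mv,\ldots,n\}$ one has $P^\sharp_k=\zero$ since any nontrivial rational convex combination of indices in the support stays within the support interval.

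For item~(1), the failure of full support already rules out $P=P^\sharp$: a gap $P_k=\zero$ with $\mv<k<n$ and $P_{k-1},P_{k+1}$ both finite would force $P^\sharp_k\geq \tfrac{1}{2}(P_{k-1}+P_{k+1})>\zero$. Once full support is granted, $P=P^\sharp$ reduces to the statement that the finite sequence $(P_\mv,\ldots,P_n)$ coincides with its own concave hull, which by the elementary equivalence between discrete concavity and non-increasingness of first differences is exactly the chain~\eqref{concave}. For item~(2), I would expand $P_n(\Y\oplus c_1)\cdots(\Y\oplus c_n)$ by distributivity of $\odot$ over $\oplus=\max$; after relabeling so that $c_1\geq\cdots\geq c_n$, the coefficient of $\Y^{n-k}$ is $P_n+\max_{|I|=k}\sum_{i\in I}c_i=P_n+c_1+\cdots+c_k$. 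Successive differences $P_{n-k}-P_{n-k+1}=c_k$ are then non-increasing, and the support is full, running from $n$ down to $n-\ell$ where $\ell$ is the number of $c_i$ different from $\zero$. Item~(1) then yields $P=P^\sharp$.

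For item~(3), assume $P=P^\sharp$. By~(1), $P$ has full support on $\{\mv,\ldots,n\}$ and satisfies~\eqref{concave}, so the $c_i$ defined in the statement satisfy $c_1\geq\cdots\geq c_{n-\mv}$ directly from~\eqref{concave}, with the remaining $c_i=\zero$ trivially below them. The telescoping identity $P_n+c_1+\cdots+c_k=P_{n-k}$ for $1\leq k\leq n-\mv$, combined with the absorbing property of $\zero$ for $\odot$ when $k>n-\mv$ (every size-$k$ subset of $\{c_1,\ldots,c_n\}$ then contains at least one $\zero$ term), shows that $P_n(\Y\oplus c_1)\cdots(\Y\oplus c_n)$ and $P$ have identical coefficients. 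I expect the main technical obstacle to lie precisely in this last bookkeeping when $\mv>0$: the factors $(\Y\oplus\zero)=\Y$ corresponding to $c_i=\zero$ must be tracked correctly to produce the partial support $\{\mv,\ldots,n\}$ of $P$, rather than to pollute coefficients that should remain finite.
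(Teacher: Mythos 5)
Your proof is essentially correct and fills in the details of the concavity/Newton-polygon computation that the paper itself only cites (to Baccelli et al.\ for $\vgroup=\R$). Two small points should be tightened. First, in item~(1) your argument that lack of full support forces $P \neq P^\sharp$ only treats an isolated gap: a run $P_k = P_{k+1} = \zeror$ inside the support interval is not handled by the pair $(k-1,k+1)$, since one of those coefficients might also be $\zeror$. The fix is to use any pair $(i,j)$ with $i<k<j$ and $P_i, P_j \neq \zeror$ (e.g.\ $i=\mv$, $j=n$), set $t=(k-i)/(j-i)\in\Q\cap(0,1)$, and conclude $P^\sharp_k \geq (1-t)P_i + tP_j \in \vgroup$, hence $P^\sharp_k \neq \zeror = P_k$. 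Second, the role you assign to divisibility is slightly misplaced: the finiteness of the set of admissible $(i,j,t)$ for fixed $k$ holds in any ordered group and is a purely combinatorial observation; divisibility is what guarantees that each value $(1-t)P_i + tP_j$ actually lies in $\vgroup$, so that $P^\sharp_k$ is a well-defined element of $\tmax$. Beyond these, your treatment of items~(2) and~(3) --- the identification of the coefficient of $\Y^{n-k}$ in the product as $P_n + c_1 + \cdots + c_k$ after ordering, the telescoping identity, and the handling of the absorbing $\zeror$ factors when $k > n-\mv$ --- is correct and is exactly what the paper delegates to the cited reference.
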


\begin{definition} 
Assume $\vgroup$ is a non-trivial divisible group.
With a given $P \in \tmax[\Y]$, we associate the \new{canonical form} of the polynomial function $\widehat{P}$, which is the element of $\tmax[\Y]$, denoted by $P^\bifench$, such that, for all $k \in \N$, 
\begin{equation}\label{p-sharp2}
 P^\bifench_k:=\inf_{y\in \vgroup} \widehat{P}(y)-k y \enspace ,
\quad \text{for }\; k\in \N\enspace.\end{equation}
\end{definition}
We use here the name {\em canonical form} as in \cite[Def.\ 3.39]{baccelli1992synchronization}.
\begin{lemma}[See~\protect{\cite[Th.\ 3.38, Point 1]{baccelli1992synchronization}} for $\vgroup=\R$]\label{max-lemma} 
  Assume $\vgroup$ is a non-trivial divisible group.
  Then, for all $k\in \N$, the infimum in~\eqref{p-sharp2} is achieved
  or equal to $\bot$. 
  Moreover, for any non-zero
  $P \in \tmax[\Y]$, with lower degree $m$ and degree $n$,
the \new{canonical form} $P^\bifench$ of $\widehat{P}$ coincides with
$P^\sharp$, it has same lower degree and degree than $P$, and the infimum in \eqref{p-sharp2} is achieved 
when 
$k\in  \{m,\ldots ,n\}$.
Moreover, $P^\bifench$ is the maximum of the formal polynomials $Q$ such that $\widehat{Q}=\widehat{P}$ for the order $\leq$.
\end{lemma}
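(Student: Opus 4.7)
The plan is to give a direct proof in four brief steps, identifying $P^\bifench$ with $P^\sharp$ and then deducing the extremal property; an alternative would be to invoke the completeness argument of~\Cref{prop-botiscomplete} to reduce everything to the real case~\cite[Th.~3.38]{baccelli1992synchronization}, but I prefer a self-contained argument since the concave-hull structure of $P^\sharp$ gives it directly.

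\textit{Step 1} ($\widehat{P^\sharp}=\widehat{P}$): since $P^\sharp_k\ge P_k$ for all $k$ (take $i=j=k$ in~\eqref{p_sharp}), one has $\widehat{P^\sharp}\ge\widehat{P}$. Conversely, writing $P^\sharp_k=(1-t)P_i+tP_j$ with $(1-t)i+tj=k$, $t\in\Q\cap[0,1]$, yields $P^\sharp_k+ky=(1-t)(P_i+iy)+t(P_j+jy)\le\max(P_i+iy,P_j+jy)\le\widehat{P}(y)$, so $\widehat{P^\sharp}\le\widehat{P}$. \textit{Step 2} ($P^\bifench\ge P^\sharp$ coefficient-wise): from Step~1, $\widehat{P}(y)-ky=\widehat{P^\sharp}(y)-ky\ge P^\sharp_k$ for every $y\in\vgroup$, hence $P^\bifench_k\ge P^\sharp_k$. \textit{Step 3} (achievement of the infimum and $P^\bifench_k\le P^\sharp_k$ for $m\le k\le n$): interpret $P^\sharp$ as the restriction to $\N$ of the upper rationally concave hull of $\mathrm{coef}\,P$; pick the edge of this hull containing the abscissa $k$, with endpoints $(i^*,P_{i^*})$ and $(j^*,P_{j^*})$, and set $y:=(P_{i^*}-P_{j^*})/(j^*-i^*)\in\vgroup$ when $i^*\ne j^*$ (divisibility of $\vgroup$ is used here), or $y:=0$ otherwise. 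A direct computation yields $P_{i^*}+i^*y=P_{j^*}+j^*y=P^\sharp_k+ky$, and the subgradient inequality for the concave hull gives $P_l+ly\le P^\sharp_k+ky$ for every $l$ in the support of $P$, so $\widehat{P}(y)-ky=P^\sharp_k$. \textit{Step 4} (case $k<m$ or $k>n$): every $j$ in the support of $P$ satisfies $j-k>0$ (resp.\ $j-k<0$), so non-triviality of $\vgroup$ lets one pick $y\in\vgroup$ making $(j-k)y$ uniformly arbitrarily small for all such $j$, which drives $\widehat{P}(y)-ky$ below any threshold; hence $P^\bifench_k=\bot$.

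Combining the four steps yields $P^\bifench=P^\sharp$, and the matching lower degree and degree of $P^\bifench$ and $P$ then follow from $P^\sharp_m=P_m$, $P^\sharp_n=P_n$, and $P^\sharp_k=\bot$ for $k\notin\{m,\ldots,n\}$. For the maximum property, any $Q\in\tmax[\Y]$ with $\widehat{Q}=\widehat{P}$ satisfies $Q_k+ky\le\widehat{Q}(y)=\widehat{P}(y)$ for every $k,y$, hence $Q_k\le\widehat{P}(y)-ky$ and so $Q_k\le P^\bifench_k$; moreover $\widehat{P^\bifench}=\widehat{P^\sharp}=\widehat{P}$ by Step~1, so $P^\bifench$ is itself a candidate and is therefore the maximum. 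The main obstacle is Step~3, where one must use the concave-hull geometry to identify, for each integer $k\in[m,n]$, the element $y\in\vgroup$ obtained from the slope of the relevant edge of the rational Newton polygon; divisibility of $\vgroup$ is essential to make this slope an element of $\vgroup$, and non-triviality supports Step~4.
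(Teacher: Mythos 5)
Your proof is correct, and the core computation---exhibiting an explicit minimizing $y\in\vgroup$ from the slope of the relevant edge of the rational Newton polygon---is the same as in the paper's appendix. The organization differs, however. The paper first proves $P^\bifench=P$ for \emph{concave} $P$ (picking $y=P_\ell-P_{\ell+1}$), establishes abstractly via Fenchel--Moreau conjugacy properties that $P\leq P^\bifench$, $\widehat{P^\bifench}=\widehat{P}$, and that both $P^\bifench$ and $P^\sharp$ are concave, and then closes the argument with an idempotence chase: $(P^\bifench)^\sharp=P^\bifench$, $(P^\sharp)^\bifench=P^\sharp$, together with monotonicity of both operators, force $P^\bifench=P^\sharp$. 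Your proof bypasses the reduction-to-concave detour and the bootstrapping: you prove $\widehat{P^\sharp}=\widehat{P}$ head-on, get $P^\bifench\geq P^\sharp$ from it, and then match the two coefficient-by-coefficient by picking the minimizer directly from the edge of the Newton polygon through abscissa $k$. This is a genuinely more self-contained route that buys a shorter, linear argument; the paper's route, while longer, packages the extremal property cleanly as a consequence of the conjugacy framework. Both proofs rely in the same way on divisibility (to realize the edge slopes in $\vgroup$) and non-triviality (for the $\bot$ values outside $[m,n]$).

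One small imprecision: in Step~4 you write that non-triviality ``lets one pick $y$ making $(j-k)y$ uniformly arbitrarily small,'' which reads as if $Ny_0\to\pm\infty$ in the Archimedean sense; in a general ordered group this is not literally available. The clean fix, which your argument implicitly contains, is to observe that for $y\leq\zero$ and $j-k\geq 1$ one has $(j-k)y\leq y$, hence $\widehat{P}(y)-ky\leq y+\max_j P_j$, and then take $y<h-\max_j P_j$ (such $y$ exists by non-triviality); similarly with signs reversed for $k>n$. The rest, in particular the handling of a vertex $k$ by taking either incident edge, is fine.
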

\begin{proof}
When $\Gamma=\R$, the result follows from Legendre-Fenchel duality in classical convex analysis theory, in the particular case of polyhedral functions, see \cite[Th.\ 3.38]{baccelli1992synchronization}. 
  Then, it carries over to any non-trivial divisible group using~\Cref{prop-botiscomplete}. Observe in this respect that, again fixing the degree of $P$,
  for each $k\in \N$, the property
  that either the infimum~\eqref{p-sharp2} is achieved or the family
  $(\widehat{P}(y)-k y)_{y\in \Gamma}$ is unbounded from below can
  be expressed by a first order formula in the theory of non-trivial
  divisible ordered groups, in which the coefficients of $P$ are
  variables. 

  However, the arguments of the proof of \cite[Th.\ 3.38, Point 1]{baccelli1992synchronization} are sketchy. So, for the convenience of the reader, we include an alternative, detailed, proof
 in Appendix (which also avoids the recourse to model theory).
\end{proof}  

In \Cref{ex_poly1} we demonstrate a pathological example for the notion of Newton polygon, when $\vgroup=\R$, and we will discuss the result of \Cref{corners_polygon}.
 \begin{example}\label{ex_poly1}
Take $\vgroup=\R$, and   
consider the tropical formal polynomial $P = \Y^5\oplus 4 \Y^3\oplus \Y \oplus 1$. We illustrate below the above definitions and properties on the formal polynomial $P$, in particular, the graphs of $P$ and of the polynomial function $\widehat{P}$ in \eqref{ex1}, the corners and their multiplicities in \eqref{ex2} and \eqref{ex3},
factorization of $\widehat{P}$ in \eqref{ex4}, and computation of $P^{\sharp}$ in
\eqref{ex5}.
 \begin{enumerate}
 \item\label{ex1}
  In \Cref{a}, we show the graph of $P$ and its associated polynomial function $\widehat{P}$ in $\tmax^{2}$ plane.  It is immediate to see that the graph of the polynomial function is the supremum of its monoimials $1$, $y$, $3y+4$ and $5y$. One can check that the line with slope $1$ does not appear in $\widehat{P}$. In other words, if $Q = \Y^5\oplus 4 \Y^3 \oplus 1$, the supremum of the monimials of $P$ and $Q$ are the same, so their associated functions $\widehat{P}$ and $\widehat{Q}$ coincide. 

\item\label{ex2}
In order to compute the corners and multiplicities of $P$, we compute the variation of slope of $\widehat{P}$ in \Cref{a} at non-differentiable points $-1$ and $2$ which are $3$ and $2$, respectively. 

\item\label{ex3}
In \Cref{b}, we have the plot of Newton polygon of $P$. The blue dots represent the coefficients of $P$. $(k,P_k)\; k=0, 1,3,5$. Note that $(1,0)$ is related to the the line with slop $1$ in \Cref{a}. Using the Newton polygon of $P$, the length of the intervals with slopes $-2$ and $1$ are equal to $2$ and $3$, respectively. Thus the corners of $P$ are $c_1=2$ and $c_2=-1$ with respective multiplicities $2$ and $3$.
\item\label{ex4}
Using the results \Cref{ex1} and \Cref{ex2} together with \Cref{factor} one can get the following factorization for $\widehat{P}$
   \[\widehat{P}(y) =   (y \oplus 2)^{\odot 2}\odot (y \oplus -1)^{\odot 3}.\]

   \item\label{ex5}
 To compute $P^\sharp$ by \Cref{p_sharp}, we have
   \[P^{\sharp}_0=1,\; P^{\sharp}_1=2, \;P^{\sharp}_2=3, \;P^{\sharp}_3=4,\; P^{\sharp}_4=2, \;P^{\sharp}_5=0.\]
   So 
   \[P^{\sharp} = \Y^5 \oplus 2  \Y^{4} \oplus 4 \Y^{3} \oplus 3  \Y^2 \oplus 2\Y  \oplus 1.\]
   Note that as we mentioned before, $P^{\sharp}$ has a full support. In \Cref{c} we demonstrate the graph of $P^{\sharp}$ and its associated polynomial function $\widehat{P^{\sharp}}$. It is clear that $\widehat{P^{\sharp}}=\widehat{P}$. Also \Cref{d}, represents the Newton polygon of $P^{\sharp}$.
  
 \end{enumerate}
\begin{figure}
     \centering
    \begin{subfigure}[b]{0.48\textwidth}
         \centering
         \includegraphics[width=\textwidth]{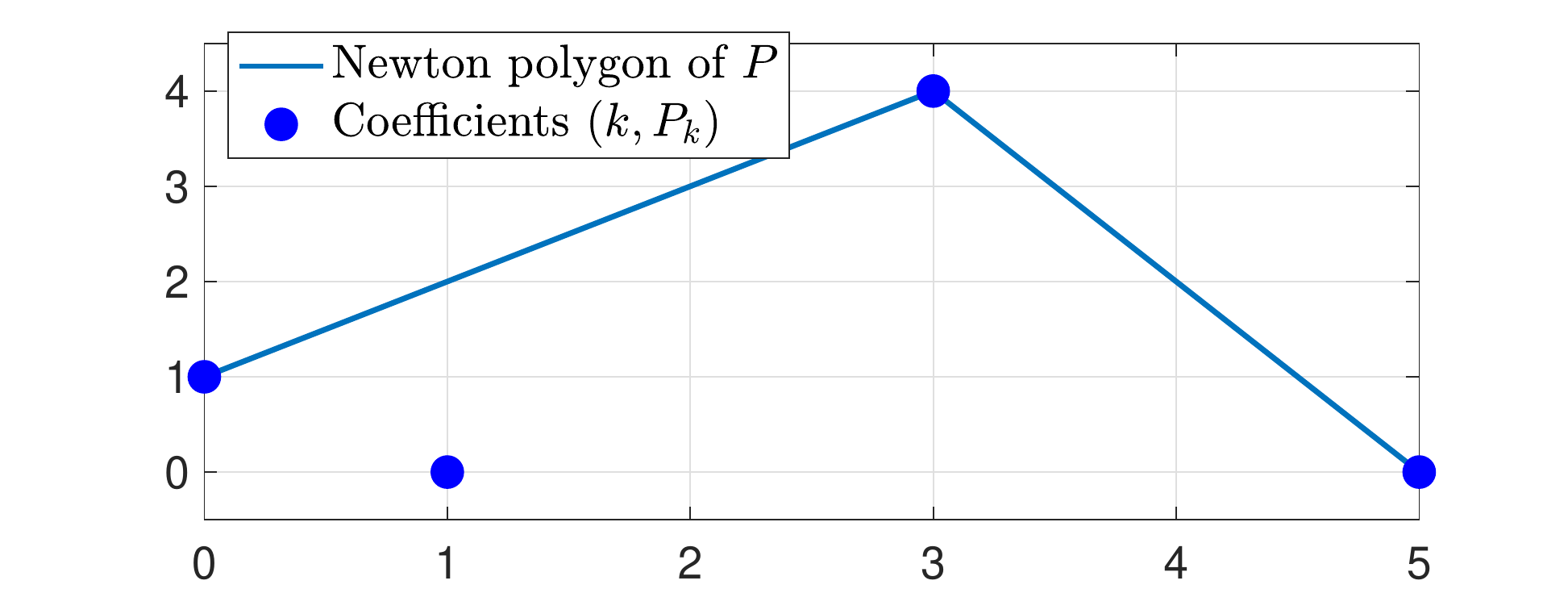}
         \caption{Configuration of points of Newton polygon of $P$.}
         \label{b}
     \end{subfigure}
     \hfill
         \begin{subfigure}[b]{0.48\textwidth}
         \centering
         \includegraphics[width=\textwidth]{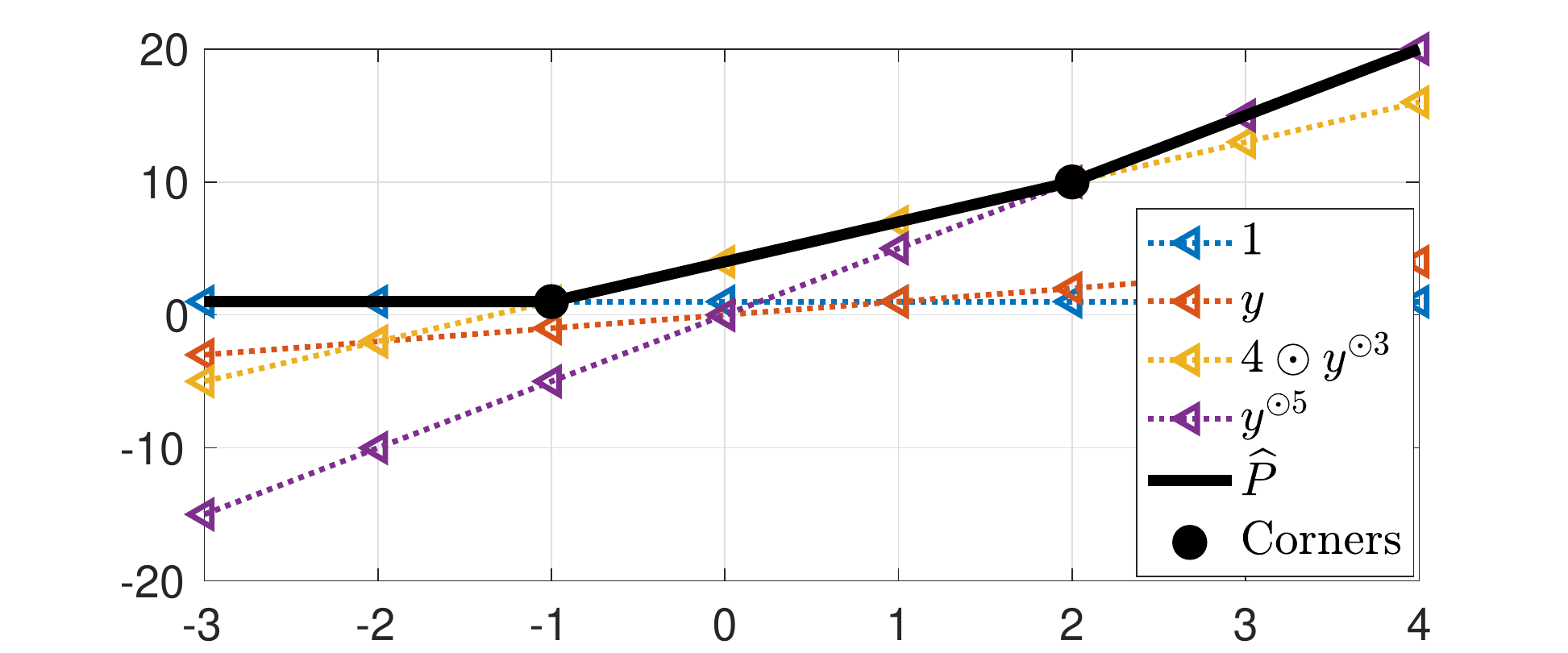}
         \caption{Polynomial function $\widehat{P}(y) = y^{\odot 5}\oplus 4 \odot y^{\odot 3} \oplus y\oplus 1$.}\label{a}
     \end{subfigure}
     \newline
          \begin{subfigure}[b]{0.48\textwidth}
         \centering
         \includegraphics[width=\textwidth]{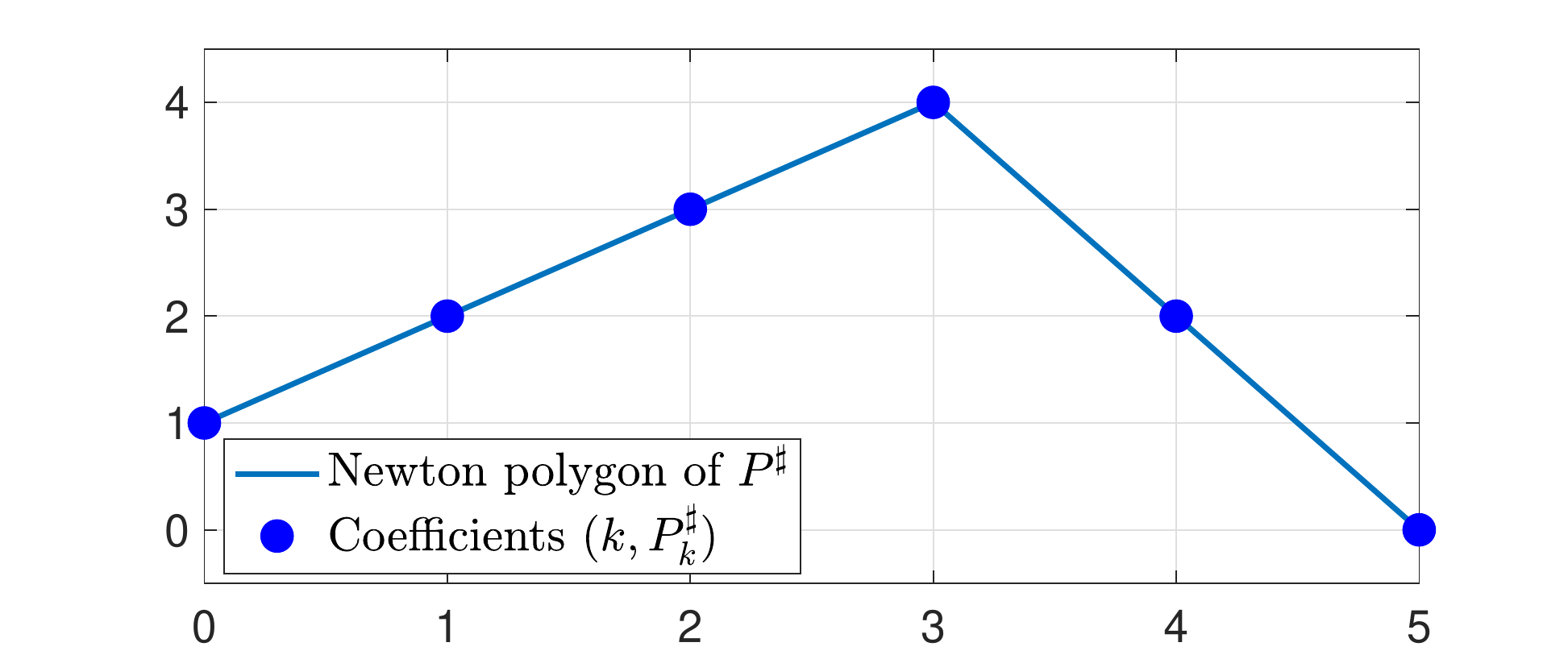}
         \caption{Configuration of points of Newton polygon of $P^{\sharp}$. }
         \label{d}
     \end{subfigure}
     \hfill
       \begin{subfigure}[b]{0.48\textwidth}
         \centering
         \includegraphics[width=\textwidth]{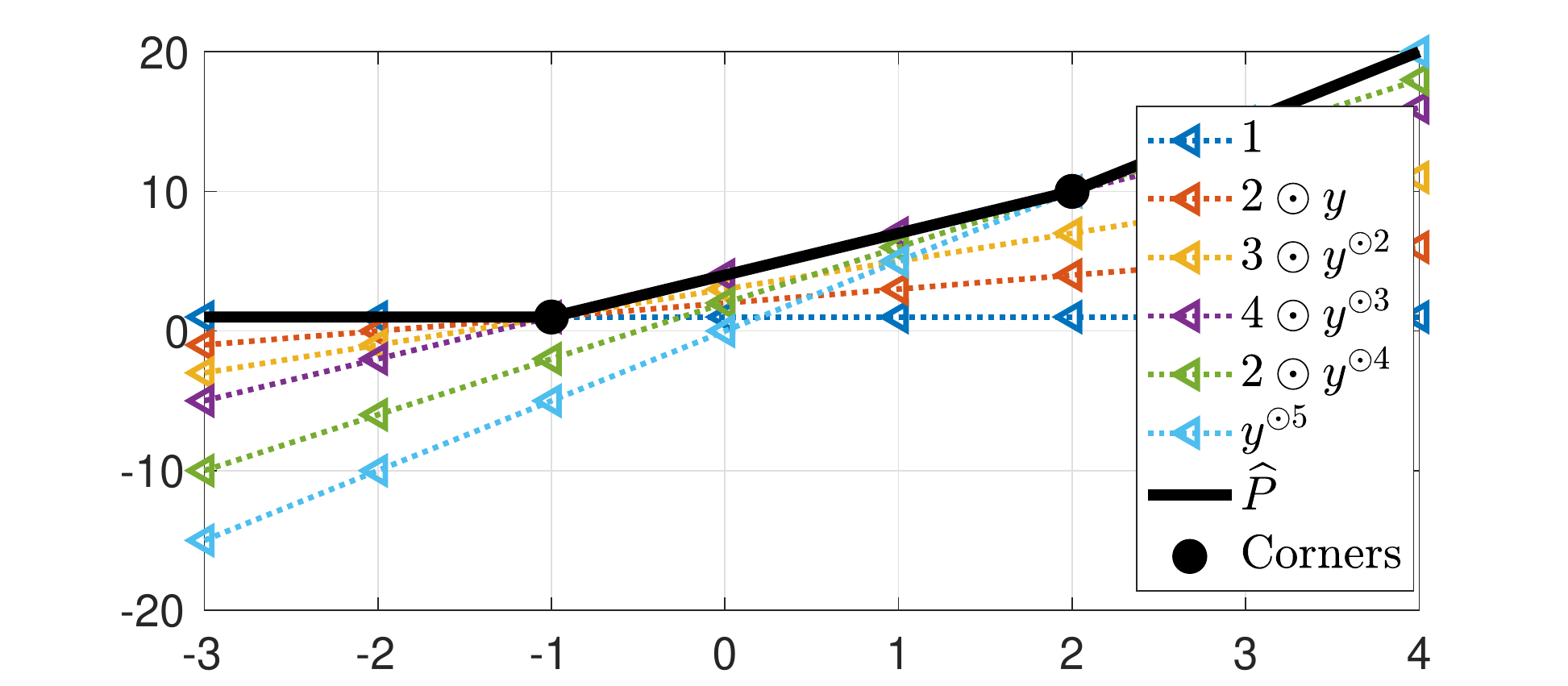}
         \caption{Illustration of $\widehat{P}=\widehat{P^{\sharp}}$.}
         \label{c}
     \end{subfigure}
     \hfill
        \caption{$P$, $P^{\sharp}$, $\widehat{P}$ and $\widehat{P}^{\sharp}$ of \Cref{ex_poly1}.}
        \label{fig_poly}
\end{figure}

\end{example}

 \subsection{Non-Archimedean valuations and tropical algebra}\label{sec-valuation}
 
 In this section, we recall some basic results about valued fields. We refer to~\cite[Chapter~2]{engler_prestel} for more information. Let $\vfield$ be a field, let $\vgroup$ be a (totally) ordered abelian group, and let $\botelt$ be added to $\vgroup$ so that it is the bottom element of $\vgroup\cup\{\botelt\}$, and thus satisfying the same properties as in \Cref{subsec-def-trop}.
A surjective function $\vall \colon \vfield \to \vgroup \cup \{\botelt \}$ is a \emph{non-Archimedean valuation} if
\begin{equation}
\begin{aligned}
 \vall(b) = \botelt &\iff b = 0 \, , \\
\forall b_{1}, b_{2} \in \vfield, \ \vall(b_{1}b_{2}) &= \vall(b_{1}) + \vall(b_{2}) \, , \\
\forall b_{1}, b_{2} \in \vfield, \ \vall(b_{1} + b_{2}) &\le \max(\vall(b_{1}),\vall(b_{2})) \, . \label[equation]{eq:valuation}
\end{aligned}
\end{equation}
The triple $(\vfield, \vgroup, \vall)$ is called a \emph{(non-Archimedean) valued field},
and $\vgroup$ is called the \emph{value group}.
The {\em valuation ring} is defined as $\vring \coloneqq \vring_\vfield \coloneqq \{b \in \vfield \colon \vall(b) \le 0 \}$, it is a subring of $\vfield$,
and $\videal \coloneqq \videal_\vfield \coloneqq \{ b \in \vfield \colon \vall(b) < 0\}$ is its maximal ideal. The quotient field $\resfield \coloneqq \resfield_\vfield \coloneqq \vring/\videal$ is called the \emph{residue field}. We denote by $\res$ the canonical projection from $\vring$ to $\resfield$, it is called the \emph{residue map}. The valuation is called \emph{trivial} if $\vgroup = \{0\}$. Otherwise, it is called \emph{nontrivial}. %

Our definition is opposite of the usual one which
deals with a map $\vall:\mathbb{F} \mapsto \vgroup \cup \{\topelt\}$,
where $\topelt$ is a ``top'' element, that satisfies 
\[\vall(b_1+b_2) \geq \min\{\vall(b_1), \vall(b_2)\} \enspace .\]
The latter definition would lead us
to work with the {\em min-plus semifield} $(\vgroup \cup\{\topelt\},\min,+)$,
instead of the max-plus semifield $\tmax(\vgroup)$. We prefer to adopt the ``max-plus''
setting here, it has the advantage that the natural order defined by~\eqref{order_max} coincides with the ordinary one.

There is a close relation between non-Archimedean valuations and tropical algebra, since~\eqref{eq:valuation} indicates
the map $\vall$ is almost a morphism from $\vfield$ to $\tmax$.

\begin{example}\label{ex-hahn}
A classical example of non-Archimedean valued field $\vfield$,
with residue field $\resfield$ and value group $\vgroup$,
is the field of {\em Hahn series}
$\hahnseries{\resfield}{\vgroup}$, whose elements
are formal sums 
\begin{align}
f= \sum_{\alpha \in \vgroup} f_\alpha t^\alpha\label{e-def-hahn}
\end{align}
where $(f_\alpha)_{\alpha\in \vgroup}$ is a family 
of elements of $\resfield$ such that the opposite
of the support $\support f:= \{\alpha\in \vgroup\mid f_\alpha \neq 0\}$
is a well ordered subset of $\vgroup$.
The {\em valuation}
of $f$ is defined as the maximal element of the support.
The sum and product
of Hahn series are defined by the usual rules, understanding
that $t^\alpha$ is a formal term and that $t^\alpha t^\beta=t^{\alpha + \beta}$. 
When $\resfield$ is ordered, $\hahnseries{\resfield}{\vgroup}$
is canonically ordered in the following way: $f>0$ if
its leading coefficient, i.e., the
coefficient $f_\alpha$ of the monomial of $f$ with exponent $\alpha$
equal to the valuation of $f$, be positive.
The field $\hahnseries{\resfield}{\vgroup}$
is real (resp.\ algebraically) closed as soon as $\resfield$ is real
(resp.\ algebraically) closed and $\vgroup$ is divisible~\cite{ribenboim}.
\end{example}
\begin{example}
When $\vgroup=\R$ and $\resfield=\R$ or $\resfield=\C$, one may
consider absolutely convergent series instead
of formal series. In this way, we get
the subfield $\hahnseries{\resfield}{\R}_{\textrm{cvg}} \subset
\hahnseries{\resfield}{\R} $ consisting of series of the form~\eqref{e-def-hahn}
that converge absolutely when $t$ is substituted by a sufficiently
small positive real number. The field
$\hahnseries{\R}{\R}_{\textrm{cvg}}$ 
is real closed~\cite{van_dries_power_series},
and so, $\hahnseries{\C}{\R}_{\textrm{cvg}}=(\hahnseries{\R}{\R}_{\textrm{cvg}})[\sqrt{-1}]$ is algebraically closed.
\end{example}
\begin{example}\label{ex-Puiseux}
The case in which $\vgroup=\Q$ is somehow simpler. Then,
one may consider the field of ordinary formal Puiseux series,
denoted by $\puiseuxseries{\resfield}$ here,
whose elements are series $f$ of the form~\eqref{e-def-hahn},
such that the support of $f$ is included in an arithmetic progression
of rational numbers. Again, $\puiseuxseries{\resfield}$
is real (resp.\ algebraically) closed as soon
as $\resfield$ is real (resp.\ algebraically) closed.
When $\resfield=\R$ or $\resfield=\C$, one can consider as well
the subfield of absolutely convergent Puiseux series 
$\resfield\{\{t\}\}_{\textrm{cvg}}$.
\end{example}

The next proposition shows that the valuations of the classical roots of a polynomial over $\vfield$ coincide with the tropical roots of the valuation of the polynomial.
For a polynomial $P = \sum_{k=0}^n P_k \Y^k \in \vfield[\Y]$, we define $\Val(P)=\bigoplus_{k=0}^n \vall(P_{k}) \Y^k \in \tmax[\Y]$.
\Cref{prop-kap} is a dual formulation of a classical result, stated in terms of Newton polygons ~\cite[Exer.\ VI.4.11]{bourbakicomm};
a proof of this proposition can be found in~\cite{akian2016non}.
A partial generalization to the multivariate case is known
as Kapranov's theorem in tropical geometry~{\cite[Th.~2.1.1]{kapranov}}.
\begin{proposition}\label{prop-kap}
Let $\vfield$ be an algebraically closed field with a non-trivial non-Archimedean valuation $\vall$ and let $P=\sum_{k=0}^nP_k Y^k\in \vfield[Y]$ with $P_n\neq 0$. Then, the 
valuations of the roots of $P$ (counted with multiplicities)
coincide with the corners or tropical roots of the tropical polynomial 
$\Val(P):=\bigoplus_{k=0}^n\vall(P_k)\Y^k\in \tmax[\vgroup]$.\hfill\qed
\end{proposition}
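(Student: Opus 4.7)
Since $\vfield$ is algebraically closed, the value group $\vgroup$ is divisible (for any $a\in\vgroup$, picking $b\in\vfield$ with $\vall(b)=a$ and $c\in\vfield$ with $c^n=b$ yields $\vall(c)=a/n$), so \Cref{factor} and \Cref{corners_polygon} apply over $\tmax=\tmax(\vgroup)$. I would begin by factoring $P = P_n \prod_{i=1}^n (Y - r_i)$ in $\vfield[Y]$, with the roots ordered so that $\vall(r_1) \geq \vall(r_2) \geq \cdots \geq \vall(r_n)$. Expanding yields $P_k = (-1)^{n-k} P_n\, e_{n-k}(r_1,\dots,r_n)$, where $e_j$ denotes the $j$-th elementary symmetric polynomial. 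In parallel, I introduce the tropical polynomial
\[
Q := \vall(P_n) \odot (\Y \oplus \vall(r_1)) \odot \cdots \odot (\Y \oplus \vall(r_n)) \in \tmax[\Y],
\]
whose corners, by \Cref{factor}, are exactly $\vall(r_1), \dots, \vall(r_n)$ with the correct multiplicities, and whose coefficients satisfy $Q_k = \vall(P_n) + \vall(r_1) + \cdots + \vall(r_{n-k})$ (the sum of the $n-k$ largest valuations). The claim then reduces to showing that $\Val(P)$ and $Q$ have the same Newton polygon, so that \Cref{corners_polygon} delivers the equality of corners with multiplicities.

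The first ingredient is a coefficient-wise inequality: applying the ultrametric inequality to each monomial of $e_{n-k}(r_1,\dots,r_n)$ gives $\vall(P_k) \leq Q_k$, hence the Newton polygon of $\Val(P)$ lies weakly below that of $Q$. The second, more delicate ingredient is that equality $\vall(P_k)=Q_k$ holds at every vertex of the Newton polygon of $Q$, i.e., at $k=0$, $k=n$, and at every $k=n-j$ such that $\vall(r_j) > \vall(r_{j+1})$. At such an index the monomial $r_1\cdots r_{n-k}$ is the strictly largest, in valuation, among the monomials of $e_{n-k}(r_1,\dots,r_n)$: any alternative choice of $n-k$ roots must omit some $r_i$ with $i \le n-k$ in favour of some $r_j$ with $j > n-k$, and since $\vall(r_i) \geq \vall(r_{n-k}) > \vall(r_{n-k+1}) \geq \vall(r_j)$, the valuation strictly decreases. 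No non-archimedean cancellation can then take place, and the ultrametric inequality becomes an equality.

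Combining both ingredients, the Newton polygon of $\Val(P)$ agrees with that of $Q$ at every vertex of the latter while lying weakly below it, which forces the two upper concave hulls to coincide. \Cref{corners_polygon} then yields the desired equality of corners with multiplicities. The main obstacle is the second ingredient: one has to identify precisely the indices at which cancellation among the monomials of $e_{n-k}$ is excluded. When several roots share the same valuation, the equality $\vall(P_k)=Q_k$ can genuinely fail at non-vertex indices, so the argument cannot proceed coefficient by coefficient and must be anchored exactly at the vertices of the Newton polygon of $Q$; this anchoring, together with concavity of the hulls, is the technical heart of the proof.
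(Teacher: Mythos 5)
The paper does not prove this proposition itself, deferring to the classical Newton-polygon argument in \cite{bourbakicomm} (Exer.\ VI.4.11) and \cite{akian2016non}. Your proof is a correct, self-contained version of exactly that standard argument: you bound $\vall(P_k)$ from above by applying the ultrametric inequality to the elementary symmetric functions of the roots, and you anchor equality at the vertices of the Newton polygon of the factored tropical polynomial $Q$, where the dominant monomial of $e_{n-k}(r_1,\dots,r_n)$ is unique so that no cancellation can occur; concavity of $Q$'s coefficient sequence then forces the two Newton polygons to coincide.
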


For our last results, see \Cref{sec-valroots}, 
we shall need a further definition, originating from model theory of valued
fields~\cite{denef_p-adic_semialgebraic,pas_cell_decomposition,pas_on_angular_component}.
A map $\xsec \colon \vgroup \to \vfield^{*}$ is called a \emph{cross-section}
if it is a multiplicative morphism such that $\vall \circ \xsec$ is the identity map. A map $\angular \colon \vfield \to \resfield$ is called an \emph{angular component} if it fulfills the following conditions:
\begin{enumerate}
\item $\angular(0) = 0$;
\item $\angular$ is a multiplicative morphism from $\vfield^{*}$ to $\resfield^{*}$;
\item the function $\morphism$ from $\vring$ to $\resfield$, mapping $b$ to $\angular(b)$ if $\vall(b) = 0$, and to $0$ otherwise, is a surjective morphism of rings whose kernel is equal to $\videal$.
\end{enumerate}
A valued field which admits a cross section admits the angular
component $\angular(b) \coloneqq \res(\xsec(-\vall(b))b)$ for $b \neq 0$.
For example, if $\vfield$ is the field of Puiseux series defined above,
then $\xsec(\alpha) = t^{\alpha}$ is a cross-section, and the map
which associates to a series $f$ its leading coefficient $f_{\vall(f)}$
is an angular
component. More generally, every real closed valued field has a cross-section, see e.g.~\cite[Lemma 3]{allamigeon2020tropical}, and thus an angular component. It also has a divisible value group $\vgroup$ (see e.g. the proof of~\cite[Lemma 3]{allamigeon2020tropical}).
We finally recall a completeness theorem from the theory of valued
field, established in~\cite{allamigeon2020tropical}, as
a consequence of a quantifier elimination method
in valued fields~\cite{denef_p-adic_semialgebraic}
developed by Denef and Pas~\cite{pas_cell_decomposition}.
\begin{theorem}[{\cite[Th.~10]{allamigeon2020tropical}}]\label{theorem:qe_vrcf}%
  The first order theory  $\thVrcf$ of valued fields with angular components
  that are real closed and have a non-trivial
  and convex valuation is complete.
\end{theorem}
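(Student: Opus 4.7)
The plan is to deduce completeness from a Denef--Pas style quantifier elimination combined with the already known completeness of the theories of the value group sort and the residue field sort. I would work in the standard three-sorted language with a sort for the valued field $\vfield$, a sort for the value group (with a bottom element) $\vgroup\cup\{\botelt\}$, and a sort for the residue field $\resfield$, together with the function symbols $\vall$, $\angular$ and $\res$ connecting them, and with the appropriate axioms for each sort plus the axioms making $\vall$ a convex valuation and $\angular$ an angular component as in the definition preceding the statement.

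First, I would invoke the quantifier elimination theorem of Pas~\cite{pas_cell_decomposition,pas_on_angular_component} (extending Denef's method~\cite{denef_p-adic_semialgebraic}) in the version adapted to real closed valued fields, which asserts that in the above three-sorted language every formula is equivalent, modulo $\thVrcf$, to a formula in which there are no quantifiers over the valued field sort, only quantifiers over the value group and residue field sorts. The essential point here, which would be the main technical verification, is that convexity of the valuation together with the availability of an angular component is precisely what is needed to pass from the algebraically closed or $p$-adic setting (where Denef--Pas was originally formulated) to the real closed setting; one checks that the cell decomposition argument and the elimination of the valued field variables go through because a real closed valued field with a convex valuation has a real closed residue field (the residue map is compatible with the order on the ring of integers $\vring$) and a divisible ordered value group.

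Second, having reduced every sentence of $\thVrcf$ to a Boolean combination of sentences purely in the value group sort and the residue field sort, I would invoke two classical completeness theorems: Tarski's theorem that the theory of real closed fields is complete, which settles the truth value of every sentence in the residue field sort; and the extension of Robinson's theorem~\cite{robinson} recorded earlier as \Cref{prop-botiscomplete}, which states that the theory of non-trivial divisible totally ordered abelian groups with a bottom element is complete, settling the truth value of every sentence in the value group sort. Combining these, every sentence of $\thVrcf$ has a determined truth value, which is the definition of completeness.

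The main obstacle I expect is the first step: carefully verifying that the Denef--Pas quantifier elimination applies in the real closed valued field setting with the chosen three-sorted language, in particular that the convexity assumption on the valuation ensures the ordered structure of $\vfield$ induces a compatible ordered structure on $\resfield$ (so that ``real closed'' is expressible and preserved) and that the presence of an angular component allows one to split each valued field quantifier into a value group quantifier and a residue field quantifier via a cell decomposition argument. The remaining step, combining the two completeness results for the auxiliary sorts, is then routine.
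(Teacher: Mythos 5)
The paper does not prove this theorem itself but cites it from~\cite[Th.~10]{allamigeon2020tropical}, stating just before the theorem that it is obtained as a consequence of the Denef--Pas quantifier elimination for valued fields. Your reconstruction — eliminate the valued-field quantifiers via Pas's theorem in the three-sorted language (using convexity and the angular component to make the cell-decomposition go through in the real closed setting), then conclude from the completeness of real closed fields for the residue sort and of non-trivial divisible ordered abelian groups with a bottom element (\Cref{prop-botiscomplete}) for the value-group sort — is exactly the argument carried out in the cited reference, so your approach matches.
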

We refer the reader to~\cite{allamigeon2020tropical}
for the formal definition of the theory $\thVrcf$.
For the present purposes, it suffices to say that this theory
involves a three sorted logic, allowing one to quantify
three kinds of variables: elements of the valued field, elements of the residue
field, and elements of $\vgroup\cup\{\botelt\}$. In addition
to the usual symbols from the theory of ordered fields and ordered groups with bottom, formulas in this theory
can make use the symbols $\angular$ for the angular component
and $\vall$ for the valuation. In that way, formulas
like $\alpha = \vall (b)$ or $a=\angular(b)$, where $\alpha, a,b$ are interpreted
as variables in $\vgroup \cup\{\botelt\}$, $\resfield$, or $\vfield$,
respectively, can be expressed. 
Note that, although every real closed valued field has an angular component, the condition ``with angular components'' in the above result was necessary to define what are the admissible expressions. Recall also that the value group of a real
closed field, that have a convex valuation,
is always divisible, a property which will often be used in sequel.

\section{Preliminaries on the Symmetrized tropical algebra and signed valuation}\label{sec-sym}
 We next recall the construction and basic properties of the symmetrized
 tropical semiring, as well as polynomials over this semiring.
We also discuss its relation with signed valuation, hyperfields and 
semiring systems. 

\subsection{The symmetrized tropical algebra}
The symmetrized tropical semiring $\smax$ 
initially appeared in~\cite{maxplus90b} (in the case $\vgroup=\R$),
 as a tool to solve systems of linear equations with signs. %
 We refer the reader to \cite{baccelli1992synchronization,gaubert1992theorie,cramer-guterman} for more information.

Because of the idempotency of $\oplus$, the theory of linear system of equations in tropical algebra is not satisfactory. The idea used in \cite{maxplus90b} is to extend $\tmax$ to a larger set $\smax$ for which $\tmax$ can be considered as the positive part. This extension is similar to the construction of $\mathbb{Z}$ as an extension of $\N$ in usual algebra but with more complications. In this new algebraic structure, every non-trivial scalar linear equation such as $a \oplus y = \zeror$, has at least one solution. The purpose of this subsection is recall this approach and the general properties. {\em All the definitions and properties below are generalized to the case of any ordered group $\vgroup$, and we shall write again $\tmax$ instead of $\tmax(\vgroup)$.}

Let us consider the set $\tmax^2:=\tmax\times \tmax$ endowed with operations $\oplus$ and $\odot$:
\[(a_1,a_2) \oplus (b_1,b_2) =(a_1\oplus b_1, a_2 \oplus b_2),\]
\[(a_1,a_2) \odot (b_1,b_2) = (a_1\odot b_1 \oplus a_2 \odot b_2, a_1 \odot b_2 \oplus a_2 \odot b_1),\]
with $(\zeror, \zeror)$ as the zero element and $(\unitr, \zeror)$ as the unit element. 
Define the following three operators on  $a= (a_1, a_2)\in \tmax^2$:
\begin{center}
\begin{tabular}{ll}
$\ominus a = (a_2, a_1)$ & minus operator $\tmax^2\to \tmax^2$;\\ 
$|a| = a_1 \oplus a_2$ & absolute value $\tmax^2\to \tmax$;\\  
$a^{\circ} = a\ominus a = (|a|, |a|)$&  balance operator $\tmax^2\to \tmax^2$.
\end{tabular}
\end{center}
\begin{property}\label{property-minus}
The operators $\ominus$ and $^{\circ}$ satisfy
\begin{enumerate}
\item $\ominus (\ominus a )= a$;
\item $\ominus(a\oplus b) = (\ominus a) \oplus (\ominus b)$;
\item $\ominus(a \odot b) = (\ominus a) \odot b$.
\item $a^{\circ} = (\ominus a)^{\circ}$;
\item $a^{\circ \circ} = a^{\circ}$;
\item $a\odot b^{\circ}= (a\odot b)^{\circ}$;
\end{enumerate}
\end{property}
These properties allow us to write $a \oplus (\ominus b) = a \ominus b$ as usual. 
\begin{definition}[Balance relation]\label{balance_def}
Let $a = (a_1, a_2)$ and $b = (b_1, b_2)$. We say that $a$ balances $b$, $a \balance b$, if $a_1 \oplus b_2 = a_2 \oplus b_1 $.
\end{definition}

  The balance relation is not transitive.
  For example we have $(1,2) \balance (3,3)$, $(3,3) \balance (1,1)$, but $(1,2) \notbalance  (1,1)$. The following relation $\mathcal{R}$ on $\tmax^2$
  is an equivalence relation, and it refines the balance relation:
\[(a_1,a_2) \mathcal{R} (b_1,b_2) \Leftrightarrow
\begin{cases}
a_1 \oplus b_2 = a_2 \oplus b_1& \;\text{if}\; a_1 \neq a_2, \;b_1 \neq b_2,\\
(a_1,a_2)=(b_1,b_2)& \text{otherwise.}
\end{cases}
\]  
One can check that $\mathcal{R}$ is compatible with $\oplus$ and $\odot$ of $\tmax^2$, $\balance$, $\ominus$, $|.|$ and $^{\circ}$ operators,
which then can be defined on the quotient $\tmax^2 / \mathcal{R}$.
\begin{definition}\label{sym_def}
The \textit{symmetrized tropical semiring} is the quotient semiring $(\tmax^2 / \mathcal{R},\oplus,\odot)$ and is denoted by $\smax$. 
We denote by $\zero:=\overline{(\botelt, \botelt)}$ the zero element 
and by $\unit:=\overline{(0, \botelt )}$ the unit element.
\end{definition}
We distinguish three kinds of equivalence classes:
\begin{center}
\begin{tabular}{ll}
$\overline{(c, \botelt)} = \{(c,a_2)\mid a_2<c\}, \; c\in \vgroup$ & positive elements  \\ 
$\overline{(\botelt,c)}=\{(a_1, c)\mid a_1<c\}, \; c\in \vgroup$ & negative elements  \\  
$\overline{(c,c)}=\{(c,c)\}, \; c\in \vgroup\cup\{\botelt\}$ & balance elements.
\end{tabular}
\end{center}
Recall that the zero element of $\smax$ is $\zero=\overline{(\botelt,\botelt)}$.
Then, we denote by $\smax^{\oplus}$,
$\smax^{\ominus}$ and  $\smax^{\circ}$ 
the set of positive or zero elements,
the set of negative or zero elements, and the set of balance elements,
respectively.
Therefore, we have:
\[\smax^{\oplus}\cup \smax^{\ominus}\cup \smax^{\circ}=\smax, \]
where the pairwise intersection of any two of these three sets
is reduced to $\{\zero\}$.
The subsemiring $\smax^{\oplus} $ of $\smax$ can be
identified to $\tmax$, by the morphism $c\mapsto \overline{(c, \botelt)}$.

\begin{property}\label{prop-modulus}
Using the above identification, the absolute value map $a\in \smax \mapsto |a|\in \smax^\oplus$ is a morphism of semirings.
\end{property}
The identification of $\tmax$ with $\smax^{\oplus} $, or the embedding of
$\tmax$ in $\smax$ allows one to consider for instance the normalization
$|a|^{\odot -1} \odot a$.

\begin{definition}[Signed tropical numbers]
The elements of $\smax^\vee:=\smax^{\oplus} \cup \smax^{\ominus}$ are called \new{signed tropical numbers}, or simply \new{signed}. They are either positive, negative or zero.
\end{definition}

We shall sometimes write $\smax(\vgroup)$ instead
of $\smax$ to emphasize the dependence on the choice
of $\vgroup$. In particular, by taking the trivial
$\vgroup=\{0\}$, one obtains $\bmaxs:= \smax(\{0\})$,
which is isomorphic to the subsemiring of
$\smax$ composed of $\{\zero,\unit,\ominus \unit,\unit^\circ \}$.
It is called the {\em symmetrized Boolean algebra}.
We denote $\bmaxs^\vee:=\bmaxs\cap \smax^\vee
=\{\zero,\unit,\ominus \unit\}$.

\begin{remark}
For each element $a \in \smax^{\vee}$, the element $\ominus a$ is the unique signed element such that $b = a \ominus a \in \smax^{\circ}$. In general $b \neq \zero$. So, the whole set $\smax^{\circ}$ plays the role of the usual zero element. 
Note that $\smax^{\circ}$ is an ideal, thus a subsemiring of $\smax$,
and that the map  $a\in \smax \mapsto a^\circ\in \smax^\circ$  is
a morphism of semirings.
\end{remark}

\begin{property}\label{prop-nabla}
 The relation $\balance$ satisfies the following properties:
\begin{enumerate}
\item $a\balance a$;
\item $a \balance b \Leftrightarrow b \balance a$;
\item $a \balance b \Leftrightarrow a \ominus b\balance \zero$;
\item $\{a\balance  b , c\balance d\} \Rightarrow a \oplus c \balance b \oplus d$;
\item $a \balance  b \Rightarrow a\odot c\balance  b\odot c$;
\item $x \balance a,\;c\odot x \balance  b$ and $x \in \smax^{\vee} \Rightarrow c \odot a\balance  b$ (weak substitution);
\item $a \balance  x,\;x\balance b$ and $x \in \smax^{\vee} \Rightarrow a \balance b $ (weak transitivity);
\item $a \balance  b$ and $a,b \in \smax^{\vee} \Rightarrow a=b$ (reduction of balances);
\item \label{prop-signed}
$a_1\oplus\cdots \oplus a_n\balance  \zero$ and $a_i \in \smax^{\vee},\; i=1,\ldots, n$ $ \Rightarrow a_i=\ominus a_j$ for some $i\neq j$,
such that $|a_i|\geq |a_k|$ for all $k\in [n]$.
\end{enumerate}
\end{property}
\begin{definition}\label{partial_order}
We define the following relations, for $a,b \in \smax$:
\begin{gather*}
a \preceq b \iff b = a \oplus c \;\text{for some}\;c \in \smax
\iff b=a\oplus b,\\
a \preceq^{\circ} b \iff b = a \oplus c \;\text{for some}\;c \in \smax^{\circ} 
\enspace . \end{gather*}
\end{definition}

\begin{property}
The relation $\preceq$ is the natural partial order  \eqref{order_max} 
of $\smax$.
The relation $\preceq^\circ$  is a partial order on $\smax$.
\end{property}

\begin{example}
We have 
\[\zero \preceq \ominus 2 \preceq \ominus 3,\;\zero \preceq 2 \preceq 3,\; 2 \preceq \ominus 3.\]
However, $3$ and $\ominus 3$ are not comparable with $\preceq$.
\end{example}

\begin{property}\label{pro_preceq}
 We have the following properties for all $a,b \in \smax$:
\begin{enumerate}
\item\label{pro_preceq1}  $a \preceq b$ implies $|a| \leq |b|$;
\item\label{pro_preceq2} If $|a| \leq |b|$ and $b \in \smax^{\circ}$, then $a \preceq^\circ b$ and so $a \preceq b$;
\item\label{pro_preceq3} If  $a\in\smax^\vee$, then $a \preceq^\circ b$ is equivalent to $a\balance b$ and it implies $|a| \leq |b|$.
\item \label{pro_preceq4}If  $b\in\smax^\vee$, then $a \preceq^\circ b$ is equivalent to $a= b$.
\end{enumerate} 
\end{property}
\subsection{Polynomials over $\smax$}\label{sec-polysym}
Most of the definitions and results of this section are taken from \cite[Sec.\ 3.6]{baccelli1992synchronization}, up to the replacement of $\R$ by $\vgroup$. 
For the results, we shall need to %
assume that $\vgroup$ is a non-trivial divisible ordered group.
However, the definitions can be given for a general  ordered group,
and the questions of factorization of polynomial functions can be considered in any divisible group.
In particular, when $\vgroup$ is trivial, the factorization of polynomial functions over $\smax$ is related to the factorization of polynomials over the hyperfield of signs which was adressed already in \cite{lorscheidfac}.

The definition of \textit{formal polynomials}, as well as their degree $\deg$ and lower degree $\uval$, and of \textit{polynomial functions}, can be done for any semiring, so in particular for $\smax=\smax(\vgroup)$ (see \Cref{sec-form-rmax}), and lead to the semirings $\smax[\Y]$ and $\PF(\smax)$ of formal polynomials and polynomial functions respectively.
We can define also on $\smax[\Y]$ the partial orders or pre-orders $\preceq$ and $\preceq^\circ$, 
by applying the corresponding relations coefficient-wise.
For convenience of the reader we provide \Cref{notations_poly} of notations related to polynomials over $\smax$.

Let $\smax^{\vee}[\Y]\subset\smax[\Y]$ be the subset of formal polynomials $P$ with coefficients in $\smax^{\vee}$ and let $\PF (\smax^{\vee})\subset\PF (\smax)$ be the set of associated polynomial functions $\widehat{P}:\smax\to\smax$.
Suppose that $P\in \smax^{\vee}[\Y]$. Then, 
the \textit{signed roots} (or \textit{roots} for short) of $P$ or $\widehat{P}$  are the signed elements $y \in \smax^{\vee}$ for which $\widehat{P}(y) \balance  \zero$. 
We say that the polynomial function $\widehat{P}\in \PF (\smax^{\vee})$ can be factored into linear factors if there exist $r_i \in \smax^{\vee}$, for $i=1, \ldots, n$, such that 
\begin{equation}\label{s-factorization}
\widehat{P}(y)=  P_n\odot (y \ominus r_1) \odot \cdots \odot (y \ominus r_n)\enspace .
\end{equation}
Note that for $y\in \smax^{\vee}$, we have 
\begin{equation}\label{fact-roots}
\tropprod_i (y \ominus r_i) \balance \zero \iff (y \ominus r_i)\balance \zero,\quad \text{for some}\; i \iff y=r_i\quad \text{for some}\; i\enspace,\end{equation}
and therefore, the computation of linear factors of a polynomial function gives its roots. However, we will see later that this factorization does not always exist and is not unique 
when the roots $r_i$ are not simple, see \Cref{rem-nonunique}.
This means that, unlike $\tmax$, the semiring $\smax$ is not algebraically closed. However, some polynomial functions called \textit{closed polynomial functions} can be factored into linear factors. %

 In order to study closed polynomial functions in $\smax$, let  
 \[\mathcal{F}: \smax[\Y] \mapsto 
\PF (\smax), \;P \rightarrow \widehat{P}.\]
 As for $\tmax$, the mapping $\mathcal{F}$ is not injective. We will use the concept of closed polynomial function defined in \cite{baccelli1992synchronization}.
\begin{definition}[Closed polynomial function, \protect{\cite[Def.\ 3.87]{baccelli1992synchronization}}  for $\vgroup=\R$]\label{closed_def}
We say that the polynomial function $\widehat{P} \in \PF (\smax^{\vee}) $ is closed if $\mathcal{F}^{-1}(\widehat{P})$ admits a maximum element for the partial order $\preceq$, denoted $P^{\sharp} \in \smax[\Y]$. This element is called the maximum representative of $\widehat{P}$.
\end{definition}
If $\vgroup$ is non-trivial and divisible, then all elements of $\mathcal{F}^{-1}(\widehat{P})$ have necessarily same degree and lower degree as $P$ as in the case of $\tmax$ and are upper bounded (by the same arguments as in the proof of \Cref{max-lemma}), and so the supremum $P^{\sharp}$ of $\mathcal{F}^{-1}(\widehat{P})$ always exist, since any upper bounded subset of $\smax$ admits a supremum. 
However, the supremum of $\mathcal{F}^{-1}(\widehat{P})$ may not be an element of $\mathcal{F}^{-1}(\widehat{P})$ and so a maximum. This is the case for instance for $P=\Y^2\oplus\unit$ which is such that $P^\sharp=\Y^2\oplus \unit^\circ \Y\oplus\unit$. Also, $P^{\sharp}$ is in general not an element of $\smax^\vee[\Y]$.
 See \Cref{closed_poly} for an illustration of these properties.

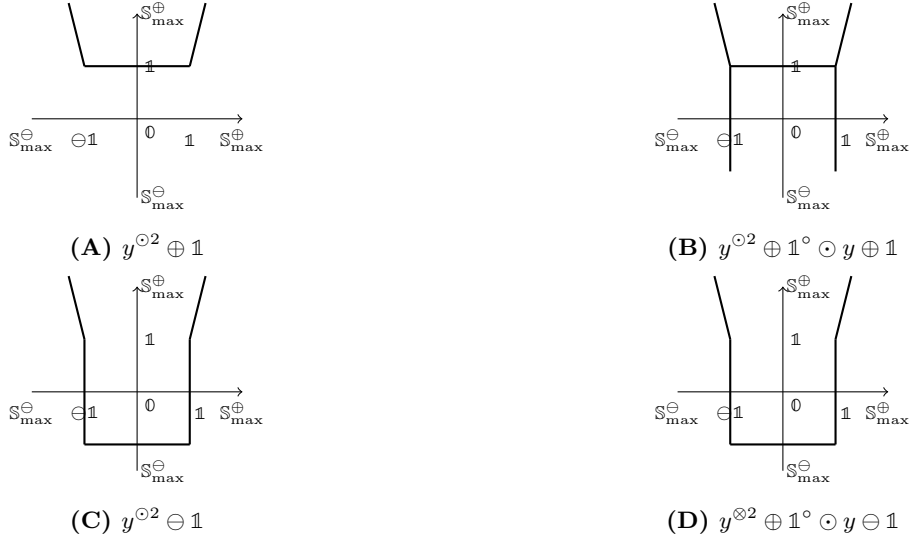
\begin{figure}
 \centering
  \begin{subfigure}[b]{0.48\textwidth}
  \centering
   \begin{tikzpicture}[scale=0.7]
\draw[->] (-2,0) -- (2,0);
\draw[->] (0,-1.5) -- (0,2);
\draw[thick](-1,1) -- (1,1);
\draw[thick] (1,1) -- (1.3,2.2);
\draw[thick] (-1,1) -- (-1.3,2.2);
\fill (0.25,-0.25) node {\tiny$\zero$};
\fill (-2,-0.4) node {\tiny$\smax^{\ominus}$};
\fill (2,-0.4) node {\tiny$\smax^{\oplus}$};
\fill (0.5,2) node {\tiny$\smax^{\oplus}$};
\fill (0.5,-1.5) node {\tiny$\smax^{\ominus}$};
\fill (-1,-0.4) node {\tiny$\ominus \unit$};
\fill (1,-0.4) node {\tiny$\unit$};
\fill (0.25,1) node {\tiny$ \unit$};
\end{tikzpicture}\caption{$y^{\odot 2} \oplus \unit$}%
     \end{subfigure}
     \hfill
         \begin{subfigure}[b]{0.48\textwidth}
         \centering
\begin{tikzpicture}[scale=0.7]
\draw[->] (-2,0) -- (2,0);
\draw[->] (0,-1.5) -- (0,2);
\draw[thick](-1,1) -- (1,1);
\draw[thick] (1,1) -- (1.3,2.2);
\draw[thick] (-1,1) -- (-1.3,2.2);
\draw[thick] (1,1) -- (1,-1);
\draw[thick] (-1,1) -- (-1,-1);
\fill (0.25,-0.25) node {\tiny$\zero$};
\fill (-2,-0.4) node {\tiny$\smax^{\ominus}$};
\fill (2,-0.4) node {\tiny$\smax^{\oplus}$};
\fill (0.5,2) node {\tiny$\smax^{\oplus}$};
\fill (0.5,-1.5) node {\tiny$\smax^{\ominus}$};
\fill (-1,-0.4) node {\tiny$\ominus \unit$};
\fill (1.2,-0.4) node {\tiny$\unit$};
\fill (0.25,1) node {\tiny$ \unit$};
\end{tikzpicture}\caption{$y^{\odot 2} \oplus \unit^{\circ} \odot y \oplus \unit$}

 \end{subfigure}
   \\
    \begin{subfigure}[b]{0.48\textwidth}
  \centering

\begin{tikzpicture}[scale=0.7]
\draw[->] (-2,0) -- (2,0);
\draw[->] (0,-1.5) -- (0,2);
\draw[thick] (1,1) -- (1.3,2.2);
\draw[thick] (-1,1) -- (-1.3,2.2);
\draw[thick] (1,1) -- (1,-1);
\draw[thick] (-1,1) -- (-1,-1);
\draw[thick] (-1,-1) -- (1,-1);
\fill (0.25,-0.25) node {\tiny$\zero$};
\fill (-2,-0.4) node {\tiny$\smax^{\ominus}$};
\fill (2,-0.4) node {\tiny$\smax^{\oplus}$};
\fill (0.5,2) node {\tiny$\smax^{\oplus}$};
\fill (0.5,-1.5) node {\tiny$\smax^{\ominus}$};
\fill (-1,-0.4) node {\tiny$\ominus \unit$};
\fill (1.2,-0.4) node {\tiny$\unit$};
\fill (0.25,1) node {\tiny$ \unit$};
\end{tikzpicture}\caption{$y^{\odot  2} \ominus \unit$}

     \end{subfigure}
     \hfill
      \begin{subfigure}[b]{0.48\textwidth}
  \centering
 
\begin{tikzpicture}[scale=0.7]
\draw[->] (-2,0) -- (2,0);
\draw[->] (0,-1.5) -- (0,2);
\draw[thick] (1,1) -- (1.3,2.2);
\draw[thick] (-1,1) -- (-1.3,2.2);
\draw[thick] (1,1) -- (1,-1);
\draw[thick] (-1,1) -- (-1,-1);
\draw[thick] (-1,-1) -- (1,-1);
\fill (0.25,-0.25) node {\tiny$\zero$};
\fill (-2,-0.4) node {\tiny$\smax^{\ominus}$};
\fill (2,-0.4) node {\tiny$\smax^{\oplus}$};
\fill (0.5,2) node {\tiny$\smax^{\oplus}$};
\fill (0.5,-1.5) node {\tiny$\smax^{\ominus}$};
\fill (-1,-0.4) node {\tiny$\ominus \unit$};
\fill (1.2,-0.4) node {\tiny$\unit$};
\fill (0.25,1) node {\tiny$ \unit$};
\end{tikzpicture}\caption{$y^{\otimes 2} \oplus \unit^{\circ} \odot y \ominus \unit$}

     \end{subfigure}\caption{The polynomial function $y^{\odot 2} \oplus \unit$ is not closed since it does not have the same polynomial function as $y^{\odot 2} \oplus \unit^{\circ}\odot y \oplus  \unit$. However the polynomial function $y^{\odot 2} \ominus \unit$ is closed since it has the same polynomial function as  $y^{\odot 2} \oplus \unit^{\circ}\odot y \ominus  \unit$. }
         \label{closed_poly}
\end{figure}

In $\tmax$, if $\vgroup$ is divisible, the formal polynomial $P^\sharp$ is factored and has the same polynomial function. In $\smax$, this property holds when the polynomial is closed, as we will see in \Cref{lemma-psharp}, but may not hold otherwise as for the previous example $P^\sharp=\Y^2\oplus \unit^\circ \Y\oplus\unit$.
These are the underlying properties of the following characterization.

\begin{theorem}[See~\protect{\cite[Th.\ 3.89]{baccelli1992synchronization}} for $\vgroup=\R$]
\label{theo-factored} Assume that $\vgroup$ is a non-trivial divisible group.
A polynomial function $\widehat{P} \in \PF (\smax^{\vee})$ can be factored into linear factors  if and only if it is closed.
\end{theorem}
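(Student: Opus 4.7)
My plan is to prove each implication separately, in each case reducing via the modulus morphism $|\cdot|:\smax\to\tmax$ of \Cref{prop-modulus} to the purely tropical situation governed by \Cref{roots_poly} and \Cref{max-lemma}, and then controlling signs by a dedicated case analysis on signed versus balance coefficients.

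\textbf{(Factored implies closed.)} Let $Q\in\smax[\Y]$ be the formal polynomial obtained by expanding $P_n\odot(\Y\ominus r_1)\odot\cdots\odot(\Y\ominus r_n)$. Then $\widehat Q=\widehat P$, so $Q\in\mathcal F^{-1}(\widehat P)$, and I aim to show $Q$ is the maximum of $\mathcal F^{-1}(\widehat P)$ for $\preceq$. Applying the morphism $|\cdot|$, one has $|Q|=|P_n|\odot(\Y\oplus|r_1|)\odot\cdots\odot(\Y\oplus|r_n|)$ in $\tmax[\Y]$, so by \Cref{roots_poly}(2), $|Q|$ has full support and equals $|Q|^\sharp$. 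For any $R\in\mathcal F^{-1}(\widehat P)$, evaluation at $y\in\smax^\oplus$ of generic modulus (i.e.\ avoiding the finitely many corner moduli of $|R|$ and $|Q|$) yields $|\widehat R(y)|=\widehat{|R|}(|y|)$ and similarly for $Q$, since no cancellation between terms of equal maximal modulus can occur; as $\widehat R=\widehat Q$, it follows that $\widehat{|R|}=\widehat{|Q|}$ as tropical polynomial functions, and hence by \Cref{max-lemma}, $|R|\leq|Q|$ coefficient-wise. When $Q_k$ is signed, a separate evaluation argument (taking $y$ signed in a well-chosen range so that the $k$-th monomial of $Q$ dominates strictly in modulus) combined with \Cref{pro_preceq}(\ref{pro_preceq4}) forces $R_k=Q_k$; when $Q_k$ is balance, the inequality $|R_k|\leq|Q_k|$ combined with \Cref{pro_preceq}(\ref{pro_preceq2}) gives $R_k\preceq Q_k$. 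In all cases $R\preceq Q$, so $P^\sharp$ exists and equals $Q$.

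\textbf{(Closed implies factored.)} Assume $\widehat P$ is closed and, replacing $P$ by $P^\sharp$, that $P=P^\sharp$. I would first prove that $|P|$ has full support and satisfies the concavity condition \eqref{concave}: otherwise one could strictly enlarge some non-leading coefficient of $P$ by a balance element whose modulus fits into the concave hull of the tropical Newton polygon, without changing $\widehat P$ on any point of $\smax$ because the enlarged term remains dominated by its neighbours, contradicting the maximality of $P^\sharp$. By \Cref{roots_poly}(3), $|P|$ then factors tropically as $|P_n|\odot(\Y\oplus c_1)\odot\cdots\odot(\Y\oplus c_n)$ with $c_1\geq\cdots\geq c_n$ and $c_i=|P_{n-i}|-|P_{n-i+1}|$. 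I would next build signed roots $r_i\in\smax^\vee$ with $|r_i|=c_i$: inside each maximal block of consecutive indices where the coefficients of $P$ are all signed, the signs of the corresponding $r_i$ are determined uniquely, recursively from the leading coefficient $P_n$, by matching the signs prescribed by the classical formula for coefficients of a product of linear factors; at indices where $P$ has a balance coefficient, the sign of the corresponding $r_i$ may be chosen freely. Expanding $\tilde Q:=P_n\odot(\Y\ominus r_1)\odot\cdots\odot(\Y\ominus r_n)$, one has $|\tilde Q|=|P|$ by the tropical factorisation, the signed coefficients of $\tilde Q$ agree with those of $P$ by construction, and the balance coefficients of $P$ are reproduced as balance coefficients of $\tilde Q$ through cancellations of terms of equal modulus and opposite sign in the expansion. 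Hence $\tilde Q=P$ and $\widehat P$ is factored.

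\textbf{Main obstacle.} The most delicate step is the sign-extraction in the second direction: one must verify that the signs prescribed inside a signed block of coefficients are compatible across block boundaries with the arbitrary sign choices made at balance coefficients, and that the alternation of signs required to produce a balance coefficient of $P$ by cancellation is automatically consistent with the ordering $c_1\geq\cdots\geq c_n$ of the tropical corners. Non-triviality and divisibility of $\vgroup$ are used throughout, via \Cref{max-lemma} and \Cref{roots_poly}, both to guarantee that the relevant infima are attained and that generic test points separating the corners exist.
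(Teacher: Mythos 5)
Your plan has the right overall shape — reduce to $\tmax$ via the modulus morphism, then control signs — but both directions have genuine gaps.

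In the direction ``factored $\Rightarrow$ closed'', the evaluation you use for a \emph{signed} coefficient $Q_k$ (pick $y$ so that the $k$-th monomial of $Q$ \emph{strictly} dominates) only exists when $k$ is a vertex of the Newton polygon of $|Q|$, i.e.\ when the two adjacent corners $c_{n-k}$ and $c_{n-k+1}$ of $|Q|$ are distinct. A factored polynomial can have a signed coefficient at a non-vertex index: take $Q=(\Y\oplus\unit)^2=\Y^2\oplus\unit\Y\oplus\unit$, where $Q_1=\unit$ is signed but the monomial $\unit\Y$ is never strictly dominant. At such an index the modulus bound $|R_k|\leq|Q_k|$ coming from \Cref{max-lemma} is not enough: $R_k=\ominus Q_k$ satisfies it without $R_k\preceq Q_k$. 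The proof of \Cref{lemma-psharp} closes exactly this case (Case~2 there) by evaluating at the \emph{non-generic} point $y=\ominus r_k$, one of the repeated roots, where the factored form of $Q$ makes every linear factor collapse and yields $\widehat{Q}(\ominus r_k)=Q_{n-k}\odot(\ominus r_k)^{\odot n-k}$ on the nose; a generic-range argument cannot reach this. (Separately, \Cref{pro_preceq}(\ref{pro_preceq4}) is a statement about $\preceq^{\circ}$, not $\preceq$: even at a vertex the evaluation gives $R_k\preceq Q_k$, not $R_k=Q_k$ — the former is in any case all you need.)

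In the direction ``closed $\Rightarrow$ factored'', the concavity of $|P^\sharp|$ via a strict-enlargement contradiction is sound and does use non-triviality and divisibility of $\vgroup$ exactly as you say, but the sign-extraction — which you yourself flag as ``the most delicate step'' — is where the whole content of this implication lies, and it is left undone. Concretely, you must show that at each corner $c$ of $|P^\sharp|$ where $\widehat P(c)$ or $\widehat P(\ominus c)$ is signed, the signs of the saturated coefficients are constrained by closedness to a pattern that arises from a product of linear factors, and that at indices where $P^{\sharp}_k$ is balance the sign of the corresponding $r_i$ can be chosen to force the needed cancellation in the expansion; nothing you have written rules out a conflict between the constraints at $c$ and at $\ominus c$. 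A cleaner route is to show that closedness yields a signed representative $P_1\in\smax^\vee[\Y]$ of $\widehat{P}$ with $|P_1|$ concave and then invoke \Cref{suf_cond}, but producing such a $P_1$ requires the same consistency check. As written, this direction is a plan, not a proof.
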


\begin{remark} If $\vgroup$ is trivial, then the above arguments fail.
Indeed, first the elements of $\mathcal{F}^{-1}(\widehat{P})$ do not have necessarily same degree and lower degree as $P$. So one would need to replace in 
\Cref{closed_def}, $\mathcal{F}^{-1}(\widehat{P})$ by the subset of polynomials with same degree and lower degree as $P$.
However, if we do so, the closeness property is not sufficient for factorization.
Take the example of $P=\Y^2\oplus\unit$. When $\vgroup$ is trivial, the polynomial function $\widehat{P}$ is equal to $\unit$ on $\smax^\vee$, so $P$ is the only element of $\mathcal{F}^{-1}(\widehat{P})$ with same degree and lower degree than $P$, and is thus closed with  $P^\sharp=P$ in the above sense, but $\widehat{P}$ is not factored.
\end{remark}

\begin{example}\label{ex_sym_plot}
We consider again the formal polynomal $P = \Y^5\oplus 4 \Y^3\oplus \Y \oplus 1$ of \Cref{ex_poly1} over $\smax(\R)$. To plot $P$ we must consider all of three components $\smax^{\ominus}$, $\smax^{\oplus}$ and $\smax^{\circ}$ and also the value of the function itself, is an element of $\smax$ which also belongs to one of these components.  Therefore, this case is more complicated than the case over $\tmax$. To simplify this, it is convenient to only consider $P$ over $\smax^{\vee}$. Then the points of discontinuity show the points in which the polynomial functions take balanced values. These discontinuities always are symmetric to the corners. To plot $P$, consider the polynomial function $\widehat{P}$ as following: 
 \[\widehat{P}(y)=\begin{cases}
 y^{\odot 5}&y \succeq 2\;, \\
 4\odot y^{\odot 3} & -1 \preceq y \preceq 2\;,\\
 1&  \ominus -1 \prec y \preceq -1\;,\\
 \ominus 4\odot y^{\odot 3} &  \ominus 2 \preceq y \preceq \ominus -1\;,\\
 \ominus y^{\odot 5}& y \preceq\ominus 2\;.
 \end{cases}\]
 It is not hard to check that $\ominus -1$ is the only point of discontinuity and the only root of $P$. \Cref{sym_poly} illustrats the plot of $P$ and $\widehat{P}$.

  \begin{figure}[htbp]\small
\begin{tikzpicture}[scale=0.5]
\draw[->] (-5.5,0) -- (5.5,0);
\draw[->] (0,-5.5) -- (0,5.5);

\draw[thick] (-3,-3) -- (-3,3);
\draw[thick](-3,3) -- (3,3);
\draw[thick] (3,3) -- (4,3.5);
\draw[thick] (-3,-3) -- (-4,-3.5);
\draw[thick] (4,3.5) -- (5,5);
\draw[thick] (-4,-3.5) -- (-5,-5);

\draw[dashed] (-4,-3.5) -- (-4,0);
\draw[dashed] (3,3) -- (3,0);
\draw[dashed] (4,3.3) -- (4,0);

\fill (4,3.5) circle (3pt);
\fill (-4,-3.5) circle (3pt);
\fill (-3,3) circle (3pt);
\fill (3,3) circle (3pt);
\fill (-3,-3) circle (3pt);

\fill (0.25,-0.45) node {$\zero$};
\fill (-5.5,-0.6) node {$\smax^{\ominus}$};
\fill (5.8,-0.6) node {$\smax^{\oplus}$};
\fill (1,5.5) node {$\smax^{\oplus}$};
\fill (1,-5.6) node {$\smax^{\ominus}$};
\fill (-4.15,0.45) node {$\ominus\, 2$};
\fill (-2,0.45) node {$\ominus\! -\! 1$};
\fill (4,-0.45) node {$ 2$};
\fill (3,-0.45) node {$-1$};
\end{tikzpicture}

 \caption{Plot of $P$ in \Cref{ex_sym_plot}. The solid black line illustrates $\widehat{P}$. The only point of discontinuity of $\widehat{P}$ is $\ominus -1$ which is the only root of $P$.}
         \label{sym_poly}

     \end{figure}
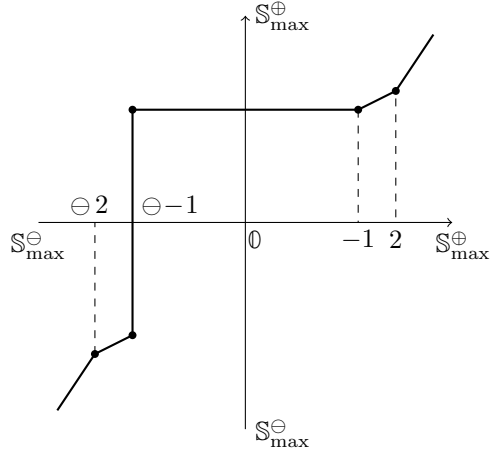
    
\end{example}
The following results were also shown in \cite{baccelli1992synchronization}.
The first one relates roots of polynomials 
over $\smax$ with corners over $\tmax$.  It follows from the definitions of these notions and is true for any ordered group $\vgroup$. 
The second one gives a sufficient condition for a polynomial to be closed.
Let us apply the absolute value map $|\cdot|$ to formal polynomials coefficient-wise.
 \begin{lemma}[See~\protect{\cite[Lemma 3.86]{baccelli1992synchronization}}  for $\vgroup=\R$]\label{abs_roots} %
Let ${P} \in \smax^\vee[\Y]$. If $r$ is a root of $P$, then $|r|$ is a corner of $|P|$. 
\end{lemma}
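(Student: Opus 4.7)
The proof will rest on the observation that the absolute value map $|\cdot|\colon \smax \to \smax^{\oplus}$ is a semiring morphism (\Cref{prop-modulus}), combined with the elementary classification of signed sums that balance zero (\Cref{prop-nabla}\eqref{prop-signed}). The plan is to take absolute values on both sides of $\widehat{P}(r) \balance \zero$ and interpret the resulting equation as the statement that the maximum defining $\widehat{|P|}(|r|)$ is attained at least twice, or that $\uval(|P|)\geq 1$.

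First I would dispose of the edge case $r=\zero$. Writing $\widehat{P}(\zero)$ out using the definition \eqref{widehat_p}, one has $\widehat{P}(\zero)=P_0$ if $\uval(P)=0$ and $\widehat{P}(\zero)=\zero$ otherwise. Since $P_0\in\smax^{\vee}$ and a signed element balances $\zero$ only if it is $\zero$ (reduction of balances, \Cref{prop-nabla}(8)), the hypothesis $\widehat{P}(\zero)\balance\zero$ forces $P_0=\zero$, i.e.\ $\uval(|P|)\geq 1$. By \Cref{def_corners} this exactly means that $|P|$ has a corner at $\zero=|r|$.

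Next I would treat the generic case $r\in\smax^{\vee}\setminus\{\zero\}$. Set $a_k\coloneqq P_k\odot r^{\odot k}$ for $m\leq k\leq n$, where $m=\uval(P)$ and $n=\deg(P)$. Each $a_k$ lies in $\smax^{\vee}$ because $P_k,r\in\smax^{\vee}$ and $\smax^{\vee}$ is closed under multiplication. The assumption $\widehat{P}(r)=\bigoplus_{k=m}^n a_k\balance\zero$ together with \Cref{prop-nabla}\eqref{prop-signed} then yields indices $i\neq j$ with $a_i=\ominus a_j$ and $|a_i|\geq |a_k|$ for every $k$. Applying the morphism property of $|\cdot|$ gives $|a_k|=|P_k|\odot|r|^{\odot k}$, and the equality $a_i=\ominus a_j$ implies in particular $|a_i|=|a_j|$. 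Hence the maximum in
\[
\widehat{|P|}(|r|)=\bigoplus_{k=m}^{n}|P_k|\odot |r|^{\odot k}
\]
is attained at two distinct indices $i\neq j$; by \Cref{def_corners}, $|r|$ is a (non-$\zero$) corner of $|P|$, which completes the proof.

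The argument is essentially a bookkeeping exercise: no convex-analytic or model-theoretic input is required, and the only potentially delicate point is recognizing that \Cref{prop-nabla}\eqref{prop-signed} is precisely the combinatorial statement needed to locate two monomials tied at the maximum.
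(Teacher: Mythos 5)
Your proof is correct, and since the paper itself does not give a proof of \Cref{abs_roots} (it simply refers to \cite[Lemma 3.86]{baccelli1992synchronization}), there is nothing in the text to diverge from: your argument is the direct one, relying on \Cref{prop-modulus} to push $|\cdot|$ through the sum and on \Cref{prop-nabla}\eqref{prop-signed} to produce two indices of maximal modulus on which the max defining $\widehat{|P|}(|r|)$ is attained, which is precisely the corner condition of \Cref{def_corners}. The $r=\zero$ case is handled cleanly via reduction of balances and the lower-degree convention. One small hygiene point you might add: the argument tacitly assumes $P\neq\zero$ (otherwise $|P|$ has no well-defined corners and every $y$ would be a root), and in the case $r\neq\zero$ you should remark that $a_n = P_n\odot r^{\odot n}\neq\zero$ because $r$ is invertible in $\smax^\vee\setminus\{\zero\}$ — this is what guarantees the pair of indices produced by \Cref{prop-nabla}\eqref{prop-signed} carries nonzero modulus, so that $|r|$ is genuinely a non-$\zero$ corner rather than a degenerate tie among zero terms.
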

\begin{lemma}[See~\protect{\cite[Cor.\ 3.90]{baccelli1992synchronization}} for $\vgroup=\R$]\label{suf_closed} Assume that $\vgroup$ is a non-trivial divisible group.
Let ${P} \in \smax^\vee[\Y]$.
A sufficient condition for $\widehat{P}$  to be closed is that $|{P}|$ has distinct non $\zeror$ corners.
\end{lemma}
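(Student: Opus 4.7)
The plan is to construct explicitly a factorization $P = P_n \odot \prod_{k=1}^n(\Y\ominus r_k)$ in $\smax^\vee[\Y]$, from which it follows that $\widehat P$ is factored, and hence closed by \Cref{theo-factored}.

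Under the hypothesis, $|P|$ has $n$ distinct non-$\zero$ corners $c_1 > c_2 > \cdots > c_n > \zero$. By \Cref{corners_polygon}, the Newton polygon of $|P|$ then has $n$ distinct slopes, each on an interval of length $1$, so the support of $|P|$ is $\{0,1,\ldots,n\}$; hence $|P|$, and therefore $P$, has full support. \Cref{factor} and \Cref{roots_poly} yield $c_k = |P|_{n-k}/|P|_{n-k+1}$. Writing each coefficient as $P_{n-k} = \tau_k \odot |P|_{n-k}$ with $\tau_k \in \{\unit,\ominus\unit\}$, I would then define
\[
\epsilon_k \coloneqq \ominus\tau_{k-1}\odot\tau_k, \qquad r_k \coloneqq \epsilon_k \odot c_k \in \smax^\vee, \qquad k=1,\dots,n,
\]
and claim that the identity above holds in $\smax^\vee[\Y]$.

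The main step is to verify this identity coefficient-wise. Expanding the right-hand side, the coefficient of $\Y^{n-k}$ equals $P_n\odot(\ominus\unit)^k\odot\bigoplus_{|S|=k}\tropprod_{i\in S} r_i$. The decisive use of the hypothesis is the following: for any $S$ of size $k$, the absolute value of $\tropprod_{i\in S} r_i$ is $\sum_{i\in S} c_i$ in $\vgroup$, and since $c_1 > \cdots > c_n$ is strictly decreasing, this is \emph{uniquely} maximized over $\{|S|=k\}$ at $S = \{1,\dots,k\}$. Consequently $\bigoplus_{|S|=k}\tropprod_{i\in S} r_i$ is a \emph{signed} element (not merely balanced), equal to $r_1\odot\cdots\odot r_k$. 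A telescoping sign computation gives $\epsilon_1\cdots\epsilon_k = (\ominus\unit)^k\odot\tau_0\odot\tau_k$, so the coefficient of $\Y^{n-k}$ on the right reduces to $\tau_k\odot|P|_{n-k} = P_{n-k}$, matching $P$. The main obstacle—really the entire crux of the argument—is ensuring this collapse to a signed element: without the distinctness hypothesis the tropical maximum would be attained at several subsets, and the sum would land in the balanced ideal $\smax^\circ$, blocking the factorization.
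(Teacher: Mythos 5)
Your proof is correct (modulo one minor omission discussed below) but takes a genuinely different route from the one used by the paper. The paper cites \cite[Cor.\ 3.90]{baccelli1992synchronization} without reproducing a proof, and later establishes the strictly more general \Cref{suf_cond} (sufficient condition: $|P|$ factored) by showing $\widehat{P}(y)=\widehat{Q}(y)$ pointwise through a delicate case analysis (Cases 1--4), since under that weaker hypothesis the coefficients of the expanded product $Q$ may be balanced even when $P_{n-k}$ is signed. You instead verify the \emph{formal polynomial} identity $P=P_n\odot\tropprod_{k=1}^n(\Y\ominus r_k)$ in $\smax^\vee[\Y]$, which is possible precisely because the distinct-corners hypothesis forces each tropical elementary symmetric sum $\bigoplus_{|S|=k}\tropprod_{i\in S}r_i$ to be attained uniquely at $S=\{1,\dots,k\}$, hence to collapse to the signed element $r_1\odot\cdots\odot r_k$. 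The telescoping sign computation then gives $P_n\odot(\ominus\unit)^{\odot k}\odot r_1\odot\cdots\odot r_k=\tau_k\odot|P|_{n-k}=P_{n-k}$, so the formal identity really does hold. Note that your $r_k$'s agree with the ones defined in \Cref{suf_cond} by $r_i\odot P_{n-i+1}=\ominus P_{n-i}$, so the two approaches produce the same factorization; what you buy is the cleaner direct computation and the sharper conclusion $P=P^\sharp$ (i.e.\ $P$ is \emph{already} its maximum representative), which the general-case argument of \Cref{suf_cond} does not give.

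One small gap: you assume $c_1>\cdots>c_n>\zero$, i.e.\ that $\zeror$ is not a corner and $\uval(P)=0$. The hypothesis ``distinct non-$\zeror$ corners'' is naturally read as allowing $\zeror$ to be a corner of any multiplicity $m=\uval(P)>0$; only the non-zero corners are required to be simple. In that case the support of $|P|$ is $\{m,\dots,n\}$, not $\{0,\dots,n\}$. This is easily patched by taking $r_k=\zero$ for $k>n-m$, since $\Y\ominus\zero=\Y$ and the coefficient of $\Y^{n-k}$ on both sides is then $\zero$ for $k>n-m$; the computation for $k\le n-m$ is unchanged.
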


\begin{table}[ht]
\caption{Frequently used notation for polynomials over $\smax$}\label{notations_poly}
\begin{center}
\footnotesize
  \begin{tabular}{@{}c@{}|@{}c@{}||@{}c@{}|@{}c@{}}
& Notion& & Notion\\
\hline
$P, Q$&Formal polynomial&$P \oplus Q$&Coefficient-wise sum, \Cref{sec-form-rmax} \\
$\smax[\Y]$&Semiring of formal polynomials&$PQ$&Cauchy product, \Cref{sec-form-rmax}\\
$\PF(\smax)$&Semiring of polynomial functions&$P \preceq Q \; (\preceq^\circ%
)$&Coefficient-wise orders, \Cref{partial_order}\\
$P_k$&Coefficients of $P$, \Cref{sec-form-rmax}&$P \balance Q$&Coefficient-wise balance relation\\
$\deg(P)$& Degree of $P$, \eqref{deg}&$\Val(P)$&Coefficient-wise valuation, \Cref{sec-valuation}\\
$\uval(P)$&Lower degree of $P$, \eqref{deg}&$\sval(P)$&Coefficient-wise signed valuation, \Cref{sec-valroots}\\
$\mathrm{supp}(P)$&Support of $P$, \eqref{supp}&$\mathrm{mult}_a(P)$&Multiplicity of root $a$ of $P$, \eqref{mult}\\
$\widehat{P}$&Polynomial function of $P$, \eqref{widehat_p}&$\Pn^r$&Normalized polynomial, \Cref{tho-mult-smax}\\
$P^{\sharp}$& Max representative of $\widehat{P}$, \Cref{closed_def}&$\sat(r,P)$&Saturation set, \Cref{tho-mult-smax}\\
$|P|$&Coefficient-wise absolute value map, \Cref{sec-polysym}&$P^{\sat,r}(\Y)$&Saturation polynomial, \Cref{tho-mult-smax}\\
  \end{tabular}
\end{center}
\end{table}

\subsection{The signed valuation over an ordered non-Archimedean valued field}
\label{sec-signval}

We now consider a valued field $(\rfield,\vgroup, \vall)$
{\em equipped with a total order} $\leq$ compatible with the operations
of the field so that $\rfield$ is an ordered field.
We require also the non-Archimedean valuation $\vall$ to be \emph{convex}, meaning
that the following property is satisfied:
\[
\elf_{1} \in \vring_\rfield\, , \; \elf_{2} \in \rfield\, ,\;\text{and}\; 
 0 \le \elf_{2} \le \elf_{1} \implies \elf_{2} \in \vring_\rfield \, .
\]
It is easy to see, using the properties of the order and the valuation that 
the convexity of $\vall$ is equivalent to the following
monotonicity property of $\vall$:
\begin{equation} \elf_{1} , \; \elf_{2} \in \rfield\, ,\;\text{and}\; 
 0 \le \elf_{2} \le \elf_{1} \implies \vall(\elf_{2})\leq  \vall(\elf_{1})\, .
\end{equation}
(Note that we are using the same notation for the order of $\rfield$ and the
one of $\vgroup$).
We shall call such a field $\rfield$ an \textit{ordered valued field}.

If $\rfield$ is a real closed field, it has a unique total order. Then, if
the non-Archimedean valuation $\vall$ is convex, the residue field
$\resfield_\rfield$ is real closed (see \cite[Theorem~4.3.7]{engler_prestel}).
An example of real closed field with a convex valuation
is provided by the field of real Hahn series $\hahnseries{\R}{\vgroup}$.

\begin{definition}[See also \protect{\cite{allamigeon2020tropical}}]
\label{def-sign}
For any ordered field $\rfield$, we define the {\em sign map}
$\sign:\rfield \to \bmaxs^\vee=\{\zero, \ominus \unit, \unit\}$ as follows, for all $\elf\in \rfield$,
\[ \sign(\elf):=\begin{cases}
\unit&\elf>0,\\
\ominus \unit& \elf<0,\\
\zero& \elf=0. 
\end{cases}\]
If $\rfield$ is also a valued field, with a convex valuation, and ordered value group $\vgroup$, the \textit{signed valuation} on $\rfield$ is the  map $\sval:\rfield \to \smax=\smax(\vgroup)$ which associates the element $\sign(\elf) \odot \vall(\elf)\in \smax^\vee$, with an element $\elf$ of $\rfield$,
where here $\vall(\elf)$ is seen as an element of $\smax^{\oplus}$.
\end{definition}

The map $\sval$ is a ``valuation'' in a generalized sense, in that it
can be identified to a {\em morphism of hyperfields} or a
{\em morphism of systems}, as explained below.

First, recall that a hyperfield $\hyper$ is a multiplicative group, 
endowed with a multivalued addition map,
$(a,b)\in \hyper\times \hyper \to a\boxplus b \in 2^\hyper$, 
with zero element $0$,
satisfying single distributivity, and a negation map
$\hyper\to\hyper$, $a\mapsto - a$, satisfying unique negation
($0\in a\boxplus b$ iff $a=- b$) and reversibility
($a\in b\boxplus c$ iff $c\in a\boxplus (-b)$).
The typical example of an hyperfield is obtained by taking the quotient 
$\vfield/\subgroup$ of
a field $\vfield$  by a subgroup $\subgroup$ of  $\vfield^*:=\vfield\setminus \{0\}$ (see for instance \cite{krasner,connesconsani,viro2010hyperfields} for an 
introduction to hyperfields and for the construction $\vfield/\subgroup$).
In this case, the canonical projection $\vfield\to \vfield/\subgroup$ is
a morphism of hyperfields (see for instance \cite{viro2010hyperfields,baker2018descartes}).

Algebraic constructions such as polynomials and matrices over hyperfields do not lead to hyperfields, so several structures extending this notion have been introduced in the litterature. For instance, in~\cite{BL}, Baker and Lorscheid introduced the notions of blueprints, ordered blueprints and idylls, which generalize the notion of hyperfields.
Also, in \cite{Rowen,Rowen2}, Rowen introduced the notion of systems and in particular of semiring systems, which appeared to be 
more general than hyperfields and idylls as shown in \cite{Rowen2,AGRowen}.
We shall use later the notion of semiring systems,
that we next define.
\begin{definition}[\protect{\cite{Rowen,Rowen2,AGRowen}}]\label{defi-system}
A (commutative) {\em semiring system} is composed of a commutative semiring $(\semiring,\oplus,\zero,\odot,\unit)$, \sloppy
a submonoid $\tangible$ of $(\semiring^*,\odot,\unit)$, additively generating $\semiring$, and being cancelative ($b\odot a=b\odot a'$ with $b\in \tangible$
and $a,a'\in\semiring$ implies $a=a'$),
and called the set of {\em tangible elements}, 
a negation, that is a map $\semiring\to \semiring$, $a\mapsto \ominus a$, 
satisfying~\Cref{property-minus}, (1)--(3),
and a {\em surpassing relation} $\surpass$, defined as follows.
Denote $\semiringvee:=\tangible\cup\{\zero\}$,
a {\em surpassing relation} $\surpass$  on $\semiring$ 
is a pre-order on $\semiring$ compatible
with the operations and such that, for all $a\in \semiring$, $\zero\surpass a\ominus a$ and,
for all $b,b'\in \semiringvee$, $b\surpass b'$ implies $b=b'$.
The set $\semiringvee$ is also required to satisfy
\new{unique negation}, that is 
$b,b'\in \semiringvee$ and $\zero\surpass b\ominus b'$ imply $b=b'$.
\end{definition}
A hyperfield gives rise to a system, called {\em hyperfield system}, by 
considering the set $\semiring$ of finite sums of elements of $\hyper$,
the set of tangible elements $\tangible=\hyper\setminus\{0\}$, and the inclusion relation as the
surpassing relation \cite{Rowen2,AGRowen}.
When $\hyper$ is a field, then the surpassing relation is
the equality relation.
Consider the ordered valued field  of real Hahn series 
$\rfield:=\hahnseries{\R}{\vgroup}$,
and  the hyperfield $\hyper(\vgroup)$ defined as the quotient
$\rfield/\subgroup$,
where $\subgroup$ is the group of elements $\elf\in \rfield$ which satisfy $\elf>0$ and $\vall(\elf)=0$. 
When, $\vgroup=\R$, $\hyper(\vgroup)$ is called sometimes the {\em signed tropical hyperfield}
\cite{gunn}, or the {\em tropical real hyperfield} \cite{viro2010hyperfields}, or the {\em real tropical hyperfield} \cite{jell_scheiderer_yu}. 
When $\vgroup=\{0\}$, the  hyperfield  $\hyper(\vgroup)$ is nothing but the
{\em hyperfield  of signs} (considered in~\cite{connesconsani,viro2010hyperfields,baker2018descartes}).
It can also be obtained by taking the quotient $\rfield/\subgroup$ of any ordered field $\rfield$ by the group $\subgroup$ of its positive elements.
Moreover, as explained in \cite{AGRowen},
idylls can be realized as a subclass of semiring systems,
in which the set $\tangible$ is a group $G$, the semiring $\semiring$ is
the monoid semiring $\N[G]$ (including $\zero$) and the surpassing relation
is defined as $a\preceq b$ if $b=a+c$ with $c\in N_G$, where $N_G$ is
a proper $G$-sub-bimodule (or equivalently a proper ideal) of $\N[G]$.

Morphisms of hyperfields, in particular canonical projections
 $\vfield\to \vfield/\subgroup$, and similarly  morphisms of idylls,
 are necessarily morphisms of 
the corresponding systems, which can be defined as follows (note that 
the conditions below
are equivalent to the ones of \cite[Def.\ 7.1]{Rowen2}, since
the tangible set contains the unit of a semiring system).

\begin{definition}[Specialization of \protect{\cite[Def.\ 7.1]{Rowen2}}]
\label{def-morphism-systems}
Let $(\semiring,\tangible,\ominus,\surpass)$ and $(\semiring',\tangible',\ominus,\surpass)$ 
be two semiring systems.
 We say that the map
$\morphismsys:\semiring\to\semiring'$ is a {\em morphism of semiring systems} if
for all $a,a'\in\semiring$ and $b\in  \tangible$, we have 
\begin{align*}
&\morphismsys(\zero)=\zero \\ %
& \morphismsys(\tangible)\subset \tangible'\\
&\morphismsys(\ominus a)=\ominus \morphismsys(a)\\
&\morphismsys(a\oplus a')\surpass \morphismsys(a)\oplus \morphismsys(a')\\
&\morphismsys(b\odot a)=\morphismsys(b)\odot \morphismsys(a)\\
& a\surpass a'\implies \morphismsys(a)\surpass \morphismsys(a').
\end{align*}
\end{definition}

For any ordered valued field $(\rfield,\vgroup,\vall,\leq)$,
we can consider, as for the field of real Hahn series, 
the hyperfield $\rfield/\subgroup$,
where $\subgroup$ is the group of elements $\elf\in \rfield$ which satisfy $\elf>0$ and $\vall(\elf)=0$. 
Then, the projection $\rfield\to\rfield/\subgroup$ is necessarily a morphism of hyperfields or of semiring systems.
Consider the semiring system composed of $\smax$ with the surpassing
relation $\preceq^{\circ}$ and
with the set of tangible elements $\smax^\vee\setminus\{\zero\}$.
Note that in that case, unique negation corresponds to
reduction of balances in \Cref{prop-nabla}.
Then, $\smax$ is isomorphic to the hyperfield system associated
to $\hyper(\vgroup)$, see \cite{AGRowen}.
It is also isomorphic to the hyperfield system associated
to $\rfield/\subgroup$. The isomorphism maps an element of 
$\rfield/\subgroup$ to the signed valuation 
of any of its elements. It identifies the element $a\in \smax^{\circ}$
with the set $\{b\in \smax^\vee \mid b\preceq^\circ a\}$.
Indeed, in the hyperfield  $\rfield/\subgroup$, the addition of
$\elf \subgroup$ with $\elf'\subgroup$ is singly valued equal to $\elf\subgroup$ or $\elf'\subgroup$, except
when $\sval(\elf)=\ominus \sval(\elf')$, in which case
the sum is the set $\{\elf''\in \rfield\mid \vall(\elf'')\leq \vall(\elf)=\vall(\elf')\}$.
Then, the signed valuation defined in \Cref{def-sign} is a morphism of semiring systems from the field 
$\rfield$ (with equality as surpassing relation, and usual negation)
to the semiring system $\smax$.
It has the additional property:
\begin{align}\label{posval}
\sval(\elf)\in \smax^\oplus\setminus\{\zero\} \iff \elf>0\enspace .
\end{align}
This is in particular the case for 
the map $\operatorname{sgn}:\rfield\to\bmaxs$.

\begin{remark}\label{rem-morphism-systems}
When $\rfield$ is a field, seen as a semiring system,
so with $\tangible=\rfield^*$, the usual negation and the equality surpassing
relation, then a
morphism $\morphismsys$ of semiring systems from $\rfield$ to $\smax$ satisfies necessarily
$\morphismsys(0)=\zero$ and $\morphismsys(\rfield^*)\subset \smax^\vee\setminus \{\zero\}$,
hence 
\begin{align*}
\elf=0 \iff \morphismsys(\elf)=\zero\enspace .
\end{align*}
Moreover,
\begin{align*}
&\morphismsys(-\elf)=\ominus \morphismsys(\elf)\\
&\morphismsys(\elf+ \elf')\surpass \morphismsys(\elf)\oplus \morphismsys(\elf')\\
&\morphismsys(\elf \elf')=\morphismsys(\elf)\odot \morphismsys(\elf').
\end{align*}
In particular, $\morphismsys(1)=\unit$, $\morphismsys(-1)=\ominus \unit$ and
$\morphismsys(\elf^2)\in \smax^\oplus\setminus\{\zero\}$, and 
$\morphismsys(-\elf^2)\in \smax^\ominus\setminus\{\zero\}$ 
for all $\elf\in \rfield^*$,
and the map $\vall: \rfield\to\tmax$ such that $\vall(\elf)=|\morphismsys(\elf)|$, is
a valuation over $\rfield$. 
If in addition, $\rfield$ is an ordered field, and if 
$\morphismsys$ satisfies \eqref{posval}
in which $\sval$ is replaced by $\morphismsys$,
then the valuation
$\vall$ is convex, and so $\rfield$ is an ordered valued field,
and $\morphismsys$ coincides with
the signed valuation $\sval$ defined in \Cref{def-sign}.
In particular, if $\rfield$ is real closed, then the order is unique and
satisfies $\elf\geq 0$ if and only if $\elf=a^2$ for some $a\in \rfield$.
Hence, using the property that
$\morphismsys(a^2)\in \smax^\oplus\setminus\{\zero\}$, we get that
$\morphismsys$ satisfies \eqref{posval}, and so the valuation
$\vall$ is convex, $\rfield$ is an ordered valued field, and
$\morphismsys$ coincides with
the signed valuation $\sval$ defined in \Cref{def-sign}.
\end{remark}

\subsection{Layered semirings}
In \cite{cramer-guterman}, we defined the notion of tropical extension
of semirings, also called {\em layered semiring} in \cite{AGRowen}.
We recall here a definition adapted to our notations, that we shall use in the following sections.

Let $\vgroup=(\vgroup,+,0,\leq)$ be a (totally) ordered abelian group,
and $\tmax=\tmax(\vgroup)$.

Assume that $(\semiring,\oplus,\zero,\odot,\unit)$ is a semiring
with a negation operator $\semiring\to \semiring$, $a\mapsto \ominus a$
(satisfying~\Cref{property-minus}, (1)--(3)).
We define the semiring  
$\skewproductstar{\semiring}{\tmax^*}$ as
the set $\semiring\times \tmax^*\cup\{(\zero,\botelt)\}$
endowed with the operations
\[
(a,g)\oplus(a',g')=\begin{cases}
(a\oplus a',g) & \text{if } g=g' \\
(a,g) & \text{if } g'< g \\
(a',g') & \text{if } g< g' 
\end{cases}
\qquad\text{and}\quad 
\begin{array}{l}
(a,g)\odot (a',g')=(a \odot a',g + g')  \\
\ominus (a,g)=(\ominus a,g)\\
|(a,g)|=g\enspace .
\end{array}
\]
This semiring has zero element $(\zero,\botelt)$ and unit $(\unit,0)$,
that shall also be denoted by $\zero$ and $\unit$ respectively.
The semiring $\semiring$ can be embedded in $\skewproductstar{\semiring}{\tmax^*}$ by the semiring morphism $a\in \semiring\mapsto (a,0)$, and so can be identified to
a subsemiring of $\skewproductstar{\semiring}{\tmax^*}$.
Moreover, $\tmax$ can be  embedded in $\skewproductstar{\semiring}{\tmax^*}$
as a multiplicative monoid by the morphism $g\in \tmax^*\mapsto (\unit,g)$
and $\botelt\mapsto (\zero,\botelt)$.
When $\semiring$ is idempotent, this morphism is also a semiring morphism,
and so $\tmax$ can be identified to a subsemiring of $\skewproductstar{\semiring}{\tmax^*}$.
In any case, we shall identify the elements of $\tmax$ with their images by this morphism, in particular, for any non-zero element $(a,g)$ of $\skewproductstar{\semiring}{\tmax^*}$, we shall see its absolute value $|(a,g)|$ as the element $(\unit,g)$ of $\skewproductstar{\semiring}{\tmax^*}$.

If $\semiring$ is also a semiring system with set of tangible elements $\tangible$, negation $\ominus$
and surpassing relation $\surpass$, then $\skewproductstar{\semiring}{\tmax^*}$ is a 
semiring system with set of tangible elements
$\tangible\times \tmax^*$, the above negation $\ominus$, and surpassing relation
\[
(a,g)\surpass (a',g')\quad \text{if }  \begin{cases}
\text{either } g< g' \; \text{and} \; \zero\surpass a' \enspace ,\\
\text{or }\;  g=g' \; \text{and} \; a\surpass a' \enspace .
\end{cases}
\]
In particular $(\skewproductstar{\semiring}{\tmax^*})^\vee
=\tangible\times \tmax^*\cup\{\zero\}$ satisfies unique negation, that is 
$(a,g),(a',g')\in\tangible\times \tmax^*\cup\{\zero\}$ and $\zero\surpass (a,g)\ominus (a',g')$ then $(a,g)=(a',g')$.

If $\semiring$ is zero-sum free and without zero divisors,
which implies that $\semiring^*$ is preserved by all the operations
$\oplus,\odot,\ominus$,
then we shall also use the notation $\skewproductstar{\semiring^*}{\tmax^*}$ 
for the subset $\semiring^*\times \tmax^*\cup\{(\zero,\botelt)\}$
endowed with all the above operations.
The semiring $\semiring$ is also embedded in $\skewproductstar{\semiring^*}{\tmax^*}$
by the morphism $a\in \semiring^*\mapsto (a,0),\; \zero\mapsto (\zero,\botelt)$,
and so can be identified to
a subsemiring of $\skewproductstar{\semiring^*}{\tmax^*}$.
Both embeddings of $\semiring$ (the present one and the above one)
send  $\tangible$ into the subgroup 
$\tangible\times \{0\}$ of $\tangible\times \tmax^*$, and they are morphisms of
semiring systems.
So we can also identify $\semiringvee$ to the subset $(\tangible\times \{0\})\cup\{(\zero,\botelt)\}$ of $(\skewproductstar{\semiring}{\tmax^*})^\vee$ and $(\skewproductstar{\semiring^*}{\tmax^*})^\vee$.

The semirings $\skewproductstar{\semiring}{\tmax^*}$ and
  $\skewproductstar{\semiring^*}{\tmax^*}$ 
are called {\em tropical extensions} of $\semiring$ by $\vgroup$
and
$\semiring$ is called the {\em layer}.

\begin{remark}\label{extendedsmax}
  The semiring system $\skewproductstar{\bmaxs^*}{\tmax^*}$
is isomorphic to $\smax(\vgroup)$ (see~\cite{cramer-guterman} or
\cite{AGRowen}).
This provides an alternative way to defined $\smax(\vgroup)$,
as a layered semiring constructed from $\bmaxs$ and $\tmax$.
\end{remark}

\section{Factorization of Polynomials over $\smax$}\label{sec-factsym}
{\em In this section, unless otherwise stated, we shall assume that $\vgroup$
is a non-trivial divisible ordered group.}
The following result implies the ``only if'' part of \Cref{theo-factored}
and characterizes $P^\sharp$.
\begin{proposition}\label{lemma-psharp}
Let $\vgroup$ be any ordered group, $P\in \smax[\Y] $, and $r_1,\ldots, r_n\in \smax$,
then $\widehat{P}$ can be factored as
\begin{equation}\label{phat}\widehat{P}(y)=  P_n\odot (y \ominus r_1) \odot \cdots \odot (y \ominus r_n)\end{equation}
if, and only if, $\mathcal{F}^{-1}(\widehat{P})$ contains
the element:
\begin{equation}\label{psharp}
 Q=  P_n (\Y \ominus r_1) \odot \cdots \odot (\Y \ominus r_n)\in \smax[\Y] \enspace.\end{equation}
Moreover, if $\vgroup$ is non-trivial and divisible, and the above conditions
hold, then $\widehat{P}$ is closed and
$P^{\sharp}= Q$ is  the maximum element of
$\mathcal{F}^{-1}(\widehat{P})$ for $\preceq$.
\end{proposition}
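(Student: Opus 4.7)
The plan is to prove the two assertions separately. The equivalence, which needs no hypothesis on $\vgroup$ beyond being an ordered group, is essentially formal: since $\mathcal{F}\colon \smax[\Y] \to \PF(\smax)$ is a semiring morphism (an immediate consequence of distributivity in $\smax$), applying $\mathcal{F}$ to~\eqref{psharp} gives $\widehat{Q}(y) = P_n \odot (y \ominus r_1) \odot \cdots \odot (y \ominus r_n)$, so~\eqref{phat} is exactly the identity $\widehat{P} = \widehat{Q}$, that is, $Q \in \mathcal{F}^{-1}(\widehat{P})$.

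For the maximality part, I assume $\vgroup$ is non-trivial and divisible and take an arbitrary $Q' \in \mathcal{F}^{-1}(\widehat{P})$; the goal is $Q'_k \preceq Q_k$ for every $k$. The first step is to reduce to $\tmax$ via the absolute value morphism $|\cdot|\colon \smax \to \tmax$ of~\Cref{prop-modulus}: for every $y \in \smax$,
\[
\widehat{|Q'|}(|y|) = |\widehat{Q'}(y)| = |\widehat{Q}(y)| = \widehat{|Q|}(|y|),
\]
and as $|y|$ runs through all of $\tmax$ we obtain $\widehat{|Q'|} = \widehat{|Q|}$ in $\PF(\tmax)$. Since $|Q| = |P_n| \odot (\Y \oplus |r_1|) \odot \cdots \odot (\Y \oplus |r_n|)$ is factored over $\tmax$, \Cref{roots_poly} and~\Cref{max-lemma} give $|Q| = |Q|^{\sharp}$ and identify $|Q|$ as the coefficient-wise maximum of $\mathcal{F}^{-1}(\widehat{|Q|})$ in $\tmax[\Y]$; hence $|Q'_k| \le |Q_k|$ for all $k$.

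It remains to upgrade $|Q'_k| \le |Q_k|$ to $Q'_k \preceq Q_k$ in $\smax$. Two subcases are routine: if $|Q'_k| < |Q_k|$, then $Q'_k \preceq Q_k$ by the description of $\preceq$ on $\smax$, and if $Q_k \in \smax^{\circ}$, then part~(\ref{pro_preceq2}) of~\Cref{pro_preceq} applies. The delicate subcase is when $Q_k$ is signed and $|Q'_k| = |Q_k|$, where I need $Q'_k = Q_k$. When the $k$-th monomial of $|Q|$ occupies a strict corner of the Newton polygon (i.e.~the absolute values $|r_1| \ge \cdots \ge |r_n|$ split strictly at the position defining $|Q_k|$), I would pick $y_0 \in \smax^\vee$ in the open interval of $\tmax$ where that monomial strictly dominates $\widehat{|Q|}$: all other monomials of $\widehat{Q'}$ then have strictly smaller absolute value, so the equality $\widehat{Q'}(y_0) = \widehat{Q}(y_0) = Q_k \odot y_0^{\odot k}$ forces $Q'_k = Q_k$ by cancellativity in $\smax^\vee$. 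The hard part will be the residual configuration where $|Q_k|$ lies on a straight segment of the Newton polygon of $|Q|$ (a cluster of equal absolute values $|r_j|$) and yet $Q_k$ happens to be signed: no single monomial of $\widehat{Q}$ dominates strictly anywhere, and a direct test-point argument becomes combinatorial. I would handle this uniformly by the model-theoretic device already used in the proof of~\Cref{factor}: for each fixed degree $n$, the whole conclusion of~\Cref{lemma-psharp} is a first-order sentence in the theory of non-trivial totally ordered divisible groups with a bottom element, since every element of $\smax$ is coded by a pair in $(\vgroup \cup \{\botelt\}) \times \{+,-,\circ\}$ modulo the zero identification, and $\oplus$, $\odot$, $\ominus$, $\preceq$ are all definable in the $\tmax$-language. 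The statement is known when $\vgroup=\R$ by~\cite[Th.~3.89]{baccelli1992synchronization}, and~\Cref{prop-botiscomplete} transfers it to every non-trivial divisible $\vgroup$. Once $Q' \preceq Q$ holds coefficient-wise for every $Q' \in \mathcal{F}^{-1}(\widehat{P})$, the closedness of $\widehat{P}$ and the identification $P^{\sharp} = Q$ follow from~\Cref{closed_def}.
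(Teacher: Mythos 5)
Your proof of the first assertion is correct and is essentially the paper's argument: $\mathcal{F}$ respects $\odot$, so $\widehat{Q}=\widehat{P}$ is exactly the factorization of $\widehat{P}$.

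For the maximality claim, the absolute-value reduction is a genuine simplification (the paper only compares moduli locally inside each case), and your trivial, balanced, and strict-corner subcases are handled correctly. But the subcase you describe as ``residual'' is not a corner case --- it is the heart of the proposition, and your plan for it does not close. The missing idea in the paper's Case~2 is that even when the Newton polygon of $|Q|$ is flat at position $n-k$ (so no monomial strictly dominates), one should still pick a test point, namely $y=\ominus r_k$, and compute $\widehat{Q}(y)$ \emph{from the factored form rather than the monomial expansion}. Under the Case~2 hypotheses ($|r_k|=|r_{k+1}|$ and no $l$ with $r_l\in\{\ominus r_k, r_k^\circ\}$, so every $r_l$ with $|r_l|=|r_k|$ equals $r_k$), one has $y\ominus r_l=\ominus r_l$ for $l<k$ and $y\ominus r_l=y$ for $l\ge k$, hence
\[
\widehat{Q}(\ominus r_k)= P_n \odot (\ominus r_1)\odot\cdots\odot(\ominus r_k)\odot(\ominus r_k)^{\odot(n-k)} = Q_{n-k}\odot (\ominus r_k)^{\odot(n-k)},
\]
a product of signed, hence invertible, elements. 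Combining with the always-true bound $P_{n-k}\odot(\ominus r_k)^{\odot(n-k)}\preceq\widehat{P}(\ominus r_k)=\widehat{Q}(\ominus r_k)$ and cancelling $(\ominus r_k)^{\odot(n-k)}$ gives $P_{n-k}\preceq Q_{n-k}$. This is exactly the cancellation you were looking for; the flat segment is not an obstruction once $\widehat{Q}$ is evaluated through the linear factors.

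Your model-theoretic fallback is in the spirit of the paper's own transfers, and the conclusion is indeed first-order expressible at each fixed degree. However, it rests on the result being already established over $\R$, and the reference you invoke, \cite[Th.~3.89]{baccelli1992synchronization}, asserts only the equivalence ``closed $\Leftrightarrow$ factorable'' (as restated in \Cref{theo-factored}), not the explicit identification $P^\sharp=Q$, which is precisely what the paper says \Cref{lemma-psharp} supplies. So the citation you lean on does not obviously cover what you need, and elsewhere the paper notes that the arguments in that source are sketchy. There is also a structural inefficiency: if the full conclusion were both first-order and already proved over $\R$, the absolute-value reduction and the case analysis would be superfluous and you could transfer the whole statement at once; if it were not proved over $\R$, you would still need a direct argument for the flat-segment case. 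The paper's direct, self-contained proof sidesteps both issues.
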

\begin{proof}
Let $\widehat{P}$ be factored as in~\eqref{phat} and 
 $Q \in  \smax[\Y]$ be the formal polynomial given in~\eqref{psharp},
then the polynomial function $\widehat{Q}$ coincides with $\widehat{P}$,
so that $Q\in \mathcal{F}^{-1}(\widehat{P})$.
Conversely, if $Q\in \mathcal{F}^{-1}(\widehat{P})$, then
 $\widehat{P}$ can be factored as in~\eqref{phat}.
This shows the first assertion of the lemma.

For the second one, we need to prove that  for all 
$P\in \mathcal{F}^{-1}(\widehat{Q})=\mathcal{F}^{-1}(\widehat{P})$,
\begin{equation}\label{aim}P \preceq Q.\end{equation} 
Due to the definition of partial order $\preceq$, we obtain that 
\begin{equation}\label{idempotent_or}
P_{n-k}\odot y^{\odot n-k} \preceq  \widehat{P}(y)=\widehat{Q}(y) \; \forall y\in \smax.
\end{equation}
We have 
\begin{equation}\label{formula}
Q = P_n \bigg( \bigoplus_{k=0}^n ((\ominus \unit)^k \odot R_k) \Y^{n-k} \bigg)\enspace ,
\end{equation}
with $R_0=\unit$ and 
\[ R_k=\bigoplus_{1\leq i_1 < \cdots < i_k\leq n}r_{i_1}\odot \cdots\odot  r_{i_k},\quad \text{for} \; k=1,\ldots , n.\]
W.l.o.g suppose that $|r_1|\geq \cdots\geq |r_n|$, so that 
$|r_1 \odot \cdots \odot r_k| \geq |r_{i_1} \odot \cdots \odot r_{i_k}|$
for all $1\leq i_1 < \cdots < i_k\leq n$. So we have 
\[R_k=\begin{cases}
(r_{1}\odot \cdots\odot  r_{k})^{\circ}&\text{if}\;  |r_{k+1}|=|r_k|\;\text{and}\; r_{l}\in\{\ominus r_k, r_k^\circ\}\; \text{for some} \; l \geq 1 ,\\
r_{1}\odot \cdots\odot  r_{k}&\text{otherwise.}
\end{cases}\]
In the following we will consider all the possible cases.
\bigskip

\noindent 
Case 1: Suppose that $|r_k| > |r_{k+1}|$ and choose $y\in\smax^\vee$ such that
$|r_{k+1}|< |y| < |r_k|$. Then by \Cref{formula} 
\[\widehat{Q}(y) = P_n\odot (\ominus \unit)^{\odot k} \odot r_1 \odot \cdots \odot r_k \odot y^{\odot n-k}=Q_{n-k} \odot y^{\odot n-k}.\]
So by \Cref{idempotent_or} 
 we deduce $P_{n-k} \preceq Q_{n-k}$, since $y\in \smax^\vee\setminus\{\zero\}$ 
so is invertible.
\bigskip

\noindent
Case 2: Suppose that $|r_k| = |r_{k+1}|$ and that there is no $l\geq 1$ such 
that $r_{l}\in\{\ominus r_k, r_k^\circ\}$.
In particular $r_k\not\in  \smax^\circ$ and so is invertible.
Then, $R_k=r_1\odot \cdots \odot r_k$.
Take $y= \ominus r_k$. We have
$y\ominus r_l=y$ if $l\geq k$ and $y\ominus r_l=\ominus r_l$ if $l< k$.
Then, using \Cref{idempotent_or}, and the factorization of $\widehat{P}$,
we obtain
\begin{eqnarray}
P_{n-k} \odot (\ominus r_k)^{\odot n-k} &\preceq& \widehat{P}(\ominus r_k)\nonumber\\
&=&P_n\odot (\ominus r_1) \odot \cdots \odot (\ominus  r_k) \odot (\ominus r_k)^{\odot n-k},\nonumber
\end{eqnarray}
which implies
\[P_{n-k} \preceq P_n\odot (\ominus \unit)^{\odot k} \odot r_1 \odot \cdots \odot r_k, \]
since $\ominus r_k$ is invertible.
Using \Cref{formula},  we deduce $P_{n-k} \preceq Q_{n-k}$.

\bigskip

\noindent
Case 3: Assume now that $|r_k| = |r_{k+1}|\neq \zeror$
and that there exists $l\geq 1$ such 
that $r_{l}\in\{\ominus r_k, r_k^\circ\}$. So 
$R_k=(r_{1}\odot \cdots\odot  r_{k})^{\circ}$. Then
\[Q_{n-k}=P_n\odot (r_1 \odot \cdots \odot r_k)^{\circ}.\]
Take again $y=\ominus r_k$. Applying the modulus
to \Cref{idempotent_or}, we obtain 
\begin{eqnarray*}
|P_{n-k} \odot (\ominus r_k)^{\odot n-k} |&\leq & %
|\widehat{Q}(\ominus r_k)|\\
&=&|P_n|\odot |r_1 \odot \cdots \odot r_k \odot (\ominus r_k)^{\odot n-k}| \\ 
&=& |Q_{n-k} \odot (\ominus r_k)^{\odot n-k}|,
\end{eqnarray*}
and so $|P_{n-k}| \leq |Q_{n-k} |$, since $|r_k|$ is invertible in $\tmax$.
Since $Q_{n-k}$ is balanced we can obtain $P_{n-k} \preceq Q_{n-k}$.
\bigskip

\noindent
Case 4: Assume now that $r_k=\zero$ and let $\ell$ be the smallest such
$k$ (so that $n-\ell$ is the lower degree of $Q$). Let $y$ be such that
$\zeror=|r_{\ell}|<|y|<|r_{\ell-1}|$. 
Then, by the same arguments as in Case 1, we have
$\widehat{Q}(y) = Q_{n-\ell+1} \odot y^{\odot n-\ell+1}$.
By \Cref{idempotent_or}, we get $P_{n-k} \odot y^{\odot n-k}\preceq \widehat{Q}(y)$.
We deduce $P_{n-k}\preceq Q_{n-\ell+1} \odot y^{\odot k-\ell+1}$.
Since $k\geq \ell$, letting $y$ going to $\zero$, we obtain
$P_{n-k}=\zero=Q_{n-k}\preceq Q_{n-k}$. 
\end{proof}
\begin{remark}\label{rem-nonunique}
For  a general closed (or factored) polynomial function $\widehat{P}\in \PF (\smax^{\vee})$,
the factorization of $\widehat{P}$ or equivalently $P^{\sharp}$ is not necessarily unique. So, the multiplicity cannot be defined by using this factorization. For instance
\[\Y^4 \oplus   \unit^{\circ}\Y^{3} \oplus  \unit^{\circ}\Y^2 \oplus \unit^{\circ} \Y \ominus \unit =(\Y \ominus \unit)^{3} (\Y \oplus \unit)= (\Y \oplus \unit)^{3} (\Y \ominus \unit)\]
is the maximum representative $P^\sharp$ of the closed polynomial function
$\widehat{P}$ with $P= \Y^4 \ominus \unit\in \smax^\vee[\Y]$, and 
$P^\sharp$ does not have a unique factorization.
Moreover, the same maximum $P^\sharp$ holds for
$P= \Y^4 \oplus   \Y^{3} \oplus\Y^2 \oplus \Y \ominus \unit$ and
for $P=  \Y^4 \ominus   \Y^{3} \oplus\Y^2 \ominus \Y \ominus \unit$.
\end{remark}
In the following, we shall first give a new sufficient condition
for a polynomial to be factored 
which generalizes \Cref{suf_closed} (recall that factored is equivalent to closed in the non-trivial case),
and deduce a condition for the factorization to be unique.
We shall need the following result which precises \Cref{abs_roots}
when the factorization exists.
\begin{proposition}\label{corner-modulus}
Let ${P} \in \smax^\vee[\Y]$, with $\vgroup$ divisible,
and assume that $\widehat{P}$ is factored as
\begin{equation}\label{assumpfact} \widehat{P}(y)=  P_n\odot (y \ominus r_1) \odot \cdots \odot (y \ominus r_n)\enspace .\end{equation}
Then, $|r_1|,\ldots, |r_n|$ are the corners of $|P|$ counted with multiplicities.
\end{proposition}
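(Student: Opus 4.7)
The plan is to push everything through the absolute value map and reduce to the fundamental theorem of tropical algebra (\Cref{factor}) in $\tmax$. By \Cref{prop-modulus}, the map $a\mapsto |a|$ is a semiring morphism $\smax\to\smax^\oplus\cong\tmax$. Applying it coefficient-wise to a polynomial $Q\in\smax[\Y]$ gives $|Q|\in\tmax[\Y]$, and for every $y\in\smax^\oplus=\tmax$ one has $|y^{\odot k}|=|y|^{\odot k}=y^{\odot k}$, whence
\[
|\widehat{Q}(y)|=\Bigl|\bigoplus_k Q_k\odot y^{\odot k}\Bigr|=\bigoplus_k |Q_k|\odot y^{\odot k}=\widehat{|Q|}(y)\enspace.
\]

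Next I would consider the formal polynomial
\[
R := P_n\odot(\Y\ominus r_1)\odot\cdots\odot(\Y\ominus r_n)\in\smax[\Y]\enspace.
\]
By hypothesis $\widehat{R}=\widehat{P}$ as functions on $\smax$. Applying $|\cdot|$, the preceding identity yields $\widehat{|P|}(y)=|\widehat{P}(y)|=|\widehat{R}(y)|=\widehat{|R|}(y)$ for every $y\in\tmax$, so $|P|$ and $|R|$ define the same polynomial function over $\tmax$. Using that the modulus is a morphism, I can compute $|R|$ explicitly by distributing:
\[
|R|=|P_n|\odot(\Y\oplus|r_1|)\odot\cdots\odot(\Y\oplus|r_n|)\enspace,
\]
which is already a factorization into linear factors over the divisible semifield $\tmax(\vgroup)$; its corners (with multiplicities) are exactly $|r_1|,\ldots,|r_n|$.

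Finally, I would invoke \Cref{factor}: since $\vgroup$ is divisible, every formal polynomial over $\tmax$ of degree $n$ factors uniquely (up to order) as a product of linear factors determined by its polynomial function, and the corners with multiplicities are an invariant of the polynomial function. Applied to $\widehat{|P|}=\widehat{|R|}$, this forces $|P|$ to have exactly the same multiset of corners as $|R|$, namely $|r_1|,\ldots,|r_n|$. The only mild subtlety to check is the handling of corners equal to $\zero$ (i.e.\ the case where some $r_i=\zero$), which is covered by agreeing, as in \Cref{def_corners}, that the multiplicity of the corner $\zero$ equals the lower degree, a quantity that is clearly preserved when passing from $R$ to $|R|$ and is an invariant of $\widehat{|P|}$.
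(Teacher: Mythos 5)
Your proof is correct and follows essentially the same route as the paper: apply the modulus morphism coefficient-wise to transfer the factorization of $\widehat{P}$ into a factorization of $\widehat{|P|}$ over $\tmax$, then invoke the uniqueness statement of \Cref{factor}. The only cosmetic difference is that you introduce the auxiliary formal polynomial $R$ explicitly and spell out the handling of zero corners, whereas the paper states these steps more tersely.
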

\begin{proof} Assume that $P$ satisfies \eqref{assumpfact}.
Using that the modulus map is a morphism, we obtain that 
the polynomial function $\widehat{|P|}$ over $\tmax$
associated to the formal polynomial $|P|$ (over $\tmax$) satisfies:
\[\widehat{|P|}|(|y|)=|\widehat{P}(y)|=|P_n|\odot
\tropprod_{1\leq j\leq n} (|y|\oplus |r_j|),\]
for all $y\in \smax$.
Applying this property to any $x\in\tmax$ such that $|y|=x$, 
and using the uniqueness of the factorization of polynomial 
functions in $\tmax$, we get the result.
\end{proof}
A formal polynomial $P$ over $\tmax$ having distinct corners 
is such that $P^\sharp$ satisfies the inequalities in \Cref{concave} strictly, 
so it cannot be the concave hull of some formal polynomial $P$ distinct 
from $P^\sharp$, and so $P=P^\sharp$ is necessarily factored, see 
\Cref{max-lemma,roots_poly}.
Therefore, the following result is a generalization of \Cref{suf_closed}.

\begin{theorem}[Sufficient condition for factorization]\label{suf_cond}
Let ${P} \in \smax^\vee[\Y]$ with $\vgroup$ a divisible ordered group.
A sufficient condition for $\widehat{P}$  to be factored is that the formal polynomial $|{P}|$ is factored (see \Cref{roots_poly}).
In that case, we have  $\widehat{P}(y)= P_n\odot (y \ominus r_1)\odot  \cdots\odot  (y \ominus r_n)$, with $r_i\in\smax^\vee$, $i=1,\ldots, n$, such that $r_i\odot P_{n-i+1}= \ominus P_{n-i}$ for all $i\leq n-\uval(P)$ and $r_i= \zero$ otherwise.
Moreover, $|r_1|\geq \cdots\geq |r_n|$ are the corners of  $|{P}|$,
counted with multiplicities.
If in addition $\vgroup$ is non-trivial, then $\widehat{P}$ is closed and 
$P^\sharp= P_n(\Y \ominus r_1)  \cdots  (\Y \ominus r_n)$.
\end{theorem}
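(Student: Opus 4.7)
My plan is to build candidate roots $r_1,\ldots,r_n\in\smax^\vee$ by an explicit telescoping formula, form $Q\coloneqq P_n\odot(\Y\ominus r_1)\odot\cdots\odot(\Y\ominus r_n)$, and check $\widehat{Q}=\widehat{P}$ pointwise on $\smax$. Once this is established, \Cref{lemma-psharp} delivers the closure of $\widehat{P}$ and the formula $P^\sharp=Q$ (when $\vgroup$ is non-trivial), and \Cref{corner-modulus} confirms that $|r_1|\ge\cdots\ge|r_n|$ are the corners of $|P|$ counted with multiplicities.

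\textbf{Construction.} Since $|P|$ is factored, \Cref{roots_poly} yields that $P_k$ is signed and non-zero (hence invertible in $\smax^\vee$) for every $\uval(P)\le k\le n$, and that the corners of $|P|$ (with multiplicities) are $c_i=|P_{n-i}|-|P_{n-i+1}|$ for $1\le i\le n-\uval(P)$ and $c_i=\zeror$ otherwise, ordered $c_1\ge\cdots\ge c_n$. I set
\[
r_i\coloneqq\ominus P_{n-i}\odot P_{n-i+1}^{\odot-1}\;\text{for }1\le i\le n-\uval(P)\enspace,\qquad r_i\coloneqq\zero\;\text{otherwise}\enspace.
\]
Then $r_i\in\smax^\vee$, $|r_i|=c_i$, and the identity $r_i\odot P_{n-i+1}=\ominus P_{n-i}$ gives by telescoping
\[
P_n\odot(\ominus\unit)^{\odot k}\odot r_1\odot\cdots\odot r_k=P_{n-k}\qquad(0\le k\le n)\enspace,
\]
with both sides vanishing for $k>n-\uval(P)$.

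\textbf{Pointwise comparison.} I split the argument on $|y|$. If $|y|\ne|r_i|$ for every $i$, let $k$ be the unique index in $\{0,\ldots,n\}$ with $|r_i|>|y|$ for $i\le k$ and $|r_i|<|y|$ for $i>k$; then $y\ominus r_i=\ominus r_i$ for $i\le k$ and $y\ominus r_i=y$ for $i>k$, so the telescoping identity yields $\widehat{Q}(y)=P_{n-k}\odot y^{\odot(n-k)}$, which is also the value of $\widehat{P}(y)$ since the degree-$(n-k)$ monomial is strictly dominant in absolute value. The delicate case is $|y|=c$ a corner of multiplicity $\mu\ge 1$, say $c_a=\cdots=c_b=c$ with $b-a+1=\mu$: then all monomials $P_{n-i}\odot y^{\odot(n-i)}$ for $i\in[a-1,b]$ share a common absolute value $M$, while every other monomial is strictly smaller, so, writing $\epsilon\coloneqq\sign(y)$ (the case $y\in\smax^\circ$ being handled analogously via $a\odot b^\circ=(a\odot b)^\circ$),
\[
\widehat{P}(y)=M\odot\bigoplus_{i=a-1}^{b}\sign(P_{n-i})\,\epsilon^{n-i}\enspace.
\]
On the $Q$ side, telescoping up to index $a-1$ reduces $\widehat{Q}(y)$ to $P_{n-a+1}\odot y^{\odot(n-b)}\odot\prod_{i=a}^{b}(y\ominus r_i)$, and each factor $(y\ominus r_i)$ is either $\epsilon c$ (when $\epsilon=\sign(P_{n-i})\sign(P_{n-i+1})$) or the balanced $c^\circ$ (otherwise). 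A direct computation using $a\odot b^\circ=(a\odot b)^\circ$ and the parameterization of $\smax^\circ$ by absolute value shows that the ``all signed factors'' subcase on the $Q$ side corresponds, via the telescoping relation between $\sign(r_i)$ and $\sign(P_{n-i})$, to the ``all sign-terms equal'' subcase on the $P$ side, both producing the common signed value $\sign(P_{n-a+1})\,\epsilon^{n-a+1}M$; in every other subcase, both $\widehat{P}(y)$ and $\widehat{Q}(y)$ equal the balanced element $M^\circ$.

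\textbf{Main obstacle.} The crux is the corner case: $Q_{n-k}$ and $P_{n-k}$ may genuinely differ---$Q_{n-k}$ can be balanced in $\smax^\circ$ while $P_{n-k}$ is signed---so $\widehat{P}=\widehat{Q}$ cannot be obtained by coefficient-wise comparison. The essential combinatorial observation is that, at a corner of multiplicity $\mu\ge 1$, a sign mismatch in the $P$-side sum is equivalent, via the telescoping, to the appearance of a balanced factor in $\prod_{i=a}^{b}(y\ominus r_i)$ on the $Q$ side, so that $\widehat{P}(y)$ and $\widehat{Q}(y)$ are signed together or balanced together, with the same absolute value in either case.
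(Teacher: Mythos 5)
Your proof is correct, and while it shares the construction of the $r_i$ and the telescoping identity \eqref{def-ci-rec} with the paper's proof, the case analysis in the pointwise verification is organized along a genuinely different axis. The paper splits on whether $y$ equals some $r_i$, is balanced, or is a signed non-root, and handles the third case through the separate \Cref{not_balanced}, which shows $\widehat{P}(y)$ cannot be balanced there; equality then follows from $\widehat{P}(y)\succeq\widehat{Q}(y)$ together with the modulus identity. You instead split on whether $|y|$ lands on a corner of $|P|$, and in the corner case carry out a direct sign-bookkeeping argument: using $\sign(r_i)=\ominus\sign(P_{n-i})\odot\sign(P_{n-i+1})$, you show that the condition ``all sign-terms $\sign(P_{n-i})\epsilon^{n-i}$ equal'' (which is what keeps $\widehat{P}(y)$ signed) coincides with the condition ``all factors $y\ominus r_i$ signed'' (which is what keeps $\widehat{Q}(y)$ signed), and that when these fail both sides collapse to $M^\circ$. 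This is essentially the content of \Cref{not_balanced} approached from a different direction: the paper phrases it as ``no dominating pair of monomials cancels at a signed non-root,'' while you phrase it as an equivalence of sign-agreement conditions across $P$ and $Q$. Your route concentrates the technical work in one computation and avoids the auxiliary lemma; the paper's route isolates the work in \Cref{not_balanced}, which it can reuse (e.g.\ to identify the root set via \eqref{fact-roots}). Two small remarks: the appeal to \Cref{corner-modulus} for ``$|r_1|\geq\cdots\geq|r_n|$ are the corners'' is unnecessary, since $|r_i|=c_i$ already holds by construction; and the balanced-$y$-at-a-corner subcase is dispatched a bit briskly --- a sentence observing that a single balanced factor in $\prod_{i=a}^{b}(y\ominus r_i)$, or the presence of a balanced dominating monomial in $\widehat{P}(y)$, forces each side to $M^\circ$ with matching modulus would make it fully airtight.
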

\begin{proof} %
Assume that $|P|$ can be factored. Let $c_1\geq \cdots \geq c_n$
be the corners of $|P|$ and $k=\uval(|P|)=\uval(P)$.
Then, by \Cref{roots_poly}, $|P|=(|P|)^\sharp$ has full support,
and we have $c_{n-k+1}=\cdots = c_{n}=\zeror$ and 
\[c_i = |P_{n-i}| - |P_{n-i+1}| \quad \forall i\leq n-k\enspace.\]
Choose $r_{n-k+1}=\cdots = r_{n}=\zero$.
Let $i\leq n-k$.
Since the coefficients of $P$ are in $\smax^\vee$ and
$|P_{n-i+1}|$ and $|P_{n-i}|$ are different from $\zeror$,
$P_{n-i+1}$ and $P_{n-i}$ are invertible, so 
there exists unique $r_i\in \smax^\vee\setminus\{\zero\}$ such
 that $r_i\odot P_{n-i+1}= \ominus P_{n-i}$.
We have $|r_i|=c_i$, for all $i\leq n$.
Moreover, using the definition of the $r_\ell$, 
the following property can be obtained by induction on $i$: 
\begin{equation}\label{def-ci-rec}
 P_n\odot (\ominus r_1)\odot \cdots \odot (\ominus r_i)=P_{n-i}
\quad \forall 1\leq i\leq n-k\enspace .\end{equation}
Denote 
\[Q= P_n (\Y \ominus r_1)  \cdots  (\Y \ominus r_n),\]
and let us show that $\widehat{P}=\widehat{Q}$,
which will implies that $\widehat{P}$ can be factored 
(see \Cref{lemma-psharp}). Moreover, 
when $\vgroup$ is non-trivial, we will get that $\widehat{P}$ is closed 
with $P^\sharp=Q$, using \Cref{theo-factored} and \Cref{lemma-psharp}.
For all $y\in \smax$, we have $|\widehat{P}(y)|=\widehat{|P|}(|y|)$
and 
\[|\widehat{Q}(y)|=\widehat{|Q|}(|y|)=|P_n|\odot (|y| \oplus |r_1|) \odot \cdots \odot (|y|\oplus |r_n|).\]
Since $|P|$ is factored with corners $c_i=|r_i|$,
we deduce that  
\begin{equation}\label{equ_abs}|\widehat{P}(y)|=|\widehat{Q}(y)|.\end{equation}
To show the equality $\widehat{P}(y)=\widehat{Q}(y)$,
we will consider the following cases:

\begin{enumerate}
\item \label{cas2}
 $y= r_i$ for some $i=1,\ldots, n-k$;
\item \label{cas4}
 $y\in \smax^\circ$ and $|y|\geq |r_n|$;
\item \label{cas3} 
$y\in \smax^\vee\setminus\{r_1,\ldots, r_n\}$ or $|y|<|r_n|$.
\end{enumerate}
\bigskip
\noindent
Case~\ref{cas2}:
We have
$\widehat{P}(r_i)\succeq P_{n-i+1}\odot r_i^{\odot n-i+1}\oplus P_{ n-i}\odot r_i^{\odot n-i}= (P_{ n-i}\odot r_i^{\odot n-i})^\circ$. Moreover,  
\begin{eqnarray}\widehat{Q}(r_i)&=& P_n\odot (r_i \ominus r_1) \odot \cdots  \odot r_i^\circ \odot \cdots \odot (r_i \ominus r_n)\nonumber\\
&=& (P_n\odot r_1 \odot \cdots \odot r_{i-1}\odot r_i^{\odot n-i+1})^\circ\nonumber\\
&=& (P_{n-i}\odot r_i^{\odot n-i})^\circ,\nonumber
\end{eqnarray}
where the last equality uses \Cref{def-ci-rec}.
Therefore, $\widehat{Q}(r_i)\preceq \widehat{P}(r_i)$. So, by using \Cref{equ_abs} together with \Cref{pro_preceq}, we have $\widehat{Q}(r_i)\succeq \widehat{P}(r_i)$ and therefore $\widehat{Q}(r_i)= \widehat{P}(r_i)$.

\bigskip
\noindent
Case~\ref{cas4}: We proceed as in Case~\ref{cas2}. 
Since $|y|\geq |r_n|$, there exists $1\leq i\leq n$ such that
$|r_i|\leq |y|<|r_{i-1}|$, with the convention that $|r_0|=+\infty$.
Since we also have $y\in\smax^\circ$,  we get that $y\ominus r_j=y$
for $j\geq i$ and $y\ominus r_j=\ominus r_j$ for $j<i$. Then, 
\begin{eqnarray}\widehat{Q}(y)&=& P_n\odot (y \ominus r_1) \odot \cdots  \odot (y\ominus r_{i-1})\odot (y\ominus r_i) \odot \cdots \odot (y \ominus r_n)\nonumber\\
&=& P_n\odot (\ominus r_1 )\odot \cdots \odot (\ominus r_{i-1})\odot y^{\odot n-i+1}\nonumber\\
&=& P_{n-i+1}\odot y^{\odot n-i+1},\nonumber
\end{eqnarray}
where the last equality uses \Cref{def-ci-rec}.
Moreover, we have
$\widehat{P}(y)\succeq P_{n-i+1}\odot y^{\odot n-i+1}= \widehat{Q}(y)$.
Since $y\in \smax^\circ$, and $i\leq n$, we also get that $\widehat{Q}(y)\in
\smax^\circ$. 
So, by using \Cref{equ_abs} together with \Cref{pro_preceq} we have $\widehat{Q}(y)\succeq \widehat{P}(y)$ and therefore $\widehat{Q}(y)= \widehat{P}(y)$.
\bigskip

\noindent
Case~\ref{cas3}:
Let $y$ be such that $y\in \smax^\vee\setminus\{r_1,\ldots, r_n\}$ 
or $|y|<|r_n|$.
In all cases, there exists $1\leq i\leq n+1$ such that
$|r_i|\leq |y|<|r_{i-1}|$, with the convention that $|r_0|=+\infty$
and $|r_{n+1}|=\zeror=\botelt$.
Since $y\neq r_j$ for all $j$, we have that
$ y \ominus r_j$ is signed and it is equal to $ y$ for $j\geq i$
and $\ominus r_j$ if $j<i$.
So again
\begin{eqnarray}\widehat{Q}(y)&=& P_n\odot (y \ominus r_1) \odot \cdots  \odot (y\ominus r_{i-1})\odot (y\ominus r_i) \odot \cdots \odot (y \ominus r_n)\nonumber\\
&=& P_n\odot (\ominus r_1 )\odot \cdots \odot (\ominus r_{i-1})\odot y^{\odot n-i+1}\nonumber\\
&=& P_{n-i+1}\odot y^{\odot n-i+1},\nonumber
\end{eqnarray}
where the last equality uses \Cref{def-ci-rec}.
Since either $i=n+1$, or $y\in \smax^\vee$, we get that 
$\widehat{Q}(y)\in \smax^\vee$.

Moreover, we have
$\widehat{P}(y)\succeq P_{n-i+1}\odot y^{\odot n-i+1}= \widehat{Q}(y)$.
\Cref{not_balanced} below shows that $\widehat{P}(y)$ is not balanced.
Since the modulus of $\widehat{P}(y)$ and $\widehat{Q}(y)$
are the same and both are unbalanced,
the inequality $\widehat{P}(y)\succeq  \widehat{Q}(y)$
implies the equality.
\end{proof}

The previous proof used the following result, which shows in particular that
the only possible roots of ${P}$ are the $r_j$. 

\begin{lemma}\label{not_balanced}
Let ${P}$ and $r_1,\cdots, r_n$ be as in \Cref{suf_cond}.
Then, for all $y \in \smax$ such that either $y\in\smax^{\vee}\setminus\{r_1,\ldots, r_n\}$, or $|y|<|r_n|$, we have $\widehat{P}(y) \notin \smax^{\circ}$.
\end{lemma}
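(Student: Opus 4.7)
The plan is to compute $\widehat{P}(y)=\bigoplus_{j} P_j\odot y^{\odot j}$ directly in $\smax$ and show that it reduces to a single non-zero signed monomial. Since $|\widehat{P}(y)|=\widehat{|P|}(|y|)$ and $|P|$ is factored with corners $|r_1|\geq\cdots\geq|r_n|$, the indices $j$ for which $|P_j|+j|y|=\widehat{|P|}(|y|)$ form a consecutive interval $J(y)$; all other monomials of $P$ have strictly smaller modulus and so do not contribute to the $\oplus$-sum in $\smax$ (regardless of being signed or balanced). Hence it suffices to show that all monomials $P_j\odot y^{\odot j}$ with $j\in J(y)$ coincide and equal a common non-zero signed element.

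If $J(y)$ is a singleton $\{j^*\}$, the conclusion is immediate. In the subcase $|y|<|r_n|$, which forces $\uval(P)=0$ (otherwise $|r_n|=\bot$ and the strict inequality is impossible), the concavity of $|P|$ yields $|P_k|+k|y|<|P_0|$ for every $k\geq 1$, so $j^*=0$ and $\widehat{P}(y)=P_0\in\smax^\vee\setminus\{\zero\}$; in the subcase where $|y|$ lies strictly between two consecutive corners, $y$ is signed and non-zero, $P_{j^*}\in\smax^\vee\setminus\{\zero\}$, hence $\widehat{P}(y)=P_{j^*}\odot y^{\odot j^*}$ is again non-zero and signed.

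The heart of the proof is the case $|J(y)|>1$, which can occur only for $y\in\smax^\vee\setminus\{r_1,\dots,r_n\}$ with $|y|=|r_i|$ for some $i$. Let $i'\leq i$ be the indices such that $|r_{i'-1}|>|r_{i'}|=\cdots=|r_i|=|y|>|r_{i+1}|$ (with conventions $|r_0|=+\infty$ and $|r_{n+1}|=\bot$); the concavity of $|P|$, which follows from \Cref{roots_poly} applied to the factored polynomial $|P|$, yields $J(y)=\{n-i,n-i+1,\dots,n-i'+1\}$. For each $j\in\{i',\dots,i\}$, the signed elements $r_j$ and $y$ share the modulus $|y|$ and satisfy $r_j\neq y$, which forces $r_j=\ominus y$, whence $\ominus r_j=y$. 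Combining this with the identity $P_{n-\ell}=P_n\odot(\ominus r_1)\odot\cdots\odot(\ominus r_\ell)$ from \eqref{def-ci-rec}, we obtain for every $\ell\in\{i'-1,\dots,i\}$
\begin{equation*}
P_{n-\ell}\odot y^{\odot(n-\ell)}
= P_{n-i'+1}\odot(\ominus r_{i'})\odot\cdots\odot(\ominus r_\ell)\odot y^{\odot(n-\ell)}
= P_{n-i'+1}\odot y^{\odot(\ell-i'+1)}\odot y^{\odot(n-\ell)}
= P_{n-i'+1}\odot y^{\odot(n-i'+1)}.
\end{equation*}
All tied monomials thus coincide with the common value $P_{n-i'+1}\odot y^{\odot(n-i'+1)}\in\smax^\vee\setminus\{\zero\}$, which is therefore $\widehat{P}(y)$.

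The main obstacle is this last telescoping computation, where the substitution $\ominus r_j=y$ at the tied indices is crucial; once the correct description of $J(y)$ via the concavity of $|P|$ and the identity \eqref{def-ci-rec} are in place, the rest is routine max-plus bookkeeping. In all cases, $\widehat{P}(y)$ is a non-zero signed element and therefore does not belong to $\smax^\circ$.
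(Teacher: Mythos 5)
Your proof is correct, and it takes a direct computational route where the paper argues by contradiction. The paper supposes $\widehat{P}(y)\balance\zero$, invokes the cancellation property for balanced sums of signed elements (\Cref{prop-signed} in \Cref{prop-nabla}) to extract a pair of indices $j<j'$ with $P_{n-j}\odot y^{\odot n-j}=\ominus P_{n-j'}\odot y^{\odot n-j'}$, identifies $|y|$ with the corners $c_{j+1}=\cdots=c_{j'}$ and $y$ with $\ominus r_{j+1}=\cdots=\ominus r_{j'}$, and then derives via \eqref{def-ci-rec} the contradiction $P_{n-j'}=\ominus P_{n-j'}$. You instead evaluate $\widehat{P}(y)$ outright: concavity of $|P|$ makes the saturating indices an interval $J(y)$, sub-dominant monomials are absorbed by modulus domination, and for $|J(y)|>1$ the same relations $\ominus r_j=y$ and \eqref{def-ci-rec} telescope to show every $J(y)$-monomial equals $P_{n-i'+1}\odot y^{\odot(n-i'+1)}$, so by idempotency $\widehat{P}(y)$ is this single nonzero signed element. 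Both proofs rest on the same core facts (concavity of $|P|$, \eqref{def-ci-rec}, and that the only signed element of modulus $|y|$ distinct from $y$ is $\ominus y$), so this is a closely parallel argument; but your version dispenses with the balancing property of \Cref{prop-nabla} entirely and is somewhat more informative, since it determines the value of $\widehat{P}(y)$ rather than merely ruling out $\smax^\circ$. One small presentational gap: the singleton-$J(y)$ discussion should also cover $|y|>|r_1|$ explicitly (then $J(y)=\{n\}$), though the argument is the same as the ``strictly between corners'' subcase.
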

\begin{proof}
Denote by $\mv$ the lower degree of $P$.
Let us assume first that $|y|<|r_n|$, which implies that $\mv=0$.
Then, $|\widehat{P}(y)|=\widehat{|P|}(|y|)= |P_0|$ and
the maximum of $|P_{n-i}|\odot |y|^{\odot n-i}$ over $i=0,\ldots , n$ is
attained for $i=n$ only. So $\widehat{P}(y)=P_0\in\smax^\vee\setminus\{\zero\}$.

Let consider now $y\in\smax^{\vee}\setminus\{r_1,\ldots, r_n\}$,
with $|y|\geq |r_n|$, 
and suppose by contradiction that $\widehat{P}(y)\balance \zero$. 

Since 
\[\widehat{P}(y)=\bigoplus_{j=0}^{n-\mv} P_{n-j} \odot y^{\odot n-j},\]
 and all the summands are signed, using \Cref{prop-signed} in \Cref{prop-nabla}, we get that there exist $j<j'\leq n-\mv$ such that
\begin{equation}\label{cond3}
P_{n-j} \odot y^{\odot n-j}= \ominus P_{n-j'} \odot y^{\odot n-j'},\end{equation}
with maximal modulus. So 
\begin{equation}\label{abs_equ}|P_{n-j}| \odot (|y|)^{\odot n-j}=|P_{n-j'}| \odot (|y|)^{\odot n-j'}=\widehat{|P|}( |y|)\enspace.\end{equation}

Since $y\neq r_n$ and $|y|\geq |r_n|$, we deduce that $|y|>\zeror$.
 Then, using
 \Cref{abs_equ} and the concavity of $|P|$, we get the
equality of $|y|$ with the corners $c_{j+1}=\cdots=c_{j'}$.
Since $y\in\smax^{\vee}\setminus\{r_1,\ldots, r_n\}$,
this implies that $y=\ominus r_{j+1}=\cdots=\ominus r_{j'}$.
Using \Cref{def-ci-rec} for $i=j$ and $j'$, we get that 
\[P_{n-j}  \odot (\ominus r_{j+1})\odot\cdots\odot (\ominus r_{j'}) = P_{n-j'},\]
 and so 
 \[P_{n-j}  \odot y^{\odot j'-j}=  P_{n-j'}.\]
However, since  $y\neq \zero$ is invertible,
\Cref{cond3} implies
 that 
 \[P_{n-j}  \odot y^{\odot j'-j}= \ominus P_{n-j'},\]
leading to $\ominus P_{n-j'}=P_{n-j'}$, which contradicts the fact that
$ P_{n-j'}\in \smax^\vee\setminus \{\zero\}$.
\end{proof}

\begin{corollary}[Sufficient condition for unique factorization]\label{coro-uniquefact}
Let ${P} \in \smax^\vee[\Y]$ with $\vgroup$ divisible.
Assume that $|{P}|$ is factored (see \Cref{roots_poly}),
and let the $r_i$ be as in \Cref{suf_cond}.
If all the $r_i$ with same modulus are equal, 
or equivalently if for each corner $c\neq \zeror$ of $|{P}|$,
$c$ and $\ominus c$ are not both roots of $P$,
then the factorization of $\widehat{P}$ is unique (up to reordering).
\end{corollary}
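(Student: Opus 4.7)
The plan is to take an arbitrary linear factorization $\widehat{P}(y) = P_n \odot (y \ominus r'_1) \odot \cdots \odot (y \ominus r'_n)$ with $r'_i \in \smax^\vee$, and to compare it with the specific factorization supplied by \Cref{suf_cond}. My goal is to show that the tuples $(r_1, \ldots, r_n)$ and $(r'_1, \ldots, r'_n)$ agree as multisets, which is exactly uniqueness up to reordering.

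Two already-proved facts carry most of the weight. First, \Cref{corner-modulus} applied to each factorization shows that both multisets $\{|r_i|\}$ and $\{|r'_i|\}$ coincide with the multiset of corners of $|P|$ (counted with multiplicities), hence they are equal to each other. Second, \eqref{fact-roots} characterises the signed roots of $\widehat{P}$ in an intrinsic way: for $y \in \smax^\vee$, one has $\widehat{P}(y) \balance \zero$ iff $y$ equals one of the factors in any given factorization (using that $P_n \in \smax^\vee\setminus\{\zero\}$ is invertible, so balance with $\zero$ is preserved under multiplication). Applied to both sides of the comparison, this yields the set equality $\{r_1, \ldots, r_n\} = \{r'_1, \ldots, r'_n\}$.

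The hypothesis is exactly what is needed to promote set equality to multiset equality. Each $r'_i$ lies in $\{r_1, \ldots, r_n\}$; if $|r'_i| = \zeror$ then necessarily $r'_i = \zero$ (the unique element of modulus $\zeror$), and if $|r'_i|$ is a non-zero corner $c$ of $|P|$, then by the assumption there is a unique candidate among the $r_j$ with modulus $c$, so $r'_i$ is forced to equal it. Because the multiplicity of each modulus value in the multiset $\{|r'_i|\}$ coincides with that in $\{|r_i|\}$, the number of indices attaining any given value must match on both sides, completing the multiset equality.

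The equivalence of the two phrasings of the hypothesis is immediate: two distinct signed elements of a common non-zero modulus $c$ must be $c$ and $\ominus c$, so the failure of uniqueness of $r_i$ per modulus is precisely the presence of both $c$ and $\ominus c$ among the roots of $P$. No substantial obstacle arises; the argument is essentially structural bookkeeping, combining \Cref{suf_cond}, \Cref{corner-modulus}, and equation~\eqref{fact-roots}.
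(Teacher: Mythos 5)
Your proposal is correct and follows essentially the same route as the paper's own proof: apply \Cref{corner-modulus} to both factorizations to match the multisets of moduli, use \eqref{fact-roots} to identify the sets $\{r_i\}$ and $\{r'_i\}$, and then invoke the hypothesis that at most one of $c,\ominus c$ is a root to pin down each $r'_i$ uniquely from its modulus. The only cosmetic difference is that the paper orders both tuples by decreasing modulus and concludes $r'_i = r_i$ termwise, whereas you argue via multiset bookkeeping; these are interchangeable.
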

In the above case, the multiplicity of a root of $P$ can be defined as
the multiplicity of its modulus as a corner of $|P|$.
\begin{proof}[Proof of {\Cref{coro-uniquefact}}]
Since $|P|$ is factored, \Cref{suf_cond} shows that $\widehat{P}$ is factored, using the $r_i$ defined as in \Cref{suf_cond},
which are such that $|r_1|\geq \cdots\geq  |r_n|$ are the corners of
$|P|$ counted with multiplicities.
By \eqref{fact-roots} (or by \Cref{not_balanced}), we have that 
$y$ is a root of $P$ if and only if $y\in  \{r_j:j=1, \ldots, n\}$.
Then, for each corner $c\neq \zeror$ of $|{P}|$, we have that 
$c=|r|$ for some root $r$ of $P$, that is $c$ or $\ominus c$ is a root of
$P$, so $c=r_i$ or $c=\ominus r_i$ for some $i$. 
If for $c\neq \zeror$, $c$ and $\ominus c$ are not both roots of $P$, then
all the $r_i$ with same modulus are equal.
Conversely, if all the $r_i$ with same modulus are equal, then
for each $r\in \smax^\vee\setminus\{\zeror\}$ (and thus for any corner $r\neq \zeror$ of $|{P}|$), we have that $r$ and $\ominus r$ are not both roots of $P$.

Assume that $\widehat{P}$ has another factorization:
\[\widehat{P}(y)=  P_n\odot (y \ominus r'_1) \odot \cdots \odot (y \ominus r'_n)\]
with $r'_i\in\smax^\vee$, $i=1,\ldots, n$, and assume that (up to some
reordering) that $|r'_1|\geq \cdots\geq |r'_n|$.
By \Cref{corner-modulus}, this implies that 
$|r'_1|,\ldots, |r'_n|$ are the corners of $|P|$
counted with multiplicities, 
hence we have $|r'_i|=|r_i|$ for all $i=1,\ldots, n$.

We also have, by \eqref{fact-roots}, that all the $r'_j$ are roots of $P$.
Hence $r'_i\in \{r_j:j=1, \ldots, n\}$, for all $i=1,\ldots, n$.
If all the $r_i$ with same modulus are equal, there is only one choice for 
all the $r'_i$ and so  $r'_i=r_i$ for all $i=1,\ldots, n$.
This shows that the factorization is unique (up to reordering).
\end{proof}

\begin{example}
Consider the polynomial $P=\Y^3\oplus 2  \Y^2\ominus 2 \Y \oplus 2$. Then, $|P|$ has a corner $2$ with multiplicity $1$ and $e=0$ with 
multiplicity $2$. We have $\widehat{P}(2)=6$ and 
$\widehat{P}(\ominus 2)=6^\circ$ so only $\ominus 2$ is a root of $P$ with modulus $2$. Similarly  $\widehat{P}(\unit)=2^\circ$ and 
$\widehat{P}(\ominus \unit)=2$ so only $\unit$ is a root of $P$ 
with modulus $0$.
We can check that $P= (\Y\oplus 2) (\Y\ominus \unit)^2$.
\end{example}

\section{Multiplicities of roots of polynomials over tropical extensions}\label{sec-multsym}
In view of \Cref{rem-nonunique}, the notion of multiplicity for a root of  a general polynomial over $\smax$ or $\smax^\vee$ cannot be defined by considering the factorizations of this polynomial, as in the case of $\tmax$. Moreover, for a polynomial with coefficients in $\smax$, it may happen that the set of roots is a continuous set of signed elements, for instance  a finite union of intervals.
Baker and Lorscheid defined in \cite{baker2018descartes} a general notion
of multiplicity for roots of polynomials over hyperfields, in terms
of a simple recursion. %
This notion can be applied in particular to the signed tropical hyperfield,
and thus to $\smax^\vee$.
Recently, 
this notion was extended by Gunn in \cite{gunn2} to polynomials over idylls.
Moreover, \cite[Th. A]{gunn2} gives 
a characterization of the multiplicities of
roots of polynomials over any ``split tropical extension'' over a 
``whole idyll'' using the notion of initial forms.
Then, using an embedding of hyperfields into the class of idylls, 
the characterization of \cite{gunn} of 
multiplicities over the signed tropical hyperfield with $\Gamma=\R$ 
is recovered and a characterization for higher rank tropical
hyperfields is obtained (higher rank tropical correspond to the case
when $\Gamma$ is a general ordered group).

As explained in \cite{AGRowen}, see also \Cref{sec-signval}, 
idylls can be realized as a subclass of semiring systems,
in which the set $\tangible$ is a group $G$, the semiring $\semiring$ is
the monoid semiring $\N[G]$ and the surpassing relation
is defined as $a\preceq b$ if $b=a+c$ with $c\in N_G$, where $N_G$ is
a $G$-sub-bimodule of $\N[G]$.
However, elements of idylls are defined as formal expressions,
these are elements of $\N[G]$, which means that the quotient with 
any possible congruence compatible with $N_G$ is not performed.
For instance, the hyperfield idyll corresponding to 
the hyperfield of signs is obtained by taking the group
$G=\{-1,1\}$, and $N_G$ the ideal of $\N[G]$ generated by all $k 1-1$,
with $k$ a positive integer. 
However, the equation $1+1=1$ is not satisfied in this hyperfield idyll, 
whereas it is satisfied in the hyperfield of signs.
Moreover, a ``split tropical extension'' is similar to what we called
a layered semiring system, although the simplifications 
$(a,g)\oplus (a,g')=(a,g)$ when $g>g'$ are not done.
Therefore, some special versions of the results that we shall state
and prove here %
 may be deduced from the ones of \cite{gunn2}, but at the price of
an identification by projection. 

In this section, we consider only
the formalism of semiring systems and of layered semiring systems.
We also propose extensions of the notion of multiplicity to the case
where the set of tangible elements $\tangible$ is not necessarily a group,
and show in \Cref{tho-mult-smax}
 a characterization of the multiplicities of a layered semiring
system.
Note that 
to state our result, we are using assumptions
which are not used in \cite{gunn2}, but are  necessary to obtain
the equivalence between two possible definitions of a root of a polynomial,
see \Cref{lem-equivmult} which is an extension of
\cite[Lemma A]{baker2018descartes}, see \Cref{rk:mult-smax}.

\subsection{Multiplicities over semiring systems}
Here, we shall use the standard notions of formal polynomial 
$P$, degree $\deg(P)$, lower degree $\uval(P)$, and associated polynomial function $\widehat{P}$,
which make sense in any semiring.
Moreover, we define the following notion of balance relation
which extends the one already introduced in $\smax$.
\begin{definition}
  Let $(\semiring,\tangible,\ominus,\surpass)$ be a semiring system.
We define the balance relation $\balance$ as follows, for $a_1,a_2\in \semiring$
\[ a_1\balance a_2\quad \text{iff}\quad 0\surpass a_1\ominus a_2\enspace .\]
Moreover, the balance relation is applied coefficient-wise to formal polynomials:
for $P,Q\in \semiring[\Y]$,
\[ P\balance Q\quad \text{iff}\quad P_i\balance Q_i\;\forall i\geq 0\enspace .\]
\end{definition}
Then, as for $\smax$, the unique negation property of the semiring system is equivalent to the reduction of balances:
\[ a_1\balance a_2\quad \text{and}\quad  a_1,a_2\in \semiringvee\quad 
\Longrightarrow a_1= a_2\enspace .\] 

\begin{definition}[Multiplicity of a root, compare with \cite{baker2018descartes}] \label{def-mult-BL}
  Let $(\semiring,\tangible,\ominus,\surpass)$ be a semiring system.
For a formal polynomial $P\in \semiringvee[\Y]$, where $\semiringvee=\tangible\cup\{\zero\}$,
and a scalar $a\in \semiringvee$, we say that $a$ is a root of $P$ if 
$\zero\surpass \widehat{P}(a)$, and
define the \new{multiplicity} 
of $a$, and denote it by $\mathrm{mult}_a(P)$, as follows.
If $a$ is not a root of $P$, set $\mathrm{mult}_a(P)=0$. 
If $a$ is a root of $P$, then 
\begin{equation}\label{mult}\mathrm{mult}_a(P)=1+\max\{\mathrm{mult}_a(Q)\mid Q\in \semiringvee[\Y],\; \lambda\in \tangible,\; \lambda P \balance (\Y \ominus a) Q\}\enspace .\end{equation}
\end{definition}
When $\semiring$ is a hyperfield system, that
is $\semiringvee$ is a hyperfield and $\surpass$ is the inclusion relation,
then $a\balance b$ is equivalent to $a\surpass b$, that is $a\in b$,
 for any $a\in  \semiringvee$ and $b\in \semiring$,
see  \cite{AGRowen}. For  $\semiring=\smax$, this is stated in \eqref{pro_preceq3} of \Cref{pro_preceq}.
Moreover, $\tangible$ is a group, hence $\lambda$ can be ommited,
by replacing $Q$ by $\lambda^{-1} Q$.
Then, the condition 
$ P \balance (\Y \ominus a)Q$  in \Cref{def-mult-BL} is equivalent to 
$P \surpass (\Y \ominus a) Q$ or $P \in (\Y \ominus a) Q$ which is used in \cite{baker2018descartes}.
Moreover, in that case, 
it is shown in  \cite[Lemma A]{baker2018descartes}  that $a$ is a root of $P\in \semiringvee[\Y]$ if and only if there exists $Q\in \semiringvee[\Y]$ such that $P \surpass (\Y \ominus a)Q$, which means if and only if the maximum in \eqref{mult} is well defined and is a non-negative integer. In particular,  $a$ is a root of $P$ if and only if $\mathrm{mult}_a(P)\geq 1$. We shall state it in
the framework of semiring systems in \Cref{lem-equivmult} below.

The results in \cite{baker2018descartes} show that the above definition of multiplicity for a  polynomial $P$ over $\tmax$ coincides with the one given by the factorization in \Cref{factor}.
The same does not hold in  $\smax$, because of the non-uniqueness of factorization.
Moreover, the multiplicity depends on the coefficients of the formal polynomial and not only on the corresponding polynomial function as shown in \Cref{examplebmax} below.

The following assumptions on semiring systems will be used to extend
\cite[Lemma A]{baker2018descartes}.
\begin{definition}\label{def-properties-systems}
Let $(\semiring,\tangible,\ominus,\surpass)$ be a (commutative) semiring system.
\begin{enumerate}
\item {\bf Weak tangible  balancing:}\label{wtb} We say that  $\semiring$ satisfies  \new{weak tangible  balancing} if for any
$a\in \semiring$, there exists $a' \in \semiringvee$ such that $a'\balance a$.

\item {\bf Tangible  balancing:}\label{tb} We say that $\semiring$ satisfies \new{tangible  balancing} if for any
pair $(a_1,a_2)$ of elements of $\semiring$ such that $a_1\balance a_2$, 
there exists $a \in \semiringvee$ such that $a_1\balance a$ and $a\balance a_2$.

\item{\bf Tangible balance elimination:}\label{tbe} We say that $\semiring$ satisfies \new{tangible balance elimination}  or \new{weak transitivity} if for any $a \in\semiringvee$, and  $a_1,a_2 \in \semiring$, 
$ a_1   \balance  a$ and $a \balance a_2$ imply $a_1 \balance a_2$.

\item {\bf Balance cancellation:}\label{bc} We say that $\semiring$ satisfies \new{balance cancellation} if for any $a\in\tangible$ and $a_1,a_2\in\semiring$ such that $a\odot a_1 \balance a \odot a_2$, we have $a_1\balance a_2$.

\item {\bf Balance inversion:}\label{bi} We say that $\semiring$ has \new{balance inversion} if for any $a,a'\in\semiringvee$ and $a_1\in\semiring$ such that $a\odot a_1 \balance a'$, 
there exists $a'' \in \semiringvee$ such that $a''\balance a_1$ and $a\odot a''=a'$.
\end{enumerate}
\end{definition}
Weak transitivity was stated for $\smax$ in \Cref{prop-nabla}. It was already used, together with reduction of balances (that is unique negation), in the context of Cramer theorem \cite{cramer-guterman}. The semiring $\smax$ also satisfies tangible balancing. If $\tangible$ is a group,
then balance inversion is deduced from the properties of a semiring system.
Balance cancellation is equivalent to the condition that $a\odot a_1 \balance \zero$ implies $a_1\balance \zero$ (by taking $a_1\ominus a_2$) and is implied by balance inversion (by taking $a'=\zero$).
It should allow to extend by localization the semiring system $\semiring$,
in order to embed it into semiring system such that the set of
tangible elements is a group,
however, we prefer below to state the results in the smaller
and more general semiring system which satisfies balance cancellation.
As said in \Cref{sec-signval}, any hyperfield gives rise to a system,
called a hyperfield system. The same holds for hyperrings. Moreover, 
a hyperring system satisfies necessarily tangible  balancing,
tangible balance elimination and balance inversion~\cite{AGRowen},
and hyperfield systems are such that $\tangible$ is a group.
Hyperring systems also satisfy the ``faithfully balanced'' property,
meaning that the application  $b\in \semiring\mapsto \{a\in\tangible\mid a\balance b\}$ is injective~\cite{AGRowen}, a property that we shall not use later.
Also, we shall not assume that $\tangible$ is a group,
and not assume balance inversion property, but only balance cancellation
which is weaker.
Moreover, even under all the above conditions, when the hyperfield is not doubly distributive, the multiplication in the semiring is not equivalent to the one of the hyperfield, except when multiplying an element of the hyperfield with a sum of elements of the hyperfield~\cite{AGRowen}. 
Here, although the same results can be written using the notations of hyperfields, we shall state all the results over semiring systems satisfying a minimal set of properties, namely tangible  balancing, tangible balance elimination and balance cancellation, while using balance relations instead of inclusions or surpassing relations. Indeed, the semiring properties allow to avoid the difficulties due to the lack of double distributivity and the balance relation is also more convenient to work with because of its symmetry.

Let us first state \cite[Lemma A]{baker2018descartes} with these notions and 
notations, and with a slightly more general assumption. Indeed 
\cite[Lemma A]{baker2018descartes} is stated for a hyperfield,
which is equivalent to adding the assumption that $\tangible$ is a group, 
which is weaken here in the balance cancellation property. 
This result will follow from \Cref{lem10} and \Cref{lem2}
which are proved under weaker assumptions.
Note that Gunn also related this property to the notion of pastures,
see \cite[Sec.\ 4.1]{gunn2}.

\begin{lemma}[Compare with \protect{\cite[Lemma A]{baker2018descartes}}]
\label{lem-equivmult}
 Let $(\semiring,\tangible,\ominus,\surpass)$ be a semiring system satisfying 
tangible balancing, tangible balance elimination and balance cancellation properties  (\crefrange{tb}{bc} of \Cref{def-properties-systems}).
Then,  $a\in\semiringvee$ is a root of  $P\in \semiringvee[\Y]$ if and only if there exist $\lambda\in\tangible$ and $Q\in \semiringvee[\Y]$ such that $\lambda P \balance (\Y \ominus a)Q$, that is $\mathrm{mult}_a(P)\geq 1$.
\end{lemma}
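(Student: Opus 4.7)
The plan is to establish the two implications separately; in fact, the paper indicates that the result will follow from two auxiliary lemmas \Cref{lem10} and \Cref{lem2} proven under somewhat weaker hypotheses, so my sketch suggests natural candidates for how each direction is reduced.

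For the implication $\mathrm{mult}_a(P)\geq 1 \Rightarrow$ ``$a$ is a root'', I would begin with the assumption $\lambda P \balance (\Y \ominus a) Q$ for some $\lambda \in \tangible$ and $Q \in \semiringvee[\Y]$, and evaluate coefficient-wise at $\Y = a$. Since the balance relation is preserved by addition and by multiplication by arbitrary semiring elements (directly from compatibility of $\surpass$ with the operations), summing the coefficient-wise balances multiplied by the appropriate powers of $a$ telescopes to
\[
\lambda\,\widehat{P}(a) \balance \widehat{(\Y\ominus a)Q}(a) \;=\; a\widehat{Q}(a)\ominus a\widehat{Q}(a).
\]
The right-hand side is of the form $z\ominus z$ and so balances $\zero$ by the defining property $\zero\surpass z\ominus z$ of a semiring system. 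The task is to chain these two balances. Because the intermediate element $a\widehat{Q}(a)\ominus a\widehat{Q}(a)$ is generally not tangible, tangible balance elimination cannot be applied directly; I would first invoke tangible balancing to insert a tangible intermediary and then apply weak transitivity to conclude $\lambda\,\widehat{P}(a) \balance \zero$. Finally, balance cancellation removes the tangible factor $\lambda$, giving $\widehat{P}(a)\balance \zero$, i.e.\ $a$ is a root.

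For the converse, suppose $\widehat{P}(a)\balance\zero$ and let $n=\deg P$. I would construct $\lambda=\unit$ together with $Q=\sum_{k=0}^{n-1} Q_k\Y^k\in\semiringvee[\Y]$ by a tropical analogue of Euclidean division by $\Y\ominus a$. Set $Q_{n-1}\coloneqq P_n$, and for $k=n-1,n-2,\dots,1$ use \emph{weak tangible balancing} (a consequence of tangible balancing obtained by taking $a_1=a_2$) to choose $Q_{k-1}\in\semiringvee$ with $Q_{k-1}\balance P_k \oplus a Q_k$. This directly yields the coefficient-wise balance $P_k\balance Q_{k-1}\ominus a Q_k$ for every $k\in\{1,\dots,n\}$. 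It remains to verify the constant-term balance $P_0\balance \ominus a Q_0$. Summing the identities $P_k a^k \balance (Q_{k-1}\ominus aQ_k)a^k$ over $k\geq 1$ telescopes to
\[
\sum_{k\geq 1} P_k a^k \;\balance\; a Q_0 \oplus Y^{\circ},
\]
where $Y^{\circ}$ is an explicit element of the form $Y\ominus Y$, hence balances $\zero$. Adding $P_0$ and combining with the hypothesis $\widehat{P}(a)\balance\zero$, a final use of tangible balancing followed by tangible balance elimination lets us bypass the balanced intermediary $Y^{\circ}$ and extract $P_0\oplus aQ_0\balance \zero$.

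The main obstacle in both directions is the absence of outright transitivity for $\balance$: one must traverse purely balanced intermediaries such as $z\ominus z$ or $Y^{\circ}$, which are not tangible. The role of tangible balancing is precisely to produce a tangible bridge through which tangible balance elimination (weak transitivity restricted to tangible middles) can act, while balance cancellation serves to discard the auxiliary scalar $\lambda$. The three conditions of \Cref{def-properties-systems} thus enter essentially and together play the role that the group structure on $\tangible$ plays in the original hyperfield argument of~\cite{baker2018descartes}.
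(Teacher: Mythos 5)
Your proposal identifies the right structural decomposition (two auxiliary lemmas, one per direction, with balance cancellation to dispose of $\lambda$), but the concrete arguments in both directions have genuine gaps, and both gaps stem from the same source: once you telescope, you are stuck with a \emph{non-tangible} intermediary, and the hypotheses on the semiring system do not let you traverse it.

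For the ``$\mathrm{mult}_a(P)\geq 1\Rightarrow a$ is a root'' direction, the telescoping yields $\lambda\widehat{P}(a)\balance(a\widehat{Q}(a))^{\circ}$, but this is strictly weaker than $\lambda\widehat{P}(a)\balance\zero$. In $\smax$, for instance, $x\balance z^{\circ}$ merely says that $|x|\leq|z|$ \emph{or} $x$ is balanced; it does not force $x\balance\zero$. Your proposed fix — insert a tangible intermediary via tangible balancing and then apply weak transitivity — does not close the gap: tangible balancing splits a \emph{single} balance $\lambda\widehat{P}(a)\balance(a\widehat{Q}(a))^{\circ}$ into two through a tangible middle, but it does not let you chain the separate balances $\lambda\widehat{P}(a)\balance(a\widehat{Q}(a))^{\circ}$ and $(a\widehat{Q}(a))^{\circ}\balance\zero$, whose shared node $(a\widehat{Q}(a))^{\circ}$ is precisely the non-tangible element blocking tangible balance elimination. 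The paper's \Cref{lem10} avoids this by never forming the full telescoped sum: it proves the \emph{partial} balance $P_0\oplus\cdots\oplus P_k\odot a^{\odot k}\balance\ominus Q_k\odot a^{\odot k+1}$ by descending induction, and at every step the balanced intermediary $\ominus Q_k\odot a^{\odot k+1}$ \emph{is tangible} (since $Q_k,a\in\semiringvee$ and $\tangible$ is multiplicatively closed), so tangible balance elimination applies cleanly.

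For the converse direction, your construction $Q_{k-1}\balance P_k\oplus aQ_k$ chosen via weak tangible balancing does not determine $Q_{k-1}$ uniquely, and a wrong choice can violate the constant-term balance. Concretely, take $\semiring=\bmaxs$, $a=\unit$, and $P=\Y^3\oplus\Y^2\ominus\Y\ominus\unit$; then $\unit$ is a root (indeed $\mult_\unit(P)=1$). Your procedure forces $Q_2=Q_1=\unit$, but at the next step $P_1\oplus Q_1=\ominus\unit\oplus\unit=\unit^{\circ}$, and every element of $\bmaxs^\vee$ balances $\unit^{\circ}$, so your procedure permits $Q_0=\ominus\unit$. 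That choice fails $P_0\balance\ominus Q_0$ (here $\ominus\unit\balance\unit$ is false), yet one verifies that your two claimed balances $\widehat{P}(\unit)\balance P_0\oplus Q_0\oplus Y^{\circ}$ and $\widehat{P}(\unit)\balance\zero$ both hold. So your finishing argument cannot extract $P_0\oplus aQ_0\balance\zero$: the conclusion you want is simply false for a $Q_0$ your construction allows. The paper's \Cref{lem2} avoids this by maintaining a \emph{second} invariant, $P_0\oplus\cdots\oplus P_k\balance\ominus Q_k$, alongside $Q_k\balance P_{k+1}\oplus Q_{k+1}$; tangible balancing is applied to the partial-sum relation $P_0\oplus\cdots\oplus P_{k-1}\balance\ominus(P_k\oplus Q_k)$ and the tangible middle becomes (up to sign) the correct $Q_{k-1}$, so that at $k=0$ the first invariant delivers the constant-term balance for free. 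Separately, the paper's reduction to $a=\unit$ and resulting $\lambda=a^{\odot n}$ is not cosmetic when $\tangible$ is only a monoid; your claim that $\lambda=\unit$ always works would need justification even after repairing the induction.
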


The ``if'' part of the first assertion of \Cref{lem-equivmult} can be proved under weaker assumptions, which are indeed similar to the ones used for Cramer theorem in \cite{cramer-guterman}. Moreover, we first state some properties which need no assumptions. 
\begin{lemma}\label{lem1} Let $(\semiring,\tangible,\ominus,\surpass)$ be a semiring system. 
Let  $a\in\semiringvee$, and $P,Q\in \semiringvee[\Y]$ such that $P \balance (\Y \ominus a)Q$.
Then, $\deg(Q)=\deg(P)-1$, and $Q_{n-1}=P_n$.
\end{lemma}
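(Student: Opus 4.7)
The plan is a direct computation: expand $(\Y\ominus a)Q$ coefficient-wise, then compare its top coefficients with those of $P$ via the balance relation, and collapse the balances using unique negation (which is built into the definition of a semiring system).

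First, I would write out the coefficients of the product. By the Cauchy product formula, with the convention $Q_{-1}=\zero$, one has
\[
((\Y\ominus a)Q)_k \;=\; Q_{k-1}\ominus a\odot Q_k \quad\text{for all } k\geq 0.
\]
Let $n=\deg(P)$ and $m=\deg(Q)$; by definition $P_n,Q_m\in\tangible$. For $k>m+1$ both $Q_{k-1}$ and $Q_k$ vanish, so $((\Y\ominus a)Q)_k=\zero$. For $k=m+1$ we get $((\Y\ominus a)Q)_{m+1}=Q_m\ominus a\odot\zero=Q_m\in\tangible$.

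Next I would establish $n=m+1$ by excluding the two inequalities. Suppose towards contradiction that $n>m+1$. Then the $\Y^n$-coefficient of the right-hand side is $\zero$, while $P_n\in\tangible$. The hypothesis $P\balance(\Y\ominus a)Q$ then reads $\zero\surpass P_n\ominus\zero=P_n$. Since $P_n,\zero\in\semiringvee$, unique negation forces $P_n=\zero$, contradicting $n=\deg(P)$. Symmetrically, if $n<m+1$, then $P_{m+1}=\zero$ while the corresponding right-hand coefficient is $Q_m\in\tangible$, and the same use of unique negation applied to $\zero\surpass\zero\ominus Q_m$ forces $Q_m=\zero$, a contradiction. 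Hence $m=n-1$, proving $\deg(Q)=\deg(P)-1$.

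Finally, for the leading coefficient, apply the balance at $k=n$: using $Q_n=\zero$,
\[
P_n\;\balance\; Q_{n-1}\ominus a\odot Q_n\;=\;Q_{n-1}.
\]
Both $P_n$ and $Q_{n-1}$ lie in $\semiringvee$, so unique negation applied to $\zero\surpass P_n\ominus Q_{n-1}$ yields $P_n=Q_{n-1}$, as claimed. The whole argument uses only the axioms of a semiring system (in particular, unique negation) and none of the properties listed in \Cref{def-properties-systems}; there is no real obstacle beyond keeping track of the degree bookkeeping.
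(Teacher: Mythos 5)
Your proof is correct, and it streamlines the paper's argument in two ways. The paper splits into the cases $a=\zero$ and $a\in\tangible$, and for $a\in\tangible$ it proves $\deg(Q)\leq n-1$ by a downward induction: from $\zero\balance Q_{k-1}\ominus a\odot Q_k$ it derives $Q_{k-1}=a\odot Q_k$ for $k\geq n+1$, then uses $Q_k=\zero$ for large $k$ to push zeroness downward. You instead observe directly that the $\Y^{m+1}$-coefficient of $(\Y\ominus a)Q$ is $Q_m\in\tangible$ (with $m=\deg Q$), so $(\Y\ominus a)Q$ has an honest leading term at degree $m+1$; then one application of unique negation on either side of $m+1$ forces $n=m+1$. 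This avoids both the case split and the induction. One small bookkeeping remark: when you invoke the balance at index $m+1$ in the case $n<m+1$, what you actually get is $\zero\surpass P_{m+1}\ominus Q_m=\ominus Q_m$, which is exactly $\zero\surpass b\ominus b'$ with $b=\zero$, $b'=Q_m$; the way you phrased it is fine, but it is worth noting that the roles of $b,b'$ in unique negation are swapped relative to the case $n>m+1$. Your concluding step for $P_n=Q_{n-1}$ coincides with the paper's. Your observation that the lemma uses only the semiring-system axioms (unique negation in particular) and none of the properties of \Cref{def-properties-systems} is also correct and consistent with how the paper positions this lemma.
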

\begin{proof}
Let  $a\in\semiringvee$, and $P,Q\in \semiringvee[\Y]$ such that $P \balance (\Y \ominus a)Q$. Denote $n=\deg(P)$ the degree of $P$.
We shall denote by $(P_k)_{k\in {\N}}$ and $(Q_k)_{k\in\N}$
the sequences of coefficients of $P$ and $Q\in \semiringvee[\Y]$ respectively.

Consider first the case $a=\zero$.
Since $P\balance \Y Q$, we get in particular
that $Q_k\balance P_{k+1}=\zero$ for all $k\geq n-1$. Since
$Q_k\in \semiringvee$, using definition of
surpassing relation (in \Cref{defi-system}),
this implies that $Q_k=\zero$, so $\deg(Q)\leq\deg(P)-1$.
Since $Q_{n-1}\balance P_{n}\in \semiringvee\setminus\{\zero\}$, using 
unique negation, we get that $Q_{n-1}=P_{n}\in \semiringvee\setminus\{\zero\}$,
and $\deg(Q)=\deg(P)-1$.

Consider now the case where $a\in \semiringvee\setminus\{\zero\}=\tangible$.
The condition $P \balance (\Y \ominus a)Q$ is equivalent to
\begin{equation}\label{eq-PQ1}
 P_{k} \balance Q_{k-1} \ominus a \odot Q_{k},\;\text{for all}\; k\geq 1,\quad
\text{and}\quad P_{0} \balance \ominus a \odot Q_{0}\enspace .\end{equation}
Using \eqref{eq-PQ1} for $k \geq n+1$, we have
\[\zero=P_k \balance Q_{k-1} \ominus a\odot Q_k,\]
and since $Q_k \in \semiringvee$ for all $k$, we deduce from unique negation
\begin{equation}\label{equ_q}
Q_{k-1}=a\odot Q_k\quad \text{for all}\; k \geq n+1\enspace .
\end{equation} 
Since $Q_k=\zero$ for large $k$, \Cref{equ_q} shows that $Q_k=\zero$ for $k \geq n$ and therefore $\deg(Q)\leq n-1$.
Also, using \eqref{eq-PQ1} for $k=n$, we obtain
$P_n\balance Q_{n-1} \ominus a\odot Q_n=Q_{n-1}$, and since $P_n$ and $Q_{n-1}$
belong to $\semiringvee$, we get $P_n=Q_{n-1}$ from unique negation.
Now $\deg(P)=n$ implies 
$P_n\neq \zero$ and so $Q_{n-1}\neq \zero$ and $\deg(Q)=n-1$.
\end{proof}

\begin{lemma}\label{lem10} Let $(\semiring,\tangible,\ominus,\surpass)$ be a semiring system satisfying tangible balance elimination property (\cref{tbe} of \Cref{def-properties-systems}).
Let  $a\in\semiringvee$, and $P,Q\in \semiringvee[\Y]$ such that $P \balance (\Y \ominus a)Q$.
Then, $a$ is a root of $P$. 
\end{lemma}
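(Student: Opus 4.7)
The plan is to evaluate the polynomial balance $P \balance (\Y \ominus a)Q$ at the point $y = a$, identify the right-hand side as a ``balanced'' element, and then conclude via weak transitivity that $\widehat{P}(a) \balance \zero$, which is the definition of $a$ being a root of $P$.

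First, I would observe that the balance relation is compatible with the semiring operations. Indeed, since $\surpass$ is by definition a preorder compatible with $\oplus$ and $\odot$, and balance is defined by $u_1\balance u_2\iff\zero\surpass u_1\ominus u_2$, it is immediate that $u_1\balance u_2$ implies $u_1\odot v\balance u_2\odot v$ for any $v\in\semiring$ and that $u_1\balance u_2$, $u_1'\balance u_2'$ imply $u_1\oplus u_1'\balance u_2\oplus u_2'$. Applied coefficient-wise to $P\balance(\Y\ominus a)Q$, namely $P_k\balance Q_{k-1}\ominus a\odot Q_k$ (with $Q_{-1}=\zero$) for each $k$, this gives after multiplying the $k$-th relation by $a^{\odot k}$ and summing:
\[
\widehat{P}(a)\balance \widehat{(\Y\ominus a)Q}(a)\enspace .
\]

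Second, since polynomial evaluation at $a$ is a semiring morphism from $\semiring[\Y]$ to $\semiring$, the right-hand side factors as $\widehat{(\Y\ominus a)}(a)\odot \widehat{Q}(a)=(a\ominus a)\odot\widehat{Q}(a)=a^\circ\odot\widehat{Q}(a)$. By \Cref{property-minus}(6), this equals $(a\odot\widehat{Q}(a))^\circ$. Setting $Z\coloneqq a\odot\widehat{Q}(a)$, the defining axiom of the surpassing relation, $\zero\surpass X\ominus X$ applied at $X=Z$, yields $Z^\circ\balance\zero$. So we have the chain
\[
\widehat{P}(a)\balance Z^\circ\balance\zero\enspace .
\]

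Third, I would apply weak transitivity (tangible balance elimination) to this chain to conclude $\widehat{P}(a)\balance\zero$. The main obstacle here is precisely that weak transitivity, as stated in \Cref{def-properties-systems}\ref{tbe}, requires the intermediate element to be in $\semiringvee$, whereas the natural mediator $Z^\circ$ generally lies in the ``balanced'' part of $\semiring$ and not in $\semiringvee$. I expect this to be the delicate point of the proof. My plan to sidestep this is to rewrite $\widehat{P}(a)\balance Z^\circ$ as $\zero\surpass\widehat{P}(a)\ominus Z^\circ=\widehat{P}(a)\oplus Z^\circ$ (using $\ominus Z^\circ=Z^\circ$), then combine with the axiom $\zero\surpass Z^\circ$ and the compatibility of $\surpass$ with $\oplus$, exploiting the fact that an element of the form $X\ominus X$ behaves ``as $\zero$'' under the balance relation, to extract $\zero\surpass\widehat{P}(a)$. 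Once the cancellation of the balanced term $Z^\circ$ is justified, the conclusion $\widehat{P}(a)\balance\zero$ follows, and hence $a$ is a root of $P$ in the sense of \Cref{def-mult-BL}.
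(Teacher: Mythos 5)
Your first two steps are correct and set things up cleanly: from $P\balance(\Y\ominus a)Q$, multiplying the $k$-th coefficient relation by $a^{\odot k}$ and summing (balance is compatible with $\oplus$ and with scalar multiplication, since $\surpass$ is) gives $\widehat{P}(a)\balance\widehat{(\Y\ominus a)Q}(a)=(a\ominus a)\odot\widehat{Q}(a)=Z^{\circ}$ with $Z=a\odot\widehat{Q}(a)$, and $Z^{\circ}\balance\zero$ holds axiomatically. You also correctly identify the weak spot: the intermediate element $Z^{\circ}$ is not in $\semiringvee$, so tangible balance elimination does not apply.

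The proposed workaround, however, does not close the gap. From $\zero\surpass\widehat{P}(a)\oplus Z^{\circ}$ and $\zero\surpass Z^{\circ}$, compatibility of $\surpass$ with $\oplus$ only yields statements like $\widehat{P}(a)\surpass\widehat{P}(a)\oplus Z^{\circ}$; it never cancels the balanced summand $Z^{\circ}$. Indeed, the inference ``$\widehat{P}(a)\balance Z^{\circ}$ and $Z^{\circ}\balance\zero$ implies $\widehat{P}(a)\balance\zero$'' is false as a generic implication: in $\smax$, take $3\balance 5^{\circ}$ and $5^{\circ}\balance\zero$, yet $3\notbalance\zero$. The lemma is nonetheless true, but only because the coefficient relations constrain the \emph{magnitudes} involved, and your argument has discarded that information by collapsing everything into the single element $\widehat{Q}(a)$. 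This is precisely why the paper proves the statement by induction on the degree, maintaining the invariant $P_{0}\oplus\cdots\oplus P_{k}\odot a^{\odot k}\balance\ominus Q_{k}\odot a^{\odot k+1}$: the right-hand side $\ominus Q_{k}\odot a^{\odot k+1}$ is a \emph{tangible} element, so tangible balance elimination can legitimately be applied at each step, and at $k=n$ it vanishes (since $Q_{n}=\zero$ by \Cref{lem1}). You need to adopt this degree-by-degree bookkeeping rather than evaluate the factored form in one shot; there is no way to recover a tangible mediator after the collapse to $Z^{\circ}$. (See also \Cref{rem-balance-necessary} in the paper, which explains why tangible balance elimination appears essentially unavoidable here.)
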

\begin{proof}
Let us use the same notations as in previous proof.

Consider first the case $a=\zero$.
Since $P\balance \Y Q$, we get in particular
that  $P_0\balance \zero$, and since $\widehat{P}(\zero)=P_0$, 
we obtain that $\widehat{P}(\zero)\balance \zero$, so $\zero$ is a root of $P$.

Consider now the case where $a\in \semiringvee\setminus\{\zero\}=\tangible$.
Let us show by induction that 
 \begin{equation}\label{inductive}
 P_0 \oplus \cdots \oplus P_k\odot a^{\odot k} \balance \ominus  Q_k \odot a^{\odot k+1}\quad \forall k\geq 0\enspace .
 \end{equation} 
This is true for $k=0$, since $P_{0} \balance \ominus  a \odot Q_{0}$. 
Assume that \eqref{inductive} holds for $k\geq 0$.
By definition of balance relation, \eqref{eq-PQ1} for $k+1$ is equivalent to
$\ominus  Q_k \balance \ominus a \odot Q_{k+1} \ominus P_{k+1}$,
which implies $\ominus   Q_k\odot a^{\odot k +1} \balance \ominus  Q_{k+1}\odot a^{\odot k+2} \ominus  P_{k+1}\odot  a^{\odot k+1 }$.
 Applying  tangible balance elimination to this balance relation and \eqref{inductive} with $\ominus   Q_k \odot a^{\odot k +1} \in \semiringvee$, we get that
\[P_0 \oplus \cdots \oplus P_{k}\odot  a^{\odot k} \balance \ominus   Q_{k+1} \odot a^{\odot k+2}\ominus P_{k+1}\odot   a^{\odot k+1}\enspace,\]
which is equivalent to \eqref{inductive} for $k+1$, and shows the induction.
In particular, for $k=n$, we have 
\[\widehat{P}(a)=P_0 \oplus \cdots \oplus P_n \odot  a^{\odot n} \balance \ominus Q_n \odot  a^{\odot n+1} \enspace .\]
Since $\deg(Q)=n-1$, by \Cref{lem1}, we get that  $Q_n=\zero$, hence the 
previous equation shows that $a$ is a root of $P$.
This finishes the proof of the lemma.
\end{proof}

\begin{remark}\label{rem-balance-necessary}
The above assumption that the semiring satisfies 
tangible balance elimination property 
seems necessary to get the assertion of \Cref{lem10}.
Let  $P$ is of degree $3$ and
assume that $P\balance (\Y\ominus \unit) Q$ for some polynomial $Q$.
By \Cref{lem1}, $Q$ has degree $2$ and satisfies 
\[P_3 \balance Q_2,\; P_2\balance Q_1\ominus Q_2,\;  P_1\balance Q_0\ominus Q_1,\; P_0\balance \ominus Q_0\enspace .\]
Using unique negation, this is equivalent to $Q_2=P_3$, $Q_0= \ominus P_0$, together with
\[ Q_1\balance  P_3 \oplus P_2,\;  Q_1\balance \ominus P_1 \ominus P_0 \enspace .\]
Moreover, $\unit$ is a root of $P$ if and only if 
$ P_3 \oplus  P_2\balance \ominus P_1\ominus P_0$.
Therefore the existence of $Q$ implies that $\unit$ is a root,
if and only if tangible balance elimination property is
satified for the elements $a_1= P_3 \oplus P_2$ and  
$a_2= \ominus P_1 \ominus P_0$, that is for elements  $a_1$ and $a_2$ 
of hight $2$ (as defined in \cite{Rowen2}).
\end{remark}

\begin{corollary}\label{lem1-cor} Let $(\semiring,\tangible,\ominus,\surpass)$ be a semiring system satisfying tangible balance elimination and balance cancellation  properties (\cref{tbe,bc} of \Cref{def-properties-systems}).
Let  $a\in\semiringvee$, $\lambda\in\tangible$, and $P,Q\in \semiringvee[\Y]$ such that $\lambda P \balance (\Y \ominus a)Q$.
Then, $a$ is a root of $P$, $\deg(Q)=\deg(P)-1$, and $Q_{n-1}=\lambda P_n$.
\end{corollary}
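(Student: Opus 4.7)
The plan is to reduce the statement to \Cref{lem1} and \Cref{lem10} applied to the polynomial $\lambda P$ in place of $P$, and then to eliminate the factor $\lambda$ using the balance cancellation property.

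First, I would check that $\lambda P$ is again an element of $\semiringvee[\Y]$: since $\tangible$ is a submonoid of $(\semiring^*,\odot,\unit)$, for every coefficient $P_k\in\semiringvee=\tangible\cup\{\zero\}$ the product $\lambda\odot P_k$ lies in $\tangible$ if $P_k\in\tangible$, and equals $\zero$ if $P_k=\zero$. Moreover, cancellativity of multiplication by $\lambda$ (which holds because $\lambda\in\tangible$) together with $\lambda\neq\zero$ forces $\lambda\odot P_k=\zero$ if and only if $P_k=\zero$, so $\deg(\lambda P)=\deg(P)=n$.

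Then I would apply \Cref{lem1} to the hypothesis $\lambda P \balance (\Y \ominus a)Q$ to conclude immediately that $\deg(Q)=\deg(\lambda P)-1=n-1$ and $Q_{n-1}=(\lambda P)_n=\lambda\odot P_n$. Next, \Cref{lem10} applied to the same hypothesis yields that $a$ is a root of $\lambda P$, i.e., $\widehat{\lambda P}(a)=\lambda\odot\widehat{P}(a)\balance\zero$. Finally, the balance cancellation property (\cref{bc} of \Cref{def-properties-systems}), applied with the tangible scalar $\lambda$, converts this into $\widehat{P}(a)\balance\zero$, which is precisely the statement that $a$ is a root of $P$.

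I do not anticipate any real obstacle: the whole argument is a formal reduction to the two preceding lemmas, and the only care required is to check that multiplication by the tangible scalar $\lambda$ preserves membership in $\semiringvee[\Y]$ and the degree, and that the balance relation $\lambda\odot\widehat{P}(a)\balance\zero$ can be stripped down to $\widehat{P}(a)\balance\zero$ — which is exactly what balance cancellation provides.
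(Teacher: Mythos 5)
Your proposal is correct and follows essentially the same route as the paper's own proof: verify that $\lambda P$ remains in $\semiringvee[\Y]$ with $\deg(\lambda P)=\deg(P)$, apply \Cref{lem1} and \Cref{lem10} to the pair $(\lambda P,Q)$ to get the degree statement, the top-coefficient identity, and that $a$ is a root of $\lambda P$, then strip the tangible factor $\lambda$ via balance cancellation to conclude $\widehat{P}(a)\balance\zero$.
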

\begin{proof}
The polynomial $\lambda P$ has same degree as $P$ since
$P\in \semiringvee[\Y]$ and $\deg(P)=n$ means that $P_m=\zero$ for $m\geq n+1$ and
$P_n\in \tangible$, which implies $(\lambda P)_n=\lambda \odot P_n\in \tangible$.  Applying \Cref{lem10} to $\lambda P$ and $Q$, we get that $a$ is a root of $\lambda P$, that is $\lambda \odot \widehat{P}(a)\balance \zero$. Applying balance cancellation property, we obtain that $\widehat{P}(a)\balance \zero$.
\end{proof}

We also get the following result which says that in \Cref{def-mult-BL}, 
one can restrict the set of 
polynomials $Q\in \semiringvee[\Y]$ to be in 
the subset of polynomials with degree $n-1$.
\begin{corollary}
\label{cor-mult-degree}
Let $(\semiring,\tangible,\ominus,\surpass)$ be a semiring system satisfying tangible balance elimination and balance cancellation  properties (\cref{tbe,bc} of \Cref{def-properties-systems}).
Let $P\in \semiringvee[\Y]$ be of degree $n$, let  $a\in\semiringvee$,
and let $1\leq m\leq n$.
Then, $\mathrm{mult}_a(P)\geq m$, if and only if there exist $\lambda\in\tangible$ and $Q\in \semiringvee[\Y]$ of degree $n-1$ such that $\lambda P \balance (\Y \ominus a)Q$ and $\mathrm{mult}_a(Q)\geq m-1$.
Moreover, when $\tangible$ is a group, one can choose $\lambda=\unit$.
\end{corollary}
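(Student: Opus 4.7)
The plan is to derive both directions of the equivalence essentially from the recursive definition of multiplicity in~\Cref{def-mult-BL}, using~\Cref{lem1-cor} to control the degree of the witness polynomial $Q$.

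For the forward direction, I would assume $\mathrm{mult}_a(P)\geq m\geq 1$. By definition, the set appearing in~\eqref{mult} must then be nonempty and its supremum must be at least $m-1$, so there exist $\lambda\in\tangible$ and $Q\in\semiringvee[\Y]$ such that $\lambda P\balance(\Y\ominus a)Q$ and $\mathrm{mult}_a(Q)\geq m-1$. Since $P_n\in\tangible$ and $\lambda\in\tangible$, we have $(\lambda P)_n=\lambda\odot P_n\in\tangible$, so $\deg(\lambda P)=n$. Applying~\Cref{lem1-cor} to the pair $(\lambda P,Q)$ then yields $\deg(Q)=n-1$, which is exactly the additional information needed.

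The backward direction is essentially immediate: given such a witness pair $(\lambda,Q)$ with $\deg(Q)=n-1$ and $\mathrm{mult}_a(Q)\geq m-1$, this pair belongs to the set over which the maximum in~\eqref{mult} is taken, so
\[\mathrm{mult}_a(P)\geq 1+\mathrm{mult}_a(Q)\geq 1+(m-1)=m\enspace.\]

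For the final assertion, assume $\tangible$ is a group and suppose a witness pair $(\lambda,Q)$ is given. Setting $Q'\coloneqq\lambda^{-1}\odot Q$, the coefficients of $Q'$ lie in $\tangible\cup\{\zero\}=\semiringvee$ since $\tangible$ is a multiplicative group, so $Q'\in\semiringvee[\Y]$. Multiplying the balance $\lambda P\balance(\Y\ominus a)Q$ by $\lambda^{-1}$ and using compatibility of $\balance$ with multiplication gives $P\balance(\Y\ominus a)Q'$. Finally, the map $(\mu,R)\mapsto(\lambda\mu,R)$ is a bijection between witness pairs in the definitions of $\mathrm{mult}_a(Q')$ and $\mathrm{mult}_a(Q)$, showing that $\mathrm{mult}_a(Q')=\mathrm{mult}_a(Q)\geq m-1$. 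Hence we may choose $\lambda=\unit$ at the cost of replacing $Q$ by $Q'$, which still has degree $n-1$. No real obstacle is expected here, since~\Cref{lem1-cor} already does the technical work of determining the degree; the argument is essentially a bookkeeping unfolding of the recursive definition.
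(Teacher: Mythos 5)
Your proof is correct and takes essentially the same route as the paper's (which only says: the ``if'' direction follows from the definition since the degree constraint only shrinks the witness set, and the ``only if'' direction follows from the definition plus~\Cref{lem1-cor}). Your treatment of the group case ($\lambda=\unit$) is a reasonable unfolding that the paper leaves implicit; one small point worth making explicit there is that, before arguing via the bijection of witness pairs, one should note that $a$ is a root of $Q'=\lambda^{-1}Q$ iff it is a root of $Q$ (by balance cancellation applied to $\widehat{Q'}(a)=\lambda^{-1}\odot\widehat{Q}(a)$), so that the recursive formula~\eqref{mult} applies to both or to neither — but this is a minor omission, not a gap.
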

\begin{proof}
The ``if'' part of the first assertion
follows from the definition of $\mathrm{mult}_a(P)$ in
 \Cref{def-mult-BL}, as we only put a restriction on $Q$. The ``only if'' part
follows from the same definition and from \Cref{lem1-cor}.
\end{proof}

The ``only if'' part of \Cref{lem-equivmult} can also be proved under weaker assumptions, as follows.
\begin{lemma}\label{lem2} Let $(\semiring,\tangible,\ominus,\surpass)$ be a semiring system satisfying tangible balancing property (\cref{tb} of \Cref{def-properties-systems}).
Let $P\in \semiringvee[\Y]$ and let $a\in\semiringvee$ be a root of $P$.
Then, there exists $\lambda\in \tangible$, and $Q\in \semiringvee[\Y]$  such that $\lambda P \balance (\Y \ominus a)Q$.
\end{lemma}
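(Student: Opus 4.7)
The plan is to treat two cases separately: $a=\zero$ and $a\in\tangible$.

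When $a=\zero$, the root condition $P_0=\widehat{P}(\zero)\balance\zero$ together with $P_0\in\semiringvee$ forces $P_0=\zero$ by unique negation. Hence $P=\Y\odot Q$ with $Q:=\bigoplus_{k=1}^n P_k\Y^{k-1}\in\semiringvee[\Y]$, and the conclusion holds (in fact, as an equality rather than a mere balance) with $\lambda=\unit$.

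When $a\in\tangible$, I would set $\lambda:=a^{\odot(n-1)}\in\tangible$ (with $n:=\deg P$) and construct $Q\in\semiringvee[\Y]$ of degree $n-1$. The two extreme coefficients are specified explicitly: $Q_{n-1}:=\lambda\odot P_n$, and (when $n\geq 2$) $Q_0:=\ominus a^{\odot(n-2)}\odot P_0$; both clearly lie in $\tangible$. When $n\geq 3$, the intermediate coefficients $Q_{n-2},\dots,Q_1$ are built by descending induction on $k$: at each step I invoke the tangible balancing property~\cref{tb} of \Cref{def-properties-systems} to produce a single tangible $Q_k\in\semiringvee$ simultaneously satisfying the ``top'' balance $Q_k\balance \lambda P_{k+1}\oplus a\odot Q_{k+1}$ (ensuring that the $\Y^{k+1}$ coefficient of $(\Y\ominus a)Q$ balances $\lambda P_{k+1}$) and the ``bottom'' balance $Q_k\balance \ominus a^{\odot(n-2-k)}\odot S_k$, where $S_k:=\bigoplus_{j=0}^k a^{\odot j}\odot P_j$ (this, after multiplication by $a$, is what makes the $\Y^k$ coefficient balance $\lambda P_k$).

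Applying~\cref{tb} at step $k$ requires the two target elements to balance each other, namely $\lambda P_{k+1}\oplus a Q_{k+1}\balance \ominus a^{\odot(n-2-k)}S_k$. I would prove this by a parallel descending induction on $k$: the base case $k=n-2$ reduces, after substituting $Q_{n-1}=\lambda P_n$, to $\widehat{P}(a)=S_n\balance\zero$, which is the root condition; the inductive step follows by multiplying the bottom balance for $Q_{k+1}$ by $a$ and using $S_{k+1}=S_k\oplus a^{\odot(k+1)}P_{k+1}$. A direct coefficient-wise verification then yields $\lambda P\balance (\Y\ominus a)Q$: the $\Y^n$ and $\Y^0$ coefficients match by construction (with $\ominus a\odot Q_0=\lambda\odot P_0$ up to the quasi-zero $(\lambda P_0)^\circ\balance\zero$), and each intermediate $\Y^k$ coefficient $Q_{k-1}\ominus a Q_k$ balances $\lambda P_k$ by rearranging the top relation.

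The main obstacle is that the statement assumes only tangible balancing and not weak transitivity (tangible balance elimination~\cref{tbe}), so balance relations cannot be chained at will. The scaling $\lambda=a^{\odot(n-1)}$ (rather than $\lambda=\unit$) is precisely what enables the argument: it ensures that the constant-term condition holds up to a single quasi-zero summand, and the carefully matched forms of the top and bottom relations keep each inductive balance within the reach of the algebraic operations (addition, multiplication by $a$) that are compatible with $\balance$ without any appeal to transitivity.
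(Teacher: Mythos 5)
Your proof is correct and follows essentially the same route as the paper's: a descending induction using tangible balancing to build the coefficients of $Q$, with the base case of the auxiliary balance coming from the root condition $\widehat{P}(a)\balance\zero$. The paper first reduces to $a=\unit$ and pulls back with $\lambda=a^{\odot n}$, whereas you carry the powers of $a$ through the induction and obtain $\lambda=a^{\odot(n-1)}$ — a packaging difference only (and incidentally, your constant-term relation is an exact equality $\ominus a\odot Q_0=\lambda\odot P_0$, so the quasi-zero you invoke there is unnecessary).
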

\begin{proof}
If $a=\zero$ is a root of $P$, then $P_0=\widehat{P}(\zero)\balance \zero$,
so $P\balance P_1 \Y\oplus\cdots \oplus P_n \Y^n=\Y Q$ for some $Q\in \semiringvee[\Y]$ (which has degree $n-1$ and is such that $Q_{n-1}=P_n$).

Suppose $a\neq \zero$. We first reduce us to the case $a=\unit$,
for which we shall prove the result with $\lambda=\unit$.
Indeed, consider the formal polynomial $P'(\Y)=P(a\Y)$.
If $P'\balance Q'(Y\ominus \unit)$ with $Q'\in \semiringvee[\Y]$,
then by \eqref{eq-PQ1}, we have
$P'_{k} \balance Q'_{k-1} \ominus Q'_{k}$, for all $k\geq 0$ 
(with $Q_{-1}=\zero$).
Multiplying these equations by $a^{\odot n-k}$, 
and using that $P'_k= P_k \odot a^{\odot k}$, we get
$a^{\odot n}\odot P_k\balance  a^{\odot n-k} (Q'_{k-1} \ominus Q'_{k})$.
Considering the polynomial $Q\in  \semiringvee[\Y]$ with degree $n-1$,
such that $Q_k= a^{\odot n-k-1} \odot Q'_k$ for $0\leq k\leq n-1$,
we obtain $a^{\odot n}\odot P_k\balance  Q_{k-1} \ominus a\odot Q_{k}$,
that is \eqref{eq-PQ1} with $P_k$ replaced by $a^{\odot n}\odot P_k$.
Hence $\lambda P\balance (Y\ominus a) Q$ for $\lambda= a^{\odot n}\in \tangible$.

Assume now that $a=\unit$.
Let us construct $Q$ of degree $n-1$ as follows. Start with $Q_{n-1}=P_n\neq \zero$ (and $Q_k=\zero$ for $k\geq n$).
Since $\unit$ is a root of $P$, we have
\[\widehat{P}(\unit)=P_0 \oplus \cdots \oplus P_n  \balance \zero \enspace .\]
Hence $P_0 \oplus \cdots \oplus P_{n-2} \balance  \ominus (P_{n-1}\oplus Q_{n-1})$.
Choose $Q_{n-2}\in \semiringvee$ such that
$P_0 \oplus \cdots \oplus P_{n-2} \balance \ominus  Q_{n-2}$ and $Q_{n-2}\balance P_{n-1}\oplus Q_{n-1}$.
This is possible using  tangible balancing property.

Similarly, by descending induction on $0\leq k\leq n-1$, one can construct 
$Q_k\in \semiringvee$ such that
\begin{equation}\label{cond-rec}
 P_0 \oplus \cdots \oplus P_{k}\balance \ominus Q_{k}\quad \text{and}\quad Q_{k}\balance P_{k+1}\oplus Q_{k+1} \enspace .\end{equation}
The right equations in \eqref{cond-rec} are also true for $k\geq n$, and then 
imply that  $Q_{k-1}\ominus Q_{k} \balance P_{k}$ for all $k\geq 1$, that is the first equation in \eqref{eq-PQ1}.
Moreover, for $k=0$, the left equation in \eqref{cond-rec}
implies $P_0\balance \ominus Q_{0}$ which is the right equation in \eqref{eq-PQ1}.
We have shown all the equations in \eqref{eq-PQ1}, so $P\balance Q(Y\ominus \unit)$ for some polynomial $Q$ of degree $n-1$, which is the result of the lemma with $\lambda=\unit$.
\end{proof}
\begin{remark}\label{rem-balancing-necessary}
The above assumption that the semiring satisfies tangible balance property
seems necessary to get the assertion of \Cref{lem2}.
Indeed, assume as in \Cref{rem-balance-necessary}, 
that $\tangible$ is a group, so that the factor $\lambda$ is not needed.
Consider $P$ of degree $3$  and assume that $\unit$ is a root of $P$
which means that $ P_3 \oplus  P_2\balance \ominus P_1\ominus P_0$.
Then, by the same arguments as in \Cref{rem-balance-necessary},
the existence of $Q$ such that $P\balance (\Y\ominus \unit) Q$ is equivalent to the tangible balance property
for the elements $a_1= P_3 \oplus P_2$ and  $a_2= \ominus P_1 \ominus P_0$,
that is for elements  $a_1$ and $a_2$ of hight $2$.
\end{remark}

\subsection{Characterization of multiplicities of polynomial roots}
The following result of \cite{baker2018descartes} characterizes
the  multiplicities of polynomial roots over the hyperfield of signs
and was used there to reprove the  Descartes' rule of signs.
We rewrite it using the equivalence between $\bmaxs^\vee$ and 
the hyperfield of signs.
\begin{theorem}[\protect{\cite[Th.\ C]{baker2018descartes}}]\label{baker31}
Let $P=\bigoplus_{i=0}^n P_i \Y^i\in \bmaxs^\vee[\Y]$, be a formal polynomial 
with degree $n$ (that is such that $P_n\neq \zero$).
Then, $\mult_{\unit}(P)$ is the number of sign changes in the (non-zero)
coefficients $P_i$ of the formal polynomial $P$:
\begin{align*}
\mult_{\unit}(P)= &\card\{ i\in \{0,\ldots,  n-1\}\mid \exists k\geq 1,
\; P_i=\ominus P_{i+k}\neq \zero,\\
&\qquad\qquad  P_{i+1}=\cdots =P_{i+k-1}=\zero\} \enspace .\end{align*}
Moreover, $\mult_{\ominus \unit}(P)$ is the number of sign changes in
the coefficients of the formal polynomial $P(\ominus \Y)$.
\end{theorem}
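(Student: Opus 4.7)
The plan is to prove the two inequalities $\mult_\unit(P)\leq s(P)$ and $\mult_\unit(P)\geq s(P)$, where $s(P)$ denotes the number of sign changes in the non-zero coefficients. For the base case, observe that $\widehat{P}(\unit)=P_0\oplus\cdots\oplus P_n$ balances $\zero$ if and only if $s(P)\geq 1$: in $\bmaxs$, a sum of signed elements balances $\zero$ precisely when the sum contains both $\unit$ and $\ominus\unit$. Hence $s(P)=0$ yields $\mult_\unit(P)=0$ by \Cref{lem-equivmult}.

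For the lower bound $\mult_\unit(P)\geq s(P)$, I would proceed by induction on $s(P)$. When $s(P)\geq 1$, the goal is to construct $Q\in\bmaxs^\vee[\Y]$ of degree $n-1$ satisfying $P\balance (\Y\ominus\unit)Q$ together with $s(Q)=s(P)-1$. Starting from the forced value $Q_{n-1}=P_n$ (see \Cref{lem1}), the recursion $Q_{k-1}\balance P_k\oplus Q_k$ determines $Q_{k-1}$ uniquely when $P_k\oplus Q_k$ is signed, but leaves a binary choice whenever the sum equals $\unit^\circ$. One makes these choices to produce a new sign change at each free step, yielding $s(Q)=s(P)-1$; the consistency condition $P_0\balance\ominus Q_0$ at the bottom of the recursion is automatic from $\widehat{P}(\unit)\balance\zero$. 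By \Cref{def-mult-BL} and induction, $\mult_\unit(P)\geq 1+\mult_\unit(Q)=s(P)$. An alternative, less combinatorial, derivation uses Grabiner's theorem \cite{grabiner}, which realises any prescribed sign pattern by a real polynomial having exactly $s(P)$ positive roots counted with multiplicity, combined with the multiplicity comparison for the morphism $\operatorname{sgn}:\mathbb R\to\bmaxs$ that will be established in \Cref{sec-multsym}.

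For the upper bound $\mult_\unit(P)\leq s(P)$, the plan is induction on $n=\deg(P)$. By \Cref{cor-mult-degree} one may restrict to $Q$ of degree $n-1$, and normalising $\lambda=\unit$ (replacing $P$ by $\ominus P$ if necessary, which preserves $s(P)$) one must show $s(Q)\leq s(P)-1$. If $P_0=\zero$, the recursion $P_0\balance \ominus Q_0$ forces $Q_0=\zero$, so both $P$ and $\Y Q$ are divisible by $\Y$ and one reduces to a polynomial of degree $n-1$ with the same $s$. Otherwise one tracks how each sign change in $Q$ consumes at least one sign change in $P$ via $Q_{k-1}\balance P_k\oplus Q_k$: two consecutive signed values $Q_{k-1},Q_k$ of opposite signs force $P_k=Q_{k-1}\ominus Q_k$ to share the sign of $Q_{k-1}$, and following the chain of non-zero coefficients gives the desired bound. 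This combinatorial bookkeeping, compounded by the ambiguity created by balanced intermediate values, is the main technical obstacle.

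Finally, the statement about $\mult_{\ominus\unit}(P)$ follows from the first part applied to the polynomial $P(\ominus\Y)$, whose coefficients are $(\ominus\unit)^kP_k$: the substitution $\Y\mapsto\ominus\Y$ in \Cref{def-mult-BL} identifies $\mult_{\unit}(P(\ominus\Y))$ with $\mult_{\ominus\unit}(P)$, while the number of sign changes of $P(\ominus\Y)$ is exactly the quantity described in the theorem.
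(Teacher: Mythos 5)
The paper does not prove this theorem: it is quoted verbatim as \cite[Th.~C]{baker2018descartes}, after recasting the hyperfield of signs as $\bmaxs^\vee$. So there is no ``paper's proof'' to compare against; what you have proposed is a reconstruction of the Baker--Lorscheid argument, and it needs to stand on its own.

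Your overall plan — establish $\mult_\unit(P)\leq s(P)$ and $\mult_\unit(P)\geq s(P)$ separately, with the recursion $Q_{k-1}\balance P_k\oplus Q_k$, $Q_{n-1}=P_n$ as the engine — matches the structure of the original proof, and your treatment of the base case and of $\mult_{\ominus\unit}$ via $\Y\mapsto\ominus\Y$ is correct. However, both directions have genuine gaps.

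For the lower bound, the constructive route as you describe it fails. You propose the greedy top-down choice: whenever $P_k\oplus Q_k=\unit^\circ$ pick $Q_{k-1}$ to create a sign change, and you assert that the remaining constraint $P_0\balance\ominus Q_0$ ``is automatic from $\widehat{P}(\unit)\balance\zero$.'' That assertion is false. Take $P=\unit\oplus\Y\ominus\Y^2\oplus\Y^3\ominus\Y^4$, so $s(P)=3$ and $\widehat{P}(\unit)=\unit^\circ$. The recursion gives $Q_3=\ominus\unit$ and each of $Q_2,Q_1,Q_0$ is free. The greedy choice is $Q_2=\unit$, $Q_1=\ominus\unit$, $Q_0=\unit$, giving $s(Q)=3$, but then $\ominus Q_0=\ominus\unit$ does \emph{not} balance $P_0=\unit$. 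A valid $Q$ with $s(Q)=2=s(P)-1$ does exist (take $Q_0=\ominus\unit$ instead), but extracting it requires the two-sided constraint of \Cref{lem2}, namely simultaneously maintaining $P_0\oplus\cdots\oplus P_k\balance\ominus Q_k$ and $Q_k\balance P_{k+1}\oplus Q_{k+1}$, and then proving that among the choices compatible with \emph{both} constraints one still achieves $s(Q)=s(P)-1$. That is precisely the combinatorial lemma your sketch elides; it is not automatic. Your alternative route (Grabiner's construction of a real polynomial attaining the Descartes bound, plugged into the morphism inequality of \Cref{most_roots}) is sound, but both ingredients are themselves only cited in the paper, so this is a proof-by-citation, not a self-contained argument.

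For the upper bound you correctly identify the mechanism — a genuine sign change in $Q$ between nonzero coefficients $Q_{k-1},Q_k$ forces $P_k=Q_{k-1}$, and zeros and balanced intermediate values create further forcing — but you stop at ``this combinatorial bookkeeping\ldots is the main technical obstacle.'' That acknowledgment is accurate: without carrying out the bookkeeping, showing that $s(Q)\leq s(P)-1$ for \emph{every} admissible $Q$, the bound is not proved. The remaining work is exactly the content of Baker--Lorscheid's Lemma on sign changes of quotient polynomials, and it is where the real effort lies.
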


\begin{example}\label{examplebmax} 
Consider the polynomials of \Cref{rem-nonunique}:
$P_1= \Y^4 \ominus \unit$, 
$P_2= \Y^4 \oplus   \Y^{3} \oplus\Y^2 \oplus \Y \ominus \unit$ and
$P_3=  \Y^4 \ominus   \Y^{3} \oplus\Y^2 \ominus \Y \ominus \unit$.
They all have same canonical form $P^\sharp$, so same polynomial
function and two possible factorizations.
These formal polynomials have only $\zero$, $\unit$, and $\ominus \unit$
coefficients, so they can be considered as polynomials over the
symmetrized Boolean semiring $\bmaxs$, and the multiplicities of roots in
$\bmaxs$ and $\smax$ coincide.
Therefore, using \Cref{baker31}, we get that
 $\unit$ is a root of 
$P_1$, $P_2$, $P_3$ with respective multiplicities $1$, $1$ and $3$.
Similarly, $\ominus \unit$ is a root of 
$P_1$, $P_2$, $P_3$ with respective multiplicities $1$, $3$ and $1$.
\end{example}

In  \cite{gunn} and \cite{gunn2}, Gunn extended the above
result to the case of 
polynomials over the signed tropical hyperfield, 
and to the case of split tropical extension of a whole idyll, respectively.
The proof there %
is based on the notion of initial forms.
To prove our general result \Cref{tho-mult-smax}, 
we use the notion of saturated polynomial, as in
\cite{akian2016non}, which is indeed similar to the one of initial forms.
The word ``saturated'' is related to optimization and graphs.

\begin{theorem}[Multiplicities of roots in a tropical extension]\label{tho-mult-smax}
 Let $(\semiring,\tangible,\ominus,\surpass)$ be a semiring system satisfying 
tangible balancing, tangible balance elimination and balance cancellation properties (\crefrange{tb}{bc} of \Cref{def-properties-systems}).
Let $\semiring'$ be either $\semiring$ or $\semiring^*$ assuming that $\semiring'$ is stable by all the operations $\oplus,\odot,\ominus$.
Let $\vgroup$ be any non-trivial divisible ordered group.
Consider the tropical extension
$\extension:=\skewproductstar{\semiring'}{\tmax^*}$ 
of $\semiring$ by $\vgroup$ and let us use the identification of 
$(\tangible\times \{0\})\cup\{\zero\}\subset \extension^\vee$ with
$\semiringvee$ and also of $\tmax$ as a subset of $\extension^\vee$.
Then, $\extension$ is a semiring system satisfying 
tangible balancing, tangible balance elimination and balance cancellation
properties, and 
multiplicities of polynomials over $\extension$ can be computed using multiplicities over $\semiring$ as follows.

Let $P=\bigoplus_{i=0}^n P_i \Y^i\in \extension^\vee[\Y]$, be a non-zero formal 
polynomial, with degree $n\geq 0$ and lower degree $\mv\geq 0$.

For $r=\zero$, the multiplicity $\mult_{r}(P)$ of $r$ as a root of $P$,
in the sense of \Cref{def-mult-BL}, 
is equal to $\mv$.

For $r\in\extension^\vee\setminus\{\zero\}$, the following properties hold.

\begin{enumerate}
\item\label{tho-mult-smax1}  $|r|$ is invertible, and the normalization $\bar{r}$ of $r$, defined as
$\bar{r}  := |r|^{\odot -1}\odot r$ belongs to $\semiringvee\setminus\{\zero\}=\tangible$. 
\item\label{tho-mult-smax2} $ |\widehat{P}(r)|=\widehat{|P|}(|r|)\in \tmax^*$ is invertible and only depends on $|r|$. Let us define the normalized polynomial $\Pn^r$ as
\begin{align*}
&\Pn^r:=|\widehat{P}(r)|^{\odot - 1} P(r\Y)\in 
\extension^\vee[\Y]
\end{align*}
meaning $\Pn^r=\bigoplus_{i=0}^n \Pn^r_i \Y^i$ with $\Pn^r_i= |\widehat{P}(r)|^{\odot - 1} \odot r^{\odot i}\odot  P_i$.
\item\label{tho-mult-smax3} The saturation set $\sat(r,P)$, defined as
\begin{align*}
& \sat(r,P):= \{i\in \{0,\ldots, n\} \mid
|\widehat{P}(r)|= |P_i | \odot |r|^{\odot i}\}
\end{align*}
only depends on $|r|$, and coincides with $\sat(\unit, \Pn^r)$ and $\sat(\bar{r}, \Pn^{|r|})$. 
\item \label{tho-mult-smax4} The saturation polynomial $P^{\sat,r}$, defined as
\begin{align*}
& P^{\sat,r}(\Y):= \bigoplus_{i\in \sat(r,P)} \Pn^r_i \Y^i \in \extension^\vee[\Y]\enspace,\end{align*}
has coefficients in $\semiringvee$. 
\item \label{tho-mult-smax5} We have
\[ %
\mult_{r}(P)=\mult_{\bar{r}}(\Pn^{|r|})=\mult_{\bar{r}}(P^{\sat,|r|})\enspace.
\] %
\item \label{tho-mult-smax6} If $r$ is invertible, then $\bar{r}$ is invertible and
we have
\[ \mult_{r}(P)=\mult_{\unit}(\Pn^{r})=\mult_{\unit}(P^{\sat,r})\enspace.
\]
\item \label{tho-mult-smax4bis} If $|r|=\unit$ (or equivalently $r\in\semiringvee\setminus\{\zero\}$) and $P$ has coefficients in
$\semiringvee$, then considering $P$ as a formal polynomial over 
 $\semiringvee$ or over
$\extension^\vee$ does not change the multiplicity $\mult_{r}(P)$.
In particular $\mult_{\bar{r}}(P^{\sat,|r|})$ and $\mult_{\unit}(P^{\sat,r})$ above can be computed either in $\semiringvee$ or in $\extension^\vee$.
\end{enumerate}
\end{theorem}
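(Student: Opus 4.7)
First, I would verify that $\extension=\skewproductstar{\semiring'}{\tmax^*}$ inherits tangible balancing, tangible balance elimination and balance cancellation from $\semiring$: every balance relation in $\extension$ reduces, by the definition of $\oplus$ on layered pairs, to equality of moduli together with a balance in $\semiring$, so each of the three properties transports coordinate by coordinate. The case $r=\zero$ is handled by induction on $\mv=\uval(P)$: when $\mv=0$, $\widehat{P}(\zero)=P_0\in\tangible$ so $\zero$ is not a root and $\mult_\zero(P)=0$; when $\mv\geq 1$, writing $P=\Y R$ with $R_i=P_{i+1}$ gives $\mult_\zero(P)\geq 1+\mult_\zero(R)$, while \Cref{cor-mult-degree} and unique negation applied to any witness $\lambda P\balance\Y Q$ with $\deg(Q)=n-1$ force $Q_{k-1}=\lambda P_k$ for $k\geq 1$, so $\uval(Q)=\mv-1$ and the induction closes.

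Items (1)--(4) for $r\neq\zero$ are direct computations in the layered structure. Writing $r=(a,g)$ with $g\in\tmax^*$, the modulus $|r|=(\unit,g)$ is invertible, so $\bar r=(a,0)\in\tangible$. Since the modulus is a semiring morphism (\Cref{prop-modulus}), $|\widehat{P}(r)|=\widehat{|P|}(|r|)$ depends only on $|r|$, and hence so does $\sat(r,P)$; the identifications with $\sat(\unit,\Pn^r)$ and $\sat(\bar r,\Pn^{|r|})$ follow by unwinding definitions. Membership $i\in\sat(r,P)$ is precisely the statement that the modulus of $\Pn^r_i$ equals $\unit$, placing $\Pn^r_i$ in the image of $\semiringvee$ inside $\extension^\vee$, and so $P^{\sat,r}$ has coefficients in $\semiringvee$.

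The main content is the chain $\mult_r(P)=\mult_{\bar r}(\Pn^{|r|})=\mult_{\bar r}(P^{\sat,|r|})$ of item (5). For the first equality I would proceed by induction on the multiplicity via \Cref{cor-mult-degree}: the substitution $\Y\mapsto|r|\Y$ is a multiplicative bijection on $\extension^\vee[\Y]$ since $|r|\in\tmax^*$ is invertible, it sends $\Y\ominus r$ to $|r|(\Y\ominus\bar r)$, and combined with multiplication by the unit $|\widehat{P}(r)|^{\odot-1}$ it converts any witness $(\lambda,Q)$ for $\mult_r(P)\geq k$ into a witness for $\mult_{\bar r}(\Pn^{|r|})\geq k$, and vice versa. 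The second, more delicate equality is where the layered structure does the essential work: inductively, given $\lambda\Pn^{|r|}\balance(\Y\ominus\bar r)Q$ with $\deg(Q)=n-1$ and $|\lambda|=\unit$ after rescaling, for $i\notin\sat(r,P)$ the coefficient $\Pn^{|r|}_i$ has modulus strictly less than $\unit$; since $|\bar r|=\unit$, the coefficient recursion $Q_{k-1}\balance \lambda\Pn^{|r|}_k\ominus\bar r\odot Q_k$ combined with the absorption rule $(b,g)\oplus(b',g')=(b,g)$ when $g'<g$ forces the entries $Q_k$ at non-saturated indices to have strictly smaller modulus, so that the restriction of $Q$ to saturated indices is a layer-$0$ polynomial witnessing $\mult_{\bar r}(P^{\sat,|r|})\geq k$. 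The main obstacle is making this layer-bookkeeping rigorous across the whole recursion, in particular at the interface between saturated and non-saturated indices; the converse inequality is easier since a witness for $P^{\sat,|r|}$ embeds into $\Pn^{|r|}$ through the canonical inclusion $\semiringvee\hookrightarrow\extension^\vee$.

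Finally, item (6) follows from one further substitution $\Y\mapsto\bar r\Y$: when $r$ is invertible in $\extension$, $\bar r$ is invertible in $\tangible$, and this substitution sends $\bar r$ to $\unit$ while transforming $\Pn^{|r|}$ into $\Pn^r$ up to a unit, giving $\mult_{\bar r}(\Pn^{|r|})=\mult_{\unit}(\Pn^r)$. Item (7) is then obtained by checking that when $P\in\semiringvee[\Y]$ and $|r|=\unit$, every witness $(\lambda,Q)$ in $\extension$ can be replaced by one in $\semiringvee$: rescaling $\lambda$ to $|\lambda|^{\odot-1}\lambda\in\tangible$ rescales $Q$ into the modulus-$\unit$ slice, and tangible balancing in $\extension$ applied to a right-hand side supported in $\semiringvee$ admits a solution in $\semiringvee$, so the recursive definitions of $\mult_r(P)$ computed in $\semiringvee$ and in $\extension^\vee$ coincide.
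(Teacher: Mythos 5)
Your overall strategy matches the paper's: you reduce item (5) by the substitution $\Y\mapsto\mu\Y$ to the case $|r|=\unit$, and then you want to relate $\mult_r(\Pn^{|r|})$ to $\mult_r(P^{\sat,|r|})=\mult_r(\pbool(\Pn^{|r|}))$ by an induction that tracks witnesses across the projection $\pbool$ onto the unit-modulus layer. Items (1)--(4), item (6), and the $r=\zero$ case are handled correctly, and the forward direction (from a witness for $\Pn^{|r|}$ to one for $P^{\sat,|r|}$) is the same idea as the paper's \Cref{cor-res}, though your claim that ``the entries $Q_k$ at non-saturated indices have strictly smaller modulus'' is not what actually holds; the correct statement is merely that $|Q_k|\leq\unit$ for all $k$ (this is \Cref{degQ}), so that $\pbool(Q)$ is defined and one checks $\pbool(\lambda P)\balance(\Y\ominus r)\pbool(Q)$.

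The genuine gap is in your treatment of the reverse inequality. You assert that ``a witness for $P^{\sat,|r|}$ embeds into $\Pn^{|r|}$ through the canonical inclusion $\semiringvee\hookrightarrow\extension^\vee$.'' This fails in general. Given $\lambda P^{\sat,|r|}\balance(\Y\ominus\bar r)\widetilde Q$, the same pair $(\lambda,\widetilde Q)$ need not satisfy $\lambda\Pn^{|r|}\balance(\Y\ominus\bar r)\widetilde Q$. The problem occurs at a non-saturated index $i$ where $\Pn^{|r|}_i\neq\zero$ (with $|\Pn^{|r|}_i|<\unit$) but $\widetilde Q_{i-1}=\widetilde Q_i=\zero$: the saturated balance at $i$ reads $\zero\balance\zero$, but the unsaturated one would require $\lambda\Pn^{|r|}_i\balance\zero$, forcing $\Pn^{|r|}_i=\zero$ by unique negation, a contradiction. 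Such a string of zero coefficients in $\widetilde Q$ is permitted by the balance equations at indices outside the saturation set, so the naive inclusion cannot be used. What is actually needed is to modify $\widetilde Q$ into a lift $Q\in\ballext^\vee[\Y]$ with $\pbool(Q)=\widetilde Q$ and $\lambda P\balance(\Y\ominus r)Q$, filling the low-modulus slots of $Q$ with suitable nonzero entries: this is precisely the content of the paper's \Cref{lem-existres}, and it relies on the weak tangible balancing property through \Cref{prop-pbool-balance}. Without this lifting lemma your induction does not close, and you cannot obtain $\mult_r(P^{\sat,|r|})\leq\mult_r(\Pn^{|r|})$. The same issue, now comparing $\semiringvee[\Y]$-witnesses with $\extension^\vee[\Y]$-witnesses, reappears in item (7), which also needs a similar lifting argument rather than the one-line remark you give.
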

\begin{remark} \label{rk:mult-smax}
Let us compare \Cref{tho-mult-smax}  to the recent result 
of Gunn \cite[Th.\ A and B]{gunn2} proved in the setting of ``whole idylls''.
As said before, idylls are particular cases of semiring systems 
in which the set of tangible elements is a group,  up to some identification by projection.
Moreover, split tropical extensions of idylls (as in \cite{gunn2})
are similar to tropical extensions of these idylls seen as
semiring systems, and the condition that an idyll is a ``whole idyll''
is similar to the weak tangible balancing condition
that there exists $a' \in \semiringvee$ such that $a'\balance a$,
for elements $a$ which are of the form $a=b+c$ with $b,c\in \semiringvee$
(called elements of height $2$ in \cite{Rowen2}).

In that case, the conclusion of \Cref{tho-mult-smax} is similar as
the one of \cite[Th.\ A]{gunn2}, and may certainly be deduced by applying
an appropriate projection.
Note that we stated our result using more restrictive assumptions, since
these assumptions are necessary to obtain the
equivalence between the two definitions of a root of a polynomial,
see \Cref{lem-equivmult}.
In \cite{gunn2}, Gunn chose to define a root 
 by the condition that the multiplicity is $\geq 1$ only, which would make 
most of the assumptions unnecessary to prove \Cref{tho-mult-smax}, as
can be checked in \Cref{prooftheo-mult} below.
Indeed, for this proof, we only need
the weak tangible balancing condition for elements of height $2$.
Therefore \Cref{tho-mult-smax} and \cite[Th.\ A and B]{gunn2}
are comparable up to some projection of a whole idyll onto the semiring system,
and also a localization of the semiring system (enlarging the semiring system 
and the set of tangible elements in order to get a group).
\end{remark}

As a corollary of \Cref{tho-mult-smax} and \cite[Th.\ C]{baker2018descartes} (see \Cref{baker31}),
we recover the characterization of the multiplicities 
of polynomials over $\smax$, already proved by Gunn \cite[Theorem A]{gunn}
when $\vgroup=\R$, in the setting of hyperfields.

\begin{corollary}[Multiplicities of roots in $\smax$ (see \protect{\cite[Theorem A]{gunn}} for $\vgroup=\R$)]\label{cor-mult-smax}
Assume that $\vgroup$ is a non-trivial divisible ordered group.
Let $P=\bigoplus_{i=0}^n P_i \Y^i\in \smax^\vee[\Y]$, be a non-zero formal 
polynomial, with degree $n\geq 0$ and lower degree $\mv\geq 0$,
and let $r\in\smax^\vee$. 
For $r=\zero$, the multiplicity $\mult_{r}(P)$ of $r$ as a root of $P$,
in the sense of \Cref{def-mult-BL}, 
is equal to $\mv$.

For $r\in\smax^\vee\setminus\{\zero\}$, the following properties hold.
\begin{enumerate}
\item  $r$ is invertible.
\item $ |\widehat{P}(r)|=\widehat{|P|}(|r|)\in \tmax^*$ is invertible and only depends on $|r|$. Let us define the normalized polynomial $\Pn^r$ as
\begin{align*}
&\Pn^r:=|\widehat{P}(r)|^{\odot - 1} P(r\Y)\in 
\extension^\vee[\Y]
\end{align*}
meaning $\Pn^r=\bigoplus_{i=0}^n \Pn^r_i \Y^i$ with $\Pn^r_i= |\widehat{P}(r)|^{\odot - 1} \odot r^{\odot i}\odot  P_i$.
\item The saturation set $\sat(r,P)$, defined as
\begin{align*}
& \sat(r,P):= \{i\in \{0,\ldots, n\} \mid
|\widehat{P}(r)|= |P_i | \odot |r|^{\odot i}\}
\end{align*}
only depends on $|r|$, and coincides with $\sat(\unit, \Pn^r)$.
\item The saturation polynomial $P^{\sat,r}$, defined as
\begin{align*}
& P^{\sat,r}(\Y):= \bigoplus_{i\in \sat(r,P)} \Pn^r_i \Y^i \in \extension^\vee[\Y]\enspace,\end{align*}
has coefficients in $\{\zero,\unit,\ominus\unit\}=\bmaxs^\vee\subset \smax^\vee$.
 Then, 
considering $P^{\sat,r}$ as a formal polynomial over $\bmaxs^\vee$  or over
$\smax^\vee$ does not change the multiplicity $\mult_{\unit}(P^{\sat,r})$.
\item\label{last-gunn} We have
\[ \mult_{r}(P)=\mult_{\unit}(\Pn^{r})=\mult_{\unit}(P^{\sat,r})\enspace,
\]
\end{enumerate}
and they are equal to the number of sign changes in the coefficients of $P^{\sat,r}$.
\end{corollary}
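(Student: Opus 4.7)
My plan is to reduce the statement to a direct application of \Cref{tho-mult-smax} with base semiring $\semiring = \bmaxs$, combined with the Baker--Lorscheid characterization of multiplicities over $\bmaxs^\vee$ given in \Cref{baker31}. The first step is to invoke \Cref{extendedsmax}, which identifies $\smax(\vgroup)$ with the tropical extension $\skewproductstar{\bmaxs^*}{\tmax^*}$. This places us exactly in the framework of \Cref{tho-mult-smax} with $\semiring = \bmaxs$ and $\semiring' = \bmaxs^*$. I would then check that $\bmaxs$ satisfies the hypotheses of \Cref{tho-mult-smax}, namely tangible balancing, tangible balance elimination and balance cancellation; this is automatic because $\bmaxs^\vee$ is (isomorphic to) the hyperfield of signs, so $\bmaxs$ is a hyperfield system, and in such a system the tangible set $\{\unit,\ominus\unit\}$ is a group, from which all three properties follow.

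Next, I would split on $r$. The case $r = \zero$ follows immediately from the first assertion of \Cref{tho-mult-smax}, giving $\mult_\zero(P) = \mv$. For $r \in \smax^\vee \setminus \{\zero\}$, the key observation is that $|r|$ lives in $\tmax^* = \vgroup$ and so is invertible, hence $r$ itself is invertible in $\smax$. This lets me apply \Cref{tho-mult-smax6}, yielding
\[
\mult_r(P) = \mult_\unit(\Pn^r) = \mult_\unit(P^{\sat,r}).
\]
By \Cref{tho-mult-smax4}, the coefficients of $P^{\sat,r}$ lie in $\semiringvee = \bmaxs^\vee = \{\zero,\unit,\ominus\unit\}$: for $i \in \sat(r,P)$, the coefficient $\Pn^r_i$ is signed with modulus $\unit$, and for $i \notin \sat(r,P)$ it is $\zero$. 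Invariance of the multiplicity under the embedding $\bmaxs^\vee \hookrightarrow \smax^\vee$ (part \Cref{tho-mult-smax4bis}) then allows me to compute $\mult_\unit(P^{\sat,r})$ inside $\bmaxs^\vee$.

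The final step is to apply \Cref{baker31} to $P^{\sat,r}$ viewed as a polynomial over $\bmaxs^\vee$: this identifies $\mult_\unit(P^{\sat,r})$ with the number of sign changes in the non-zero coefficients of $P^{\sat,r}$, which is precisely the last assertion of the corollary. The substantive content of the result is really carried by \Cref{tho-mult-smax} itself, so I do not anticipate a major obstacle here; the only slightly delicate points are the consistent identification of $\smax$ with the tropical extension (so that the notation $\Pn^r$ and $P^{\sat,r}$ really coincide with the ones defined in the body of the corollary) and the careful use of \Cref{tho-mult-smax4bis} to justify that passing from $\bmaxs^\vee$ to $\smax^\vee$ leaves the multiplicity unchanged.
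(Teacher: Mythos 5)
Your proposal is correct and follows the same route the paper takes: the paper explicitly introduces \Cref{cor-mult-smax} as a corollary of \Cref{tho-mult-smax} and \Cref{baker31}, obtained by identifying $\smax(\vgroup)$ with the tropical extension $\skewproductstar{\bmaxs^*}{\tmax^*}$ (\Cref{extendedsmax}) and computing the residual multiplicity over $\bmaxs^\vee$ via Baker--Lorscheid's sign-change count. Your verification that $\bmaxs$ satisfies the required hypotheses of \Cref{tho-mult-smax} because it is a hyperfield system, and your use of parts \Cref{tho-mult-smax6}, \Cref{tho-mult-smax4} and \Cref{tho-mult-smax4bis} together with \Cref{baker31}, reproduce the intended argument faithfully.
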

 \begin{example}
 Turning back to \Cref{ex_sym_plot}. To compute the multiplicity of the root $\rsmax=\ominus -1$ of 
 $P = \Y^5\oplus 4 \Y^3\oplus \Y \oplus 1$ we have
 $\sat(\ominus -1,P)=\{0,3\}$. Therefore

 \[P^{\sat,\ominus -1}(\Y)=(-1\odot (\ominus -1)^{\odot 3}\odot 4)\Y^{3}\oplus -1 \odot 0 \odot 1 =\ominus 4 \Y^3 \oplus \unit\enspace,\]
 which contains only one sign change and thus 
  $\mult_{\ominus -1}(P)=1$\enspace.
 \end{example}

\subsection{Properties of semiring extensions}
For the proof of \Cref{tho-mult-smax}, we shall need the following lemmas and properties, some of them generalizing the properties already stated for $\smax$, to the case of general semiring extensions.

\begin{property}\label{pro_preceq-ext}
Let $(\semiring,\tangible,\ominus,\surpass)$ be a semiring system, and
let $\semiring'$ be either $\semiring$ or $\semiring^*$ assuming that $\semiring'$ is stable by all the operations $\oplus,\odot,\ominus$.
Let $\vgroup$ be any non-trivial divisible ordered group.
Consider the tropical extension
$\extension:=\skewproductstar{\semiring'}{\tmax^*}$ 
of $\semiring$ by $\vgroup$.

 We have the following properties for all $b_1,b_2 \in \extension$:
\begin{enumerate}
\item \label{pro_preceq-ext1} $b_1 \surpass b_2$ implies $|b_1| \leq |b_2|$;
\item \label{pro_preceq-ext2}If $|b_1| \leq |b_2|$ and $\zero\surpass b_2$, then $b_1 \surpass b_2$;
\item \label{pro_preceq-ext3} If  $b_1\in\extension^\vee$ and $b_1\balance b_2$ then $|b_1| \leq |b_2|$.
\end{enumerate} 
Moreover, we have
\begin{enumerate}
 \setcounter{enumi}{3}
\item\label{prop-exten-1}
 If $\semiring$ satisfies tangible  balancing
(\cref{tb} of \Cref{def-properties-systems}), then so does
$\extension$.
\item\label{prop-exten-0}
 If $\semiring$ satisfies weak tangible  balancing
(\cref{tb} of \Cref{def-properties-systems}), then so does
$\extension$.
\item \label{prop-exten-2}
If $\semiring$ satisfies tangible balance elimination
(\cref{tbe} of \Cref{def-properties-systems}), then so does
$\extension$.
\item \label{prop-exten-3} If $\semiring$ satisfies balance cancellation
(\cref{bc} of \Cref{def-properties-systems}), 
then so does $\extension$.
\end{enumerate} 
\end{property}
\begin{proof}
The first three properties follow from the definition of addition and negation
in $\extension$. 
For the proof of the last properties, we remark that for
$(a_1,g_1),(a_2,g_2)\in \extension$, we have that $(a_1,g_1)\balance (a_2,g_2)$
if and only if $\zero\surpass (a_1,g_1)\ominus (a_2,g_2)$, which holds
if and only either $g_1>g_2$ and $\zero\surpass a_1$,
 or $g_2>g_1$ and $\zero\surpass a_2$, or 
$g_1=g_2\in \vgroup$ and $a_1\balance a_2$, or $(a_1,g_1)=(a_2,g_2)=(\zero,\botelt)$.

Let us show \eqref{prop-exten-1}.
If $\semiring$ satisfies tangible  balancing, then for all
$a_1,a_2\in \semiring$, $a_1\balance a_2$
implies that there exists $a\in \tangible\cup\{\zero\}$ such that
$a_1\balance a$ and $a_2\balance a$, and this applies to the particular
case $a_1=a_2$. 
Assume that  $(a_1,g_1)\balance (a_2,g_2)$, and let us show that there
exists $(a,g)\in \extension^\vee$ such that
 $(a_1,g_1)\balance (a,g)$ and $(a,g)\balance (a_2,g_2)$.
This is trivial for the case $(a_1,g_1)=(a_2,g_2)=(\zero,\botelt)$.
Assume first that $g_1=g_2\in \vgroup$, and let $a$ be as above,
that is  $a\in \tangible\cup\{\zero\}$, $a_1\balance a$ and $a_2\balance a$.
If $a\neq \zero$, then $a\in \tangible$, so $(a,g_1)\in\extension^\vee$,
and $(a_1,g_1)\balance (a,g_1)$ and $(a,g_1)\balance (a_2,g_2)$.
Otherwise, $a=\zero$, then $a_1\balance \zero$ and $a_2\balance \zero$,
so   $(a_1,g_1)\balance (\zero,\botelt)$ and
$ (\zero,\botelt)\balance (a_2,g_2)$, with $(\zero,\botelt)\in\extension^\vee$.
Assume now that $g_1>g_2$, then  $\zero\surpass a_1$,
and there exists $a\in\tangible\cup\{\zero\}$ such that $a\balance a_2$.
If $a\neq \zero$, then $(a,g_2)\in\extension^\vee$ and we have
 $(a_1,g_1)\balance (a,g_2)$ and $(a,g_2)\balance (a_2,g_2)$.
Otherwise, we have $(a_1,g_1)\balance (\zero,\botelt)$ and
$ (\zero,\botelt)\balance (a_2,g_2)$, with $(\zero,\botelt)\in\extension^\vee$. 
The same holds for the symmetrical case $g_2>g_1$.
Then, we have shown that $\extension$  satisfies tangible  balancing.

The proofs of \eqref{prop-exten-0}  \eqref{prop-exten-2} are similar.

Let us show \eqref{prop-exten-3}.
If $\semiring$ satisfies balance cancellation, then
for any $a\in\semiringvee\setminus\{\zero\}$ and $a_1\in\semiring$ such that $a\odot a_1 \balance \zero$, we have $a_1\balance \zero$ (in $\semiring$).
Consider $(a,g)\in \extension^\vee\setminus\{\zero\}$ and $(a_1,g_1)\in \extension$
such that $(a,g)\odot (a_1,g_1)\balance \zero$.
By definition this means that $a\in \tangible$ and 
$\zero\surpass (a,g)\odot (a_1,g_1)$.
Then, $\zero\surpass a\odot a_1$ or equivalently  
$a\odot a_1\balance \zero$. 
Since $\semiring$ satisfies balance cancellation, this implies that
 $a_1\balance \zero$, and by definition again, we get $(a_1,g_1)\balance \zero$.
\end{proof}

Let $(\semiring,\tangible,\ominus,\surpass)$, 
$\vgroup$ and $\extension:=\skewproductstar{\semiring'}{\tmax^*}$ be as
in \Cref{pro_preceq-ext}. 
We denote by $\ballext:= \{x\in \extension\mid |x|\leq \unit\}=
\{(a,g)\in \extension\mid g\leq 0\}$
the unit ball of $\extension$, 
$\ballext^\vee:=\ballext\cap\extension^\vee$ its intersection
with $\extension^\vee$, and define the map $\pbool: \ballext\to \semiring$,
such that
$\pbool((a,g))=a$ if $g=0$ and
$\pbool((a,g))=\zero$ if $g<0$. 
Note that one can compare the absolute value morphism to a valuation,
then $\ballext$, $\semiring$ and $\pbool$ can be compared 
respectively to the valuation ring,
the residue field and the residue map.
We shall also use the notation $\pbool$ entrywise or coefficientwise,
that is, for any formal polynomial $P$ with coefficients in $\ballext$,
 $\pbool (P)$ will be the formal
polynomial with coefficients equal to $\pbool (P_k)$.
We have the following property.

\begin{property}\label{prop-pbool}
With the above notations, 
$(\ballext,\ballext^\vee\setminus\{\zero\},\ominus, \surpass)$ is a subsemiring
system of $(\extension,\extension^\vee\setminus\{\zero\},\ominus,\surpass)$.
The map $\pbool$ is a  morphism of semiring systems
from $(\ballext,\ballext^\vee\setminus\{\zero\},\ominus, \surpass)$
to $(\semiring,\tangible,\ominus,\surpass)$, which is a projection.
\end{property}
In particular $a\balance b$ implies $\pbool(a)\balance \pbool(b)$,
for any $a,b\in\ballext$. We also have the following reverse implication.

\begin{property}\label{prop-pbool-balance}
With the above notations, 
assume that  $(\semiring,\tangible,\ominus,\surpass)$ satisfies 
weak tangible  balancing.
Then, 
for all $\tilde{a}\in \semiringvee$ and $b\in \ballext$, such that $\tilde{a}\balance \pbool(b)$, there exists $a\in \ballext^\vee$ such that $\pbool(a)=\tilde{a}$  and $a\balance b$. 
\end{property}
\begin{proof}
Using the identification of elements of $\semiring'$
as elements of $\extension$ with modulus $\unit$, by the map $a\mapsto (a,0)$,
we get that,
for all $b\in \ballext$, $\pbool(b)=b$ if $|b|=\unit$ and $\pbool(b)=\zero$ otherwise.

Let $\tilde{a}\in \semiringvee$  and $b\in \ballext$ be such that  $\tilde{a}\balance \pbool(b)$. 

If $|b|=\unit$, then $\pbool(b)=b$ and so $a=\tilde{a}$ satisfies trivially the conditions $\pbool(a)=\tilde{a}$  and $a\balance b$. 

If $|b|<\unit$, then  $\pbool(b)=\zero$, so $\tilde{a}\balance \zero$.
If $\tilde{a}\neq \zero$, then $|\tilde{a}|=\unit$, and so $a=\tilde{a}$ satisfies $\pbool(a)=a=\tilde{a}$ and $a\ominus b= a \balance \zero$, so
$a\balance b$.
If  $\tilde{a}=\zero$, then 
we need to find $a\in  \ballext^\vee$ such that $\pbool(a)=\zero$
and $a\balance b$. This holds for $a=\zero$, if $b\balance \zero$.
Otherwise, $b=(b',g)$ for some $b'\in \semiring$ such that $b'\notbalance \zero$ and 
$g\in \Gamma$ and $g<0$. By weak tangible balancing, there exists 
$a'\in \semiringvee$ such that $b'\balance a'$.
Since $b'\notbalance \zero$, we get that $a'\neq \zero$, so
$(a',g)\in\extension^\vee$. Since $g<0$, we also have $(a',g)\in \ballext$
and  $\pbool((a',g))=\zero$. So $a=(a',g)$ satisfies all the properties.
\end{proof}

\begin{lemma}\label{lemma-pbool}
For all $P\in\extension^\vee[\Y]$ and $r\in\extension^\vee$ such that  $|r|=\unit$. If $|\widehat{P}(r)|=\unit$, then we have $P=\Pn^\unit$,
$P\in \ballext^\vee[\Y]$, and $P^{\sat,\unit}=\pbool(P)\neq \zero$. Moreover,  $\Pn^r=P(r\Y)\in \ballext^\vee[\Y]$,
 and $P^{\sat,r}=\pbool(\Pn^r)$.
Conversely, if $P\in \ballext^\vee[\Y]$ and $\pbool(P)\neq \zero$,
then $|\widehat{P}(r)|=\unit$, and all the other properties hold.
\end{lemma}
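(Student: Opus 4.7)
The plan is to exploit the fact that the absolute value map $|\cdot|\colon \extension \to \tmax$ is a morphism of semirings, together with the morphism properties of the projection $\pbool$ from \Cref{prop-pbool}, so that every assertion reduces to a coefficientwise verification.

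First I would observe that since $|\cdot|$ is a semiring morphism, $|\widehat{P}(r)| = \widehat{|P|}(|r|) = \max_{i=0}^n (|P_i|\odot |r|^{\odot i})$. When $|r|=\unit$ (meaning $|r|=0$ in $\vgroup$), this simplifies to $\max_i |P_i|$. Hence $|\widehat{P}(r)|=\unit$ is equivalent to having $|P_i|\leq\unit$ for every $i$ and $|P_i|=\unit$ for at least one index $i$. The first condition says that every coefficient of $P$ lies in $\ballext^\vee$, i.e., $P\in\ballext^\vee[\Y]$; the second is equivalent to $\pbool(P)\neq\zero$, since by the description of $\pbool$ one has $\pbool(P_i)=\zero$ precisely when $|P_i|<\unit$. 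This gives both directions of the lemma simultaneously, reducing the hypothesis $|\widehat{P}(r)|=\unit$ to ``$P\in\ballext^\vee[\Y]$ and $\pbool(P)\neq\zero$''.

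Next, assuming these equivalent hypotheses, the normalization factor in the definition of $\Pn^r$ equals $\unit$, so $\Pn^r=P(r\Y)$. For $r=\unit$ this immediately yields $\Pn^\unit=P$. For a general $r$ with $|r|=\unit$, the coefficients $\Pn^r_i = r^{\odot i}\odot P_i$ satisfy $|\Pn^r_i|=|P_i|\leq\unit$, so $\Pn^r\in\ballext^\vee[\Y]$. Moreover, the saturation set reduces to $\sat(r,P)=\{i\mid |P_i|=\unit\}$, which depends only on $|r|$ as expected.

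Finally, to identify $P^{\sat,r}$ with $\pbool(\Pn^r)$, I apply $\pbool$ coefficientwise. Since $|r|=\unit$, the element $r$ is identified with a tangible in $\semiring$ and $\pbool(r)=r$, so $\pbool(\Pn^r_i)=r^{\odot i}\odot \pbool(P_i)$, which equals $\Pn^r_i$ when $|P_i|=\unit$ (i.e., $i\in\sat(r,P)$) and $\zero$ otherwise. Summing over $i$ gives $\pbool(\Pn^r)=\bigoplus_{i\in\sat(r,P)}\Pn^r_i\Y^i=P^{\sat,r}$. In particular, for $r=\unit$ this specializes to $P^{\sat,\unit}=\pbool(P)$, which is nonzero. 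There is essentially no obstacle beyond carefully tracking the identifications of $\semiring$ and $\tmax$ inside $\extension$; the main point is that $\pbool$ restricted to the subsemiring system $\ballext$ is a morphism of semiring systems, which is already established in \Cref{prop-pbool}.
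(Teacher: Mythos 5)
Your proof is correct and follows essentially the same path as the paper's: use the morphism property of $|\cdot|$ to write $|\widehat{P}(r)|=\widehat{|P|}(|r|)=\max_i|P_i|$ (since $|r|=\unit$), characterize the saturation set as $\{i:|P_i|=\unit\}$, and apply $\pbool$ coefficientwise. The one small improvement over the paper's exposition is in the converse direction: you observe directly that ``$P\in\ballext^\vee[\Y]$ and $\pbool(P)\neq\zero$'' is \emph{equivalent} to ``$\max_i|P_i|=\unit$'', which immediately gives $|\widehat{P}(r)|=\unit$; the paper instead passes through the morphism property of $\pbool$ to argue $|\pbool(\widehat{P}(r))|=\unit$ and then pulls back, which is slightly more roundabout (and tacitly uses that $\widehat{\pbool(P)}(r)\neq\zero$ follows from $\pbool(P)\neq\zero$). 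Your phrasing sidesteps that step cleanly, but the underlying ideas coincide.
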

\begin{proof} Assume that $|\widehat{P}(r)|=\unit$.
Since the absolute value is a morphism, we get 
$|\widehat{P}(r)|= \widehat{|P|}(|r|)$, for all $r\in \extension$,
hence $|\widehat{P}(\unit)|=\unit$.
By definition of  $\Pn^\unit$, we have $\Pn^\unit=P$.
Similarly, $\Pn^r=P(r\Y)$.
Since the addition in $\tmax$ is
the maximization, we get that $|\widehat{P}(\unit)|=\max_{k\geq 0} |P_k|$ so
$P_k\in \ballext$ for all $k\geq 0$,
so $P\in\ballext^\vee[\Y]$. Then, $\sat(\unit,P)=\sat(r,P)$
is the set of indices 
$k$ such that $|P_k|= \unit$. So for all $k\geq 0$, we have,
by  definition, $P^{\sat,\unit}_k=\zero$
if $|P_k|\neq \unit$ and  $P^{\sat,\unit}_k=P_k$ if  $|P_k|= \unit$,
that is $P^{\sat,\unit}_k=\pbool(P_k)$.
Similarly, $P^{\sat,r}=P^{\sat,\unit}(r\Y)$,  
$\Pn^r_k=P_k\odot r^{\odot k}\in \ballext$ and
$P^{\sat,r}_k=\pbool(\Pn^r_k)$, for all $k\geq 0$.

Conversely, if we assume only that $|r|=\unit$ and $P\in \ballext^\vee[\Y]$, 
then $|\widehat{P}(r)|\leq\unit$, and $\pbool(r)=r$.
By the morphism properties of the modulus and of $\pbool$ map, 
we get that $|\widehat{\pbool(P)}(r)|=|\pbool(\widehat{P}(r))|$.
So if $\pbool(P)\neq \zero$, $\widehat{\pbool(P)}(r)\neq \zero$ and
so has modulus $\unit$, which implies that 
$|\pbool(\widehat{P}(r))|=\unit$ and so $|\widehat{P}(r)|=\unit$.
\end{proof}

\begin{lemma}\label{degQ}
Let $P,Q\in\extension^\vee[\Y]$ and $r\in\extension^\vee$ such that $|r|=\unit$ and $|\widehat{P}(r)|=\unit$.
If $P\balance Q(\Y\ominus r)$,  then $|\widehat{Q}(r)|=\unit$.
\end{lemma}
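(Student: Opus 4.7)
The plan is to reduce the statement to a coefficient-wise modular bookkeeping by exploiting the fact that the absolute value $|\cdot|$ is a morphism onto $\tmax$, so that $|a\oplus b|=\max(|a|,|b|)$ and $|\widehat{P}(r)|=\widehat{|P|}(|r|)=\max_k|P_k|\odot|r|^k$. Since $|r|=\unit$, the hypothesis $|\widehat{P}(r)|=\unit$ is equivalent to saying that $|P_k|\leq\unit$ for all $k$, with equality for at least one index. Symmetrically, the conclusion $|\widehat{Q}(r)|=\unit$ will follow once we prove that $|Q_k|\leq\unit$ for all $k$, with equality for some $k$. So the whole proof reduces to bounding the moduli of the coefficients of $Q$.

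I would first unfold the coefficient-wise balance $P\balance Q(\Y\ominus r)$ as $P_{k}\balance Q_{k-1}\ominus r\odot Q_{k}$ for $k\geq 1$ and $P_{0}\balance \ominus r\odot Q_{0}$, with the convention $Q_{-1}=\zero$, and recall from \Cref{lem1} that $\deg Q=n-1$ and $Q_k=\zero$ for $k\geq n$. Rewriting the relation as $Q_{k}\balance P_{k+1}\oplus r\odot Q_{k+1}$ (using that the balance relation is preserved by adding $r\odot Q_{k+1}$ and applying $\ominus$), and noting that $Q_{k}\in\extension^{\vee}$, I would invoke property~\eqref{pro_preceq-ext3} of \Cref{pro_preceq-ext} to deduce $|Q_{k}|\leq\max(|P_{k+1}|,|Q_{k+1}|)\leq\unit$. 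A straightforward downward induction on $k$ starting from $|Q_{n-1}|=|P_n|\leq\unit$ then yields $|Q_k|\leq\unit$ for every $k$.

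For the remaining direction, I would argue by contradiction: assume $|Q_k|<\unit$ for all $k$. Applying \eqref{pro_preceq-ext3} to the balance $P_k\balance Q_{k-1}\ominus r\odot Q_k$ with $P_k\in\extension^{\vee}$ (and to the $k=0$ case treated similarly via $P_{0}\balance \ominus r\odot Q_{0}$) gives $|P_k|\leq\max(|Q_{k-1}|,|Q_k|)<\unit$ for every $k$. Hence $|\widehat{P}(r)|=\max_k|P_k|<\unit$, contradicting the hypothesis $|\widehat{P}(r)|=\unit$. Therefore some $|Q_k|=\unit$, and combined with the upper bound we conclude $|\widehat{Q}(r)|=\max_k|Q_k|\odot|r|^k=\unit$.

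The only subtlety, and hence the step requiring most care, is verifying that the tangibility of the coefficients $P_k,Q_k\in\extension^{\vee}$ is what legitimately allows passage from a balance relation to a modular inequality via~\eqref{pro_preceq-ext3}; this is why the argument stays inside $\extension^{\vee}[\Y]$ rather than general $\extension[\Y]$, and it is the reason the conclusion need not hold without the tangibility assumption on the coefficients of $Q$.
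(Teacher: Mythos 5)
Your proof is correct and follows essentially the same path as the paper's: both arguments extract the two modular inequalities $|Q_k|\leq |P_{k+1}|\oplus|Q_{k+1}|$ (from $Q_k\balance P_{k+1}\oplus r\odot Q_{k+1}$, using tangibility of $Q_k$) and $|P_{k+1}|\leq |Q_k|\oplus|Q_{k+1}|$ (from $P_{k+1}\balance Q_k\ominus r\odot Q_{k+1}$, using tangibility of $P_{k+1}$) via property~\eqref{pro_preceq-ext3} of \Cref{pro_preceq-ext}, together with the boundary facts $Q_{n-1}=P_n$ and $|Q_0|\leq|P_0|$. The only cosmetic difference is that the paper chains these into the sum inequalities $\bigoplus|P_k|\leq\bigoplus|Q_k|\leq\unit$ and squeezes, whereas you run a downward induction for the upper bound and a contradiction argument for the lower one — same content, slightly different bookkeeping.
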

\begin{proof}
Recall that the condition $P\balance Q(\Y\ominus r)$ is equivalent to
$P_{k} \balance Q_{k-1} \ominus r\odot Q_{k}$ for all $k\geq 1$ and
$P_{0} \balance \ominus  r \odot Q_{0}$, see \eqref{eq-PQ1}. Let $n=\deg(P)$.
Since $|\widehat{P}(r)|=\unit$, then by \Cref{lemma-pbool}, we get that
$|P_k|\leq \unit$ for all $k=0,\ldots, n$.
Since $P_0 \balance \ominus r \odot Q_0$, we have $P_0 = \ominus r \odot Q_0$,
by unique negation. So  $|Q_0| =|P_0|\leq \unit$.
Also, by \Cref{lem1}, we have $\deg(Q)=n-1$, and 
$P_n=Q_{n-1}$. So, %
$|Q_{n-1}| =|P_n|\leq \unit$.

For all $k=0, \ldots, n-2$, we have
$P_{k+1} \balance Q_{k} \ominus r\odot Q_{k+1}$, 
and since $P_{k+1}\in \extension^\vee$, using \Cref{pro_preceq-ext3} of
\Cref{pro_preceq-ext}, 
we obtain that $|P_{k+1}|\leq | Q_{k} \ominus r\odot Q_{k+1}|=|Q_k| \oplus |Q_{k+1}|$.
Together with $|P_0| =|Q_0|$ and $|P_n|=|Q_{n-1}|$, this leads to 
\begin{equation}\label{firstineq}
 \bigoplus_{k=0}^{n}|P_k| \leq \bigoplus_{k=0}^{n-1}|Q_k| \enspace.\end{equation}

Similarly, for all $k=0, \ldots, n-2$, we have
$Q_{k} \balance P_{k+1} \oplus  r\odot Q_{k+1}$,
 and so
$| Q_{k} |\leq |P_{k+1} \oplus r\odot Q_{k+1}|=|P_{k+1} | \oplus |Q_{k+1}|$.
By induction, we obtain 
that, for all $k=0, \ldots, n-1$, $| Q_{k} |\leq 
\bigoplus_{\ell=k+1}^{n}|P_k|\leq \unit$.

Since $\bigoplus_{k=0}^{n} |P_k|=|\widehat{P}(r)|=\unit$, using \eqref{firstineq}, we deduce:
\[ \unit=\bigoplus_{k=0}^{n}|P_k| \leq \bigoplus_{k=0}^{n-1}|Q_k| 
\leq \unit \enspace,\]
which implies the equality.
Then, $|\widehat{Q}(r)|=\widehat{|Q|}(|r|)=\bigoplus_{k=0}^{n-1}|Q_k|=\unit$.
\end{proof}

\begin{corollary}\label{cor-res}
Let $P\in\ballext^\vee[\Y]$, $Q\in \extension^\vee[\Y]$ and $r\in\extension^\vee$ such that  $|r|=\unit$, $\pbool(P)\neq \zero$, and $P \balance (\Y \ominus r)Q$.
Then, $Q\in\ballext^\vee[\Y]$, $\pbool(Q)\neq \zero$ and $\pbool(P) \balance (\Y \ominus r)\pbool(Q)$.
\end{corollary}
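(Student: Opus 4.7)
The plan is to chain together the two preceding lemmas (\Cref{lemma-pbool} and \Cref{degQ}) together with the morphism property of $\pbool$ recorded in \Cref{prop-pbool}.

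First, since $P \in \ballext^\vee[\Y]$ and $\pbool(P) \neq \zero$, the converse direction of \Cref{lemma-pbool} (applied with the given $r$, which satisfies $|r| = \unit$) immediately yields $|\widehat{P}(r)| = \unit$. With this in hand, the hypothesis $P \balance (\Y \ominus r)Q$ (which is exactly the hypothesis ``$P \balance Q(\Y \ominus r)$'' of \Cref{degQ} up to commutativity) allows us to apply \Cref{degQ} to conclude that $|\widehat{Q}(r)| = \unit$ as well. Feeding this back into the forward direction of \Cref{lemma-pbool} applied to $Q$ shows that $Q \in \ballext^\vee[\Y]$ and that $\pbool(Q) \neq \zero$. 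This settles the first two claims of the corollary.

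For the last assertion, I would use that $\pbool$ is a morphism of semiring systems from $\ballext$ to $\semiring$ (\Cref{prop-pbool}). Since $P$ and $Q$ both have coefficients in $\ballext$ (the latter just proved), and since $|r| = \unit$ gives $\pbool(r) = r$, the morphism preserves sums, products, and negation, and preserves the surpassing relation (by the last bullet of \Cref{def-morphism-systems}). Applying $\pbool$ coefficient-wise to the relation $P \balance (\Y \ominus r) Q$, which unfolds via~\eqref{eq-PQ1} into a finite list of balance relations $P_k \balance Q_{k-1} \ominus r \odot Q_k$ in $\ballext$, transfers to the same list of relations $\pbool(P_k) \balance \pbool(Q_{k-1}) \ominus r \odot \pbool(Q_k)$ in $\semiring$. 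Reassembling, this reads $\pbool(P) \balance (\Y \ominus r)\pbool(Q)$.

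There is essentially no obstacle beyond checking these compatibilities: the key point, already established in \Cref{degQ}, is that a factorization of a polynomial of norm $\unit$ through $\Y \ominus r$ cannot drop the norm of the quotient, so $\pbool(Q)$ cannot vanish. The translation to the residue polynomial then follows formally from the morphism property of $\pbool$.
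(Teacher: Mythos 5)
Your proposal is correct and follows essentially the same route as the paper's own proof: invoke the converse direction of \Cref{lemma-pbool} to get $|\widehat{P}(r)|=\unit$, apply \Cref{degQ} to obtain $|\widehat{Q}(r)|=\unit$, feed this back through \Cref{lemma-pbool} to conclude $Q\in\ballext^\vee[\Y]$ with $\pbool(Q)\neq\zero$, and finish by pushing the balance relation through the morphism $\pbool$ using \Cref{prop-pbool}.
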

\begin{proof}
By \Cref{lemma-pbool}, we get that $|\widehat{P}(r)|=\unit$.
Then, by \Cref{degQ}, $|\widehat{Q}(r)|=\unit$, and
applying \Cref{lemma-pbool} again, we get that $Q\in \ballext^\vee[\Y]$.
Using \Cref{prop-pbool} and that $\pbool(r)=r$,
we deduce that  $\pbool(P) \balance (\Y \ominus r)\pbool(Q)$.
\end{proof}

The following result is showing a reverse implication.
\begin{lemma}\label{lem-existres}
Let  $P\in \ballext^\vee[\Y]$ be of degree $n$, $r\in\extension^\vee$ such that $|r|=\unit$, and $\widetilde{Q}\in\semiringvee[\Y]$ be of degree $\leq n-1$, and such that
 \begin{equation}\label{Q_property}\pbool (P) \balance (\Y \ominus r) \widetilde{Q}.\end{equation} 
 Then there exists $Q\in \ballext^\vee[\Y]$ of degree $n-1$,
such that $P\balance (\Y \ominus r) {Q}$ 
and $\pbool (Q)=\widetilde{Q}$.
 \end{lemma}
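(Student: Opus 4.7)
The plan is to construct $Q$ by descending induction on the coefficient index, adapting the argument of \Cref{lem2} to the extension $\extension$ while using \Cref{prop-pbool-balance} to control the residue at each step. First, I set $Q_{n-1} := P_n$; the leading-coefficient part of the hypothesis reads $\pbool(P_n) \balance \widetilde{Q}_{n-1}$, and unique negation in $\semiringvee$ upgrades this to $\pbool(P_n) = \widetilde{Q}_{n-1}$, so $\pbool(Q_{n-1}) = \widetilde{Q}_{n-1}$ and $Q_{n-1} \in \ballext^\vee$.

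For the inductive step, from $k$ to $k-1$ with $k \ge 1$, I form $b := P_k \oplus r \odot Q_k \in \ballext$ and apply \Cref{prop-pbool-balance}. The assumption $\widetilde{Q}_{k-1} \balance \pbool(b)$ needed by \Cref{prop-pbool-balance} is obtained by computing $\pbool(b) = \pbool(P_k) \oplus r \odot \widetilde{Q}_k$ and combining it with the residue equation $\pbool(P_k) \balance \widetilde{Q}_{k-1} \ominus r \odot \widetilde{Q}_k$ coming from the hypothesis. The lemma then yields $Q_{k-1} \in \ballext^\vee$ with $\pbool(Q_{k-1}) = \widetilde{Q}_{k-1}$ and $Q_{k-1} \balance b$, which rearranges to the desired $k$-th coefficient balance $P_k \balance Q_{k-1} \ominus r \odot Q_k$.

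The main obstacle I expect is verifying the final (constant-term) equation $P_0 \balance \ominus r \odot Q_0$, which is not automatically ensured by the free lifting above. My plan is to override the free choice at the last step by taking $Q_0 := \ominus r^{\odot -1} \odot P_0$ (valid because $r$ is invertible and $|P_0| \le \unit$ forces $Q_0 \in \ballext^\vee$); this makes the bottom equation hold as an equality, and $\pbool(Q_0) = \widetilde{Q}_0$ follows by applying unique negation to the constant-term residue equation $\pbool(P_0) \balance \ominus r \odot \widetilde{Q}_0$. What remains is to check that this specific $Q_0$ also fulfils the consecutive balance $Q_0 \balance P_1 \oplus r \odot Q_1$ required from the previous inductive step. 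Using the chain of balances $r^{\odot(k+1)} \odot Q_k \balance r^{\odot(k+1)} \odot P_{k+1} \oplus r^{\odot(k+2)} \odot Q_{k+1}$ produced by the induction, together with $Q_{n-1} = P_n$ and the compatibility of $\balance$ with $\oplus$ (so that balances can be summed), this consistency check reduces, after multiplying by $r$, to the single statement $\widehat{P}(r) \balance \zero$ in $\extension$. This residual fact is obtained from the hypothesis by applying \Cref{lem10} in $\semiring$ to $\pbool(P) \balance (\Y \ominus r)\widetilde{Q}$, which gives $\pbool(\widehat{P}(r)) = \widehat{\pbool(P)}(r) \balance \zero$ in $\semiring$, and then lifting this to $\extension$ via the layered structure of the tropical extension, exploiting that the level-$\unit$ part of $\widehat{P}(r)$ equals $(\widehat{\pbool(P)}(r), 0)$ whose first component is a balance element of $\semiring$; this delicate case analysis, together with the possible use of tangible balance elimination (\cref{tbe} of \Cref{def-properties-systems}) to finish the chain, is the crux of the argument.
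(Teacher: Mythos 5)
Your concern about the constant-term equation is well-founded, and in fact the paper's own argument shares it: the descending induction only produces $Q_{k-1}\balance P_k\oplus r\odot Q_k$ for $1\le k\le n$, while the coefficient-wise balance $P\balance(\Y\ominus r)Q$ additionally requires the $k=0$ instance $\zero\balance P_0\oplus r\odot Q_0$, which is not addressed by the free lift. Your fix -- setting $Q_0:=\ominus r^{\odot -1}\odot P_0$ -- is forced by unique negation, and your residue check $\pbool(Q_0)=\widetilde{Q}_0$ is correct.

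However, the ``crux'' you leave open does not close as sketched, for three reasons. \emph{(i)} Balances cannot be ``summed to telescope'': $\balance$ is not additively cancellative, so each collapse of the chain has to be done by descending induction using tangible balance elimination with pivot $r^{\odot(k+1)}\odot Q_k\in\extension^\vee$; that does give $r^{\odot 2}\odot Q_1\balance\bigoplus_{j\ge 2}P_j\odot r^{\odot j}$ and hence $P_0\oplus r\odot P_1\oplus r^{\odot 2}\odot Q_1\balance\widehat{P}(r)$. \emph{(ii)} To pass from this and $\widehat{P}(r)\balance\zero$ to the needed $P_0\oplus r\odot P_1\oplus r^{\odot 2}\odot Q_1\balance\zero$, tangible balance elimination would require $\widehat{P}(r)$ -- a sum of $n+1$ terms of $\extension^\vee$ -- to be tangible, which it generically is not; this is a genuine obstruction, not a bookkeeping issue. \emph{(iii)} Your derivation of $\widehat{P}(r)\balance\zero$ requires $|\widehat{P}(r)|=\unit$, equivalently $\pbool(P)\neq\zero$, which is not among the hypotheses of the statement, and without which the statement is in fact false: take $\extension=\smax(\R)$, $r=\unit$, $P=(-1)\odot\Y\oplus(-2)$ with both coefficients in $\smax^\oplus$ of moduli $-1$ and $-2$; then $\pbool(P)=\zero$ forces $\widetilde{Q}=\zero$, yet any degree-$0$ lift $Q$ must have $Q_0=P_1$, so $P_0\balance\ominus r\odot Q_0$ would require $P_0=\ominus P_1$, which fails. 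The hypothesis $\pbool(P)\neq\zero$ does hold at the point where the lemma is invoked in the proof of \Cref{tho-mult-smax} and should be added, but even with it step \emph{(ii)} needs a different resolution -- so the proposal does not yet give a complete proof.
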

 \begin{proof}
Let $P$ and $\widetilde{Q}$ be as in the lemma.
\Cref{Q_property} means that 
\begin{equation}\label{assump-tilde}
 \widetilde{Q}_{k-1}\balance \pbool( P_k ) \oplus r\odot \widetilde{Q}_k
\quad \forall\;  0\leq k\leq n\enspace .\end{equation}
Let us construct $Q$ satisfying the conditions of the lemma,
which means that $Q\in \ballext^\vee[\Y]$, that 
it satisfies $Q_n=\zero$ and that, for all $k\leq n$, we have  
\begin{subequations}\label{induc-deftilde}
\begin{align}
& Q_{k-1}\balance P_k \oplus r \odot Q_k\label{induc-deftilde1}\end{align}
and 
\begin{align}
&\pbool(Q_{k-1})=\widetilde{Q}_{k-1}\enspace .\label{induc-deftilde2}
\end{align}\end{subequations}

We shall construct $Q_{k-1}\in \ballext^\vee$ satisfying \eqref{induc-deftilde} for $k$, by descending induction on $k=n,\ldots , 1$, assuming $Q_n=\zero$.

For $k=n$, %
we take $ Q_{n-1} = P_n$, then $Q_{n-1}\in \ballext^\vee$,
 and since $Q_{n}=\zero$, 
\eqref{induc-deftilde1}  holds for $k$.
Moreover, since $\deg(\widetilde{Q}) %
\leq n-1$,
we get that  $\widetilde{Q}_{n}=0$, and using  \Cref{assump-tilde},
we obtain that $\widetilde{Q}_{n-1} =  \pbool( P_n)$. %
Then, $\pbool({Q}_{n-1}) =\pbool( P_n)=\widetilde{Q}_{n-1}$,
which gives \eqref{induc-deftilde2} for $k=n$.

Assume now that \eqref{induc-deftilde} is true for $k+1\leq n$, 
and let us show that there exists $Q_{k-1}\in \ballext^\vee$
 satisfying \eqref{induc-deftilde} for $k$.
Using  \eqref{induc-deftilde2} for $k+1$, we get
$\pbool(Q_{k})=\widetilde{Q}_{k}$, so,
using the morphism property of $\pbool$,  and that $|r|=\unit$, we get that
the right hand side of  \eqref{assump-tilde} satisfies 
$\pbool( P_k ) \oplus r\odot \widetilde{Q}_k=\pbool( P_k \oplus r\odot \widetilde{Q}_k)$.
Applying \Cref{prop-pbool-balance} to $\tilde{a}= \widetilde{Q}_{k-1}$,
that is the left hand side of \eqref{assump-tilde}, and to
$b= P_k \oplus r\odot \widetilde{Q}_k$, 
we deduce that there exists $a\in \ballext^\vee$ such that
$a\balance b$ and $\pbool(a)=\tilde{a}$. Taking $Q_{k-1}=a$, we get that 
$Q_{k-1}\in \ballext^\vee$ and satisfies 
 \eqref{induc-deftilde} for $k$.

This shows the existence of $Q\in \ballext^\vee[\Y]$,
satisfying $Q_n=\zero$ and \eqref{induc-deftilde} for all $1\leq k\leq n$,
which finishes the proof of the lemma.
\end{proof}
\subsection{Proof of \Cref{tho-mult-smax}}\label{prooftheo-mult}
We are now able to prove \Cref{tho-mult-smax}.

Let $r=\zero$. 
If $P,Q\in\extension^\vee[\Y]$ and $P \balance  Q \Y$, then $P_{i+1}\balance Q_i$ 
for $i\geq 0$, and $P_0\balance \zero$.
By unique negation, this implies the equalities,
and so $P=Q\Y$, and $Q$ is unique. Since, by \Cref{mult} we have 
$\mult_{\zero}(P)=1+\mult_{\zero}(Q)$, we deduce that $\mult_{\zero}(P)$
is the lower degree of $P$.

\noindent
{\bf Proof of \Cref{tho-mult-smax1} and \Cref{tho-mult-smax2}:}
 Now, let $r\in\extension^\vee\setminus\{\zero\}$, then $|r|\neq \zero$, so
$|\widehat{P}(r)|=\widehat{|P|}(|r|)\neq \zero$ (since $P$ is not the zero polynomial). Therefore $|r|$ and $|\widehat{P}(r)|$ are invertible in $\extension^\vee$. So we can consider the normalization $\bar{r}=|r|^{\odot -1}\odot r$ of $r$ as in  \Cref{tho-mult-smax}. Since $|\bar{r}|=\unit$, and $\bar{r}\in \extension^\vee$, it can be seen as an element of $\tangible$. This shows \Cref{tho-mult-smax1} and \Cref{tho-mult-smax2} of \Cref{tho-mult-smax}.

\noindent
{\bf Proof of \Cref{tho-mult-smax3}:} By definition of the saturation polynomial, $\sat(r,P)$ only depends on $|r|$ and of the formal polynomial $|P|$. Moreover, given any $\gamma,\mu\in\extension^\vee$ such that $r=\gamma\odot \mu$,  $\sat(r,P)$ coincides with $\sat(\gamma, \Pn^{\mu})$, so in particular with $\sat(\unit, \Pn^r)$ and 
$\sat(\bar{r}, \Pn^{|r|})$, which shows  \Cref{tho-mult-smax3} of \Cref{tho-mult-smax}.

\noindent
{\bf Proof of \Cref{tho-mult-smax4}:} By definition of $\sat(r,P)$, the coefficients of the saturation polynomial $P^{\sat,r}$ are either $\zero$ or with modulus equal to $\unit$. Since they are also in $\extension^\vee$, they are identified with elements of $\semiringvee$, which shows \Cref{tho-mult-smax4} of \Cref{tho-mult-smax}.

\noindent
{\bf Proof of \Cref{tho-mult-smax5} and \Cref{tho-mult-smax6}:} To show \Cref{tho-mult-smax5} and \Cref{tho-mult-smax6} of \Cref{tho-mult-smax}, we consider $\gamma,\mu\in\extension^\vee$ such that $r=\gamma\odot \mu$ with $|\gamma|=\unit$ and $\mu$ invertible,
and shall show that 
\begin{equation}\label{equality_mult}
\mult_{r}(P)=\mult_{\gamma}(\Pn^{\mu})=\mult_{\gamma}(P^{\sat,\mu})\enspace.
\end{equation}
Indeed, applying \Cref{equality_mult} to the case $\gamma=\bar{r}$ and $\mu=|r|$ will give  \Cref{tho-mult-smax5}, and applying it to the case $\gamma=\unit$ and $\mu=r$ will give  \Cref{tho-mult-smax6}.

Since $|r|=|\mu|$, we have  $|\widehat{P}(r)|=|\widehat{P}(\mu)|$,
so $\Pn^\mu= |\widehat{P}(r)|^{\odot - 1} P(\mu\Y)$ is well defined.
Moreover, since $\mu$ is invertible, then 
the multiplicity of $r$ as a root of $P$, $\mult_r(P)$, is the same as
the multiplicity of $\gamma$ as a root of $\Pn^\mu$, $\mult_\gamma(\Pn^\mu)$,
that is the first equality in \eqref{equality_mult}.
Indeed, if $\lambda P\balance Q(\Y\ominus r)$ for some $\lambda\in \extension^\vee\setminus\{\zero\}$ and $Q\in \extension^\vee[\Y]$, then
$\lambda \Pn^\mu \balance Q'(\Y\ominus \gamma)$ with
$Q'=  |\widehat{P}(r)|^{\odot - 1} \mu Q(\mu\Y)$;
and the converse is also true since $\mu$ is invertible.

We have $|\gamma|=\unit$ and $|\widehat{\Pn^\mu}(\gamma)|=|\widehat{P}(r)|^{\odot - 1} |\widehat{P}(\mu\odot \gamma)|=\unit$.
Moreover, $P^{\sat,\mu}$ only depends  on $\Pn^\mu$, and is 
equal to $(\Pn^\mu)^{\sat,\unit}$. Therefore, the second equality 
in \eqref{equality_mult} will follow from the following property
\begin{equation}\label{multsatmod1old}
\mult_{r}(P)=\mult_{r}(P^{\sat,\unit})\quad\text{for}\; r\in\extension^\vee,\; |r|=\unit,\; P\in \extension^\vee[\Y],\; |\widehat{P}(r)|=\unit\enspace .
\end{equation}
Moreover, from \Cref{lemma-pbool}, we get that 
\eqref{multsatmod1old} is equivalent to %
\begin{equation}\label{multsatmod1}
\mult_{r}(P)=\mult_{r}(\pbool(P))\quad\text{for}\; r\in\extension^\vee,\;|r|=\unit,\; P\in \ballext^\vee[\Y],\; \pbool(P)\neq \zero\enspace .
\end{equation}

Let us first note that if $P$ is as in \eqref{multsatmod1}, then 
$\pbool(P)\in\semiringvee[\Y]\setminus\{\zero\}$
and conversely if $P\in\semiringvee[\Y]\setminus\{\zero\}$, then $P=\pbool(P)$
and $P$ satisfies the conditions of \eqref{multsatmod1}.

Let us fix $r$ as in \eqref{multsatmod1}, that is 
$r\in\extension^\vee$ with $|r|=\unit$, which is equivalent to
the condition that $r\in \semiringvee\setminus\{\zero\}=\tangible$.
We shall show 
 \eqref{multsatmod1} %
for all formal polynomials of degree $n\geq 0$, by induction on $n$.

If $n=0$, this property are trivial.
Now assume by induction that the property holds
for all formal polynomials of degree $\leq n-1$.
Let $P\in \ballext^\vee[\Y]$ be with degree $n$ and such that $\pbool(P)\neq \zero$, or equivalently $|\widehat{P}(r)|=\unit$,  by \Cref{lemma-pbool}.

Let us first show that $r$ is a root of $P$ if, and only if, it is a root of $\pbool(P)$. Indeed, since $|\widehat{P}(r)|=\unit$, and $|r|=\unit$, we get that
$\widehat{P}(r)=\pbool(\widehat{P}(r))$ and $r=\pbool(r)$.
Since $\pbool$ is a morphism,
we deduce that $\pbool(\widehat{P}(r))=\widehat{\pbool(P)}(\pbool(r))=
\widehat{\pbool(P)}(r)$ and so $\widehat{P}(r)=\widehat{\pbool(P)}(r)$,
which implies that $r$ is a root of $P$ if and only if it is a root of $\pbool(P)$.

Assume first that $r$ is not a root of $P$, then it is not a root of 
 $\pbool(P)$,
and so $\mult_r(P)=\mult_r(\pbool(P))=0$, which shows \eqref{multsatmod1}.

Now assume that $r$ is a root of $P$, and so it is also a root of
$\pbool(P)$. By definition of multiplicites, there exist 
$\lambda\in \extension^\vee\setminus\{\zero\}$ and 
$Q\in \extension^\vee[\Y]$, such that $\lambda P \balance (\Y \ominus r)Q$
and $\mult_r(P)=1+\mult_r(Q)$.
Since $\lambda\neq \zero$, dividing the balance equation by $|\lambda|$, we get
$\bar{\lambda} P \balance (\Y \ominus r) Q'$
for $\bar{\lambda}=|\lambda|^{\odot -1}\odot \lambda$ and
$Q'= |\lambda|^{\odot -1} Q$.
Since $|\lambda|$ is invertible, the multiplicity of $Q'$ is the same as the one of $Q$, and so we can assume from the begining 
that $|\lambda|=\unit$.
In that case, $\lambda\in \semiringvee\setminus \{\zero\}$ and 
$\pbool(\lambda)=\lambda$.
Then, $\lambda P\in \ballext^\vee[\Y]$ and $\pbool(\lambda P)=\lambda \pbool(P)\neq \zero$.
Hence, by \Cref{lem1}, 
$\deg(Q)=n-1$ and by \Cref{cor-res}, $Q\in\ballext^\vee[\Y]$,
$\pbool(Q)\neq \zero$ 
and $\lambda \pbool(P) \balance (\Y \ominus r)\pbool(Q)$.
Therefore, $\mult_r(\pbool(P))\geq 1+\mult_r(\pbool(Q))$, by definition
of multiplicities.
Using the inductive hypothesis, we obtain
 $\mult_{r}(Q)= \mult_{r}(\pbool(Q))$
and thus  $\mult_r(\pbool(P))\geq 1+\mult_r(Q)=\mult_r(P)$.
Let us show the reverse inequality, that is $\mult_{r} (\pbool(P))\enspace\leq  \mult_r(P)$.
By definition of multiplicities, there exist $\lambda \in \extension^\vee\setminus\{\zero\}$ and $\widetilde{Q}\in \extension^\vee[\Y]$
such that $\lambda \pbool(P) \balance (\Y \ominus r) \widetilde{Q}$,
and $\mult_r(\pbool(P))=1+\mult_r(\widetilde{Q})$.
Dividing by $|\lambda|$ as above, we are reduced to the case 
in which $|\lambda|=\unit$, and so $\lambda \in \semiringvee\setminus\{\zero\}$
and $\pbool(\lambda P)=\lambda \pbool(P)\neq \zero$.
Applying \Cref{cor-res}, we obtain that $\widetilde{Q}\in\ballext^\vee[\Y]$, $\pbool(\widetilde{Q})\neq \zero$ and $\pbool(\lambda P) \balance (\Y \ominus r)\pbool(\widetilde{Q})$.
By \Cref{lem1}, $\deg(\pbool(\widetilde{Q}))=\deg(\pbool(\lambda P))-1\leq n-1$.
Then, applying \Cref{lem-existres} to $\lambda P$ and $\pbool(\widetilde{Q})$ (instead of $P$ and $\widetilde{Q}$), we obtain $Q\in \ballext^\vee[\Y]$ of degree $n-1$,
such that $\lambda P\balance (\Y \ominus r) {Q}$ 
and $\pbool (Q)=\pbool(\widetilde{Q})$.
Now applying the induction assumption, we get that 
$\mult_r(Q)=\mult_r(\pbool (Q))=\mult_r(\pbool(\widetilde{Q}))=\mult_r(\widetilde{Q})$. For the last equality, we used that $\deg(\widetilde{Q})=\deg(\pbool(\lambda P))-1\leq n-1$ by \Cref{lem1}.
This implies $\mult_r(\pbool(P))=1+\mult_r(Q)$.
Since $\lambda P\balance (\Y \ominus r) {Q}$ and $\lambda \in \extension^\vee\setminus\{\zero\}$, we get by definition of multiplicities that 
$\mult_r(P)\geq 1+\mult_r(Q)=\mult_r(\pbool(P))$, which 
shows the reverse inequality and thus the equality in
\eqref{multsatmod1}.  This finishes the proof of the induction,
and thus of  \Cref{tho-mult-smax5} and \Cref{tho-mult-smax6}.

{\bf Proof of \Cref{tho-mult-smax4bis}:}
Let $r$ be as in \Cref{tho-mult-smax4bis}, that is
$r\in\extension^\vee$ and $|r|=\unit$. Then, $r$ can be seen equivalently
as an element of $\semiringvee\setminus\{\zero\}=\tangible$.
Any formal polynomial $P$ with coefficients in $\semiringvee$
can be seen also as a formal polynomial with coefficients in $\extension^\vee$.
Let us denote by $\mult_{r,\semiring}(P)$ the multiplicity of
$r$ as a root of $P$ computed in $\semiring$, and keep the notation
$\mult_{r}(P)$ for the multiplicity of $r$ as a root of $P$ computed in 
$\extension$. 
Then \Cref{tho-mult-smax4bis} of \Cref{tho-mult-smax} can be rewritten as: 
\begin{equation}\label{eqmult}
\mult_{r}(P)=\mult_{r,\semiring}(P)\quad\text{for} \; 
r \in \semiringvee\setminus\{\zero\}, \; P\in \semiringvee[\Y]\setminus\{\zero\}
\enspace .
\end{equation}
We shall prove \eqref{eqmult} 
for all formal polynomials of degree $n\geq 0$, by induction on $n$.

If $n=0$, this property is trivial.
Now assume by induction that the property holds
for all formal polynomials of degree $\leq n-1$.
Let $P\in \semiringvee[\Y]$ with degree $n$.

For $r\in \semiringvee\setminus\{\zero\}$, being a root of $P$ in $\semiring$ or
in $\extension$ is the same. So,
when $r$ is not a root of $P$ (in $\semiring$ or in $\extension$),
we have  $\mult_r(P)=\mult_{r,\semiring}(P)$.

Assume now that $r$ is a root of $P$ (in $\semiring$ or in $\extension$).
By definition of multiplicites, there exist 
$\lambda\in \semiringvee\setminus\{\zero\}$ and 
$Q\in \semiringvee[\Y]$, such that $\lambda P \balance (\Y \ominus r)Q$
and $\mult_{r,\semiring}(P)=1+\mult_{r,\semiring}(Q)$.
By \Cref{lem1}, $\deg(Q)=n-1$, so by the induction assumption, 
we have $\mult_{r,\semiring}(Q)=\mult_r(Q)$.
Using the definition of multiplicities and that $\semiringvee\subset \extension^\vee$, we obtain $\mult_{r,\semiring}(P)\leq \mult_r(P)$.

To show the reverse inequality, consider 
$\lambda\in \extension^\vee\setminus\{\zero\}$ and 
$Q\in \extension^\vee[\Y]$, such that $\lambda P \balance (\Y \ominus r)Q$
and $\mult_r(P)=1+\mult_r(Q)$.
Using the same arguments as in the proof of \eqref{multsatmod1}, we can reduce us to
the case where $|\lambda|=\unit$, and so $\lambda\in\semiringvee\setminus \{\zero\}$. Then, $\lambda P\in \semiringvee[\Y]\setminus\{\zero\}$.
Moreover, $Q\in\ballext^\vee[\Y]$, $\pbool(Q)\neq \zero$ 
and $\lambda P=\lambda \pbool(P) \balance (\Y \ominus r)\pbool(Q)$.
Now, applying \eqref{multsatmod1} (which we already proved)  to $Q$,
we obtain that  $\mult_r(\pbool(Q))=\mult_r(Q)$, and
so $\mult_r(P)=1+\mult_r(\pbool(Q))$ with $\pbool(Q)\in \semiringvee[\Y]\setminus\{\zero\}$.
This means that we can reduce us to the case where $Q=\pbool(Q)\in \semiringvee[\Y]\setminus\{\zero\}$.
Since now $\lambda\in\semiringvee\setminus \{\zero\}$,
$Q\in \semiringvee[\Y]\setminus\{\zero\}$, and
$\lambda P \balance (\Y \ominus r)Q$,
by definition of multiplicities, we get that 
$1+\mult_{r,\semiring}(Q)\leq \mult_{r,\semiring}(P)$.
Since $\lambda P$ has degree $n$, we get by \Cref{lem1}, that
$Q$ has degree $n-1$. Hence, using
the induction hypothesis, we obtain $\mult_{r,\semiring}(Q)=\mult_r(Q)$,
and so $\mult_r(P)=1+\mult_r(Q)\leq \mult_{r,\semiring}(P)$.
This finishes the proof of the equality in \eqref{eqmult}, and so of 
\Cref{tho-mult-smax4bis}.
\qed 

\section{From multiplicities to factorization in $\smax$}
\label{sec-multtact}
{\em In this section, unless otherwise stated, 
we shall assume that $\vgroup$
is a divisible ordered group.}
Recall that when $\vgroup$ is trivial, $\smax$ is equal to $\bmaxs$,
and is thus 
equivalent to the hyperfield of signs studied in \cite{baker2018descartes}.

\begin{proposition}[Bounds on multiplicities of signed roots of polynomials]\label{atmostn}
Let ${P} \in \smax^\vee[\Y]$ of degree $n$.
Then, for any root $r\in\smax^\vee$ of $P$, its multiplicity
(in the sense of \Cref{def-mult-BL}) is bounded above by the multiplicity 
$\mult_{|r|}(|P|)$ of $|r|$ as a corner of $|P|$.
More precisely, we have $\mult_{r}(P)+\mult_{\ominus r}(P)\leq \mult_{|r|}(|P|)$.
Therefore, $P$ has at most $n$ roots counted with multiplicity.
\end{proposition}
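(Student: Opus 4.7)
The plan is to reduce multiplicities of $r$ and $\ominus r$ as roots of $P$ to sign-change counts for an associated polynomial in $\bmaxs^\vee[\Y]$ via the saturation construction of \Cref{cor-mult-smax}, and then bound the sum of these counts by the length of the saturation interval, which coincides with the corner multiplicity of $|r|$ in $|P|$.

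The case $r=\zero$ is immediate from \Cref{cor-mult-smax}: both $\mult_\zero(P)$ and $\mult_\zero(|P|)$ equal $\uval(P)$, and since $\ominus\zero=\zero$ the refined inequality is just the non-refined one, an equality. Now assume $r\neq\zero$. By \Cref{cor-mult-smax} we have $\mult_r(P)=\mult_\unit(P^{\sat,r})$, where $P^{\sat,r}\in\bmaxs^\vee[\Y]$ has support exactly $\sat(r,P)=\sat(|r|,|P|)$; a direct computation using $|\widehat{P}(\ominus r)|=|\widehat{P}(r)|$ and $(\ominus r)^{\odot i}=(\ominus\unit)^{\odot i}\odot r^{\odot i}$ yields the identity $P^{\sat,\ominus r}(\Y)=P^{\sat,r}(\ominus\Y)$, so the last assertion of \Cref{baker31} gives $\mult_{\ominus r}(P)=\mult_\unit(P^{\sat,\ominus r})=\mult_{\ominus\unit}(P^{\sat,r})$. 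Writing $\sat(|r|,|P|)=\{i_0<\dots<i_k\}$, the definition of corner multiplicity gives $\mult_{|r|}(|P|)=i_k-i_0$.

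The combinatorial core is the claim that for any $Q\in\bmaxs^\vee[\Y]$ with support $\{i_0<\dots<i_k\}$ one has $\mult_\unit(Q)+\mult_{\ominus\unit}(Q)\leq i_k-i_0$. By \Cref{baker31}, $\mult_\unit(Q)$ counts consecutive pairs $(i_j,i_{j+1})$ with $Q_{i_j}=\ominus Q_{i_{j+1}}$, and the same pair contributes to $\mult_{\ominus\unit}(Q)$ precisely when $Q_{i_j}=(\ominus\unit)^{d_j+1}Q_{i_{j+1}}$, where $d_j=i_{j+1}-i_j\geq 1$. If $d_j$ is odd the two conditions are mutually exclusive and the pair contributes exactly $1$; if $d_j$ is even the two conditions coincide and the contribution is $0$ or $2$, and in this case $d_j\geq 2$. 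In every case the contribution is at most $d_j$, so summing over $j$ yields the bound $\sum_j d_j=i_k-i_0$. Applied to $Q=P^{\sat,r}$ this establishes the refined inequality, and hence the first assertion. I expect this gap-by-gap estimate to be the only place that requires care.

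For the total count, partition the roots of $P$ by modulus: roots $r\in\smax^\vee\setminus\{\zero\}$ of common absolute value $c>\zero$ contribute at most $\mult_c(|P|)$ by the refined inequality applied to such $r$, while the root $r=\zero$ contributes $\mult_\zero(|P|)=\uval(P)$. Summing over all corners of $|P|$ and invoking \Cref{factor}, the total is at most $n=\deg(P)$.
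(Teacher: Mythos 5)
Correct, and essentially the same approach as the paper: reduce via \Cref{cor-mult-smax} and the identity $P^{\sat,\ominus r}=P^{\sat,r}(\ominus\Y)$ to sign-change counts for $P^{\sat,r}\in\bmaxs^\vee[\Y]$, handle $r=\zero$ separately via lower degree, and sum over corners. The only difference is that where the paper invokes \Cref{lemma-bmax} (i.e.\ \cite[Remark 1.13]{baker2018descartes}) for the bound $\mult_{\unit}(Q)+\mult_{\ominus\unit}(Q)\leq\deg(Q)-\uval(Q)$, you re-derive it directly from \Cref{baker31} by the gap-by-gap estimate — which is the same computation the paper carries out later in the proof of \Cref{lemma-max-bmax} — so this is an inlined citation rather than a new route.
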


The proof uses the following result, which was stated in 
\cite[Remark 1.13]{baker2018descartes} (and follows from 
\cite[Th.\ C]{baker2018descartes}, see also \Cref{baker31}).
\begin{lemma}[\protect{\cite[Remark 1.13]{baker2018descartes}}]\label{lemma-bmax}
Let $Q\in \bmaxs^\vee[\Y]$ of degree $n$.
Then, $Q$ has at most $n$ roots in $\bmaxs$ counted with multiplicity, that is
$\mult_{\zero}(Q)+\mult_{\unit}(Q)+\mult_{\ominus \unit}(Q)\leq n$.
\end{lemma}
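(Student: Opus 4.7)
The plan is to establish the refined inequality $\mult_r(P) + \mult_{\ominus r}(P) \leq \mult_{|r|}(|P|)$ by transferring the multiplicity computation from $\smax^\vee$ to the Boolean system $\bmaxs^\vee$ via the saturation polynomial $P^{\sat,r}$ from \Cref{cor-mult-smax}, and then invoking \Cref{lemma-bmax}. The degenerate case $r = \zero$ is treated separately: there $\mult_\zero(P) = \uval(P) = \uval(|P|) = \mult_\zero(|P|)$, with equality, and $\ominus r = r$ so the ``pair'' is a singleton.

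For the main case $r \in \smax^\vee \setminus \{\zero\}$, I would first verify a substitution identity relating the saturation polynomials at $r$ and $\ominus r$. Since $\sat(r,P)$ depends only on $|r|$ by \Cref{cor-mult-smax}, one has $\sat(\ominus r, P) = \sat(r, P)$; and a direct computation gives $\Pn^{\ominus r}_i = (\ominus \unit)^{\odot i} \odot \Pn^r_i$ for each $i$, so
\[P^{\sat, \ominus r}(\Y) = P^{\sat, r}(\ominus \Y).\]
Evaluating at $\unit$ shows $\mult_\unit(P^{\sat,\ominus r}) = \mult_{\ominus \unit}(P^{\sat,r})$ (the change of variable $\Y \mapsto \ominus \Y$ is invertible, so preserves multiplicities, as follows from the recursive \Cref{def-mult-BL}). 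Combining this with $\mult_r(P) = \mult_\unit(P^{\sat,r})$ and $\mult_{\ominus r}(P) = \mult_\unit(P^{\sat,\ominus r})$ from \Cref{cor-mult-smax} yields
\[\mult_r(P) + \mult_{\ominus r}(P) = \mult_\unit(P^{\sat,r}) + \mult_{\ominus \unit}(P^{\sat,r}).\]

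Next I would apply \Cref{lemma-bmax} to $P^{\sat,r} \in \bmaxs^\vee[\Y]$ (its coefficients lie in $\bmaxs^\vee$ by \Cref{cor-mult-smax}, and multiplicities computed over $\bmaxs^\vee$ agree with those over $\smax^\vee$). This gives
\[\mult_\zero(P^{\sat,r}) + \mult_\unit(P^{\sat,r}) + \mult_{\ominus \unit}(P^{\sat,r}) \leq \deg(P^{\sat,r}),\]
and since $\mult_\zero(P^{\sat,r}) = \uval(P^{\sat,r})$, we conclude
\[\mult_r(P) + \mult_{\ominus r}(P) \leq \deg(P^{\sat,r}) - \uval(P^{\sat,r}) = \max \sat(r,P) - \min \sat(r,P).\]
By the definition of the multiplicity of a corner, the right-hand side equals $\mult_{|r|}(|P|)$, proving the refined bound.

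Finally, for the global bound of $n$ roots counted with multiplicity, I would sum the refined inequality over pairs $\{r, \ominus r\}$ (one per nonzero corner $c$ of $|P|$, using that any root must have modulus a corner, by \Cref{abs_roots}, or equivalently because otherwise the right-hand side vanishes) and add the contribution of $\zero$:
\[\mult_\zero(P) + \sum_{c \neq \zeror \text{ corner}} \bigl(\mult_c(P) + \mult_{\ominus c}(P)\bigr) \leq \mult_\zero(|P|) + \sum_{c \neq \zeror \text{ corner}} \mult_c(|P|) = n,\]
the final equality coming from \Cref{factor}, which states that $|P|$ has exactly $n$ corners counted with multiplicity. The main technical step is the substitution identity $P^{\sat,\ominus r} = P^{\sat,r}(\ominus \Y)$ and the resulting invariance of multiplicities under the sign change $\Y \mapsto \ominus \Y$; once this is in hand, the rest is bookkeeping with \Cref{cor-mult-smax,lemma-bmax}.
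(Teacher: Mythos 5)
You have proved the wrong statement. The lemma in question, \Cref{lemma-bmax}, asserts that a formal polynomial $Q$ of degree $n$ over $\bmaxs^\vee$ (i.e., with coefficients in $\{\zero,\unit,\ominus\unit\}$) has at most $n$ roots counted with multiplicity. Your argument instead establishes the refined bound $\mult_r(P) + \mult_{\ominus r}(P) \leq \mult_{|r|}(|P|)$ for $P \in \smax^\vee[\Y]$ over a general $\smax$ --- which is the content of \Cref{atmostn}, stated \emph{after} \Cref{lemma-bmax} in the paper --- and, crucially, you invoke \Cref{lemma-bmax} itself as the key step (``Next I would apply \Cref{lemma-bmax} to $P^{\sat,r} \in \bmaxs^\vee[\Y]$\ldots''). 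So as a proof of \Cref{lemma-bmax} the argument is circular; it establishes nothing about the target statement.

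What an actual proof of \Cref{lemma-bmax} looks like, and what the paper does: this lemma lives entirely in the Boolean system $\bmaxs$, and the paper simply cites \cite[Remark 1.13]{baker2018descartes}, noting that it follows from the sign-change characterization of multiplicities in \cite[Th.\ C]{baker2018descartes} (restated in the paper as \Cref{baker31}). Concretely, writing $m = \uval(Q)$, one has $\mult_\zero(Q) = m$, and $\mult_\unit(Q)$ and $\mult_{\ominus\unit}(Q)$ count the sign changes of $Q$ and of $Q(\ominus\Y)$, respectively. As the paper later carries out explicitly in the proof of \Cref{lemma-max-bmax}, for any two consecutive indices $k < \ell$ in the support of $Q$ the combined number of sign changes contributed by the pair to $\sigma(k,\ell) + \sigma'(k,\ell)$ is at most $\ell - k$ (with equality characterized there); summing these along the support gives $\mult_\unit(Q) + \mult_{\ominus\unit}(Q) \leq n - m$, and adding $\mult_\zero(Q) = m$ yields the bound $n$. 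Your proof does not contain this local sign-change bookkeeping in $\bmaxs$, which is the actual mathematical content of \Cref{lemma-bmax}; everything you wrote is downstream of it.
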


\begin{proof}[Proof of \Cref{atmostn}]
The result follows from \Cref{lemma-bmax}, when $\vgroup$ is trivial.
Assume now that $\vgroup$ is non-trivial.
Let $P=\bigoplus_{i=0}^n P_i \Y^i\in \smax^\vee[\Y]$, be a non-zero formal 
polynomial, and let  $n\geq 0$ be its degree.
Then, by \Cref{abs_roots}, any root $r\in\smax^\vee$ of $P$ is such that
$|r|$ is a corner of $|P|$.

If $r=\zero$, then, by \Cref{cor-mult-smax},
 the multiplicity $\mult_{r}(P)$ of $r$ 
as a root of $P$ is equal to the lower degree $\uval(P)$ of $P$,
which is also the  multiplicity of
$\zero$ as a corner of $|P|$.

If $r\neq \zero$, then again by \Cref{cor-mult-smax},
$\mult_r(P)$ is equal to $\mult_{\unit}(P^{\sat,r})$.
Similarly, $\mult_{\ominus r}(P)$ is equal to $\mult_{\unit}(P^{\sat,\ominus r})$.
Since $P^{\sat,\ominus r}=P^{\sat,r}(\ominus \Y)$,
we get that $\mult_{\ominus r}(P)$ is equal to $\mult_{\ominus \unit}(P^{\sat,r})$.
Using \Cref{lemma-bmax}, and the fact that the multiplicity of 
$\zero$ as a root of a polynomial over $\smax$ or $\bmaxs$
 is equal to the lower degree of this polynomial, we deduce that 
$\mult_{\unit}(P^{\sat,r})+ \mult_{\ominus \unit}(P^{\sat,r})$ is bounded
above by $\deg(P^{\sat,r})-\uval(P^{\sat,r})$.
One can also check, from the definition of $P^{\sat,r}$,
 that $\deg(P^{\sat,r})-\uval(P^{\sat,r})$
is equal to the multiplicity of $|r|$ as a corner of $|P|$.
This shows the inequality $\mult_{r}(P)+\mult_{\ominus r}(P)\leq \mult_{|r|}(|P|)$.

Summing the inequalities for all $r\neq \zero$ and 
the equality $\mult_{\zero}(P)=\mult_{\zero}(|P|)$ for $r=\zero$, we 
obtain that the sum of the multiplicities of the roots of $P$ is
upper bounded by the sum of multiplicities of the corners of $|P|$, that is $n$.
\end{proof}

Recall that, the factorization of a polynomial function
$\widehat{P}$ over $\smax$ may not be unique, so a factorization
does not determine the multiplicities of the roots. 
Moreover, a factorization does not imply
that there are $n$ roots counted with multiplicities.
However, the following result shows the converse implication.

\begin{theorem}[Another sufficient condition for factorization]
\label{prop-mult-fact} 
Let ${P} \in \smax^\vee[\Y]$ of degree $n$, and
having $n$ roots $r_1,\ldots , r_n$, counted with multiplicity.
Then $\widehat{P}$ is factored as 
\begin{equation} \label{factorization-eq}
 \widehat{P}(y)=P_n\odot (y\ominus r_1)\odot \cdot \odot (y\ominus r_n) \enspace .\end{equation}
\end{theorem}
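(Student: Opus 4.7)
The plan is to construct the factorization explicitly from the multiplicity data, and then verify equality of polynomial functions. Starting from \Cref{atmostn}, for every nonzero corner $|r|$ of $|P|$ we have $\mult_r(P) + \mult_{\ominus r}(P) \leq \mult_{|r|}(|P|)$, while $\mult_\zero(P) = \uval(P)$ equals the $\zero$-corner-multiplicity of $|P|$. Summing these inequalities over all corners of $|P|$ (which total $n$ by \Cref{factor}) and using the hypothesis that the total root multiplicity of $P$ equals $n$ forces equality in every instance. Consequently, for each nonzero corner $c$ of $|P|$, $\mult_c(P) + \mult_{\ominus c}(P) = \mult_c(|P|)$. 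This pins down the multiset $\{r_1, \ldots, r_n\}$: take $\mult_c(P)$ copies of $c$ and $\mult_{\ominus c}(P)$ copies of $\ominus c$ for each nonzero corner $c$, together with $\uval(P)$ copies of $\zero$, arranged so that $|r_1| \geq \cdots \geq |r_n|$.

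I would then set $Q = P_n (Y \ominus r_1) \cdots (Y \ominus r_n) \in \smax[Y]$; by \Cref{lemma-psharp}, the claimed factorization \eqref{factorization-eq} is equivalent to $\widehat{Q} = \widehat{P}$. First, $|\widehat{P}| = |\widehat{Q}|$ as polynomial functions on $\tmax$: since the absolute value is a morphism, $|\widehat{Q}(y)| = |P_n| \odot (|y| \oplus |r_1|) \odot \cdots \odot (|y| \oplus |r_n|)$, and the moduli $|r_1|, \ldots, |r_n|$ are the corners of $|P|$ counted with multiplicities; uniqueness of factorization over $\tmax$ in \Cref{factor} yields $\widehat{|P|} = \widehat{|Q|}$ and hence $|\widehat{P}(y)| = |\widehat{Q}(y)|$ pointwise.

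It remains to verify that $\widehat{P}(y)$ and $\widehat{Q}(y)$ lie in the same stratum ($\smax^\oplus$, $\smax^\ominus$, or $\smax^\circ$) for every $y \in \smax$. When $y = r_i$, the factor $r_i \ominus r_i \in \smax^\circ$ forces $\widehat{Q}(r_i) \in \smax^\circ$, while $r_i$ being a root of $P$ places $\widehat{P}(r_i) \in \smax^\circ$; two balanced elements of equal modulus coincide. For $y \in \smax^\vee \setminus \{r_1, \ldots, r_n\}$, by \Cref{abs_roots} together with the exhaustive enumeration of signed roots above, $y$ cannot be a root of $P$, so $\widehat{P}(y) \in \smax^\vee$; likewise each factor $y \ominus r_i$ is signed and so $\widehat{Q}(y) \in \smax^\vee$. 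Sign agreement is then established by noting that for $|y|$ strictly larger than all $|r_i|$ both functions equal $P_n y^n$, and then tracking how the sign propagates across tropical intervals delimited by the moduli $|r_i|$, with each crossing of a positive (resp.\ negative) root accounting for a sign change predicted by $\mult_c(P)$ (resp.\ $\mult_{\ominus c}(P)$).

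The principal obstacle is precisely this detailed sign tracking, which also needs to handle $y \in \smax^\circ \setminus \{\zero\}$: for $|y|$ smaller than every nonzero $|r_i|$ (and $\uval(P) = 0$) both $\widehat{P}(y)$ and $\widehat{Q}(y)$ collapse to constants, and one must establish a Vieta-type identity $P_0 = P_n \prod_i (\ominus r_i)$. I would address these compatibility questions by localizing at each corner via \Cref{cor-mult-smax}: the saturation polynomial $P^{\sat,c} \in \bmaxs^\vee[Y]$ has total root multiplicity equal to $\deg(P^{\sat,c}) - \uval(P^{\sat,c})$, and the characterization of multiplicities over $\bmaxs^\vee$ from \Cref{baker31} constrains the sign pattern of $P^{\sat,c}$ tightly enough to force agreement of the local signs of $\widehat{P}$ and $\widehat{Q}$ in each tropical interval adjacent to $|y| = c$. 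Gluing these local sign matches yields $\widehat{P} = \widehat{Q}$, and \Cref{lemma-psharp} then delivers the claimed factorization \eqref{factorization-eq}.
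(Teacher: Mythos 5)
Your overall strategy --- directly verifying $\widehat{P}=\widehat{Q}$ for $Q=P_n(\Y\ominus r_1)\cdots(\Y\ominus r_n)$ and invoking \Cref{lemma-psharp} --- is a legitimate alternative route, and your preliminary steps (equality of moduli via \Cref{corner-modulus}, the cases $y=r_i$ and $y\in\smax^\circ$, and the identification of the multiset $\{r_1,\ldots,r_n\}$ via \Cref{atmostn}) are sound. However, there is a genuine gap exactly where you flag ``the principal obstacle'': you do not actually prove the sign agreement between $\widehat{P}(y)$ and $\widehat{Q}(y)$ for $y\in\smax^\vee\setminus\{r_1,\ldots,r_n\}$. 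Concretely, what is needed is a family of Vieta-type identities $P_k = P_n\odot\prod_{|r_i|>|y|}(\ominus r_i)$, one for each index $k$ that dominates $\widehat{P}(y)$ on a given modulus range; the modulus computation only yields $|P_k|=|P_n|\odot\prod_{|r_i|>|y|}|r_i|$, not the sign. Your sketch of ``tracking how the sign propagates across tropical intervals'' and ``gluing local sign matches via saturation polynomials'' is the right idea: \Cref{cor-mult-smax} together with \Cref{baker31} forces, across each corner $c$ with saturation interval $[k,\ell]$, the relation $\sign(P_k)=\sign(P_\ell)\odot(\ominus\unit)^{\mult_c(P)}$, and iterating from $\ell=n$ downward produces the needed identities. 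But the proposal stops short of carrying out this derivation, and without it the pointwise equality $\widehat{P}(y)=\widehat{Q}(y)$ for signed $y$ is not established.

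The paper avoids the sign-propagation question altogether by a different route. It first modifies $P$ to a polynomial $R\in\smax^\vee[\Y]$ with the same polynomial function and the same root multiplicities, but with $|R|$ concave and hence factored over $\tmax$; the modification, its preservation of multiplicities, and the identification of the resulting linear factors are packaged into \Cref{lemma-max-bmax} for the Boolean case and lifted via \Cref{cor-mult-smax}. Then the already-established sufficient condition \Cref{suf_cond} applies to $R$, delivering the factorization of $\widehat{R}=\widehat{P}$ with the given roots as factors. Completing your direct approach would essentially require reproducing the sign-change bookkeeping done inside \Cref{lemma-max-bmax}, so the two proofs end up doing comparable work, but the paper's version isolates the hard part in a clean Boolean lemma and reuses the existing factorization criterion rather than re-deriving the polynomial-function equality from scratch.
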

To prove this result, let us show the following result for
 polynomials over $\bmaxs$.
\begin{lemma}\label{lemma-max-bmax}
Let $Q\in \bmaxs^\vee[\Y]$ of degree $n$, and lower degree $m$,
and having $n$ roots in $\bmaxs$ 
counted with multiplicities, that is
$\mult_{\unit}(Q)+\mult_{\ominus \unit}(Q)=n-m$.
Then, for all successive degrees $k<\ell$ of non-zero monomials of $Q$
(that is $m\leq k<\ell\leq n$  and $Q_i=\zero$ for
$k<i<\ell$), we have either $\ell-k=1$, or $\ell-k=2$ and $Q_\ell=\ominus Q_k$.
Conversely, if the latter property holds, then $Q$ has  $n$ roots in $\bmaxs$ 
counted with multiplicities.

Let us consider the polynomial $R$ with same
lower degree and degree as $Q$ and such that, for all
$m\leq i\leq n$,  $R_i=Q_i$ if $Q_i\neq \zero$ and $R_i=\unit$ otherwise.
We have that $|R|$ is factored (see \Cref{roots_poly}),
$\widehat{R}=\widehat{Q}$, and $R$ has same multiplicities of roots as $Q$:
 $\mult_{r}(R)=\mult_{r}(Q)$, for all $r\in\bmaxs^\vee$.
Moreover, these multiplicities are the same as the ones in
the sequence $r_1,\ldots, r_n$ obtained in \Cref{suf_cond} for the
polynomial $R$.
In particular $\widehat{Q}$ is factored as in 
\begin{equation} \label{factorization-eq-lem}
 \widehat{Q}(y)=Q_n\odot (y\ominus r_1)\odot \cdot \odot (y\ominus r_n) \enspace .\end{equation}
\end{lemma}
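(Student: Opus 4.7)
The plan is to reduce everything to \Cref{baker31}, which identifies $\mult_\unit(Q)$ and $\mult_{\ominus\unit}(Q)$ with the numbers of sign changes in the coefficient sequences of $Q$ and of $Q(\ominus\Y)$, respectively. Let $m=i_0<i_1<\cdots<i_s=n$ enumerate the positions of the non-zero coefficients of $Q$, and set $d_j:=i_{j+1}-i_j$, so $\sum_j d_j=n-m$. For each pair $(Q_{i_j},Q_{i_{j+1}})$, the contribution to $\mult_\unit(Q)$ is $1$ iff $Q_{i_{j+1}}=\ominus Q_{i_j}$, and the contribution to $\mult_{\ominus\unit}(Q)$, computed from the coefficients $(\ominus\unit)^{i_j}Q_{i_j}$ of $Q(\ominus\Y)$, is $1$ iff $Q_{i_{j+1}}=(\ominus\unit)^{d_j+1}Q_{i_j}$. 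A case split on the parity of $d_j$ and on the relative sign yields total contribution $1$ when $d_j$ is odd, and $0$ or $2$ when $d_j$ is even, the value $2$ occurring exactly when $Q_{i_{j+1}}=\ominus Q_{i_j}$. In particular the contribution of each pair is at most $d_j$, with equality precisely when $d_j=1$, or $d_j=2$ and $Q_{i_{j+1}}=\ominus Q_{i_j}$. Summing gives $\mult_\unit(Q)+\mult_{\ominus\unit}(Q)\leq n-m$, and the saturation hypothesis forces equality in every term, proving the first assertion. The converse is immediate by reversing the computation.

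For the polynomial $R$, all coefficients $R_i$ with $m\leq i\leq n$ satisfy $|R_i|=\unit$, so the concavity inequalities in~\eqref{concave} are trivially equalities and $|R|$ is factored by \Cref{roots_poly}. Hence \Cref{suf_cond} applies to $R$ and yields the factorization
\begin{equation*}
\widehat{R}(y)=Q_n\odot(y\ominus r_1)\odot\cdots\odot(y\ominus r_n),
\end{equation*}
with $r_i=\ominus R_{n-i}\odot R_{n-i+1}^{\odot -1}\in\{\unit,\ominus\unit\}$ for $i\leq n-m$ and $r_i=\zero$ otherwise; in particular $r_i=\unit$ exactly when $R$ has a sign change between positions $n-i$ and $n-i+1$.

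To transfer the factorization to $Q$, I will show $\widehat{R}=\widehat{Q}$ on $\bmaxs$. By construction, $R$ differs from $Q$ only at ``filler'' positions $j=k+1$ for which $Q_k$ and $Q_{k+2}=\ominus Q_k$ are non-zero and $R_j=\unit$, so $\widehat{R}(y)=\widehat{Q}(y)\oplus\bigoplus_{j\in F}y^j$, where $F$ is the set of such $j$. For each $j=k+1\in F$, the pair $Q_k\odot y^k\oplus Q_{k+2}\odot y^{k+2}$ is already a summand of $\widehat{Q}(y)$, and a direct case check on $\bmaxs=\{\zero,\unit,\ominus\unit,\unit^\circ\}$ shows it absorbs the added $y^{k+1}$: for $y\in\{\unit,\ominus\unit,\unit^\circ\}$, using $(\ominus\unit)^2=\unit$ and $(\unit^\circ)^i=\unit^\circ$ for $i\geq 1$, the pair equals $\unit^\circ$ and dominates any element of $\bmaxs$; for $y=\zero$ the added term $y^{k+1}$ itself vanishes since $k+1\geq 1$. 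This gives $\widehat{R}=\widehat{Q}$ and hence the factorization~\eqref{factorization-eq-lem}.

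It remains to match $\mult_r(Q)=\mult_r(R)$ for $r\in\bmaxs^\vee$ with the multiplicities in the $r_i$ sequence. By \Cref{baker31}, this reduces once more to a sign-change comparison: when $d_j=1$ the relevant coefficients of $R$ and $Q$ coincide, and when $d_j=2$ with $Q_{k+2}=\ominus Q_k$, the triple $(R_k,R_{k+1},R_{k+2})=(Q_k,\unit,\ominus Q_k)$ contains exactly one sign change, matching the single sign change between $Q_k$ and $Q_{k+2}$; the analogous comparison on $R(\ominus\Y)$ versus $Q(\ominus\Y)$ preserves $\mult_{\ominus\unit}$. Counting the $r_i$ then yields $\#\{i:r_i=\unit\}=\mult_\unit(R)=\mult_\unit(Q)$ and similarly for $\ominus\unit$, completing the proof. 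The main conceptual step is the balanced-pair absorption argument in the third paragraph; the rest is careful sign-change bookkeeping.
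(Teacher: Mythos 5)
Your proof is correct and follows essentially the same route as the paper's: both parts reduce to \Cref{baker31} via the additive sign-change count over gaps between successive non-zero coefficients, and the factorization step applies \Cref{suf_cond} to the filled polynomial $R$ after observing $|R|$ is trivially concave. The only cosmetic difference is in showing $\widehat R=\widehat Q$: you absorb each filler monomial by a direct case check on $\bmaxs$, whereas the paper first notes $\widehat Q(y)=\unit^\circ$ at the relevant points and then sandwiches $\widehat R(y)$ between $\widehat Q(y)$ and $\widehat Q(y)\oplus\unit^\circ$.
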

\begin{proof}
Let $Q\in \bmaxs^\vee[\Y]$ of degree $n$, and lower degree $m$,
and having $n$ roots in $\bmaxs$ 
counted with multiplicities, that is
$\mult_{\unit}(Q)+\mult_{\ominus \unit}(Q)=n-m$. 
For any two degrees $k<\ell$ of monomials of $Q$, with $m\leq k<\ell\leq n$,
denote by $\sigma(k,\ell)$ the number of sign changes in the non-zero coefficients $Q_i$ of $Q$ with indices $i$ such that $k\leq i\leq \ell$ and $\sigma'(k,\ell)$ be the number of sign changes in the non-zero coefficients $Q_i(\ominus \unit)^i$ of $Q(\ominus \Y)$ such that $k\leq i\leq \ell$.
Let $k$ and $\ell$ be two successive degrees of non-zero monomials of $Q$ such that $m\leq k<\ell\leq n$. We have 
\[\sigma(k,l) + \sigma'(k,l)=
\begin{cases}
1& \text{if} \; \ell-k \; \text{is odd};\\
2&  \text{if} \;Q_k = \ominus Q_\ell \;\text{and}\; \ell-k \; \text{is even};\\
0& \text{if} \;Q_k = Q_\ell \;\text{and}\; \ell-k \; \text{is even}.
\end{cases}
\]
Then, $\sigma(k,\ell) + \sigma'(k,\ell) \leq \ell-k$ and we have equality if and only if either $\ell-k=1$, or $\ell-k=2$ and $Q_\ell=\ominus Q_k$.
Moreover, $\sigma$ and $\sigma'$ are additive, for instance
$\sigma(k,\ell)=\sigma(k,k')+\sigma(k',\ell)$ when $m\leq k<k'<\ell\leq n$.
Therefore, $\sigma(m,n) + \sigma'(m,n) \leq n-m$ and  we have the equality if and only if we have, for all two successive degrees of non-zero monomials of $Q$, $k<\ell$,
 either $\ell-k=1$, or $\ell-k=2$ and $Q_\ell=\ominus Q_k$.
This shows the first assertion of the lemma and its converse implication.

Now let $R \in \bmaxs^\vee[\Y]$ be as in the lemma.
Then the coefficients of $|R|$ with indices between $m$ and $n$ 
are all equal to $\unit$, so that $|R|$ is factored (the coefficient map is
constant so concave).
Also, if $R\neq Q$, then there are successive degrees of non-zero monomials
 of $Q$, $k<\ell$, such that $\ell-k=2$ and $Q_\ell=\ominus Q_k$,
which implies that both $\unit$ and $\ominus \unit$ are roots of $Q$:
$\widehat{Q}(\unit)=\widehat{Q}(\ominus\unit)=\unit^\circ$.
Since $\widehat{Q}(\unit)\leq \widehat{R}(\unit)\leq\widehat{Q}(\unit)\oplus \unit^\circ$, we get that $\widehat{Q}(\unit)= \widehat{R}(\unit)$.
Similarly $\widehat{Q}(y)= \widehat{R}(y)$, for all $y\in\bmaxs$ and
also for all $y\in\smax$.

Let us consider now the sequence $r_1,\ldots, r_n$ obtained in \Cref{suf_cond} for the polynomial $R$. %
Since $|R|$ is factored, %
we get that $\widehat{Q}=\widehat{R}$ is factored as
 in \eqref{factorization-eq-lem},
with the elements $r_1,\ldots, r_n$ of $\bmaxs^\vee$ defined in \Cref{suf_cond}.
Let us show that the multiplicities of the roots $r\in\bmaxs^\vee$ of $Q$ and $R$ are the same as the ones in the sequence $r_1,\ldots, r_n$.

By definition, $r_i=\zero$ for $i=n-m+1,\ldots, n$,
and $r_i \odot R_{n-i+1}=\ominus R_{n-i}$ for $i=1,\ldots, n-m$.
So the multiplicity of $r=\zero$ is the same for the polynomials $Q$ and $R$
and for the sequence  $r_1,\ldots, r_n$.

Let us consider now the roots $r=\unit$ and $\ominus \unit$.
For $i=1,\ldots, n-m$, we have
\[r_i=\begin{cases}
\unit & \text{if}\; R_{n-i+1}=\ominus R_{n-i};\\
\ominus \unit& \text{if} \; R_{n-i+1}= R_{n-i}.
\end{cases}
\]
So, the multiplicity of $r=\unit$ in the sequence $r_1,\ldots, r_n$
is equal to the number of sign changes in the non-zero coefficients of the
polynomial $R$, and so by \cite[Th.\ C]{baker2018descartes} (see \Cref{baker31}), it is equal to $\mult_{\unit}(R)$.
Similarly,  the multiplicity of $r=\ominus\unit$ in the sequence $r_1,\ldots, r_n$ is equal to the number of sign changes in the non-zero coefficients of the
polynomial $R(\ominus \Y)$, and so it is equal to $\mult_{\ominus \unit}(R)$.

Define $\sigma$ and $\sigma'$ as above for both $Q$ and $R$.
Given two successive degrees $k<\ell$ of non-zero monomials of $Q$, 
we have either $\ell-k=1$ or $\ell-k=2$ and $Q_{\ell}=\ominus Q_k$.
In the first case, the coefficients of $Q$ and $R$ with
indices between $k$ and $\ell$ are the same, so
$\sigma(k,l)$ is the same for $Q$ and $R$.
In the second case, we have  $R_{k+1}=\unit$, 
so $R_{k+1}$ is either equal to $Q_k$ or $Q_{\ell}$,
and $\sigma(k,\ell)=1$ for both $Q$ and $R$.
Similarly $\sigma'(k,\ell)$ are the same for both $Q$ and $R$.
Summing the $\sigma(k,\ell)$ for all successive degrees $k<\ell$,
 we get that $\sigma(m,n)$ and $\sigma'(m,n)$
are the same for $Q$ and $R$, which by \cite[Th.\ C]{baker2018descartes} (see \Cref{baker31}) implies
that the multiplicities of the roots $\unit$ and $\ominus \unit$ are the
same for $Q$ and $R$.
\end{proof}

\begin{proof}[Proof of \Cref{prop-mult-fact}]
We can assume that $\vgroup$ is non-trivial,
since otherwise the result follows from \Cref{lemma-max-bmax}.
Let ${P} \in \smax^\vee[\Y]$ of degree $n$, lower degree $m$, and
having $n$ roots $r_1,\ldots , r_n$, counted with multiplicity.

By \Cref{atmostn}, we have $\mult_{\zero}(P)=\mult_{\zero}(|P|)$,
and $\mult_{r}(P)+\mult_{\ominus r}(P)\leq \mult_{r}(|P|)$, for all $r\in \tmax$,
seen also as an element of $\smax^\oplus$.
Since, by assumption, the sum of all the multiplicities of roots of $P$ 
is equal to $n$,  which is also the sum of all the multiplicities of corners of $|P|$, we get the equality
$\mult_{c}(P)+\mult_{\ominus c}(P)=\mult_{c}(|P|)$, for all 
corners $c\neq \zeror $ of $|P|$.
By \Cref{cor-mult-smax}, for each corner $c=|r_i|$ of $|P|$, we have 
$\mult_{c}(P)=\mult_{\unit}(P^{\sat,c})$,
and $\mult_{\ominus c}(P)=\mult_{\unit}(P^{\sat,\ominus c})=\mult_{\ominus \unit}(P^{\sat,c})$ (since $P^{\sat,\ominus c}=P^{\sat,c}(\ominus \Y)$).
So $P^{\sat,c}$ satisfies the assumption of \Cref{lemma-max-bmax}.

Given a corner $c$ of $|P|$, let us add monomials in $P$ so that 
$P^{\sat,c}$ is changed as in \Cref{lemma-max-bmax}.
Then, the new polynomial $R$ is such that $R^{\sat,c}$ has same multiplicities
of roots in $\bmaxs^\vee$ as  $P^{\sat,c}$,
and so,  by \Cref{cor-mult-smax}, $R$ and $P$ have same multiplicities of
roots $r$ such that $|r|=c$.
Let $[k,\ell]$ be the convex hull of the saturation set $\sat(c,P)$,
then $k$ and $\ell$ are the lower degree and degree of $P^{\sat,c}$,
so the coefficients of $R$ and $P$ outside $[k,\ell]$ are the same.
This implies that,  for all the roots $r$ such that $|r|\neq c$,
the saturated polynomials of $R$ and $P$ are the same,
and, by \Cref{cor-mult-smax}, the multiplicities of the root $r$ are the same
for $R$ and $P$.
The polynomial $R$ also satisfies $\widehat{R^{\sat,c}}=\widehat{P^{\sat,c}}$,
which implies also by domination properties, that $\widehat{R}=\widehat{P}$.
Moreover, $|R^{\sat,c}|$  is factored,
 which means that the support of $|R^{\sat,c}|$  is full, and thus
the coefficient map of $|R|$ is concave in the 
interval $[k,\ell]$. 

Applying this transformation for all corners of $|P|$, we obtain
a polynomial $R$ such that $\widehat{R}=\widehat{P}$, $|R|$ is concave 
in all the interval $[m,n]$, and the multiplicities of the roots of 
$R$ and $P$  are the same.
Since  $|R|$ is concave, we get  that 
$|R|$ is factored (as a formal polynomial over $\tmax$), 
hence, by \Cref{suf_cond}, $\widehat{R}$ is factored as 
$\widehat{R}(y)=\tropprod_{1\leq i\leq n} (y\ominus r_i)$ with
$r_i$ such that 
$r_i\odot R_{n-i+1}= \ominus R_{n-i}$ for all $i\leq n-m$.
Using the last assertion in \Cref{lemma-max-bmax}, we get that these roots 
$r_1,\ldots, r_n$ coincide with the roots $r_1,\ldots , r_n$,
up to a permutation.
\end{proof}

We also have the following result which is a $\smax$ version of
\Cref{lemma-max-bmax} and is obtained using this lemma together
with \Cref{cor-mult-smax}.

\begin{theorem}\label{prop-sum-mult-n} 
Let ${P} \in \smax^\vee[\Y]$ of degree $n$, and
having $n$ roots $r_1,\ldots , r_n$, counted with multiplicity.
Then, for all non-zero corners $c\in \tmax$ of $|P|$, and
for all successive degrees $k<\ell$ of non-zero monomials of $P^{\sat,c}$,
we have either $\ell-k=1$, or $\ell-k=2$ and $P^{\sat,c}_\ell=\ominus 
P^{\sat,c}_k$, or equivalently $P_\ell\odot P_k\in \smax^\ominus$.
Conversely, if the latter property holds, then $P$ has  $n$ roots in $\smax$ 
counted with multiplicities.
\end{theorem}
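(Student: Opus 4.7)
The plan is to deduce the theorem directly from \Cref{cor-mult-smax} combined with the two directions of \Cref{lemma-max-bmax}. The key observation is that the saturation polynomial $P^{\sat,c}$ at a non-zero corner $c\in\tmax$ of $|P|$ has coefficients in $\bmaxs^\vee$, so the results over the symmetrized Boolean semiring apply to it directly.

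First, assume $P$ has $n$ roots counted with multiplicity. As in the opening of the proof of \Cref{prop-mult-fact}, this hypothesis combined with \Cref{atmostn} forces the equalities $\mult_{\zero}(P)=\mult_{\zero}(|P|)$ and $\mult_{c}(P)+\mult_{\ominus c}(P)=\mult_{c}(|P|)$ for every non-zero corner $c$ of $|P|$. Fix such a $c$. By \Cref{cor-mult-smax}, $\mult_c(P)=\mult_{\unit}(P^{\sat,c})$ and, using $P^{\sat,\ominus c}=P^{\sat,c}(\ominus\Y)$, also $\mult_{\ominus c}(P)=\mult_{\ominus\unit}(P^{\sat,c})$. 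Moreover it follows from the definition of $P^{\sat,c}$ and of multiplicity of a corner that $\mult_c(|P|)=\deg(P^{\sat,c})-\uval(P^{\sat,c})$. Hence $P^{\sat,c}\in \bmaxs^\vee[\Y]$ has as many non-zero roots (counted with multiplicity in $\bmaxs^\vee$) as its degree minus its lower degree, and the hypothesis of \Cref{lemma-max-bmax} is satisfied.

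Applying the first assertion of \Cref{lemma-max-bmax} to $P^{\sat,c}$ gives exactly the dichotomy for successive degrees $k<\ell$ of non-zero monomials of $P^{\sat,c}$: either $\ell-k=1$, or $\ell-k=2$ with $P^{\sat,c}_\ell=\ominus P^{\sat,c}_k$. To translate the latter equality into a condition on $P$ itself, note that $c\in\tmax^*$ embeds as a positive element in $\smax^\oplus$, so $\Pn^{c}_i=|\widehat P(c)|^{\odot-1}\odot c^{\odot i}\odot P_i$ has the same sign as $P_i$ for every $i\in\sat(c,P)$. Consequently, $P^{\sat,c}_\ell=\ominus P^{\sat,c}_k$ is equivalent to saying that $P_k$ and $P_\ell$ carry opposite (non-zero) signs, i.e.\ $P_\ell\odot P_k\in \smax^\ominus$.

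For the converse, assume the stated property holds at every non-zero corner $c$ of $|P|$. Then, by the converse assertion of \Cref{lemma-max-bmax} applied to $P^{\sat,c}$, we obtain $\mult_{\unit}(P^{\sat,c})+\mult_{\ominus\unit}(P^{\sat,c})=\deg(P^{\sat,c})-\uval(P^{\sat,c})=\mult_c(|P|)$. Using \Cref{cor-mult-smax} again, this yields $\mult_c(P)+\mult_{\ominus c}(P)=\mult_c(|P|)$. Summing these identities over all non-zero corners of $|P|$ and adding the contribution at $\zero$ (where $\mult_\zero(P)=\uval(P)=\mult_\zero(|P|)$ is automatic from \Cref{cor-mult-smax}), we conclude that the total multiplicity of roots of $P$ equals the total multiplicity of corners of $|P|$, which is $n$. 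The main obstacle is really only bookkeeping: ensuring that consecutive degrees of non-zero monomials of $P^{\sat,c}$ correspond exactly to consecutive elements of $\sat(c,P)$, and that the sign equivalence in the statement is correctly interpreted; both are immediate from the definitions of $\Pn^c$ and $P^{\sat,c}$ in \Cref{cor-mult-smax}.
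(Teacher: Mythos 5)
Your proof is correct and follows precisely the route the paper indicates, namely combining \Cref{lemma-max-bmax} with \Cref{cor-mult-smax}, using \Cref{atmostn} to force the corner-wise equality of multiplicities when the total count equals $n$. The paper does not spell out a detailed argument for this theorem — it only states that it is "obtained using \Cref{lemma-max-bmax} together with \Cref{cor-mult-smax}" — so your write-up fills in exactly the intended bookkeeping (the sign-preservation under normalization by $c\in\smax^\oplus$, and the identification $\mult_c(|P|)=\deg(P^{\sat,c})-\uval(P^{\sat,c})$), and both directions check out.
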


\begin{remark}
Recall that a formal polynomial over $\tmax$ is factored if and
only if its coefficient map is concave. Therefore,
if $Q\in\bmaxs[\Y]$, the polynomial $|Q|$ is factored 
if and only if $|Q_k|=\unit$ for all $k$ between lower degree and degree of
$Q$. This implies that the condition that $|Q|$ is factored 
implies the condition in \Cref{lemma-max-bmax}.
Similarly, for a polynomial $P$ over $\smax$, the condition
that $|P|$ is factored implies the condition in \Cref{prop-mult-fact}.
Hence, \Cref{prop-mult-fact} is stronger than \Cref{suf_cond}. 
Note however that our proof of \Cref{prop-mult-fact} uses \Cref{suf_cond}.
\end{remark}

As suggested in \Cref{sec-factsym}, using \Cref{coro-uniquefact}, one may
define another notion of multiplicity of roots in case of unique factorization.
The following result shows that both notions coincide in the
framework of  \Cref{coro-uniquefact}.

\begin{theorem}[Multiplicities and unique factorization]\label{coro2-uniquefact}
Let ${P} \in \smax^\vee[\Y]$.
Assume that $|{P}|$ is factored (see \Cref{roots_poly}),
and let the $r_i$ be as in \Cref{suf_cond}.
If all the $r_i$ with same modulus are equal,
or equivalently if for each corner $c\neq \zeror$ of $|{P}|$,
$c$ and $\ominus c$ are not both roots of $P$,
then the multiplicity of a root $r$ of $P$ coincides with the 
number of occurences of $r$ in the unique factorization of $\widehat{P}$.
\end{theorem}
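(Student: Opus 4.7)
The plan is to combine the explicit factorization provided by \Cref{suf_cond} with the computation of multiplicities via saturation polynomials given by \Cref{cor-mult-smax}. Under the hypotheses of the theorem, \Cref{coro-uniquefact} ensures that $\widehat{P}$ has a unique factorization $\widehat{P}(y) = P_n \odot (y\ominus r_1)\odot\cdots\odot(y\ominus r_n)$, where the $r_i$ from \Cref{suf_cond} satisfy $r_i \odot P_{n-i+1} = \ominus P_{n-i}$ for $i \leq n - \uval(P)$, and $r_i = \zero$ otherwise. The number of occurrences of a given root $r$ in this factorization is then $\card\{i : r_i = r\}$, which, by our assumption that all $r_i$ with the same modulus coincide, equals $\mult_{|r|}(|P|)$ when $r \neq \zero$ (and equals $\uval(P)$ when $r = \zero$).

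First I would handle the root $r = \zero$ (which is a root only if $\uval(P) > 0$). By \Cref{cor-mult-smax}, $\mult_\zero(P) = \uval(P)$, which equals the number of indices $i$ with $r_i = \zero$, as required.

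Next, for a non-zero root $r \in \smax^\vee$, I would set $c = |r|$. Since $|P|$ is factored, it has full support on $[\uval(P), \deg(P)]$ and its Newton polygon is concave, with a linear piece of slope $-c$ over an interval $[k,\ell]$ of length $\mult_c(|P|)$. Consequently $\sat(r,P) = \{k, k+1, \ldots, \ell\}$. Writing $s$ for the common value of all $r_j$ with $|r_j| = c$, the unique-factorization hypothesis forces $s = r$ (as $r$ is a root with modulus $c$, and $\ominus r$ is not). From the recursion $r_j \odot P_{n-j+1} = \ominus P_{n-j}$ in \Cref{suf_cond}, translating to the indexing $i = n-j+1$, I obtain $P_{i-1} = \ominus s \odot P_i$ for every $i \in [k+1,\ell]$. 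Substituting into the definition of $\Pn^r$ with $r = s$ yields
\[
\Pn^r_{i-1} = |\widehat{P}(r)|^{\odot -1}\odot s^{\odot i-1}\odot P_{i-1} = \ominus |\widehat{P}(r)|^{\odot -1}\odot s^{\odot i}\odot P_i = \ominus \Pn^r_i,
\]
so that consecutive coefficients of $P^{\sat,r}$ strictly alternate in sign across the $\ell - k$ gaps in $[k,\ell]$.

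Finally, invoking \Cref{cor-mult-smax}, $\mult_r(P)$ equals the number of sign changes in the coefficients of $P^{\sat,r}$, which is exactly $\ell - k = \mult_c(|P|)$. This matches the number of occurrences of $r$ in the factorization, completing the proof. The main bookkeeping step is translating the index conventions between the $r_j$-recursion of \Cref{suf_cond} and the saturation interval $[k,\ell]$ of $|P|$; once this is aligned, the sign alternation computation is immediate. (If $\vgroup$ is trivial, the argument reduces to a direct application of \Cref{baker31} to $P \in \bmaxs^\vee[\Y]$, using the same sign alternation along the full support interval.)
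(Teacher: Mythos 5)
Your proof is correct and takes essentially the same approach as the paper: both arguments identify the saturation interval $[k,\ell]$ from the concavity of $|P|$, deduce from the defining recursion of the $r_j$ in \Cref{suf_cond} that the coefficients of $P^{\sat,r}$ strictly alternate in sign across that interval, and then invoke \Cref{cor-mult-smax} to read off $\mult_r(P)$ as the number of sign changes, namely $\ell-k=\mult_{|r|}(|P|)$. The only cosmetic difference is that the paper first normalizes to $r=\unit$ and $|\widehat{P}(\unit)|=\unit$ by replacing $P$ with $\Pn^r$, whereas you carry the factor $|\widehat{P}(r)|^{\odot -1}\odot r^{\odot i}$ through the computation explicitly, which amounts to the same thing.
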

\begin{proof}
By \Cref{suf_cond}, $\widehat{P}$ is factored and by \Cref{coro-uniquefact} the factorization of  $\widehat{P}$ is unique, and
given by the $r_i$ of \Cref{suf_cond}, which are the only possible roots of 
$P$. 
In particular any root $r$ of $P$ belongs to $\{r_i,i=1,\ldots, n\}$,
and is such that $|r|$ is
a corner of $|P|$, and if $r\neq \zero$, then 
$r$ and $\ominus r$ are not both roots of $P$.
Moreover, from  \Cref{suf_cond}, 
 $|r_1|\geq \ldots \geq |r_n|$ are the corners of $|P|$ counted
with multiplicities, hence the number of occurences of $r$ in the unique 
factorization of $\widehat{P}$ is equal to the multiplicity of $|r|$ 
as a corner of $|P|$.

If $r=\zero$, the multiplicity of $r$ as a root of $P$ is the lower degree of
$P$, which is also the multiplicity of $r$ as a corner of $|P|$,
and thus the number of occurences of $r$ in the unique 
factorization of $\widehat{P}$.

Now, let $r\neq \zero$ be a root of $P$. 
We can reduce us to the case where $r=\unit$ and $|P(\unit)|=\unit$
by considering the polynomial $\Pn^r$ which satisfies the 
same properties as $P$. 
Since $|r_1|\geq \ldots \geq |r_n|$, we have 
$\unit=r_{k+1}=\cdots =r_{\ell}$ and $|r_i|>\unit$ for $i\leq k$ and
$|r_i|<\unit$ for $i>\ell$.
Using the definition of the $r_i$ in \Cref{suf_cond}, we deduce 
$P_{n-i+1}=\ominus P_{n-i}$ for $i= \{k+1,\ldots, \ell\}$,
$|P_{n-i+1}|<|P_{n-i}|$ for $i\leq k$, and
 $|P_{n-i+1}|>|P_{n-i}|$ for $i>\ell$.
This implies that $\sat(\unit, P)=\{n-\ell,\ldots, n-k\}$,
and $P^{\sat,\unit}= \bigoplus_{i=n-\ell}^{n-k} P_{i} \Y^{i}$.
By \Cref{cor-mult-smax}, the multiplicity of a root $r$ of $P$ is
equal to the number of sign changes in the coefficients of
$P^{\sat,r}$, which is here equal to $\ell-k$. This is also the number of 
occurences of $r=\unit$ in the sequence $(r_1,\ldots, r_n)$, 
and the multiplicity of the corner $\unit$ of $|P|$.
\end{proof}

\section{Application to signed valuations of roots over ordered non-Archimedean valued fields}\label{sec-valroots}

Let us consider now an ordered field $\rfield$ with a 
map $\sval:\rfield\to \smax=\smax(\vgroup)$,
 which is a morphism of semiring systems 
as in \Cref{def-morphism-systems},  for some ordered group $\vgroup$.
As said in \Cref{sec-signval},  $\smax$ is a hyperfield system associated to
a hyperfield structure on  $\smax^\vee$, and since $\sval$ is a morphism
of systems, then it is also a homomorphism of hyperfields from the field 
$\rfield$ to the hyperfield $\smax^\vee$. Baker and Lorscheid proved a general
inequality relating the multiplicities of the roots of a
polynomial over a field with the multiplicities of the roots
of the homomorphic image of the polynomial over a hyperfield.
When applied to the special case of the hyperfield $\smax^\vee$,
we arrive at the following result, which holds for any ordered group $\vgroup$.

\begin{theorem}[Corollary of \protect{\cite[Prop.~B]{baker2018descartes}}]\label{most_roots}
Let $\rfield$ be an ordered field with a morphism
$\sval:\rfield\to \smax^\vee$, as in \Cref{def-morphism-systems}.
Let $\bp=\sum_{k=0}^{n} \bp_k \Y^k\in \rfield[\Y]$ be a formal polynomial over $\rfield$,
in one variable $\Y$. Denote $P=\sval(\bp):=\bigoplus_{k=0}^{n} \sval(\bp_k) \Y^k \in \smax^{\vee}[\Y]$. Use \Cref{def-mult-BL} for the multiplicities in the semiring systems $\smax$ and $\rfield$
 (these are the usual ones in the field $\rfield$).
\begin{enumerate}
\item We have %
\begin{equation}\label{inq-mult}
  \mult_{\rsmax}(P)\geq \sum_{\elf\in\rfield\; \sval(\elf)={\rsmax}} \mult_{\elf}(\bp)\qquad 
\forall {\rsmax}\in\smax^\vee\enspace. \end{equation}
\item \label{eq-mult} Suppose in addition that $\bp$ is factored into linear factors : $\bp=(\Y-{\elf}_1)\cdots (\Y-{\elf}_n)$ and
that $\sum_{{\rsmax}\in \smax^\vee} \mult_{\rsmax}(P)\leq n$, then
the inequalities in \eqref{inq-mult}
are equalities, which means that $\sval({\elf}_1),\ldots, \sval({\elf}_n)$ are the roots of $\sval(\bp)$ counted with multiplicities.
\end{enumerate}
\end{theorem}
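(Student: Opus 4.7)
The plan is to reduce directly to Baker--Lorscheid's inequality on multiplicities for morphisms of hyperfields~\cite[Prop.~B]{baker2018descartes}. Since $\sval$ is a morphism of semiring systems (\Cref{def-morphism-systems}), and the hyperfield system structure on $\smax$ recalled in~\Cref{sec-signval} comes from the hyperfield $\hyper(\vgroup)$ identified with $\smax^\vee$, the map $\sval$ restricts to a morphism of hyperfields from $\rfield$ (viewed as a trivial hyperfield) to $\smax^\vee$. As remarked after~\Cref{def-mult-BL}, the multiplicity defined here on a hyperfield system agrees with the hyperfield multiplicity of~\cite{baker2018descartes}. Applying~\cite[Prop.~B]{baker2018descartes} to $\bp$ and this morphism then yields~\eqref{inq-mult} for every $\rsmax \in \smax^\vee$, establishing part~(1).

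For part~(2), the plan is a simple summation argument using part~(1) together with the hypotheses. The factorization $\bp = (\Y-\elf_1)\cdots(\Y-\elf_n)$ implies $\sum_{\elf \in \rfield}\mult_\elf(\bp)=n$, since multiplicities in $\rfield$ are the classical ones. Summing~\eqref{inq-mult} over $\rsmax \in \smax^\vee$ and interchanging the two sums on the right-hand side gives
\[
\sum_{\rsmax \in \smax^\vee}\mult_\rsmax(P) \;\ge\; \sum_{\rsmax \in \smax^\vee}\sum_{\substack{\elf\in\rfield\\ \sval(\elf)=\rsmax}}\mult_\elf(\bp) \;=\;\sum_{\elf\in\rfield}\mult_\elf(\bp)\;=\;n.
\]
Combined with the hypothesis $\sum_{\rsmax}\mult_\rsmax(P)\le n$, all these inequalities collapse to equalities. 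In particular, each instance of~\eqref{inq-mult} is an equality, so for every $\rsmax\in\smax^\vee$ the multiplicity $\mult_\rsmax(P)$ equals the number (with multiplicity) of indices $i$ with $\sval(\elf_i)=\rsmax$; that is, $\sval(\elf_1),\dots,\sval(\elf_n)$ are the roots of $P$ counted with multiplicities.

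The only point requiring care is the soundness of the reduction to~\cite[Prop.~B]{baker2018descartes}: one must check that our semiring-system multiplicity of~\Cref{def-mult-BL} coincides with the hyperfield multiplicity of~\cite{baker2018descartes} when specialized to a hyperfield system such as $\smax$. This is immediate from the translation between hyperfields and semiring systems given in~\cite{AGRowen} together with the observation that, on a hyperfield system, the tangible set is a group and the balance relation $a \balance b$ coincides with $a \in b$ (see~\eqref{pro_preceq3} of~\Cref{pro_preceq} for $\smax$). Once this identification is noted, both parts of the theorem follow immediately with no further technical work, so no serious obstacle is anticipated.
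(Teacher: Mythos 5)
Your proof is correct and takes essentially the same approach as the paper: both parts are obtained by identifying $\sval$ with a morphism of hyperfields from $\rfield$ to $\smax^\vee$ (using the translation between semiring systems and hyperfields from \cite{AGRowen}, so that \Cref{def-mult-BL} recovers Baker--Lorscheid's notion of multiplicity) and then invoking \cite[Prop.~B]{baker2018descartes}. The paper cites Prop.~B for both parts without spelling out the summation argument for (2); your explicit double-sum/Fubini step and the collapse-to-equality step correctly reconstruct how the second part follows, either directly from the second statement in Prop.~B or, as you do, from part (1) together with the two counting hypotheses.
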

The following results will generally require $\vgroup$ to be divisible. 
This means that they cannot be applied to $\Z$, which is the valuation group of
the field of formal or convergent Laurent series.
The first one is deduced easily from \eqref{eq-mult} in 
\Cref{most_roots} together with \Cref{atmostn} and \Cref{prop-sum-mult-n}.
\begin{corollary}\label{most_roots2}
Let $\rfield$ be an ordered field with a morphism
$\sval:\rfield\to \smax^\vee$ as in \Cref{def-morphism-systems}, and
assume that $\vgroup$ is a divisible ordered group.
If $\bp\in\rfield[\Y]$ is a formal polynomial over $\rfield$ of degree $n$,
with exactly $n$ roots counted with multiplicities, ${\elf}_1, \ldots, {\elf}_n$, 
or equivalently $\bp=(\Y-{\elf}_1)\cdots (\Y-{\elf}_n)$,
then $P=\sval(\bp)$ has the $n$ roots in $\smax$,
$\sval({\elf}_i)$, $i=1, \ldots, n$, counted with multiplicity.

In particular, for all non-zero corners $c\in \tmax$ of $|P|$, and
for all successive degrees $k<\ell$ of non-zero monomials of $P^{\sat,c}$,
we have either $\ell-k=1$, or $\ell-k=2$ and $P^{\sat,c}_\ell=\ominus 
P^{\sat,c}_k$, or equivalently $P_\ell\odot P_k\in \smax^\ominus$.
\end{corollary}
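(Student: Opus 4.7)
The plan is to deduce this corollary by chaining three previously established results, with essentially no new computation.

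First I would verify that $P = \sval(\bp)$ has degree exactly $n$. Since $\bp$ has degree $n$, the leading coefficient $\bp_n$ is nonzero, and by \Cref{rem-morphism-systems} a morphism of semiring systems $\rfield \to \smax$ sends nonzero elements to tangible nonzero elements, so $\sval(\bp_n) \in \smax^\vee \setminus \{\zero\}$ and thus $\deg(P) = n$. Then I would invoke \Cref{atmostn} to conclude that the total multiplicity $\sum_{\rsmax \in \smax^\vee} \mult_\rsmax(P)$ is at most $n$.

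The key step is then to apply the second part of \Cref{most_roots}: the hypotheses are that $\bp$ factors as $(\Y-\elf_1)\cdots(\Y-\elf_n)$ (given) and that $\sum_{\rsmax} \mult_\rsmax(P) \leq n$ (just established), so the inequalities \eqref{inq-mult} become equalities. Summing the equalities $\mult_\rsmax(P) = \sum_{\elf:\, \sval(\elf) = \rsmax} \mult_\elf(\bp)$ over $\rsmax \in \smax^\vee$ and using $\sum_\elf \mult_\elf(\bp) = n$ (from the factorization of $\bp$), I get $\sum_{\rsmax} \mult_\rsmax(P) = n$, so the elements $\sval(\elf_1), \dots, \sval(\elf_n)$ form the complete multiset of roots of $P$ in $\smax^\vee$ counted with multiplicity.

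For the ``in particular'' clause, having established that $P$ has degree $n$ and admits $n$ roots counted with multiplicity, I would simply apply \Cref{prop-sum-mult-n} to $P$, which gives exactly the stated structural property on successive non-zero monomials of $P^{\sat,c}$ for each non-zero corner $c$ of $|P|$. The equivalence $P^{\sat,c}_\ell = \ominus P^{\sat,c}_k \iff P_\ell \odot P_k \in \smax^\ominus$ follows from the definition of the saturation polynomial and the fact that, on the saturation set, the coefficients $P^{\sat,c}_i$ have the same signs as the corresponding $P_i$ (they are obtained from $P_i$ by multiplication by a positive invertible scalar). There is no genuine obstacle: the whole corollary is a transparent consequence of the three main technical results already proved, and the only minor point to be careful about is the preservation of degree under $\sval$, which comes for free from the morphism axioms.
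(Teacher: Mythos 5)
Your proposal matches the paper's own proof, which is stated simply as being "deduced easily from \eqref{eq-mult} in \Cref{most_roots} together with \Cref{atmostn} and \Cref{prop-sum-mult-n}" — exactly the three ingredients you chain together, in the same order. The only thing you add is the explicit verification that $\deg(P)=n$ via the morphism axioms, which is a reasonable detail to spell out and entirely correct.
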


Let us give some examples when $\rfield=\puiseuxseries{\R}$, the field of (formal) Puiseux series with real coefficients (see \Cref{ex-Puiseux}).

\begin{example}\label{ex-factored}
 As an example, let us consider the polynomial
 \begin{eqnarray*}
 \bp&=&(\Y+t^5)(\Y-t)^3(\Y-t^{-2})\\
&=& \Y^5 +(t^5-3t-t^{-2})\Y^4-(3t^6+t^3-3t^2-3t^{-1})\Y^3+(3t^7+3t^4-t^3-3)\Y^2\\
&&\quad  -(t^8+3t^5-t)\Y+t^6\enspace,
 \end{eqnarray*}
 with coefficients in $\R\{\{t\}\}$. Clearly, $\elf_1=-t^{5}, \elf_2=t$ and $\elf_3=t^{-2}$ are the roots of $\bp$ with multiplicities $1, 3$ and $1$, respectively. Now using signed valuation we have
  $P=\sval(\bp)=\Y^5\oplus 5\Y^4 \ominus 6\Y^3\oplus 7\Y^2\ominus 8\Y\oplus 6$
   which has the roots
     $\rsmax_1= \ominus 5$, $\rsmax_2=1$ and $\rsmax_3 = -2$.
  Here, we have equalities in \eqref{inq-mult} since $\bp$ is factored.
Moreover, \Cref{most_roots2} shows that
$\mult_{\ominus 5}(P)=1, \; \mult_{1}(P)=3$ and $\mult_{-2}(P)=1$.
We can recover also this by using \Cref{cor-mult-smax}.
\end{example}
\begin{example}
Consider now
\begin{eqnarray*}
\bp&=&(\Y+t^5)(\Y-t)(\Y-t^{-2})(\Y^2+t^2)\\
&=& \Y^5 +(t^5-t-t^{-2})\Y^4-(t^6+t^3-t^2-t^{-1})\Y^3+(t^7+t^4-t^3-1)\Y^2\\
&&\quad -(t^8+t^5-t)\Y+t^6\enspace , 
\end{eqnarray*}
which cannot be factored in linear factors in real Puiseux series.
We have that $P=\sval(\bp)$ is the same as in \Cref{ex-factored}, 
however, we do not have equality in \eqref{inq-mult} for root $r_2=1$,
since $\mult_{1}(P)=3$ whereas the only root $\elf$ of $\bp$ such that
$\sval(\elf)=r_2=1$ has multiplicity $1$.
\end{example}

Using \Cref{prop-mult-fact}, we also obtain the following result.
\begin{corollary}\label{fact_roots}
Let $\rfield$ be an ordered field with a morphism
$\sval:\rfield\to \smax^\vee$ as in \Cref{def-morphism-systems}, and
assume that $\vgroup$ is a divisible ordered group.
Let $\bp\in\rfield[\Y]$ be a formal polynomial over $\rfield$ of degree $n$,
with exactly $n$ roots counted with multiplicities, ${\elf}_1, \ldots, {\elf}_n$, 
or equivalently assume that $\bp$ is factored as $\bp=(\Y-{\elf}_1)\cdots (\Y-{\elf}_n)$,
and let $P=\sval(\bp)$. Then, $\widehat{P}$ is factored as
$\widehat{P}(y)=(\Y\ominus\sval({\elf}_1))\cdots (\Y\ominus\sval({\elf}_n))$.
\end{corollary}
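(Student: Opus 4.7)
The plan is to combine two results already established in the paper. The statement asserts that when $\bp$ splits completely over $\rfield$, its image $P=\sval(\bp)$ under the signed valuation is a factored polynomial function over $\smax$, with roots exactly given by the signed valuations of the roots of $\bp$.

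First, I would invoke \Cref{most_roots2}: since $\bp$ has degree $n$ and factors as $\bp=(\Y-\elf_1)\cdots(\Y-\elf_n)$, this corollary (applicable thanks to the hypothesis that $\vgroup$ is divisible) guarantees that $P=\sval(\bp)\in \smax^\vee[\Y]$ has $n$ roots counted with multiplicity, and that these roots are precisely $\sval(\elf_1),\ldots,\sval(\elf_n)$. In particular, $\deg(P)=n$ (the leading coefficient $P_n=\sval(1)=\unit$ is nonzero since $\bp$ is monic), and the total multiplicity of roots of $P$ in $\smax^\vee$ equals $n$.

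Next, having a polynomial in $\smax^\vee[\Y]$ of degree $n$ with $n$ roots (counted with multiplicity) is exactly the hypothesis of \Cref{prop-mult-fact}. Applying that theorem, we conclude that $\widehat{P}$ can be factored as
\[
\widehat{P}(y)=P_n\odot (y\ominus \sval(\elf_1))\odot \cdots \odot (y\ominus \sval(\elf_n))\enspace .
\]
Since $\bp$ is monic, $P_n=\sval(1)=\unit$, and the stated factorization follows.

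There is no real obstacle here: the proof is a direct chaining of \Cref{most_roots2} (which identifies the roots of $P$ and their multiplicities from the factorization of $\bp$) and \Cref{prop-mult-fact} (which upgrades the existence of $n$ roots counted with multiplicity into an actual factorization of $\widehat{P}$). The only subtlety worth mentioning explicitly is that the divisibility of $\vgroup$ is needed for both ingredients, and that the monicity of $\bp$ ensures the correct leading coefficient in the factorization.
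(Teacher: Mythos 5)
Your proof is correct and matches the paper's intended argument: the paper simply prefaces the corollary with ``Using \Cref{prop-mult-fact}, we also obtain the following result,'' and the chain you describe (first \Cref{most_roots2} to identify the $n$ roots of $P=\sval(\bp)$ and their multiplicities, then \Cref{prop-mult-fact} to upgrade this into a factorization of $\widehat{P}$, with $P_n=\sval(1)=\unit$ from monicity) is precisely the intended derivation.
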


\begin{remark}
Note that in \Cref{fact_roots}, $P=\sval(\bp)$ may not be factored as a formal polynomial.
Indeed, consider $\rfield=\puiseuxseries{\R}$, %
and the polynomial $\bp= \Y^2-1$ with roots $1$ and $-1$. Then, $P=\Y^2\ominus \unit$, which has $\unit$ and $\ominus \unit$ as roots, but
$P\neq (\Y\ominus \unit)(\Y\oplus \unit)$ (as a formal polynomial).
\end{remark}

\begin{remark}
Note that part of the result of \Cref{fact_roots} can be obtained easily 
using the same techniques as in \cite{akian2016non},
at least when $\vgroup=\R$.
Indeed, using
$\bp_{n-k}=(-1)^{k}\sum_{i_1<\cdots < i_k} {\elf}_{i_1}\cdots {\elf}_{i_k}$,
we deduce that 
$\sval(\bp_{n-k})\preceq^\circ (\ominus\unit)^{\odot k}\bigoplus_{i_1<\cdots < i_k}
\sval({\elf}_{i_1})\odot \cdots\odot \sval({\elf}_{i_k})$,
so $P\preceq^\circ \tropprod_{1\leq i\leq n} (\Y\ominus \sval({\elf}_i))$.
Then,
$\widehat{P}(y)\preceq^{\circ} \tropprod_{1\leq i\leq n} (y\ominus \sval({\elf}_i))$,
with equality for all $y$ such that the right hand side is in $\smax^\vee$,
in particular for  $y\in \smax^\vee\setminus\{\sval({\elf}_i)\mid i=1, \ldots, n\}$.
If $\vgroup=\R$, then, by continuity, we conclude that the modulus of 
both sides are equal for
$y\in\smax$. %
However, it is not so easy to obtain the equality everywhere
without using the results leading to \Cref{fact_roots}.
\end{remark}

We now suppose that $\rfield$ is a real closed field with a non-trivial convex non-Archimedean valuation. As said in the preliminaries, this implies that the value group $\vgroup$ is divisible. %
Moreover, by \Cref{rem-morphism-systems}, a morphism 
$\sval$ as in \Cref{most_roots} (that is as in \Cref{def-morphism-systems})
is the signed valuation associated, as in \Cref{def-sign},
to the non-trivial convex non-Archimedean valuation $\vall$ such that
$\vall(\elf)=|\sval(\elf)|$, for all $\elf\in \rfield$.
 Then, \Cref{most_roots} can be refined:
\Cref{th-descartes} below shows that the bound of~\Cref{most_roots} for the sum of multiplicities of the roots with a prescribed signed valuation is exact
 modulo 2 (part 1), and that it is tight (part 2).  In~\cite[Theorem B]{gunn},
 Gunn proved a similar result to the first part of \Cref{th-descartes}
in the case where $\vgroup=\R$ 
(rank $1$ valued fields). Our result holds for any real closed field with 
a non-trivial convex valuation, which implies that the 
non-trivial divisible ordered group $\vgroup$ is arbitrary.
As said in the introduction, the question of the 
tightness of the bounds of~\Cref{most_roots} was raised by Baker and Lorscheid \cite[Remark 1.14]{baker2018descartes} for general morphisms of hyperfields, and is already true for when the group $\vgroup$ is trivial.
Gunn gave a partial answer  in  \cite[Th.~5.4]{gunn}, where
a weak version of the second part of \Cref{th-descartes} is shown in the
case  where $\vgroup=\R$. Namely, this result is equivalent to 
the property that for any polynomial $P$ over $\smax^\vee$, 
there exists a polynomial $\bp$ over the real closed field  $\rfield$ of Hahn series
with signed valuation equal to $P$ and
satisfying equalities in \eqref{inq-mult}, %
with the sum over $\elf\in\rfield$ such that $\sval(\elf)={\rsmax}$
enlarged into a sum over
all elements $\elf$ of the algebraically closed complex field of Hahn series,
that have a real dominant term with a signed valuation equal to $\rsmax$.
Then, the tightness result shown in the second part of \Cref{th-descartes}
refines the one of~\cite[Th.~5.4]{gunn} in the case where $\vgroup=\R$, answering to a question raised there.
Moreover,  as for the first part, 
 our result holds for any real closed field with 
a non-trivial convex valuation, and for an arbitrary
non-trivial divisible ordered group $\vgroup$.
\begin{theorem}[Descartes' rule for signed valuations]
  \label{th-descartes}
  Let $\rfield$ be a real closed field with a non-trivial convex non-Archimedean
  valuation, and let $\smax=\smax(\vgroup)$ be constructed
  from the value group $\vgroup$ of $\rfield$, and let
 $\sval:\rfield\to \smax^\vee$ denote the signed valuation
  map, as in \Cref{def-sign}.
(Equivalently, let $\rfield$ be a real closed field and let 
$\sval:\rfield\to \smax^\vee$, as in \Cref{def-morphism-systems},
with $\smax=\smax(\vgroup)$ for a non-trivial group $\vgroup$.)

  Let
  \[ P=\bigoplus_{k=0}^n P_k\Y^k\in \smax^\vee[\Y]
  \enspace.
  \]
Then,
\begin{enumerate}
\item \label{th-descartes1}For all polynomials $\bp\in \rfield[\Y]$
  such that $\sval(\bp)=P$,
  we have 
\begin{equation}\label{inq-mult2}
  \mult_{\rsmax}(P)\geq \sum_{\elf\in\rfield\; \sval(\elf)={\rsmax}} \mult_{\elf}(\bp)\qquad 
\forall {\rsmax}\in\smax^\vee\enspace, \end{equation}
and the two terms of the inequalities~\eqref{inq-mult2} coincide
  modulo $2$.
\item \label{th-descartes2}There exists $\bp\in \rfield[\Y]$ such that $\sval(\bp)=P$
and, for all $ {\rsmax}\in\smax^\vee$, the inequalities in~\eqref{inq-mult2} are equalities.
\end{enumerate}
\end{theorem}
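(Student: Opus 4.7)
My plan is to reduce both remaining assertions---the mod-$2$ equality in part~(\ref{th-descartes1}), the inequality itself being~\Cref{most_roots}, and the tightness in part~(\ref{th-descartes2})---to their verification in a single concrete real closed valued field, and then invoke the completeness theorem~\Cref{theorem:qe_vrcf} of Denef--Pas to transfer them to every real closed valued field with a non-trivial convex valuation. For each fixed degree $n$ and each fixed sign/support pattern of $P$, both assertions can be encoded as first-order sentences in the three-sorted language of $\thVrcf$: the values $|P_k|\in\vgroup$ and the coefficients $\bp_k\in\rfield$ are quantified in their respective sorts, the constraint $\sval(\bp_k)=P_k$ is expressed through $\vall$ and $\angular$, and the multiplicities $\mult_r(P)$, bounded above by $n$ by~\Cref{atmostn}, are definable via iterating the existential characterization in~\Cref{def-mult-BL}. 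I therefore fix the special model $\rfield=\hahnseries{\R}{\R}_{\textrm{cvg}}$ with value group $\vgroup=\R$ from now on.

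For part~(\ref{th-descartes2}) in this model, the strategy is a block-by-block construction along the Newton polygon of $|P|$. Let $c_1>\cdots>c_m$ be the tropical corners of $|P|$ and $k_0<\cdots<k_m$ the associated breakpoints. Each saturated polynomial $P^{\sat,c_i}$, identified with an element of $\bmaxs^{\vee}[\Y]$ via~\Cref{cor-mult-smax}, is a real sign pattern whose numbers of sign changes (in $P^{\sat,c_i}$ and in $P^{\sat,c_i}(\ominus\Y)$) equal $\mult_{c_i}(P)$ and $\mult_{\ominus c_i}(P)$ respectively. Grabiner's theorem~\cite{grabiner} (the trivial-group case of part~(\ref{th-descartes2}), cited in~\cite[Remark~1.14]{baker2018descartes}) yields a real polynomial $\bar{\bp}^{(c_i)}\in\R[y]$ with exactly this sign pattern, all of whose real roots are simple, non-zero, and Descartes-tight in count. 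The two-parameter rescaling $\bar{\bp}^{(c_i)}(y)\mapsto\lambda\bar{\bp}^{(c_i)}(\mu y)$ with $\lambda,\mu>0$ preserves both properties while prescribing the leading and constant coefficients of the block, allowing the blocks to be glued across each shared Newton-polygon vertex $k_i$ into a compatible assignment of angular components. Setting $\bp_k:=\angular_k\cdot t^{|P_k|}$ on the Newton polygon (with $\angular_k$ the corresponding block coefficient), $\bp_k$ any element with signed valuation $P_k$ when $k$ is in the support of $P$ but below the Newton polygon, and $\bp_k=0$ when $P_k=\zero$, the substitution $\elf=t^{c_i}y$ followed by Hensel's lemma in $\hahnseries{\R}{\R}_{\textrm{cvg}}$ lifts each simple non-zero real root of $\bar{\bp}^{(c_i)}$ to a unique root of $\bp$ of valuation $c_i$ and matching sign, yielding $\mult_r(P)$ real roots with $\sval(\elf)=r$ for every signed tropical root $r\ne\zero$; the root at $\zero$ is accounted for by the $\uval(P)$ trailing zero coefficients.

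For part~(\ref{th-descartes1}) in the same model, the mod-$2$ equality follows from a Newton-polygon analysis independent of the previous construction. Given $\bp\in\rfield[\Y]$ with $\sval(\bp)=P$ and a tropical corner $c$ of $|P|$, the residue polynomial $\bar{\bp}^{(c)}\in\R[y]$ has the same sign pattern as $P^{\sat,c}$, so by the classical Descartes' rule of signs modulo~$2$, the number of positive real roots of $\bar{\bp}^{(c)}$ (with multiplicity) is congruent modulo~$2$ to the number of sign changes in its coefficients, which equals $\mult_c(P)$ by~\Cref{cor-mult-smax}. For each real root $y_0>0$ of $\bar{\bp}^{(c)}$ of multiplicity $\nu$, a recursive Newton-polygon sub-analysis of $\bp(t^c(y_0+z))$ in the parameter $z$ shows that the number of real roots of $\bp$ lifting $y_0$ has the same parity as $\nu$: at each recursion level the relevant sub-residue is a real polynomial whose number of real roots is congruent to its degree modulo~$2$, and the sub-face degrees add up to $\nu$. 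Summing over positive $y_0$, one obtains $\sum_{\elf,\sval(\elf)=c}\mult_{\elf}(\bp)\equiv\mult_c(P)\pmod 2$, and analogously for $r=\ominus c$; the case $r=\zero$ holds as an equality since $\mult_{\zero}(P)=\uval(P)$ coincides with the lower degree of $\bp$. The main obstacle of the proof is the block-gluing in part~(\ref{th-descartes2}): one must simultaneously match the angular components at every interior Newton-polygon vertex, a constraint I handle by observing that the two unknowns $(\lambda,\mu)$ per block are uniquely determined by two sign-compatible equations---one pinning the constant coefficient, the other the leading coefficient---whose solvability in $\R_{>0}\times\R_{>0}$ is guaranteed by Grabiner's construction already respecting the required signs at the boundaries.
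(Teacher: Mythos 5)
Your proof is correct and shares the paper's overall two-step architecture (establish the result in one concrete real closed valued field, then transfer to all such fields via the completeness \Cref{theorem:qe_vrcf}), but the verification in the special model proceeds by a genuinely different route. For the tightness in part~(\ref{th-descartes2}), the paper uses a Viro-type patchworking construction: it introduces an auxiliary positive parameter $u$, chooses a strictly concave weight $\omega\colon\{0,\dots,n\}\to\Q$, sets $\bp_k=\epsilon_k u^{\omega_k}t^{|P_k|}$, and then regards the residue polynomial $Q$ at each corner as a polynomial over the auxiliary valued field $\R\{\{u\}\}_{\textrm{cvg}}$, so that part~(\ref{th-descartes1}) of the theorem (applied \emph{in that auxiliary field}) forces all real roots of $Q$ to be simple for $u$ large; the lift $\bp$ is then obtained by specializing $u$. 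Your construction is more direct and avoids this self-referential bootstrap: you invoke Grabiner's tight-Descartes construction once per saturated sign pattern $P^{\sat,c_i}$, glue the blocks by choosing positive scalings $(\lambda_i,\mu_i)$ so that the angular components agree at the shared Newton-polygon vertices (this chain of one or two sign-compatible equations per block is indeed solvable in $\R_{>0}\times\R_{>0}$, since the signs are forced to match), and finish by Hensel lifting. What you should make explicit is that Grabiner's polynomials can be taken with \emph{simple} nonzero real roots---simplicity is precisely what Hensel's lemma needs---and that the Hensel lift of a simple real residue root of a real polynomial is automatically real (by uniqueness and conjugation). For the modulo-$2$ part of~(\ref{th-descartes1}), the paper's argument is shorter than yours: the $\nu=\mult_{y_0}(Q)$ complex roots of $\bp$ with valuation $c$ and dominant coefficient $y_0$ have their non-real members in conjugate pairs, so the real ones are $\equiv\nu\pmod 2$; summing over $y_0>0$ and invoking classical Descartes mod~$2$ for $Q$ finishes. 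Your recursive Newton-polygon sub-analysis reaches the same conclusion but is essentially a re-derivation of this conjugate-pair count; conjugation shortens it. A last small remark: nothing in your verification uses convergence, so the formal Hahn field $\hahnseries{\R}{\R}$ (transparently Henselian) would serve at least as well as $\hahnseries{\R}{\R}_{\textrm{cvg}}$; the paper's choice of $\R\{\{t\}\}$ together with $\R\{\{u\}\}_{\textrm{cvg}}$ is dictated by the two-parameter Viro argument, which you do not need.
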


The proof of \Cref{th-descartes} is organized in two parts. First, we establish
the result in a special case, assuming that
$\rfield:= \puiseuxseries{\R}$ is the field of (formal) Puiseux series
 (see \Cref{ex-Puiseux}).
Then, we will deduce \Cref{th-descartes} in full generality from this special
case, by applying a model theory argument.

\begin{proof}[Proof of \Cref{th-descartes}, Part I (case $\rfield= \puiseuxseries{\R}$)]

The inequalities~\eqref{inq-mult2} follow from \Cref{most_roots} (and are true 
actually in any ordered field $\rfield$).

Proof of \eqref{th-descartes1}:
Let us show the ``modulo 2'' part.
When ${\rsmax}$ is not a root of $P$, we have $\mult_{\rsmax}(P)=0$, so the equality holds
 in~\eqref{inq-mult2}. Moreover, if ${\rsmax}=\zero$ is a root of $P$, then
$\mult_{\rsmax}(P)$ is the lower degree of $P$ (see \Cref{cor-mult-smax}), which is also the lower degree of 
$\bp$ (since $\elf=0$ iff $\sval(\elf)=\zero$ for all $\elf\in \rfield$),
and thus the multiplicity of the root $0$ of $\bp$. This shows the
equality in~\eqref{inq-mult2} for the case ${\rsmax}=\zero$.

We now consider the case in which ${\rsmax}\in \smax^\vee\setminus\{\zero\}$ is a root of $P$. Then, $|{\rsmax}|$ is a corner of $|P|=\bigoplus_{k=0} |P_k|\Y^k\in \tmax[Y]$,
see \Cref{abs_roots}.
We denote by $m$ the multiplicity of the corner $|{\rsmax}|$ of $|P|$. 
For all $s=\sum_{\alpha\in \Q} s_\alpha t^\alpha 
\in \rfield$, or $\rfield[\sqrt{-1}]$,
we denote by $\vall(s)$ the valuation of $s$,
that is the maximal element of its
support, and by $\dc(s)$ the dominant coefficient of $s$, that is 
$s_{\vall(s)}$.
The field $\rfield[\sqrt{-1}]$ is known to be
algebraically closed~\cite{ribenboim}.
  Moreover, the Newton-Puiseux theorem entails that
  $\bp$ has precisely $m$ roots (counted with multiplicities) ${\elf}_1,\dots,{\elf}_m$ of valuation $|{\rsmax}|$ over $\rfield[\sqrt{-1}]$, and that the dominant coefficients of these roots, $\dc({\elf}_1),\ldots, \dc({\elf}_m)$, are precisely the non-zero roots of the following polynomial with  real coefficients,
  \[
  Q(\Y) = \sum_{k\in \sat({\rsmax},P) } \dc(\bp_k)\Y^k\enspace,
  \]
  where $\sat({\rsmax},P)$  is defined as in \Cref{cor-mult-smax}.
  (Note that the polynomial is called the {\em initial form} of
  $\bp$ --with respect to the weight $|{\rsmax}|$-- in the tropical
  geometry literature.)
  More precisely, the characterization of the exponents and dominant coefficients of the roots of $\bp$ which we just stated follows
  from the analysis of the Newton-Puiseux algorithm (see Theorem~3.2
  in~\cite{TEISSIER2007} or Theorem~6 in~\cite{markwig2009field}),
  or Theorem~6.2 in~\cite{akian2016non}).

Assume first that $b$ is positive, that is 
${\rsmax}\in \smax^{\oplus}\setminus\{\zero\}$.
By \Cref{cor-mult-smax}, then the multiplicity $\mult_{\rsmax}(P)$
of the root ${\rsmax}$ of $P$
is precisely the number of sign changes of the polynomial
$P^{\sat,{\rsmax}}= P^{\sat,|{\rsmax}|}$. The supports of $P^{\sat,{\rsmax}}$ and $Q$ are 
both equal to $\sat({\rsmax},P)$ and since ${\rsmax}$ is positive,
the sign of the coefficients are the same:
$\sign(\dc(\bp_k))=\sign(\bp_k)=P^{\sat,|{\rsmax}|}_k$,
since $P_k=\sign(\bp_k)\vall(\bp_k)$.
So $\mult_{\rsmax}(P)$ is the number of sign changes of the polynomial $Q$.

By the Descartes' rule of sign for real polynomials,
the number of real positive roots of $Q$ is equal modulo $2$
to the number of sign changes of $Q$.
The positive roots of $Q$ can be partitionned in two lists:
(i) the ones which are the dominant coefficients of a root
of $\bp$ in $\rfield$; (ii) the ones which are the dominant
coefficients of a root $\elf$ of $\bp$ in $(\rfield[\sqrt{-1}])\setminus \rfield$
(i.e., of a root $\elf$ that has a non-zero imaginary part).
The roots of type (ii) come by pairs, since if $\elf$ is a root of $\bp$,
its conjugate $\bar{\elf}$ is also a root of $\bp$. The roots of $Q$ of type (i)
are precisely the ones that are the dominant coefficients of
roots of $\bp$ in $\rfield$ which are positive and 
with valuation $|{\rsmax}|$, or equivalently which have a signed valuation equal 
to ${\rsmax}$. It follows
that the number of roots of $\bp$ in $\rfield$ with signed valuation
${\rsmax}$ is equal modulo $2$ to the number of real positive roots of $Q$,
which coincides with the multiplicity $\mult_{\rsmax}(P)$ of the root ${\rsmax}$ of $P$.

By replacing $\bp$ by $\bp(-\Y)$, we arrive at the same
conclusion for the multiplicity of the root ${\rsmax}$ of $P$,
when ${\rsmax}$ is negative that is ${\rsmax}\in\smax^{\ominus}\setminus\{\zero\}$.
This shows the ``modulo $2$'' statement.

Proof of \eqref{th-descartes2}:
We now show that the 
inequalities in~\eqref{inq-mult2} are equalities, for some
$\bp\in \rfield[\Y]$ such that $\sval(\bp)=P$.
 We use the following
  technique known as Viro's method in tropical geometry~\cite{itenberg_viro}.
Consider a formal polynomial $P=\bigoplus_{k=0}^n P_k\Y^k\in \smax^\vee[\Y]$
with degree $n$.
  We introduce a positive real parameter $u$ (distinct from the indeterminate $t$
  of Puiseux series), take any strictly concave map $\omega$
  from $\{0,\dots,n\}$ to $\Q$, and consider the polynomial
  \[
  \bp := \sum_{k=0}^n \epsilon_k  u^{\omega_k} t^{|P_k|}\Y^k \in \R[\Y]
  \]
  where, $t^{-\infty}=0$, and for all $k\geq 0$,  $\epsilon_k= 1$ if $P_k$ is a positive element
  of $\smax$, $\epsilon_k= -1$ if $P_k$ is a negative element
  of $\smax$, and $\epsilon_k=\pm 1$ if $P_k=\zero$,
so that $\sval (\bp)=P$.
For ${\rsmax}=\zero$, we already know that the 
inequality in~\eqref{inq-mult2} is an equality.
Now consider a root ${\rsmax}\in \smax^\vee\setminus\{\zero\}$ of $P$,
and let $m$ be the multiplicity of the corner $|{\rsmax}|$ of $|P|$.
If ${\rsmax}$ is positive, then the associated polynomial $Q$ defined above is equal to
  \[
  Q(\Y) := \sum_{k\in \sat({\rsmax},P) } \epsilon_k u^{\omega_k}\Y^k  \enspace ,
  \]
and its degree minus its lower degree is equal to $m$.
  We shall see $u$ as a large parameter, hence, the coefficients
  of $Q$ will be interpreted as elements of the field
  $\R\{\{u\}\}_{\textrm{cvg}}$ of Puiseux series
  that are absolutely convergent for small enough
  values of the variable $u$. Using this interpretation,
the signed valuation of $Q$ is equal to the polynomial
 $R(\Y) := \oplus_{k\in \sat({\rsmax},P) } R_k\Y^k \in \smax^\vee[\Y]$,
with $R_k =\omega_k$ if $\epsilon_k=1$ and $R_k =\ominus \omega_k$ if $\epsilon_k=-1$.
Since $\omega$ is strictly concave, each corner $c$ of $|R|$ is such that
$\sat(c,R)$ is composed of two indices $k<\ell$  only, with $\ell-k$ 
equal to the multiplicity of $c$ as a corner of $|R|$.
Moreover, for any corner $c$ of $|R|$, $c$ is a root of $R$ 
if and only if $R_k$ and $R_\ell$ have opposite signs and then, the multiplicity is $1$ (the
multiplicity of $c$ as a root of $R$ is equal to the number of sign changes
of the saturation polynomial $R^{\sat,c}=R_k\Y^k\oplus R_\ell Y^\ell$),
and similarly $\ominus c$ is a root of $R$ if and only if 
$R_k$ and $R_\ell(\ominus \unit)^{\ell-k}$ have opposite signs.
So the multiplicity of any root of $R$ is at most $1$.

By applying the Newton-Puiseux
  theorem to the polynomial $Q\in\C\{\{u\}\}_{\textrm{cvg}}$,
we get that %
the polynomial $Q$ has exactly $m$ non-zero roots 
(counted with multiplicities) in $\C\{\{u\}\}_{\textrm{cvg}}$,
and applying \eqref{th-descartes1} of the theorem, 
we have that for all $\rsmax\in \smax^\vee$, 
 $ \mult_{\rsmax}(R)\geq \sum_{\elf}  \mult_{\elf}(Q)$, where the sum is over all
$\elf\in\R\{\{u\}\}_{\textrm{cvg}}$ with a signed valuation equal to $\rsmax$,
and the two terms coincide modulo 2.
Since the roots of $R$ have multiplicity $1$, we get that the equality
holds for all $\rsmax\in \smax^\vee$.
Then, the real roots of $Q$ in 
$\R\{\{u\}\}_{\textrm{cvg}}$ have their signed valuations equal to 
non-zero roots of the polynomial $R$, and these are all simple roots.

Specializing this result for a sufficiently large value of $u$, we get that 
all real roots of $Q$ (seen as an element of $\R[\Y]$) are simple roots.
Moreover, the number of positive roots of $Q$ is equal to the number of positive roots of $R$, that is the number of sign changes of $R$, so of $Q$.
By the same arguments as in the proof of the first item,
 this number is equal to $\mult_{\rsmax}(P)$.
Moreover, for any root $r$ of $\bp$ in $\rfield$
that has a signed valuation equal to ${\rsmax}$, its dominant coefficient $\dc(r)$ 
is a positive real root of $Q$. Since the real roots of $Q$ are simple, 
we get that the number of such roots $r$ of $\bp$ is exactly equal to 
the number of positive roots of $Q$, that is $\mult_{\rsmax}(P)$.
This shows the equality in \eqref{inq-mult2} for ${\rsmax}$.

  The same conclusion applies to all signed roots ${\rsmax}$ of $P$.
  Specializing $u$ to a suitably large value, we get
  a lift $\bp$ of $P$ which achieves the equality in~\eqref{inq-mult2},
  for all roots of ${\rsmax}$ of $P$.

\end{proof}
\begin{proof}[Proof of \Cref{th-descartes}, Part II (derivation of the general case)]
The inequality \eqref{inq-mult2} in
Point \eqref{th-descartes1} was already proved in full generality.
It suffices to show that the ``modulo 2'' part
of Point \eqref{th-descartes1} and 
Point \eqref{th-descartes2} of \Cref{th-descartes}
can be expressed as statements in the (first order) theory $\thVrcf$  of real ordered fields with angular component that are real closed
and have a non-trivial valuation. Then, since
this theory is complete (\Cref{theorem:qe_vrcf}), the validity of \Cref{th-descartes} in its full generality will follow from the validity
  of the same statements when $\rfield=\R\{\{t\}\}$, which we
  just established. 
  To see this, we must fix the degree $n$, so that the statements of \Cref{th-descartes}   can be considered as statements involving the variables
  ${\rsmax},P_0,\dots,P_n\in \smax^\vee$ and $\elf,\bp_0,\dots,\bp_n\in \rfield$. Then, we observe that every variable $a$ ranging in $\smax^\vee$ can be replaced by a pair of variables $(\epsilon_a,\mu_a)$
  ranging in $\{\unit,\ominus \unit,\zero\}\times (\Gamma\cup\{\bot\})$, interpreting $\epsilon_a$ as the sign of $a$ and $\mu_a$
  as its modulus. In that way, \Cref{th-descartes} can be rewritten as an equivalent statement involving the variables encoding the signs and moduli of ${\rsmax},P_0,\dots,P_n$, as well as the variables $\elf,\bp_0,\dots, \bp_n$. In the transformed statement,   we do not need to use any more the symbol ``$\sval$'',
  and use the symbols ``$\vall$'' and ``$>$'' instead to express
  the same property.
  Hence,
  modulo the abuse of terminology implied by this rewriting,
  it does makes sense to consider the
membership of this statement to the theory $\thVrcf$.

  We observe that for a fixed integer $m$, 
  the statement $\mult_{\elf}(\bp)=m$ with ${\elf}\in \rfield$,
can be expressed in the theory $\thVrcf$.
Indeed, in a field $\mult_{\elf}(\bp)=m$ holds if and only if
there exists a polynomial $\mathbf{Q}$ with degree $n-m$ such that
  \[ \bp= \mathbf{Q} (\Y-{\elf})^m  \;\text{and}\; \mathbf{Q}({\elf})\neq 0 \enspace,
  \]
  and the latter can be rewritten as a statement involving
$\elf,\bp_0,\ldots, \bp_n,\mathbf{Q}_0,\ldots ,\mathbf{Q}_{n-m}$
and belonging to $\thVrcf$.

 Moreover, again for a fixed integer $m$, the statement $\mult_{\rsmax}(P)=m$ can be expressed in the theory $\thVrcf$.
Indeed, using \Cref{cor-mult-degree}, this statement is equivalent to 
the properties that there exists $m$ polynomials $Q^{(1)}, \ldots, Q^{(m)}\in\smax^\vee[\Y]$
of degree $n-1, \ldots, n-m$
such that $Q^{(i)} \balance (\Y \ominus {\rsmax}) Q^{(i+1)}$
for $i=0,\ldots , m-1$ with $Q^{(0)}=P$, 
and there does not exist $m+1$ polynomials $Q^{(1)}, \ldots, Q^{(m+1)}\in\smax^\vee[\Y]$ with same properties.
 Then, we observe that every equation $Q \balance (\Y \ominus {\rsmax}) Q'$
with $Q,Q'\in \smax^\vee[\Y]$ with degree $j,j-1$, with $j\in\{1,\ldots, n\}$
 can be written as a system of $j+1$ equations, namely 
$Q_\ell\balance Q'_{\ell-1}\ominus \rsmax\odot Q'_{\ell}$, 
for $\ell=1,\ldots, j-1$, together with $Q_j= Q'_{j-1}$ and 
$Q_0= \ominus \rsmax\odot Q'_{0}$.
Moreover, these equations are of the form
$a\balance b\oplus c$ in $\smax$ for variables $a,b,c$ ranging in $\smax^\vee$,
and they can be replaced by an expression involving the variables 
$(\epsilon_a,\mu_a)$, $(\epsilon_b,\mu_b)$ and $(\epsilon_c,\mu_c)$
all  ranging in $\{\unit,\ominus \unit,\zero\}\times (\Gamma\cup\{\bot\})$,
as above.

  Observe that the equality $m=\sum_{\elf\in\rfield\; \sval(\elf)={\rsmax}} \mult_{\elf}(\bp)$ in~\eqref{inq-mult2} can be rewritten
  as the disjunction $\formF_m(\bp,{\rsmax}):=
\bigvee_{1\leq k\leq m,m_1+\dots +m_k= m}
  \formF_{k,m_1,\dots,m_k}(\bp,{\rsmax})$,
  in which $\formF_{k,m_1,\dots,m_k}(\bp,{\rsmax})$
stands for the following statement:
``$\bp$ has precisely $k$ distinct roots with signed valuation ${\rsmax}$, and the multiplicities of these roots, up to a reordering, are given by $m_1,\dots,m_k$''. The latter statement can be expressed in $\thVrcf$ by introducing
$k+1$ additional variables ${\elf}_1,\dots,{\elf}_{k+1}$ ranging in $\rfield$, so that it can be replaced by 
``$\bp$ has $k$ distinct roots ${\elf}_1,\dots,{\elf}_{k}$
with signed valuation ${\rsmax}$ and respective multiplicities $m_1,\dots,m_k$, and ``$\bp$ does not have $k+1$ distinct roots ${\elf}_1,\dots,{\elf}_{k+1}$ with signed valuation ${\rsmax}$''.
Then  the equality in ~\eqref{inq-mult2} is equivalent to the disjunction 
$\bigvee_{0\leq m \leq n}(\formF_m(\bp,{\rsmax})\wedge (\mult_{\rsmax}(P)=m))$.
It follows that the statement of \eqref{th-descartes2} of \Cref{th-descartes}
can be expressed in $\thVrcf$.

Similarly, the second part of statement \eqref{th-descartes1} can also be expressed in $\thVrcf$, by considering this time the disjonctions
$\formF'_m(\bp,{\rsmax}):\bigvee_{1\leq k\leq m,1\leq \ell\leq m, m_1+\dots +m_k+2 \ell= m}
  \formF_{k,m_1,\dots,m_k}(\bp,{\rsmax})$. 

\end{proof}

\begin{remark}
  Observe that, by~\Cref{most_roots}, the inequality~\eqref{inq-mult2}
  is valid for any ordered valued field with the signed valuation of
\Cref{def-sign}, since the approach
  of~\cite{baker2018descartes}
  via morphisms of hyperfields applies to this general
  setting. In contrast,
  the proof that the inequality is ``tight'' (i.e., attained
  by some lift and exact modulo $2$) requires the field
  to be {\em real closed}.
  \end{remark}

\appendix
\section{Direct proof of \Cref{max-lemma}}
We give here a proof of \Cref{max-lemma} for a general divisible group, without using the model theory argument.
Although $\R$ is replaced by $\vgroup$, one can still see the maps
$- P\mapsto \widehat{P}$ and $\widehat{P}\mapsto -  P^\bifench$ 
as Fenchel-Moreau conjugacies. They operate 
on functions defined on $\N$ and $\vgroup$ respectively,
with the kernel $c(k,y)=ky$. So by
the usual properties of Fenchel-Moreau conjugacies, if $P$ is non-zero,
meaning that the function $k\in \N\mapsto - P_k$ has a nonempty support, 
we have that $P\leq P^\bifench$ and $\widehat{P^\bifench}=\widehat{P}$.
So $P^\bifench$ is the maximum of the formal polynomials $Q$ such that $\widehat{Q}=\widehat{P}$ for the (coefficientwise) order $\leq$.

Let $P$ has lower degree $m$ and degree $n$.
Since $-P$ has a nonempty finite support with rational convex hull equal to 
$[m,n]\cap \Q$, we deduce that $k\in \N\mapsto -P^\bifench_k$ is convex 
with support equal to $\{m,\ldots, n\}=[m,n]\cap \N$.

Let us show that if  $k\in \N\mapsto P_k$ is concave,
we have $P^\bifench=P$. The proof is elementary and comes from
the fact that the functions are all polyhedral.
If $m=n$, this is trivial.
Otherwise, since $P$ is concave, we have in particular
$P_{k}-P_{k-1}\geq P_{k+1}-P_k$ for all $k$ such that $m<k<n$ if any.
Consider $\ell\in \{m,\ldots,n\}$ and take 
$y\in \vgroup$ such that $y=P_\ell-P_{\ell+1}$ if $\ell<n$ and
$y=P_{\ell-1}- P_\ell$, if $\ell=n$.
We have $P_{k-1}-P_{k}\leq y$ for all $k\leq \ell$,
and $P_{k-1}-P_{k}\geq y$ for all $k\geq \ell+1$.
This implies that $\widehat{P}(y)= P_{\ell}+\ell y$, and 
so $P^\bifench_\ell\leq \widehat{P}(y)-\ell y= P_{\ell}$.
Since this holds for all $\ell\in [m,n]$, we get that 
$P^\bifench\leq P$, and since the other inequality is always true, we obtain
the equality $P^\bifench= P$, for all $P$ such that
 $k\in \N\mapsto P_k$ is concave. 
The above proof also shows that in that case 
the infimum in the right hand 
side of \eqref{p-sharp2} is attained by some $y\in \vgroup$,
when $k\in \{m,\ldots,n\}$.
Moreover, since  $\widehat{P^\bifench}= \widehat{P}$, 
and  $k\in \N\mapsto P^\bifench_k$ is concave, the latter property also holds
for any $P$.

Using the definition of $P^{\sharp}$, we see also that 
$P^{\sharp}\geq P$, that it is concave on $\N$ with a support included in $\{m,\ldots, n\}$, and that $P^{\sharp}= P$ when $P$ is concave.
In particular we have $P^\bifench= {P}^{\sharp}$ when  $P$ is concave.

It remains to show that $P^\bifench= {P}^{\sharp}$ for a general non-zero 
formal polynomial $P$.
We already showed that
$P^\bifench\geq P$ and ${P}^{\sharp}\geq P$. 
Applying the order-preserving 
maps $P\mapsto {P}^{\sharp}$ and $P\mapsto P^\bifench$ to these inequalities,
we obtain
\begin{equation}\label{ineq-sharp}
(P^\bifench)^{\sharp}\geq {P}^{\sharp} \quad \text{and} \quad 
({P}^{\sharp})^\bifench\geq P^\bifench\enspace.\end{equation}
Since we also have that $P^\bifench$ and $P^\sharp$ are both concave, 
the above results show that 
$(P^\bifench)^{\sharp}=P^\bifench$, and  $({P}^{\sharp})^\bifench={P}^{\sharp}$.
Then, from \eqref{ineq-sharp}, we get
$P^\bifench= (P^\bifench)^{\sharp}\geq {P}^{\sharp}$,
and ${P}^{\sharp}=({P}^{\sharp})^\bifench\geq P^\bifench$, hence
$P^\bifench= {P}^{\sharp}$.
%

%\bibliographystyle{alpha} %
%\bibliography{refs,phd_bibliography}

\end{document}